\newtheorem{theorem}{Theorem}
\newtheorem{proposition}[theorem]{Proposition}
\newtheorem{corollary}[theorem]{Corollary}
\newtheorem{lemma}[theorem]{Lemma}
\theoremstyle{definition}
\newtheorem{definition}[theorem]{Definition}
\newtheorem{remark}[theorem]{Remark}
\newtheorem{question}[theorem]{Question}
\numberwithin{equation}{section}
\numberwithin{theorem}{section}
\newcommand{\bfv}{\mathbf{v}}
\newcommand{\vv}{\mathbf{v}}
\newcommand{\vw}{\mathbf{w}}
\newcommand{\vu}{\mathbf{u}}
\newcommand{\vx}{\mathbf{x}}
\newcommand{\vy}{\mathbf{y}}
\renewcommand{\b}{\beta}
\newcommand{\g}{\gamma}
\newcommand{\G}{\Gamma}
\renewcommand{\d}{\delta}
\newcommand{\D}{\mathbf{D}}
\newcommand{\bfL}{\mathbf{\Lambda}}
\renewcommand{\k}{\kappa}
\renewcommand{\l}{\lambda}
\renewcommand{\L}{\Lambda}
\renewcommand{\r}{\rho}
\newcommand{\F}{\Phi}
\newcommand{\x}{\xi}
\newcommand{\y}{\eta}
\newcommand{\p}{\psi}
\DeclareMathOperator{\Int}{Int}
\newcommand{\Cc}{{\mathcal C}}
\newcommand{\Dc}{{\mathcal D}}
\newcommand{\Fc}{{\mathcal F}}
\newcommand{\C}{{\mathbb C}}
\newcommand{\N}{{\mathbb N}}
\newcommand{\R}{{\mathbb R}}
\newcommand{\Z}{{\mathbb Z}}
\newcommand{\calB}{\mathcal{B}}
\newcommand{\calC}{\mathcal{C}}
\newcommand{\calD}{\mathcal{D}}
\newcommand{\calF}{\mathcal{F}}
\newcommand{\calG}{\mathcal{G}}
\newcommand{\calH}{\mathcal{H}}
\newcommand{\J}{\mathcal{J}}
\newcommand{\calM}{\mathcal{M}}
\newcommand{\calT}{\mathcal{T}}
\newcommand{\calW}{\mathcal{W}}
\newcommand{\calX}{\mathcal{X}}
\newcommand{\scrC}{\mathscr{C}}
\newcommand{\scrD}{\mathscr{D}}
\newcommand{\scrH}{\mathscr{H}}
\newcommand{\scrM}{\mathscr{M}}
\newcommand{\scrV}{\mathscr{V}}
\newcommand{\bbP}{\mathbb{P}}
\newcommand{\bbS}{\mathbb{S}}
\newcommand{\al}{\alpha}
\newcommand{\gam}{\gamma}
\newcommand{\del}{\delta}
\newcommand{\Del}{\Delta}
\newcommand{\ep}{\epsilon}
\newcommand{\thet}{\theta}
\newcommand{\lam}{\lambda}
\newcommand{\Lam}{\Lambda}
\newcommand{\sig}{\sigma}
\newcommand{\om}{\omega}
\newcommand\GL{\operatorname{GL}}
\newcommand\PSL{\operatorname{PSL}}
\newcommand\SL{\operatorname{SL}}
\newcommand{\bs}{\backslash}
\newcommand{\ra}{\rightarrow}
\def\({\left(}
\def\){\right)}
\def\l\{ { \left\{ }
\def\r\}{\right\}}
\def\wt{\widetilde}
\def\wh{\widehat}
\def\wbar{\overline}
\def\ov{\overline}
\def\id{{\rm id}}
\newcommand{\tred}[1]{\textcolor{red}{#1}}
\def\Cay{{\rm Cay}}
\def\Leb{{\rm Leb}}
\def\ev{{\rm ev\,}}
\def\1{{\bf 1}}
\def\gr{{\rm gr}}
\def\conj{{\bf conj}}
\def\ac{{\rm ac}}
\def\sing{{\rm sing}}
\def\H{{\mathbb H}}
\def\bS{{\bf S}}
\def\CAT{{\rm CAT}}
\def\Dil{{\rm Dil}}
\DeclareMathOperator{\diam}{diam}
\newcommand{\violet}[1]{{\color{violet}#1}}
\newcommand\numberthis{\addtocounter{equation}{1}\tag{\theequation}}
\date{\today}
\title{The joint translation spectrum and Manhattan manifolds}
\author{\small{Stephen Cantrell, Eduardo Reyes and Cagri Sert}}
\begin{document}
\maketitle
\begin{abstract}
We define and study geometric versions of the Benoist limit cone and matrix joint spectrum, which we call the translation cone and the joint translation spectrum, respectively. These new notions allow us to generalize the study of embeddings into products of rank-one simple Lie groups and to compare group actions on different metric spaces, quasi-morphisms, Anosov representations and many other natural objects of study.

We identify the joint translation spectrum with the image of the gradient function of a corresponding Manhattan manifold: a higher dimensional version of the well known and studied Manhattan curve. As a consequence we deduce many properties of the spectrum. For example we show that it is given by the closure of the set of all possible drift vectors associated to finitely supported, symmetric, admissible random walks on the associated group. 
\end{abstract}
\maketitle

\section{Introduction}

Let $S$ be a compact subset of matrices in $\GL_n(\C)$. The \textit{joint spectral radius} of $S$ is the limit
\[
R(S) = \lim_{m\to\infty} \sup_{A \in S^m} \|A\|^{1/m}
\]
where $S^m = \{s_1\cdots s_m : s_1,\ldots,s_m\in S \}$ is the set of $m$-fold products of matrices in $S$. The limit exists by submultiplicativity and does not depend on the choice of norm. It is a  conjugation invariant: $R(gSg^{-1})=R(S)$ for all $g \in \GL_n(\C)$. This notion was introduced by Rota and Strang \cite{rota-strang} in the 60's and has since been extensively studied in a variety of contexts, pure and applied, in particular in the study of wavelets, control theory, ergodic optimization and beyond; see for example \cite{daubechies-lagarias, lagarias-wang, berger-wang}, \cite{barabanov, gurvits}, \cite{bousch-mairesse, jenkinson.survey, morris.mather} respectively, and the many references therein.

In \cite{breuillard-sert}, Breuillard--Sert have introduced the notion of \textit{joint spectrum}, a multidimensional version of the the joint spectral radius that encapsulates possible growth rates of all of the singular values for products of matrices in $S$. More precisely, given a matrix $g \in \GL_n(\C)$, let $\sigma_1(g) \geq \ldots  \geq \sigma_n(g)>0$ be its singular values and let $\kappa: \GL_n(\C) \to \R^n$ be the Cartan projection: 
\begin{equation}\label{eq.def.cartan}
\kappa(g):=(\log \sigma_1(g),\ldots,\log \sigma_n(g)).  
\end{equation}
Under an irreducibility assumption, the authors showed that the sequence of subsets
\[
\frac{1}{m}\kappa(S^m) := \left\{ \frac{\kappa(A)}{m} : A \in S^m \right\} \subset \R^n \ \text{for $m\ge 1$}
\]
converges in the Hausdorff metric to a compact subset $\J(S)$ of $\R^n$. This limiting set is called the joint spectrum and after introducing $\J(S)$, in \cite{breuillard-sert} the authors prove various interesting results regarding its properties. For example they show that $\J(S)$ is a convex body (a compact convex set with non-empty interior) and that any convex body can be realized as the joint spectrum of some compact set of matrices satisfying the Zariski-density asssumption. They showed that it is closely related to certain limit theorems in the theory of random matrix products. In that setting, the joint spectrum encodes an asymptotic radial information (of powers of a generating set) in addition to the \textit{Benoist limit cone}, introduced much earlier by Benoist \cite{benoist.cone}, which corresponds to the projective image of the joint spectrum but does not depend on the generating set.

In a geometric direction, the notion of joint spectral radius was adapted to isometric actions on metric spaces by Reyes in \cite{oregonreyes.inequalities} and Breuillard--Fujiwara in \cite{breuillard-fujiwara}. These authors proved a geometric analogue of the Berger--Wang identity \cite{berger-wang} for a variety of situations including isometric actions on Gromov-hyperbolic spaces and (higher-rank) symmetric spaces of non-compact type. In the latter work \cite{breuillard-fujiwara}, the authors also establish a geometric analogue of Bochi inequality \cite{bochi} for isometric actions on Gromov-hyperbolic spaces.

The aim of this article is to introduce and study  geometric analogues of the Benoist limit cone \cite{benoist.cone} and joint spectrum  \cite{breuillard-sert}, that we will call, respectively, $\textit{the translation cone}$ and \textit{the joint translation spectrum}, which allow us to simultaneously compare several isometric actions on metric spaces, quasi-morphisms and other natural real-valued potentials. In this sense, our work can be seen as a multi-dimensional version of the aforementioned work by Reyes \cite{oregonreyes.inequalities} and Breuillard--Fujiwara \cite{breuillard-fujiwara}. After establishing its various characterizations and basic properties, we will study the joint translation spectrum from a variety of angles including geodesic currents, random walks on Gromov-hyperbolic groups, and ergodic optimization. We will establish results reminiscent of the simplicity of Lyapunov exponents (due to Guivarc'h--Raugi \cite{guivarch-raugi}). We will obtain a rigidity statement forcing the joint translation spectrum to be a polygon when the underlying metrics are induced by finite generating sets, which we show to not always be the case when we consider more general metrics. Finally, the analysis of the joint translation spectrum turns out to be tightly related to a dual object, that we call \textit{Manhattan manifold} and we investigate this duality. When specialized to comparing two metrics, Manhattan manifold boils down precisely to the well-known Manhattan curve that was introduced by Burger \cite{burger}.

Before proceeding to state precise definitions and results, let us mention that in this work we will focus our attention to Gromov-hyperbolic groups and their actions. 
Although it is possible to establish the notions of  translation cone and joint translation spectrum in more general settings (e.g.\ a larger class of groups and their isometric actions), we choose to work in this setting as we can draw upon techniques from geometry and ergodic theory $\&$ dynamical systems. These allow us to prove sometimes stronger results than the statements for matrix groups (in, for example \cite{breuillard-sert}).

\subsection{Hyperbolic metric potentials} \label{sec.jts}
In this part we introduce the class of (quasi)- metrics and more general class of metric-like functions that we call hyperbolic metric potentials. For hyperbolic groups, they will play a role somewhat reminiscent of log-singular values (or more generally, coordinates of a Cartan subspace) for reductive Lie groups.

Let $\Gamma$ be a non-elementary hyperbolic group with identity element $o \in \Gamma$ and let $\Dc_\G$ denote the collection of left-invariant hyperbolic pseudo-metrics on $\G$ that are quasi-isometric to a word metric associated to a finite symmetric generating set. We are interested in understanding the following class of functions on $\G$.
\begin{definition}
Let $\calH_\G$ be the real vector space of all functions $\psi:\G \times \G \ra \R$ satisfying the following:
    \begin{enumerate}
        \item $\G$-invariance: $\psi(sx,sy)=\psi(x,y)$ for all $x,y,s\in \G$; and,
        \item for any $d_0\in \calD_\G$ there exists $\lam>0$ such that for any $x,y, w\in \G$ we have 
        \[
        |(x|y)_{w}^\psi| \leq \lam (x|y)_w^{d_0}+\lam,
        \]
        where 
        \[
        (x|y)_{w}^\psi:=\frac{(\psi(x,w)+\psi(w,y)-\psi(x,y))}{2}
        \]
        denotes the \emph{Gromov product} of $x,y$ based at $w$ with respect to $\psi$.
    \end{enumerate}   
We will refer to the elements of $\calH_\G$ as \textit{hyperbolic metric potentials}. 
We also let $\calH_\G^{++}\subset \calH_\G$ be the subset of all those $\psi \in \calH_\G$ that are bounded below and proper. That is, those $\psi$ satisfying $\psi(x,y)>-C$ for all $x, y$ in $\G$ for some $C>0$ independent of $x,y$ and such that $\{x\in \G : \psi(o,x)<T\}$ is finite for all $T>0$. It is easily checked to be a blunt convex cone in $\calH_\G$ containing $\calD_\G$.
\end{definition}



Condition (2) above can be understood as quasi-additivity of $\p$ along quasi-geodesics in $\G$, but we note that hyperbolic metric potentials are not necessarily symmetric. This is the case of word metrics for finite asymmetric generating subsets of $\G$. Furthermore, hyperbolic metric potentials can be unbounded from above and below, with main examples being quasimorphisms. Namely, if $\varphi:\G \ra \R$ is a quasimorphism, then $\psi(x,y):=\varphi(x^{-1}y)$ defines a hyperbolic metric potential. See \Cref{sec.H_G} for more examples and a discussion on $\calH_\G$.

\subsection{Translation cone and joint translation spectrum}\label{sec.intro.TC+JTS}
Although the class $\calH_\G$ is much larger than the collection of metrics on $\Gamma$, it is designed to behave as a collection of possibly signed and asymmetric hyperbolic metrics. One manifestation of this feature is that for $\p \in \calH_\G$, the following limit
\[
\ell_\p(x) := \lim_{m\to\infty} \frac{\p(o,x^m)}{m} \quad \text{for} \; x \in \Gamma,
\]
exists (see \Cref{lem:MLSnonegative}) allowing us to define the \emph{stable translation length} functional $\ell_\p:\G \ra \R$. Keeping with the analogy of thinking of functionals in $\calH_\G$ as log-singular values of matrices, the stable translation length functionals $\ell_\p$ with $\p \in \calH_\G$ can be thought of as log-moduli of eigenvalues of matrices, or equivalently coordinates of the Jordan projection (the map given by replacing the singular values in the Cartan projection \eqref{eq.def.cartan} by the moduli of generalized eigenvalues).

Now given finitely many hyperbolic metric potentials $\p_1, \ldots, \p_n \in \calH_\G$ define the functions $\D, \bfL : \G \to \R^n$ by
\begin{align*}
\D(x) = (\p_1(o,x), \ldots, \p_n(o,x))  \quad \text{ and} \quad
\bfL(x) = (\ell_{\p_1}(x), \ldots, \ell_{\p_n}(x)).
\end{align*}
By definitions, for every $x \in \Gamma$, we have $\frac{1}{m}\D(x^m) \to \bfL(x)$ as $m \to \infty$ and $\bfL(x^m)=m\bfL(x)$ for every $m \in \N$. By abuse of notation, we also let $\D$ denote the $n$-tuple $(\psi_1,\ldots,\psi_n)$.

\begin{definition}[Translation cone] 
    We define the \emph{translation cone} of the tuple $\D=(\psi_1,\ldots,\psi_n)$ to be the closed cone in $\R^n$ generated by $\{\bfL(x) :x \in \G\}$. We will denote it by $\mathcal{TC}(\D)$.
\end{definition}
Thinking of the functions $\D$ and $\bfL$ as geometric versions of the Cartan and Jordan projection in linear algebraic setting, the translation cone is then the object analogous to the Benoist limit cone \cite{benoist.cone} in our setting. Indeed, when specialized to the case hyperbolic group $\Gamma$ in a product $\prod_{i=1}^n G_i$ of rank-one simple Lie groups, the translation cone of the tuple $\D$ consisting of $G_i$-invariant metrics on the associated symmetric spaces boils down to the Benoist limit cone. As for the usual Benoist limit cone, we will record in Theorem \ref{thm.intro.cone} below that the translation cone coincides with the asymptotic cone of the displacement vectors and that it is convex. Recall that the asymptotic cone of a subset $T$ of $\R^n$ is  the set of $v \in \R^n$ such that there exists a sequence of positive reals $\alpha_k \to 0$ and a sequence $t_n \in T$ such that $\alpha_k t_k \to v$ as $k \to \infty$ (see \cite[\S 4]{benoist.cone}).

We say that $\p_1,\ldots, \p_k \in \calH_\G$ are \emph{independent} if the only constants $c_1,\ldots, c_k \in \R$ for which
    \[
\sup_{x,y \in \G} \left|    \sum_{i=1}^k c_i \p_i(x,y)\right| < \infty
    \]
are $c_1=c_2=\cdots = c_k = 0$. Independence generalizes the notion of two pseudo metrics in $\calD_\G$ not being roughly similar. Given $\D=(\p_1,\ldots,\p_n)$, an obvious obstruction for $\mathcal{TC}(\D)$ to have an empty interior in $\R^n$ is that the tuple $\D$ is not independent. Our first result below shows that the failure of independence is the only reason why $\mathcal{TC}(\D)$ may fail to have a non-empty interior and summarizes the aforementioned properties of $\mathcal{TC}(\D)$.

\begin{theorem}[Translation cone]\label{thm.intro.cone}
Let $\D=(\p_1,\ldots,\p_n)$ be a finite tuple of elements in $\mathcal{H}_\Gamma$. Then, the translation cone $\mathcal{TC}(\D)$ is convex and it coincides with the asymptotic cone of $\D(\Gamma)$. Moreover, it has non-empty interior if and only if $\p_1,\ldots, \p_n$ are independent.    
\end{theorem}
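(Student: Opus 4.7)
The plan is to establish the three assertions essentially in the order stated, using the identification of $\mathcal{TC}(\D)$ with the asymptotic cone of $\D(\G)$ as the main tool. Throughout, I would fix a reference pseudo-metric $d_0 \in \calD_\G$ and the constant $\lam>0$ from condition~(2) controlling the Gromov products of each $\psi_i$ by those of $d_0$.

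First I would prove the identification $\mathcal{TC}(\D) = $ asymptotic cone of $\D(\G)$. The inclusion $\subseteq$ is immediate, since $\bfL(x) = \lim_{m\to\infty} \D(x^m)/m$ for every $x \in \G$ and the asymptotic cone is a closed cone. For the reverse inclusion, take $v = \lim_k \alpha_k \D(x_k)$ with $\alpha_k \to 0^+$ (so necessarily $d_0(o,x_k) \to \infty$). Using hyperbolicity of $(\G,d_0)$, I would choose $s_k$ from a fixed finite subset of $\G$ such that $g_k := x_k s_k$ is $d_0$-loxodromic with quasi-axis passing within uniformly bounded $d_0$-distance of $o$; such $s_k$ exist by density of pairs of endpoints of loxodromic axes in $\partial\G \times \partial \G$. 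Then the powers $(g_k^m)_{m\in\Z}$ track a bi-infinite $d_0$-quasi-geodesic through a bounded neighborhood of $o$, so condition~(2) gives a uniform bound $|(o|g_k^m)_{g_k}^{\psi_i}| \leq K$ independent of $k,m,i$. Using $\G$-invariance to rewrite $\psi_i(g_k, g_k^m) = \psi_i(o, g_k^{m-1})$, the identity $\psi_i(o,g_k^m) = \psi_i(o,g_k) + \psi_i(o,g_k^{m-1}) - 2(o|g_k^m)_{g_k}^{\psi_i}$ telescopes to $|\psi_i(o,g_k^m) - m\psi_i(o,g_k)| = O(m)$; dividing by $m$ and letting $m\to\infty$ yields $|\bfL(g_k) - \D(g_k)| = O(1)$ coordinatewise. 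Since $|\D(g_k) - \D(x_k)| = O(1)$ as well (as $s_k$ stays in a finite set), it follows that $\alpha_k \bfL(g_k) \to v$, whence $v \in \mathcal{TC}(\D)$.

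Given the asymptotic-cone description, convexity reduces to closure under addition. Writing $v_i = \lim_k \alpha_k \bfL(g_k^{(i)})$ with $g_k^{(i)}$ loxodromic as above (and rescaling by integer powers to align the scales $\alpha_k$), a ping-pong argument in $(\G,d_0)$ furnishes a conjugating element $c$ from a fixed finite set such that the concatenation of $d_0$-quasi-geodesic representatives of $g_k^{(1)}$ and $c g_k^{(2)} c^{-1}$ is again a $d_0$-quasi-geodesic with uniform constants. Condition~(2) applied at the splicing point then yields $\D\bigl(g_k^{(1)} \cdot c g_k^{(2)} c^{-1}\bigr) = \D(g_k^{(1)}) + \D(g_k^{(2)}) + O(1)$, and hence $\alpha_k \D\bigl(g_k^{(1)} \cdot c g_k^{(2)} c^{-1}\bigr) \to v_1 + v_2$.

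Finally, for the interior characterization the forward direction is direct: if $(c_i)\neq 0$ makes $\sum_i c_i \psi_i$ bounded on $\G \times \G$, then $\sum_i c_i \ell_{\psi_i}(x) = \lim_m \tfrac{1}{m} \sum_i c_i \psi_i(o,x^m) = 0$ for every $x \in \G$, so $\mathcal{TC}(\D)$ lies in the hyperplane $\{\sum_i c_i v_i = 0\}$. Conversely, an empty interior forces the closed convex cone $\mathcal{TC}(\D)$ into a linear hyperplane, producing $(c_i)\neq 0$ with $\sum_i c_i \ell_{\psi_i} \equiv 0$ on $\G$. The hard part will be upgrading this vanishing of stable translation lengths on all elements to boundedness of $\sum_i c_i \psi_i$ on $\G \times \G$: this is a Livsic-type rigidity statement for hyperbolic metric potentials, best attacked either by a direct boundary-cocycle / Patterson--Sullivan argument over a Cayley graph of $(\G,d_0)$, or as a by-product of the Manhattan-manifold machinery developed elsewhere in the paper, which effectively parametrises $\calH_\G$ modulo coboundaries.
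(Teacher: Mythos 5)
Your overall strategy matches the paper's: reduce to the asymptotic cone of $\D(\G)$ by replacing each $x\in\G$ with a nearby ``proximal'' element $g$ on which $\D(g^m) \approx m\bfL(g)$, then use a concatenation/Schottky trick for convexity, and finally pass between non-empty interior and linear independence of stable translation lengths. Two points deserve scrutiny.

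First, the proximality step. You propose to make $g_k := x_k s_k$ proximal by \emph{post-}composing with $s_k$ from a fixed finite set. This is weaker than what the paper actually does (\Cref{lem.construct}, via the Cannon coding and \cite[Lemma~2.16]{cantrell.mixing}), which conjugates on both sides: it produces $f_1,f_2$ in a fixed finite set so that $f_1 x_k f_2$ lies on a loop of the Cannon graph, giving $(d_S,0)$-proximality uniformly. Your one-sided modification only steers the initial segment of the geodesic $o\to g_k^{-1}=s_k^{-1}x_k^{-1}$; it does not obviously control backtracking at the splicing point $s_k^{-1}$, and you would need to argue that a single finite post-composition set works for \emph{every} $x_k$. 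Pre-and-post composition sidesteps this entirely, and the Cannon-coding formulation has the extra virtue that the same finite set also supplies the separating elements $f_3,f_6$ used for concatenation, so the convexity and asymptotic-cone arguments share one lemma rather than invoking a separate ping-pong construction.

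Second, the interior characterization. Your forward implication is the paper's. For the converse you correctly reduce to the statement: if $\sum_i c_i\ell_{\p_i}\equiv 0$ on $\G$, then $\sum_i c_i\p_i$ is bounded on $\G\times\G$. You flag this as ``the hard part'' and suggest Patterson--Sullivan boundary-cocycle methods or the Manhattan-manifold machinery. In fact this rigidity is far more elementary and is already \Cref{lem:MLSnonegative} of the paper: setting $\psi=\sum_i c_i\p_i$, both $\ell_\psi\ge 0$ and $\ell_{-\psi}\ge 0$, so by the equivalence $(2)\Leftrightarrow(1)$ there $\psi$ is bounded above and below. The proof of that lemma itself rests only on the same proximality input (\cite[Lemma~3.11]{cantrell-reyes.manhattan}: for every $x$ there is a uniformly nearby proximal $y$, on which $\psi(o,y)\approx\ell_\psi(y)$), not on any boundary-measure or thermodynamic machinery. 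So the ingredient you need is precisely the one you already deployed in the asymptotic-cone step; invoking the Manhattan manifold here would be circular within the paper's logical order, since that machinery is built \emph{after} this theorem. Aside from these two issues the outline is sound and in the same spirit as the paper's argument.
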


Given a tuple $\D=(\psi_0,\p_1,\ldots,\p_n)$ of  elements in $\calH_\Gamma$, the information of the translation cone in $\R^{n+1}$, thanks to the homogeneity, can be read off on an affine slice, say $x_0=1$ living in $\R^n$. This affine slice gives rise to an object describing asymptotic comparisons of $\p_1,\ldots,\p_n$ with respect to the reference $\p_0$. When $\p_0 \in \calH_\Gamma^{++}$, this slice will coincide with our joint translation spectrum and the second main result of this article (Theorem \ref{thm.intro.jtspectrum} below) will establish a finer convergence property (compared to Theorem \ref{thm.intro.cone}) of $\D$ and $\bfL$. We now proceed to discuss our second result and this more refined convergence property.

Given $\p \in \calH^{++}_\G$, we define $\p$-normalized versions of $\D$ and $\bfL$ as  
\begin{equation}\label{eq.defDlam}
\D_\p(x) = \frac{1}{\p(o,x)}\cdot \D(x) \ \text{ and } \ \bfL_\p(x) = \frac{1}{\ell_{\p}(x)}\cdot\bfL(x)
\end{equation}
for each $x \in \G$. Since $\p$ is proper, $\D_\p(x)$ is well-defined for all but finitely many elements $x$, and $\bfL_\p(x)$ is defined whenever $x$ is non-torsion. Given $R, T >0$ we let $S_R(T) = \{x \in 
\G: |\p(o,x) - T| \le R \text{ and }\p(o,x)>0\}$ and similarly we 
define $S_R^{\ell_\p}(T) = \{x \in \G : |\ell_\p(x) - T| <R \text{ and }\ell_\p(x)>0 \}$. 

The second main result of this article establishing our geometric version of the joint spectrum and the finer convergence property in the translation cone is the following.

\begin{theorem}[Joint translation spectrum]\label{thm.intro.jtspectrum}
Let $\D=(\p_1,\ldots,\p_n)$ be a finite tuple of elements in $\mathcal{H}_\Gamma$ and $\p \in \mathcal{H}_\Gamma^{++}$. Then, there exists a convex compact set $\J_{\p}(\D) \subset \R^n$ such that for any $R>0$ large enough, both sequences $\D_\p(S_R(T))$ and $\bfL_\p(S_R^{\ell_\p}(T))$ converge to $\J_{\p}(\D)$ in the Hausdorff metric as $T \to \infty$. It has non-empty interior in $\R^n$ if and only if $\p_1,\ldots, \p_n, \p$ are independent.
\end{theorem}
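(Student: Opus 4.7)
The plan is to identify $\J_\p(\D)$ with a set naturally associated to a multivariable pressure function — the one whose zero locus is the Manhattan manifold announced in the introduction — and then establish Hausdorff convergence by combining convexity (via concatenation) with counting estimates on the sphere. The first step is to derive, from condition (2) in the definition of $\calH_\G$ together with the properness of $\p \in \calH_\G^{++}$, a linear comparison $|\p_i(o,x)| \leq C\,\p(o,x) + C'$ valid for $\p(o,x)$ large; this places both $\D_\p(S_R(T))$ and $\bfL_\p(S_R^{\ell_\p}(T))$ in a common compact subset $K\subset\R^n$ for $T$ large, so Hausdorff subsequential limits exist. Any such limit $J$ is convex by a concatenation argument: given rational $\theta\in(0,1)$ and points $v_1,v_2 \in J$ approximated by $\D_\p(x_1), \D_\p(x_2)$, form $x_1^{a}x_2^{b}$ along a quasi-geodesic with $a/b$ tuned so that the $\p$-weights distribute as $\theta T + O(1)$ and $(1-\theta) T + O(1)$. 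Quasi-additivity of elements of $\calH_\G$ along quasi-geodesics, iterated from condition (2) and the stability of quasi-geodesics in the hyperbolic group $\G$, yields $\D(x_1^{a}x_2^{b}) = a\D(x_1) + b\D(x_2) + O(1)$ and analogously for $\p$, placing $\theta v_1 + (1-\theta)v_2$ in $J$.

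To pin down the limit uniquely, I would introduce the abscissa of convergence
\[
\Pc(\vec{s}) \ := \ \inf\Big\{ t \in \R : \sum_{x \in \G} e^{-t\,\p(o,x) - \langle \vec{s}, \D(x)\rangle} < \infty \Big\},
\]
whose graph $\Mcc := \{(\vec{s},\Pc(\vec{s})) : \vec{s}\in\R^n\}\subset \R^{n+1}$ is the Manhattan manifold referenced in the introduction. Standard arguments show $\Pc$ is convex and finite on an open set, and I would take $\J_\p(\D) := \overline{\{-\nabla \Pc(\vec{s}) : \vec{s}\in\mathrm{dom}(\nabla \Pc)\}}$ as the candidate limit. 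The inclusion $J \subseteq \J_\p(\D)$ follows from Patterson--Sullivan type estimates on $\G$: exponentially many elements of $S_R(T)$ concentrate in the direction $-\nabla\Pc(\vec{s})$ for each $\vec{s}$, so any point outside $\J_\p(\D)$ can carry at most subexponentially many elements and hence cannot lie in $J$. The reverse inclusion $\J_\p(\D) \subseteq J$ is supplied by the convexity construction applied at differentiability points of $\Pc$, whose gradients $-\nabla\Pc(\vec{s})$ realize the extreme points of $\J_\p(\D)$.

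For the $\bfL$-version, every non-torsion $x\in\G$ admits a conjugate $x'$ with $\D(x')$ and $\p(o,x')$ within a bounded additive error of $\bfL(x)$ and $\ell_\p(x)$ respectively — a consequence of the existence of quasi-axes in Gromov-hyperbolic groups combined with quasi-additivity — which forces $\bfL_\p(S_R^{\ell_\p}(T))$ to share the same Hausdorff limit. The non-empty interior characterization reduces to Theorem \ref{thm.intro.cone}: $\J_\p(\D)$ is essentially the affine slice $\{x_{n+1}=1\}$ of $\mathcal{TC}(\p_1,\ldots,\p_n,\p)\subset\R^{n+1}$, which has non-empty interior exactly when $\p_1,\ldots,\p_n,\p$ are independent. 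The main obstacle I expect is the upper bound $J \subseteq \J_\p(\D)$: it requires controlling the fine distribution of $\D_\p$-values on the thickened sphere $S_R(T)$, which goes beyond convex-geometric considerations and calls on genuine thermodynamic input — precisely where the duality with the Manhattan manifold is essential.
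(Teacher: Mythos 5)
Your proposal takes a genuinely different route from the paper. The paper does not invoke the Manhattan manifold at all to prove this theorem: it proves Hausdorff convergence directly via a Cauchy-type criterion (Lemma \ref{lem.hausconv}) and a Schottky-style concatenation lemma (Lemma \ref{lem.construct}), never naming the limit up front. The identification of $\J_\p(\D)$ with the closure of the gradient image of $\theta_{\D/\p}$ is then established in Section \ref{sec.manhattanmanifolds} \emph{as a consequence} of this theorem, using the $C^1$-regularity of $\theta$ and a G\"artner--Ellis large deviations principle. Reversing this order, as you propose, creates several genuine problems.

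First, your convexity-by-concatenation step does not work as written. For $\D(x_1^a x_2^b) = a\D(x_1) + b\D(x_2) + O(1)$ to hold, the segment $o, x_1^a, x_1^a x_2^b$ must be a quasi-geodesic with uniform constants, which requires the Gromov products $(x_1 \mid x_1^{-1})_o$, $(x_2 \mid x_2^{-1})_o$, and $(x_1^{-a} \mid x_2^b)_o$ to be uniformly bounded. Generic $x_1,x_2$ have no such property. The paper's Lemma \ref{lem.construct} supplies, via the Cannon coding, a fixed finite set $F$ and elements $f_1,\dots,f_6 \in F$ so that $(f_1 x_1 f_2)^a f_3 (f_4 x_2 f_5)^b f_6$ is both proximal and has the desired additive behaviour; this Abels--Margulis--Soifer-style ingredient is the technical heart of the argument and is entirely absent from your sketch.

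Second, the ``counting'' argument for the inclusion $J \subseteq \J_\p(\D)$ is not strong enough for Hausdorff convergence. If $J$ is a subsequential Hausdorff limit of $\D_\p(S_R(T_k))$, then $J$ contains $\lim_k \D_\p(x_k)$ for any choice of $x_k \in S_R(T_k)$; a single ``bad'' element per radius suffices to produce a point of $J$. Showing that directions outside $\J_\p(\D)$ ``carry at most subexponentially many elements'' therefore does not rule them out. You need that \emph{every} $x \in S_R(T)$ has $\D_\p(x)$ within $o(1)$ of the limit set, which is a deterministic statement, not a large deviations one; the paper gets it from the replacement of $x$ by a proximal conjugate $x'$ with $\|\D_\p(x)-\bfL_\p(x')\| = O(1/T)$ together with $\overline{\bfL_\p(\G)} = \J_\p(\D)$.

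Third, defining the candidate limit as $\overline{\{-\nabla \Pc(\vec s)\}}$ requires knowing that this set is convex. For a merely finite convex function $\Pc$, the closure of its gradient image need not be convex (kinks produce gaps); convexity holds precisely when $\Pc$ is $C^1$. But $C^1$-regularity of $\theta_{\D/\p}$ is Theorem \ref{thm.manc^1}, a substantial result relying on the Mineyev flow and ergodicity of Bowen--Margulis currents. So your route front-loads heavy thermodynamic machinery (C$^1$ regularity, equidistribution, Patterson--Sullivan estimates on spheres) to prove a statement that the paper disposes of by a short geometric argument, and at the same time risks circularity, since the paper's Proposition \ref{prop.deriv=dj} — the very identification you wish to take as a definition — is itself proved using Theorem \ref{thm.intro.jtspectrum}. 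The boundedness observation, the reduction of the $\bfL_\p$-version to the $\D_\p$-version via proximal conjugates, and the non-empty interior characterization are all correctly identified and match the paper; the gaps are in the core existence argument.
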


We call the limiting set $\J_{\p}(\D)$ the \textit{joint translation spectrum} of $\p_1, \ldots, \p_n$ with respect to $\p$.

\begin{remark}
    If we study the image of balls $B_R(T) = \{ x\in \G: \p(o,x) < T \}$ (instead of spheres) under $\D_\p$ then we also obtain a limiting object $\J_\p^B(\D)$, which is simply the convex hull of $\J_\p(\D)$ and the zero vector, see \Cref{sec.othersets}. 
\end{remark}






\subsection{Random walk and dynamical spectra}\label{sec.RWandD}
In \cite[Theorem~1.11]{breuillard-sert}, the joint spectrum $\J(S)$ was compared to the \emph{Lyapunov spectrum}, the set of all possible drifts for the Cartan projection associated to random matrix products supported on $S$. In our current setting, we consider the values attained by $\D_\p$ along typical random walks on $\G$.


\begin{definition}\label{def.rw}
    We define the \emph{random walk spectrum}  $\mathcal{WJ}_{\p}(\D)$ to be the collection of vectors $\vx \in \R^n$ for which there exists a finitely supported, symmetric, probability measure $\lam$ on $\G$ with support generating   $\G$ such that the following holds: for almost every trajectory $(Z_k)_k$ of the random walk on $\G$ determined by $\lam$ we have $\D_\p(Z_k) \to \vx$ as $k \to \infty$.
\end{definition}

The joint translation spectrum records the `asymptotic directions' seen by $\D_\p(x)$ for all $x \in \G$, while $\mathcal{WJ}$ records the `asymptotic directions' of $\D_\p$ that are witnessed by a finitely supported random walk on $\G$. When $\p,\p_1,\dots,\p_n\in \calD_\G$, we obtain the following relation between $\J_\p(\D)$ and $\mathcal{WJ}_\p(\D)$.

\begin{theorem}\label{thm.wj=j}
Suppose that $\p,\p_1,\dots,\p_n\in \calD_\G$. Then we have that 
\[
\overline{\mathcal{WJ}_{\p}(\D)} = \J_{\p}(\D).
\]
That is, the closure of the random walk spectrum is the joint translation spectrum. Moreover, if $\p,\p_1,\dots,\p_n$ are independent then 
\[
\mathcal{WJ}_{\p}(\D) \subset \Int(\J_{\p}(\D)).
\]
\end{theorem}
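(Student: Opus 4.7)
The plan is to establish each inclusion separately, then the interior refinement.

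For the inclusion $\mathcal{WJ}_\p(\D) \subset \J_\p(\D)$: I fix a finitely supported, symmetric, admissible probability measure $\lam$ with random walk $(Z_k)_k$ on $\G$. Each element of $\calD_\G$ is a left-invariant pseudo-metric satisfying a rough triangle inequality, so every cocycle $k \mapsto \p_i(o, Z_k)$ is subadditive up to a bounded additive error. Kingman's subadditive ergodic theorem then produces deterministic almost-sure limits $\ell_i(\lam) = \lim_k \p_i(o, Z_k)/k$ for each $i$, and $\ell_\p(\lam) = \lim_k \p(o, Z_k)/k$, with $\ell_\p(\lam) > 0$ by admissibility together with properness of $\p$. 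Hence almost surely $\D_\p(Z_k) \to \vx(\lam) := \ell_\p(\lam)^{-1}(\ell_1(\lam), \ldots, \ell_n(\lam))$. Since $\p(o, Z_k) \to \infty$ and $Z_k \in S_R(\p(o, Z_k))$ for every $R > 0$, Hausdorff convergence of $\D_\p(S_R(T))$ to $\J_\p(\D)$ guaranteed by Theorem \ref{thm.intro.jtspectrum} yields $\vx(\lam) \in \J_\p(\D)$.

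For the density $\J_\p(\D) \subset \overline{\mathcal{WJ}_\p(\D)}$: given $\vx \in \J_\p(\D)$ and $\eta > 0$, I apply Theorem \ref{thm.intro.jtspectrum} to choose a loxodromic $g \in \G$ with $\ell_\p(g)$ large and $|\bfL_\p(g) - \vx| < \eta$. Pick $h \in \G$ so that the attracting and repelling fixed points of $g$ and $hgh^{-1}$ on $\partial\G$ are in general position; a standard ping-pong argument yields $N_0$ such that $g_1 := g^N$ and $g_2 := hg^Nh^{-1}$ freely generate a rank-two subgroup $F_N \le \G$ for every $N \ge N_0$. With $S$ a finite symmetric generating set of $\G$, $\mu_S$ uniform on $S$, and any fixed $\epsilon \in (0,1)$, consider
\[
\lam_N := \tfrac{1-\epsilon}{4}\sum_{j=1}^{2}(\delta_{g_j} + \delta_{g_j^{-1}}) + \epsilon\,\mu_S,
\]
which is finitely supported, symmetric, and admissible. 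The key claim is $\lim_{N \to \infty} \vx(\lam_N) = \bfL_\p(g)$. A trajectory of length $k$ contains $(1-\epsilon)k + o(k)$ Schottky steps forming a uniform symmetric random walk on the rank-two free group with generators $g_1, g_2$, which has drift $\tfrac{1}{2}$ in free-word length; each such letter contributes $\p_i$-length $\approx N\ell_{\p_i}(g)$ by $\G$-invariance, conjugation invariance of $\ell_{\p_i}$, and quasi-additivity of $\calD_\G$-potentials along Schottky reduced words. The remaining $\epsilon k$ steps in $S$ add only $O(\epsilon)$ per unit step to every $\p_i$-displacement. Hence $\ell_i(\lam_N) = (1-\epsilon)N\ell_{\p_i}(g)/2 + O(1)$ and similarly for $\ell_\p$, so the ratios $\ell_i/\ell_\p$ converge to $\bfL_{\p,i}(g)$ as $N \to \infty$. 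Letting $\eta \to 0$ completes the density. The hard part will be rigorous control of each $\p_i$-cocycle along typical (not-perfectly-reduced) trajectories of $\lam_N$, which requires quasi-additivity of $\calD_\G$-potentials together with Gromov-product estimates near the Schottky attractors of $F_N$.

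Finally, under independence of $\p,\p_1,\ldots,\p_n$, $\J_\p(\D)$ has non-empty interior by Theorem \ref{thm.intro.jtspectrum}. If $\vx(\lam) \in \partial\J_\p(\D)$ for some admissible $\lam$, a non-zero supporting functional $\alpha \in \R^n$ yields a combination $\psi^\alpha := \sum_i \alpha_i \p_i - (\alpha \cdot \vx(\lam))\p \in \calH_\G$ whose $\p$-normalized stable translation length is saturated along $(Z_k)_k$, while $\ell_{\psi^\alpha}(x) \le 0$ for every $x \in \G$. Admissibility of $\lam$ forces $(Z_k)_k$ to visit $\G$ in all directions, which via standard large-deviation estimates for random walks on hyperbolic groups, or equivalently via Gateaux-differentiability of the Manhattan-type pressure underlying $\J_\p(\D)$, prevents saturation of any supporting hyperplane. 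Hence $\vx(\lam) \in \Int\J_\p(\D)$.
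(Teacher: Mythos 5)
Your first inclusion ($\mathcal{WJ}_\p(\D)\subset\J_\p(\D)$ via Kingman plus Theorem \ref{thm.intro.jtspectrum}) is correct and is essentially what the paper waves through with ``clearly''.

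For the density direction you take a genuinely different route. The paper's proof rests on Theorem \ref{thm.gmd}, the density of Green metrics in $(\scrD_\G,\Del)$ (from a forthcoming companion paper), and then works entirely through Manhattan curves: one identifies the target direction $-\nabla\thet(\vv)$ with a Lyapunov vector for a quasi-conformal measure $\nu_\vv$ of the symmetric potential $\p_\vv$, realizes $[\p_\vv]$ in $\calD_\G$, approximates by Green metrics $d_{\lam_m}$, and uses the continuity of Manhattan curves (\Cref{prop.continuity}) together with the identification of the hitting measure with the $d_{\lam_m}$-quasi-conformal measure to show the random walk drifts for $\lam_m$ converge to $-\nabla\thet(\vv)$. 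Your Schottky construction aims directly at $\bfL_\p(g)$ for well-chosen $g$ without passing through the Manhattan manifold at all; this is conceptually more elementary and has the advantage of not depending on the (not yet published) Green-density result. However, you correctly flag the real gap: the step where a $\lam_N$-typical trajectory yields drift $\approx N\ell_{\p_i}(g)\cdot(1-\epsilon)/2$ for each coordinate is not a corollary of quasi-additivity alone. The $\epsilon$-fraction of $S$-steps destroys the literal free-group structure, so one cannot simply cite the drift of the uniform walk on $F_2$; one needs a Gou\"ezel- or Maher-type linear-progress estimate showing that the $\p_i$-cocycle along the mixed walk stays within $o(Nk)$ of the reduced Schottky contribution, uniformly in $N$. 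This is provable but a genuine technical undertaking, and without it the density inclusion is not established.

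The ``moreover'' part is where the gap is most serious. The paper's mechanism (implicit in the proof, explicit in Proposition \ref{prop.IntJ}, Lemma \ref{lem.criterionnormal}, and Remark \ref{remark.fillingcurrents}) is: the hitting measure of any finitely supported admissible $\lam$ is a quasi-conformal measure for a metric $d_\lam\in\calD_\G$, the associated Bowen--Margulis current has full support, and a full-support current $\mu$ satisfies $\ell_{\p'}(\mu)>0$ for every boundary potential $\p'\in\partial\calH_\G^{++}$, which by the characterization of normal cones of $\J_\p(\D)$ forces $\bfL_\p(\mu)\in\Int(\J_\p(\D))$. You begin along the right lines -- the supporting-functional potential $\p^\alpha$ you write down is (up to sign) exactly $\p_{[\alpha]}$ from Definition \ref{def.boundaryHMP}, and the observation $\ell_{\p^\alpha}\le 0$ pointwise is correct -- but ``standard large-deviation estimates\ldots\ prevents saturation'' and ``Gateaux-differentiability of the Manhattan-type pressure'' do not constitute a proof of $\ell_{-\p^\alpha}(\Lambda_{d_\lam})>0$. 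The step you need, that a boundary potential cannot have zero stable length on the Bowen--Margulis current of $d_\lam$ because that current has full support, is exactly what the paper's framework supplies and what your sketch skips.
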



To prove this theorem we first study a different spectrum, that records `asymptotic directions' of $\D_\p$ what are witnessed by Borel probability measures on the Gromov boundary $\partial \G$ of $\G$. A Borel probability measure $\nu$ on $\partial \G$ that is \emph{quasi-invariant} if $\nu$ and $x_\ast \nu$ are absolutely continuous for any $x\in \G$, and it is \emph{ergodic} if either $\nu(A)=0$ or $1$ whenever $A$ is a Borel $\G$-invariant subset of $\partial \G$. 

If $\nu$ is quasi-invariant and ergodic, suppose the vector $\vx$ in $\R^n$ satisfies the following. 
For $\nu$-almost every $\xi\in \partial \G$ and any quasi-geodesic $\x_k$ in $\G$ converging to $\x$ we have that $\D_\p(\x_k)\to \vx$ as $k\to \infty$. In case $\vx$ exists, we denote it by $\D_\p(\nu)$ and we call it the \emph{Lyapunov vector} for $\nu$, in analogy with the matrix case.



\begin{definition}
        We define the \textit{dynamical translation spectrum} $\mathcal{DJ}_{\p}(\D)$ to be the collection of all Lyapunov vectors $\D_\p(\nu)$ for $\nu$ an ergodic and quasi-invariant Borel probability measure on $\partial \G$.        
\end{definition}
The first step in the proof of \Cref{thm.wj=j} is to relate the joint translation spectrum with its dynamical counterpart. We will prove the following.
\begin{theorem}\label{thm.dj=j}
We have that
\[
\overline{\mathcal{DJ}_{\p}(\D)} = \J_{\p}(\D).
\]
That is, the closure of the dynamical translation spectrum is the joint translation spectrum. Moreover, if $\p_1,\dots,\p_n,\p$ are independent then $\calD\J_\p(\D)$ contains the interior of $\J_\p(\D)$.
\end{theorem}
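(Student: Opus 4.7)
The plan is to prove the two inclusions in $\overline{\mathcal{DJ}_\psi(\D)} = \J_\psi(\D)$ separately, handling the stronger claim $\Int(\J_\psi(\D)) \subseteq \mathcal{DJ}_\psi(\D)$ (under independence) in the process. For the inclusion $\overline{\mathcal{DJ}_\psi(\D)} \subseteq \J_\psi(\D)$, since $\J_\psi(\D)$ is closed it suffices to prove $\mathcal{DJ}_\psi(\D) \subseteq \J_\psi(\D)$. Let $\nu$ be ergodic and quasi-invariant with Lyapunov vector $v = \D_\psi(\nu)$, and pick a boundary point $\xi \in \partial \G$ in the full $\nu$-measure set where the defining convergence $\D_\psi(x_k) \to v$ holds along every quasi-geodesic $(x_k) \to \xi$. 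Since $\G$ is non-elementary such a quasi-geodesic exists, and by properness of $\psi \in \calH_\G^{++}$ one has $\psi(o, x_k) \to \infty$. For any fixed $R > 0$ and large $k$, the element $x_k$ trivially lies in $S_R(T_k)$ with $T_k = \psi(o, x_k)$, so $\D_\psi(x_k) \in \D_\psi(S_R(T_k))$; passing to the limit and invoking the Hausdorff convergence $\D_\psi(S_R(T_k)) \to \J_\psi(\D)$ supplied by Theorem~\ref{thm.intro.jtspectrum} yields $v \in \J_\psi(\D)$.

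For the reverse direction, my plan is to parametrize the interior of $\J_\psi(\D)$ by a family of conformal densities on $\partial \G$. Assume first the independence of $\psi_1, \ldots, \psi_n, \psi$. For each $t = (t_1, \ldots, t_n)$ in a suitable open subset $U \subset \R^n$ containing $0$, the potential $\psi_t := \psi + \sum_i t_i \psi_i$ still lies in $\calH_\G^{++}$ (an open condition near $0$), and one defines the critical exponent
\[
\delta(t) := \limsup_{T \to \infty} \frac{1}{T} \log \# \{ x \in \G : \psi_t(o, x) \le T \}.
\]
The associated Manhattan manifold provides a smooth strictly convex function $t \mapsto \delta(t)$ on $U$, and for each such $t$ a Patterson--Sullivan-type construction for the potential $\psi_t$ at exponent $\delta(t)$ produces a Borel probability measure $\nu_t$ on $\partial \G$ which is $\G$-quasi-invariant and ergodic. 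Its Lyapunov vector $\D_\psi(\nu_t)$ exists and equals the appropriately normalized gradient of the Manhattan function at $t$; as $t$ ranges over $U$, these vectors sweep out $\Int(\J_\psi(\D))$, which is exactly the gradient characterization of the joint translation spectrum alluded to in the abstract. This gives $\Int(\J_\psi(\D)) \subseteq \mathcal{DJ}_\psi(\D)$, hence $\J_\psi(\D) = \overline{\Int(\J_\psi(\D))} \subseteq \overline{\mathcal{DJ}_\psi(\D)}$. The closure equality in the non-independent case follows by reducing to a maximal independent sub-tuple, in which $\J$ has non-empty relative interior in the corresponding affine subspace.

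The main obstacle is the ergodic step identifying the Lyapunov vector $\D_\psi(\nu_t)$ with $\nabla \delta(t)$: for $\nu_t$-almost every $\xi$ and every quasi-geodesic $(x_k) \to \xi$, one must show that the ratios $\psi_i(o, x_k)/\psi(o, x_k)$ converge to the prescribed coordinate values. This amounts to a Birkhoff-type theorem for general hyperbolic metric potentials, whose key ingredient is a shadow lemma of the form $\nu_t(\mathrm{Shadow}_R(x)) \asymp \exp(-\delta(t)\, \psi_t(o, x))$ combined with a Borel--Cantelli argument controlling fluctuations along shadow chains. Establishing such a shadow lemma in the generality of $\calH_\G$, where potentials may be asymmetric and unbounded both above and below, is the delicate technical task; the quasi-additivity property (2) in the definition of $\calH_\G$ is what permits the usual Gromov-product manipulations to survive, and the positivity of the reference potential $\psi \in \calH_\G^{++}$ supplies the control needed to close the argument.
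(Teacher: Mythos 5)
Your high-level plan matches the paper's strategy: parametrize the Manhattan manifold, produce quasi-conformal measures for each parameter, show that the resulting Lyapunov vectors are gradients of the Manhattan function, and then conclude that the gradient image is the interior of $\J_\p(\D)$. But the proposal has two genuine gaps, one of which is essentially the whole content of the harder direction.

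The first gap is that you assert the vectors $\D_\p(\nu_t)$ ``sweep out $\Int(\J_\p(\D))$'' by appealing to ``the gradient characterization \dots alluded to in the abstract.'' That characterization is \Cref{thm.homeointeriors}, one of the paper's main results; it is established via \Cref{prop.deriv=dj} (which needs a G\"artner--Ellis large deviation principle for $\D_\p$ and invariance of domain to show $-\nabla\theta(\R^n)$ is open and dense in $\J_\p(\D)$) and \Cref{prop.IntJ} (a separate argument using geodesic currents and boundary potentials $\p_{[\vv]}$ to show that $-\nabla\theta(\R^n)$ exhausts the interior). None of this is free, and a proof of \Cref{thm.dj=j} cannot simply cite the desired conclusion; the paper's short proof is short precisely because it rests on these two propositions. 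The second gap is the one you flag yourself: the shadow lemma and the identification $\D_\p(\nu_\vv) = -\nabla\theta(\vv)$ are left as a ``delicate technical task.'' In the paper this is resolved by first proving $\calH_\G$ coincides with the tempered potentials (\Cref{prop.H_G=temp}), so the machinery of \cite{cantrell-tanaka.manhattan} applies, and then establishing via a nontrivial bootstrap (\Cref{prop.genEGR1}) that $\p_\vv = \langle\vv,\D\rangle + \theta_{\D/\p}(\vv)\p$ lies in $\calH_\G^{++}$ with exponential growth rate $1$ for \emph{every} $\vv \in \R^n$; only then do existence and differentiability of the Lyapunov vector follow (\Cref{cor.drift}). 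Relatedly, your parametrization $\psi_t = \psi + \langle t, \D\rangle$ restricted to an unspecified neighborhood $U$ of $0$ does not obviously reach all of the Manhattan manifold; the global definability of $\p_\vv$ on $\R^n$ (\Cref{lem.EGR1}) is exactly what lets the gradient image reach every interior point, and is not addressed in the proposal.
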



\subsection{Manhattan manifolds}\label{sec.mm}
To link the joint translation spectrum with its random walk and dynamical counterparts, we study Manhattan manifolds. These are higher dimensional versions of the well-known Manhattan curve introduced by Burger \cite{burger}.

Let $\D, \p$ be as above and let $\langle \cdot, \cdot \rangle$ denote the standard inner product on $\R^n$. We consider the following set
\[
\left\{ (v_1, \ldots, v_{n+1}) \in \R^{n+1} : \sum_{x \in \G} e^{-\langle (v_1,\ldots, v_n), \, \D(x) \rangle - v_{n+1} \p(o,x)} < \infty \right\},
\]
which is convex and has convex boundary. We are interested in the boundary of this set.

\begin{definition}
Given $\vv \in \mathbb{R}^n$ we define $\theta_{\D/\p}(\vv)$ to be the critical exponent of the series
\[
s \mapsto \sum_{x\in\G} e^{-\langle \vv, \D(x) \rangle - s\p(o,x)}
\]
as $s$ varies in $\R$. Then the \emph{Manhattan manifold} is the graph of $\theta_{\D/\p}$. That is,
\[
\calM_{\D/\p}:=\{(\vv,y) \in \R^n \times \R \text{ such that } y = \theta_{\D/\p}(\vv)\}.
\]
\end{definition}
Combining ideas of Burger \cite{burger}, Cantrell-Tanaka \cite{cantrell-tanaka.manhattan} and Cantrell-Reyes \cite{cantrell-reyes.manhattan} we prove the following.

\begin{theorem}\label{thm.manc^1} 
The function $\theta_{\D/\p}$ is of class $C^1$. Moreover, if $\p_1,\ldots, \p_n, \p$ are independent then $\thet_{\D/\p}$ is strictly convex, i.e. $\nabla \theta_{\D/\p}$ is injective. In this case the Manhattan manifold $\calM_{\D/\p}$ is an $n$-dimensional, $C^1$-manifold.
\end{theorem}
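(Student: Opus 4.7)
The plan is to establish $C^1$ regularity and strict convexity of $\theta_{\D/\p}$ by realising it as the zero-pressure curve of a real-analytic pressure function coming from a symbolic coding of $\G$, and then invoking the implicit function theorem.

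First, I would record convexity of $\theta_{\D/\p}$. Writing $P(\vv,s)=\sum_{x\in\G}e^{-\langle \vv, \D(x)\rangle - s\p(o,x)}$, the set $\{(\vv,s):P(\vv,s)<\infty\}$ is convex as a standard consequence of H\"older's inequality, and $\p\in \calH_\G^{++}$ is proper and bounded below so that $P(\vv,s)<\infty$ for $s$ sufficiently large. The critical exponent $\theta_{\D/\p}(\vv)$ is then the (finite) boundary value in the $s$-direction of this convex set, which makes $\theta_{\D/\p}$ convex and locally Lipschitz on $\R^n$.

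Next, following the approach of Cantrell--Tanaka \cite{cantrell-tanaka.manhattan} and Cantrell--Reyes \cite{cantrell-reyes.manhattan}, I would use a strongly Markov automatic structure for $\G$ (e.g.\ a Cannon/Gromov coding) to transfer the counting problem to a mixing subshift of finite type $(\Sigma,\sigma)$. Elements of $\calH_\G$ extend, via the Gromov product quasi-additivity in condition (2) of the definition, to H\"older-continuous potentials on $\Sigma$ (up to bounded coboundaries); denote by $\widetilde{\psi_i}$ and $\widetilde{\psi}$ the symbolic potentials associated with the coordinates of $\D$ and with $\p$. The thermodynamic pressure
\[
\Pr(\vv,s):=\mathrm{Pressure}\bigl(-\langle \vv, \widetilde{\D}\rangle - s\widetilde{\psi}\bigr)
\]
is then real-analytic on the open set where $s>$ some threshold, and the standard comparison between orbital counting series and the pressure (Bowen--Series type arguments in the form used in \cite{cantrell-tanaka.manhattan}) shows that $\theta_{\D/\p}(\vv)$ is exactly the unique $s$ for which $\Pr(\vv,s)=0$.

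With this at hand, regularity reduces to the implicit function theorem. The partial derivative $\partial_s\Pr(\vv,s)=-\int \widetilde{\psi}\,d\mu_{\vv,s}$, where $\mu_{\vv,s}$ is the equilibrium state, is strictly negative because $\widetilde{\psi}$ is cohomologous to a strictly positive function (using $\p\in \calH_\G^{++}$ proper together with the fact that $\ell_\p(x)>0$ for infinite-order $x$, which is \Cref{lem:MLSnonegative}). So $\theta_{\D/\p}$ is not merely $C^1$ but in fact real-analytic, and we may compute
\[
\nabla\theta_{\D/\p}(\vv)=-\frac{\nabla_\vv \Pr(\vv,\theta_{\D/\p}(\vv))}{\partial_s\Pr(\vv,\theta_{\D/\p}(\vv))}=\frac{\int \widetilde{\D}\,d\mu_{\vv,\theta_{\D/\p}(\vv)}}{\int \widetilde{\psi}\,d\mu_{\vv,\theta_{\D/\p}(\vv)}}.
\]

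For strict convexity under independence, I would use that for an analytic pressure of a mixing SFT the Hessian in any direction $\vu\in\R^n\setminus\{0\}$ equals the asymptotic variance of the combination $\langle \vu,\widetilde{\D}\rangle$ under $\mu_{\vv,\theta_{\D/\p}(\vv)}$, and this variance vanishes if and only if $\langle \vu,\widetilde{\D}\rangle$ is cohomologous to a constant multiple of $\widetilde{\psi}$ in the H\"older category. Translating back to $\G$ via the coding, such a cohomology would force $\sup_{x,y}|\langle \vu, \D(x,y)\rangle - c\,\psi(x,y)|<\infty$ for some $c\in\R$, contradicting the assumed independence of $\psi_1,\dots,\psi_n,\psi$. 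Hence the Hessian of $\theta_{\D/\p}$ is positive definite, which gives strict convexity and injectivity of $\nabla\theta_{\D/\p}$; the Manhattan manifold, being the graph of a $C^1$ function on $\R^n$, is then a $C^1$-manifold of dimension $n$.

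The main technical obstacle will be the second step: extending an arbitrary hyperbolic metric potential $\psi\in\calH_\G$ (which may be signed, asymmetric, and unbounded) to a H\"older-continuous potential on the symbolic space while preserving the cohomological information needed to identify $\theta_{\D/\p}$ with the zero-pressure curve. For genuine metrics in $\calD_\G$ this is essentially done in \cite{cantrell-reyes.manhattan}, but for general elements of $\calH_\G$ one has to exploit quasi-additivity along quasi-geodesics (condition~(2)) to bound Birkhoff sums of $\widetilde{\psi}$ in terms of $\psi(o,x)$ with uniform error, so that the abstract pressure-theoretic machinery applies.
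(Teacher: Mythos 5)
Your proposal runs along a genuinely different track from the paper: you reduce to a symbolic coding (Cannon/strongly Markov automatic structure), transfer the potentials to H\"older functions on a mixing subshift of finite type, and then use the real-analyticity of the Gurevi\v{c}/Bowen--Ruelle pressure together with the implicit function theorem. The paper instead works directly on the Gromov boundary and the Mineyev flow space $\Fc_\k$: it establishes existence of quasi-conformal measures $\nu_\vv$ for the potentials $\p_\vv$ (\Cref{prop.existenceQCH_G}, \Cref{coro.PSforv}), packages these into flow-invariant Bowen--Margulis measures $m_\vv$ on $\Fc_\k$, shows $\vv\mapsto m_\vv$ is weak$^\ast$ continuous (\Cref{Cor:limit}), identifies the (almost-everywhere) derivative of the 1D slice $t\mapsto\theta_\vv(t)$ with a Lyapunov vector computed against $\nu_{\vv_t}$, and upgrades a.e.\ differentiability of a convex function to everywhere-$C^1$ via this continuity. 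Strict convexity is then obtained by an ergodicity argument: a flat direction forces two Bowen--Margulis currents $\Lambda_\vv,\Lambda_\vw$ to be proportional, which forces the potentials to be dependent. No symbolic coding appears in the paper's proof of this theorem, and the conclusion is only $C^1$.

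The gap in your approach is concentrated exactly where you flag it, and it is not merely a technical annoyance to be deferred. For a general $\p\in\calH_\G$ the quasi-additivity in condition~(2) controls Gromov products (hence Birkhoff sums of the symbolic potential up to a uniformly bounded additive error), but it gives no modulus of continuity on cylinders and in particular does not produce a H\"older-continuous (or even summable-variation) potential on the subshift. Without that, $\Pr(\vv,s)$ is not an analytic function of $(\vv,s)$ in the Ruelle--Perron--Frobenius framework, so the implicit function theorem step and the variance-positivity argument both collapse. A uniform additive error is exactly enough to match critical exponents (so the identification $\theta_{\D/\p}(\vv)=$ zero of the pressure would still be sound as a number), but it is strictly weaker than what thermodynamic formalism needs to yield regularity of that zero in $\vv$. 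The paper in fact signals this limitation: analyticity of $\theta_{\D/\p}$ is proved only for word metrics (\Cref{coro.analyticword}) and for $\CAT(0)$ cube complex metrics (\Cref{prop.polytopecubulations}), precisely the situations where the Calegari--Fujiwara / Cantrell--Reyes results furnish \emph{locally constant} symbolic potentials, and the paper leaves regularity beyond $C^1$ open in general. So your route, if filled in, would prove a stronger conclusion (analyticity), but only for a restricted class; for the full class $\calH_\G$ covered by the theorem it does not go through, and the weaker $C^1$ conclusion obtained via the Patterson--Sullivan/Mineyev-flow machinery appears to be the honest level of generality. If you want to salvage a symbolic proof, you would need to establish that arbitrary elements of $\calH_\G$ (including asymmetric, signed, unbounded ones such as quasimorphisms) admit H\"older symbolic lifts with respect to a Cannon coding, and that claim is neither in the paper nor, as far as I can see, in the cited literature.
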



In the case that $\p_1,\ldots, \p_n, \p$ are not independent, we can take a maximal independent subset $\p, \p_1,\ldots, \p_n$ that includes $\p$ and then $\calM_{\D/\p}$ is a $k$-dimensional $C^1$-manifold. 

The relation between the Manhattan manifold and the joint translation spectrum is via the gradient of the parametrization $\thet_{\D/\p}$. Indeed, we obtain a complete characterization of the interior points in $\J_\p(\D)$.

\begin{theorem}\label{thm.homeointeriors}
    Suppose that $\p_1,\dots,\p_n,\p$ are independent. Then 
    \[\Int(\J_\p(\D))=\{-\nabla \thet_{\D/\p}(\vv): \vv\in \R^n\}\]
    and the map $-\nabla\thet_{\D/\p}:\R^n \ra \Int(\J_\p(\D))$ is a homeomorphism.
\end{theorem}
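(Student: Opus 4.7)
The strategy is to view $-\nabla \theta_{\D/\p}$ as a Legendre-type duality map, with Patterson--Sullivan/Gibbs measures on $\partial\G$ providing the bridge between the Manhattan manifold and the joint translation spectrum. The two central inputs are Theorem \ref{thm.manc^1} ($\theta_{\D/\p}$ is $C^1$ and strictly convex, so $-\nabla \theta_{\D/\p}$ is continuous and injective) and Theorem \ref{thm.dj=j} ($\J_\p(\D) = \overline{\calD\J_\p(\D)}$, with $\calD\J_\p(\D)$ containing $\Int(\J_\p(\D))$).

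A classical convex-analysis fact for strictly convex $C^1$ functions on $\R^n$ implies that $U := -\nabla \theta_{\D/\p}(\R^n)$ is an open convex subset of $\R^n$ (namely, the interior of the effective domain of the Legendre conjugate $\theta_{\D/\p}^*$) and that $-\nabla \theta_{\D/\p}: \R^n \to U$ is a homeomorphism. The central task is therefore to identify $U$ with $\Int(\J_\p(\D))$.

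For $U \subseteq \Int(\J_\p(\D))$, I would construct, for each $\vv \in \R^n$, an ergodic quasi-invariant Patterson--Sullivan/Gibbs measure $\nu_\vv$ on $\partial \G$ associated with the potential $x \mapsto \langle \vv, \D(x)\rangle + \theta_{\D/\p}(\vv)\p(o,x)$; here $\theta_{\D/\p}(\vv)$ is by definition the exponent that makes the corresponding Poincar\'e series critical. Implicit differentiation of this critical-exponent relation together with the Gibbs property yields the formula
\[
\partial_{\vv_i}\theta_{\D/\p}(\vv) = -\frac{\int \p_i\, d\mu_\vv}{\int \p\, d\mu_\vv},
\]
where $\mu_\vv$ is the shift-invariant measure on a symbolic coding of the boundary action associated to $\nu_\vv$. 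The right-hand side is the $i$-th coordinate of the Lyapunov vector $\D_\p(\nu_\vv)$, so $-\nabla \theta_{\D/\p}(\vv) \in \calD\J_\p(\D) \subseteq \J_\p(\D)$ by Theorem \ref{thm.dj=j}. Openness of $U$ then places $U$ inside $\Int(\J_\p(\D))$.

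For the reverse inclusion $\Int(\J_\p(\D)) \subseteq U$ I would use a duality/uniqueness-of-equilibrium argument: given $\vx \in \Int(\J_\p(\D))$, Theorem \ref{thm.dj=j} provides an ergodic quasi-invariant $\nu$ with $\D_\p(\nu) = \vx$; strict convexity of $\theta_{\D/\p}$ together with the variational characterization of equilibrium states forces $\nu$ to agree with some $\nu_\vv$ (up to the Gibbs class), whence $\vx = -\nabla \theta_{\D/\p}(\vv)$. The main obstacle is precisely this thermodynamic input: constructing $\nu_\vv$ for possibly asymmetric and signed hyperbolic metric potentials, establishing Lyapunov convergence $\D_\p(\x_k) \to \D_\p(\nu_\vv)$ along $\nu_\vv$-typical quasi-geodesics, and justifying the pressure-derivative formula in this generality. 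This adapts techniques of Burger, Cantrell--Tanaka and Cantrell--Reyes to the multi-potential setting of the present paper.
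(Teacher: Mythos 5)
Your convex-analysis framing is correct in part and genuinely aligns with one half of the paper. The inclusion $U := -\nabla\theta_{\D/\p}(\R^n)\subseteq\Int(\J_\p(\D))$ via Patterson--Sullivan measures is essentially the paper's Corollary~\ref{cor.drift}: for each $\vv$ the quasi-conformal measure $\nu_\vv$ for $\p_\vv$ has Lyapunov vector $\D_\p(\nu_\vv)=-\nabla\theta(\vv)$, and openness of $U$ (strict convexity plus Invariance of Domain) then places $U$ inside $\Int(\J_\p(\D))$. Your Legendre observation that $-\nabla\theta:\R^n\ra U$ is a homeomorphism onto an open set is also correct and used by the paper.

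The reverse inclusion is where you have a genuine gap, on two counts. First, the citation of the ``moreover'' clause of Theorem~\ref{thm.dj=j} is circular: that clause asserts $\Int(\J_\p(\D))\subseteq\calD\J_\p(\D)$, and the paper's proof of Theorem~\ref{thm.dj=j} explicitly defers it to Proposition~\ref{prop.IntJ}, which is the statement you are trying to prove. Second, the ``uniqueness of equilibrium states'' step does not do the work required. Even granting an ergodic quasi-invariant $\nu$ with $\D_\p(\nu)=\vx$, nothing forces $\nu$ to be in the measure class of some $\nu_\vv$; producing a $\vv$ with $-\nabla\theta(\vv)=\vx$ is precisely the content of $\vx\in U$, so the argument presupposes its conclusion. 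Moreover, a symbolic-coding equilibrium-state formalism is available for word metrics via the Cannon coding, but the paper deliberately treats general hyperbolic metric potentials, where no such coding exists, and works with the Mineyev flow and quasi-conformal measures instead.

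The paper's actual route to $\Int(\J_\p(\D))\subseteq U$ is different. It first establishes $\J_\p(\D)=\ov{U}$ via a large deviation principle (Corollary~\ref{cor.ldp}, proved with the G\"artner--Ellis theorem applied to the counting measures on spheres $S_R(T)$), giving Proposition~\ref{prop.deriv=dj}. It then proves Proposition~\ref{prop.IntJ} by a connectedness argument: if $\vx\in\Int(\J_\p(\D))\setminus U$, follow the segment from $-\nabla\theta(\mathbf{0})$ to $\vx$ to its exit point $\vy$ from $U$, write $\vy=\lim_m(-\nabla\theta(\vv_m))$ with $\|\vv_m\|\to\infty$, and use the radial asymptotics of $\theta$ (Lemma~\ref{lem.limitthetaboundary}, expressed through the dilation $\Dil(\D_{-\hat\vv},\p)$) together with Lemma~\ref{lem.ineqconvexity} and the boundary criterion of Corollary~\ref{coro.criterionboundary} to produce a geodesic current $\mu$ with $\bfL_\p(\mu)=\vy$ and $\ell_{\p_{[\hat\vv]}}(\mu)=0$, forcing $\vy\in\partial\J_\p(\D)$, a contradiction.

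One constructive remark: your observation that $U$ equals $\Int(\operatorname{dom}\theta_{\D/\p}^*)$, hence is \emph{convex}, is a genuine simplification that the paper does not exploit. Combined with Proposition~\ref{prop.deriv=dj} ($\J_\p(\D)=\ov{U}$), convexity and openness of $U$ give $U=\Int(\ov{U})=\Int(\J_\p(\D))$ directly, bypassing the boundary-potential argument of Proposition~\ref{prop.IntJ}. If you replace the equilibrium-state step with this convexity observation and supply the missing large-deviations ingredient that yields $\J_\p(\D)\subseteq\ov{U}$, you obtain a correct alternative proof; as written, however, the proposal does not close.
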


This result allows us to give more precise information about some special vectors in $\calD\J_\p(\D)$. The \emph{mean distortion} vector for $\p_1,\dots,\p_n$ with respect to $\p$ is the limit
\[
\tau_{\D/\p} = \lim_{T\to\infty} \frac{1}{\#S_R(T)}\sum_{x \in S_R(T)} \D_\p(x).
\]
This vector is well-defined and can be seen as the `typical quasi-isometry constant' vector for the collection $\p_1,\ldots, \p_n, \p$. The mean distortion vector is also the Lyapunov vector $\D_\p(\nu_\p)$ for $\nu_\p$ a quasi-conformal measure for $\p$ (see \Cref{sec.boundary} and \cite[Theorem~3.12]{cantrell-tanaka.manhattan}).

As in Breuillard-Sert's work \cite[Theorem~1.9]{breuillard-sert}, from \Cref{thm.homeointeriors} we will deduce that $\tau_{\D/\p}$ sits in the interior of the joint translation spectrum.

\begin{proposition}\label{prop.interiorlambda}
Suppose that $\p_1,\ldots, \p_n, \p$ are independent. Then  $\tau_{\D/\p}=-\nabla \theta_{\D/\p}(\mathbf{0})$, and consequently $\tau_{\D/\p}$ lies in the interior of $\J_{\p}(\D)$.
\end{proposition}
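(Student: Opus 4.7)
The plan is to prove the identity $\tau_{\D/\p}=-\nabla \theta_{\D/\p}(\mathbf{0})$; once this is done, the interior assertion follows at once from Theorem \ref{thm.homeointeriors}, which under the independence hypothesis identifies $-\nabla \theta_{\D/\p}(\R^n)$ with $\Int(\J_{\p}(\D))$ and thus places $-\nabla \theta_{\D/\p}(\mathbf{0})$ in $\Int(\J_{\p}(\D))$.

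To establish the identity, I would compute the gradient coordinate by coordinate by reducing to the classical one-dimensional Manhattan curve. Fix $i\in\{1,\dots,n\}$, let $e_i$ be the $i$-th standard basis vector of $\R^n$, and set $\Theta_i(s):=\theta_{\D/\p}(s e_i)$. By the very definition of $\theta_{\D/\p}$, the function $\Theta_i$ is the Manhattan curve for the pair $(\psi_i,\p)$: the critical exponent in $t$ of the Poincaré series $\sum_{x\in\G}e^{-s\psi_i(o,x)-t\p(o,x)}$. Invoking the differentiability formula for such Manhattan curves in the spirit of Burger \cite{burger}, Cantrell--Tanaka \cite{cantrell-tanaka.manhattan} and Cantrell--Reyes \cite{cantrell-reyes.manhattan}, one obtains
\[
\Theta_i'(0) \;=\; -\lim_{T\to\infty}\frac{1}{\#S_R(T)}\sum_{x\in S_R(T)}\frac{\psi_i(o,x)}{\p(o,x)} \;=\; -(\tau_{\D/\p})_i.
\]
Since $\partial_{v_i}\theta_{\D/\p}(\mathbf{0})=\Theta_i'(0)$, assembling the $n$ coordinate identities yields $-\nabla \theta_{\D/\p}(\mathbf{0})=\tau_{\D/\p}$.

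The main obstacle is justifying this 1-dimensional derivative formula in the present generality, since the cited results were primarily developed for genuine pseudo-metrics in $\calD_\G$ rather than for arbitrary hyperbolic metric potentials in $\calH_\G$. A conceptual route, which I would pursue in parallel, is to identify $-\nabla \theta_{\D/\p}(\mathbf{0})$ directly with the Lyapunov vector $\D_\p(\nu_\p)$ of the $\p$-quasi-conformal measure $\nu_\p$ on $\partial\G$; combined with the already-noted equality $\D_\p(\nu_\p)=\tau_{\D/\p}$ from \Cref{sec.boundary} and \cite[Theorem~3.12]{cantrell-tanaka.manhattan}, this gives the desired formula. Making this rigorous reduces to implicit differentiation of the Poincaré series at its critical exponent, which in turn rests on a Patterson--Sullivan / shadow-lemma analysis for potentials in $\calH_\G$; the quasi-additivity condition~(2) in the definition of $\calH_\G$ is exactly what enables this analysis, and in practice one expects the general formula $-\nabla\theta_{\D/\p}(\vv)=\D_\p(\nu_\vv)$ for a tilted family of quasi-conformal measures $\{\nu_\vv\}_{\vv\in\R^n}$ to be already part of the machinery underpinning Theorems \ref{thm.manc^1} and \ref{thm.homeointeriors}, of which the statement at $\vv=\mathbf{0}$ is then the special case.
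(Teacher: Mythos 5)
Your proposal is correct, and the ``conceptual route'' you outline in your last paragraph is precisely the paper's own argument: the paper invokes \Cref{prop.deriv=dj} to place $-\nabla\theta_{\D/\p}(\mathbf{0})$ in $\Int(\J_\p(\D))$, then invokes \Cref{cor.drift} (which is exactly the general formula $-\nabla\theta_{\D/\p}(\vv)=\D_\p(\nu_\vv)$ you correctly anticipate is part of the established machinery) to reduce to the identity $\tau_{\D/\p}=\D_\p(\nu_\p)$ at $\vv=\mathbf{0}$, and finally appeals to the argument of \cite[Theorem~3.12]{cantrell-tanaka.manhattan} for that last step, exactly as you propose. Your coordinate-wise version is an unpacking of the same content — each $\Theta_i$ is the Manhattan curve for the pair $(\psi_i,\p)$ and its derivative at $0$ is the $i$-th component of the Lyapunov vector $\D_\p(\nu_\p)$, which \cite[Theorem~3.12]{cantrell-tanaka.manhattan} identifies with the $i$-th component of the mean distortion — so it is not a genuinely different route, just the same argument expressed one coordinate at a time.
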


For $\p_1,\ldots, \p_n,\p$ not necessarily independent, from this proposition we get that the vector $\tau_{\D/\p}$ lies in the relative interior of $\J_\p(\D)$, in analogy to \cite[Theorem~1.9]{breuillard-sert}.

\subsection{Manhattan manifolds as metric structures and its boundary}\label{subsec.intro.manhattanmetric}

In the work of Cantrell-Reyes \cite{cantrell-reyes.manhattan}, Manhattan curves for pairs of metrics in $\calD_\G$ were used to describe geodesics in the space of \emph{metric structures} $\scrD_\G$, which is the quotient of $\calD_\G$ under rough similarity (see \cite{oregonreyes.metric} for the metric properties of $\scrD_\G$). In the current case, similar techniques allow us to embed the Manhattan manifold inside $\scrH_\G^{++}$, the set of rough similarity classes in $\calH_\G^{++}$. 

More precisely, if $\thet=\thet_{\D/\p}$ then for every value $\vv$ we let $\rho^{\D/\p}(\vv)=\rho^\thet(\vv)$ denote the rough similarity class of $\p_{\vv}:=\langle \vv,\D\rangle+\theta_{\D/\p}(\vv)\p$, which belongs to $\calH_\G^{++}$ by \Cref{lem.EGR1}. We let $\scrM_{\D/\p}$ denote the image of $\rho^\thet$, which we also call \emph{Manhattan manifold}. The metric on $\scrD_\G$ naturally extends to a metric on $\scrH_\G^{++}$, and for its induced topology we have the following complement to \Cref{thm.homeointeriors}.


\begin{proposition}\label{prop.fromManhattantoJTS}
Suppose that $\p_1,\dots,\p_n, \p$ are independent. Then the map $\rho=\rho^{\D/\p}:\R^n \ra \scrH_\G^{++}$ is a homeomorphism onto its image. In consequence, the composition
\[ \scrM_{\D/\p}   \xrightarrow{\rho^{-1}} \R^n \xrightarrow{-\nabla \thet_{\D/\p}} \Int(\J_\p(\D))\] 
is a homeomorphism.
\end{proposition}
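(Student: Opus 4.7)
The plan is to establish the core statement that $\rho = \rho^{\D/\p}\colon \R^n \to \scrH_\G^{++}$ is a homeomorphism onto its image $\scrM_{\D/\p}$; the composition claim then follows immediately, since \Cref{thm.homeointeriors} already gives that $-\nabla \theta_{\D/\p}\colon \R^n \to \Int(\J_\p(\D))$ is a homeomorphism. So I reduce to verifying three things: $\rho$ is injective, $\rho$ is continuous, and $\rho^{-1}$ is continuous on $\scrM_{\D/\p}$.

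For injectivity, assume $\rho(\vv)=\rho(\vw)$, that is, $\p_\vv$ and $\p_\vw$ are roughly similar with proportionality constant $c>0$, meaning $\p_\vv - c\,\p_\vw$ is bounded on $\G\times\G$. The normalization built into $\theta_{\D/\p}$ ensures that, for every $\vu\in\R^n$, the Poincar\'e series $\sum_{x\in\G} e^{-t\,\p_\vu(o,x)}$ sits at criticality precisely for $t=1$. A rough similarity with constant $c$ would rescale critical exponents by $1/c$, forcing $c=1$. The difference $\p_\vv - \p_\vw = \langle \vv-\vw,\D\rangle + (\theta_{\D/\p}(\vv)-\theta_{\D/\p}(\vw))\,\p$ is then bounded on $\G\times\G$, and independence of $\p_1,\dots,\p_n,\p$ yields $\vv=\vw$. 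The continuity of $\rho$ is then routine: $\theta_{\D/\p}$ is $C^1$ by \Cref{thm.manc^1}, so $\vv\mapsto (\vv,\theta_{\D/\p}(\vv))$ is continuous, and the affine map $(\vu,s)\mapsto \langle \vu,\D\rangle+s\p$ into $\calH_\G^{++}$ descends to a continuous map into $\scrH_\G^{++}$ endowed with the metric extending the one on $\scrD_\G$ from \cite{oregonreyes.metric}.

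The main obstacle is the continuity of $\rho^{-1}$ on $\scrM_{\D/\p}$. The plan is to obtain it via an intrinsic invariant of each rough similarity class. To every $[\psi]\in\scrM_{\D/\p}$ one can attach its mean distortion vector $\tau_{[\psi]/\p}\in\R^n$ with respect to $\p$; this depends only on the class $[\psi]$, and applying \Cref{prop.interiorlambda} to the tuple recentered around $\psi=\p_\vv$ identifies $\tau_{[\p_\vv]/\p}$ with $-\nabla\theta_{\D/\p}(\vv)$. If one verifies that $[\psi]\mapsto \tau_{[\psi]/\p}$ is continuous on $\scrM_{\D/\p}$, then composing with the inverse of the homeomorphism $-\nabla\theta_{\D/\p}$ from \Cref{thm.homeointeriors} produces a continuous left inverse of $\rho$ on its image, giving continuity of $\rho^{-1}$. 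The step I expect to require the most care is precisely this continuity of mean distortion along rough similarity classes, which reduces to showing that the relevant Patterson--Sullivan data vary continuously as one moves the potential in the metric structure topology of $\scrH_\G^{++}$. This is the higher-dimensional analogue of the argument performed for the Manhattan curve in \cite{cantrell-reyes.manhattan}, and is where the bulk of the technical effort will lie.
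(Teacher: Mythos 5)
Your reduction is sound, and the first two points (injectivity via the growth-rate-one normalization, continuity of $\rho$ via $C^1$-regularity of $\theta_{\D/\p}$) match the paper's observations preceding its Proposition 5.4. The composition claim also correctly reduces to \Cref{thm.homeointeriors}, and this is not circular: the paper proves \Cref{prop.IntJ} (which completes \Cref{thm.homeointeriors}) using \Cref{lem.limitthetaboundary} and \Cref{lem.ineqconvexity} without relying on the homeomorphism you are trying to establish.

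Where your route diverges and where it has a genuine gap is the continuity of $\rho^{-1}$. The paper argues directly: given $\rho(\vv_m)\to\rho(\vv_\infty)$ in $\scrH_\G^{++}$, it first rules out $\|\vv_m\|\to\infty$ because, by \Cref{lem.limitthetaboundary}, the rescaled length functions $\frac{1}{\|\vv_m\|}\ell_{\p_{\vv_m}}$ would then converge pointwise to $\ell_{\p_{[\hat\vv]}}$, which is the length function of a \emph{boundary} potential in $\partial\calH_\G^{++}$, incompatible with all $\p_\vv$ having exponential growth rate $1$; boundedness plus injectivity then forces $\vv_m\to\vv_\infty$. You instead propose to build a continuous left inverse $(-\nabla\theta)^{-1}\circ\tau$ where $\tau([\psi])$ is the Lyapunov/mean-distortion vector. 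The identity you need is really \Cref{cor.drift}, namely $\D_\p(\nu_\vv)=-\nabla\theta_{\D/\p}(\vv)$; invoking \Cref{prop.interiorlambda} "recentered around $\p_\vv$" instead produces $-\nabla\theta_{\D/\p_\vv}(\mathbf{0})$, which differs from $-\nabla\theta_{\D/\p}(\vv)$ by a nontrivial scalar (the Lyapunov exponent of $\p_\vv$ relative to $\p$), so that step needs to be fixed. More importantly, you leave unproved the claim that $[\psi]\mapsto\tau([\psi])$ is continuous on $\scrM_{\D/\p}$ in the Thurston-metric topology, and this is precisely the crux. The Thurston metric on $\scrH_\G^{++}$ only controls ratios of stable translation lengths; it does not a priori control Busemann cocycles or the measure class of the quasi-conformal (Patterson--Sullivan) measures that enter the definition of the Lyapunov vector, and neither \Cref{prop.continuity} (which gives continuity of $\theta$ itself, not of $\nabla\theta$, in the reference potential) nor \Cref{Cor:limit} (which gives continuity in $\vv$, not in $[\p_\vv]$) supplies this. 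Establishing this continuity \emph{intrinsically} in $[\psi]$, without first knowing that $[\p_{\vv_m}]\to[\p_{\vv_\infty}]$ implies $\vv_m\to\vv_\infty$, looks at least as hard as the statement you are trying to prove, which is why the paper opts for the direct boundedness argument via \Cref{lem.limitthetaboundary} instead.
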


As for Manhattan curves in $\calD_\G$, we can also define a bordification of $\scrM_{\D/\p}$ in terms of `boundary' hyperbolic metric potentials. If $\ov\calH^{++}_\G$ is the set of hyperbolic metric potentials bounded below and with non-trivial stable translation length function, we let $\partial \calH_\G^{++}=\ov\calH^{++}_\G \bs \calH_\G^{++}$. Then $\ov\scrH_\G^{++}$ and $\partial \scrH_\G^{++}$ denote the rough similarity quotients of $\ov\calH_\G^{++}$ and $\partial \calH_\G^{++}$ respectively. If $[\vv]$ is a unit vector in $\mathbb{S}^{n-1}=(\R^n-\{\mathbf{0}\})/\R^+$, then we can use $\thet_{\D/\p}$ to define a boundary metric structure $\rho^\thet([\vv])\in \partial\scrH_\G^{++}$ (see \Cref{def.manhattanmanifoldmetric}). If $\ov{\R^{n}}=\R^n \cup \mathbb{S}^{n-1}$ is the ball compactification of $\R^n$, then we obtain an extension map $\rho: \ov{\R^n} \ra \ov\scrH_\G^{++}$ whose image we denote by $\ov\scrM_{\D/\p}$. We will see in \Cref{lem.extensionboundary} that this map is injective.

\begin{remark}\label{rmk.topologyboundary}
Inducing the appropriate topology on $\ov\scrH_\G^{++}$, it can be proven that the map $\rho:\ov{\R^n} \ra \ov\scrM_{\D/\p}$ is actually a homeomorphism. However, we will not pursue this in this paper.   
\end{remark}

The question of whether the homeomorphism $-\nabla\thet_{\D/\p}:\R^n \ra \Int(\J_\p(\D))$ from \Cref{thm.homeointeriors} extends to the boundaries is more subtle, and it depends on the regularity properties of the boundary of $\J_\p(\D)$. We obtain the following criteria.

\begin{proposition}\label{prop.descriptionboundary}
    If $\p_1,\dots,\p_n$ and $\p$ be independent then the following hold:
    \begin{enumerate}
        \item If $\J_\p(\D)$ is strictly convex then the map $-\nabla \thet_{\D/\p}:\R^n \ra \Int(\J_\p(\D))$ extends to a continuous surjection 
        $\Phi:\ov{\R^n} \ra \J_\p(\D)$.
\item If $\partial \J_\p(\D)$ is $C^1$ then the map $(-\nabla \thet_{\D/\p})^{-1}:\Int(\J_\p(\D)) \ra \R^n$ extends to a continuous surjection $\Psi:\J_\p(\D) \ra\ov{\R^n}$.
    \end{enumerate}
\end{proposition}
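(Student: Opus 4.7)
The plan is to extend the homeomorphism $-\nabla\thet:\R^n \ra \Int(J)$ (where $\thet=\theta_{\D/\p}$ and $J=\J_\p(\D)$) to the ball compactifications, using monotonicity of the gradient of a convex function together with the two separate regularity hypotheses on $J$. The two inputs required from the earlier theory, both contained in Theorems~\ref{thm.manc^1} and \ref{thm.homeointeriors}, are: (a) monotonicity of $\nabla\thet$, i.e.\ $\langle \nabla\thet(v)-\nabla\thet(v'),\, v-v'\rangle\geq 0$ for all $v,v'\in \R^n$; and (b) the fact that $-\nabla\thet(\R^n)=\Int(J)$, so in particular $-\nabla\thet(\R^n)$ is dense in $J$.

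\textbf{Part (1).} Assume $J$ is strictly convex. For each $[u]\in \mathbb{S}^{n-1}$, the linear form $q\mapsto \langle q,u\rangle$ has a unique minimizer on $J$, which I call $\Phi([u])\in \partial J$; this produces the candidate extension of $-\nabla\thet$. To verify continuity, take $v_k\in \R^n$ with $|v_k|\ra \infty$ and $v_k/|v_k|\ra u$, and set $q_k=-\nabla\thet(v_k)$. For any auxiliary $v'\in \R^n$ with $q'=-\nabla\thet(v')\in \Int(J)$, monotonicity gives
\[
\langle q'-q_k,\, v_k-v'\rangle\geq 0.
\]
Dividing by $|v_k|$ and passing to a subsequential limit $q_k\ra q^*\in J$ yields $\langle q'-q^*,\, u\rangle\geq 0$. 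Density of $-\nabla\thet(\R^n)$ in $J$ promotes this to every $q'\in J$, so $q^*$ minimizes $\langle \cdot, u\rangle$ over $J$; strict convexity of $J$ forces $q^*=\Phi([u])$, giving convergence along the full sequence. Continuity of $\Phi$ on $\mathbb{S}^{n-1}$ is the standard fact that the minimizer of a linear form on a strictly convex body depends continuously on the form, and surjectivity of $\Phi$ onto $J$ follows because every boundary point of a strictly convex body is the unique minimizer of some linear form.

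\textbf{Part (2).} Assume $\partial J$ is $C^1$, so every $q\in \partial J$ has a unique outer unit normal $n(q)\in \mathbb{S}^{n-1}$ and the Gauss map is continuous and surjective onto $\mathbb{S}^{n-1}$. I set $\Psi(q)=[-n(q)]$ for $q\in \partial J$ and check continuity at the boundary. Take $q_k\in \Int(J)$ with $q_k\ra q\in \partial J$ and let $v_k=(-\nabla\thet)^{-1}(q_k)$; if $v_k$ admitted a bounded subsequence, continuity of $-\nabla\thet$ on $\R^n$ would force $q\in \Int(J)$, a contradiction, so $|v_k|\ra \infty$. Running the monotonicity argument from Part~(1) with arbitrary $q'\in -\nabla\thet(\R^n)=\Int(J)$ and a subsequential limit $u$ of $v_k/|v_k|$ produces $\langle q'-q,\, u\rangle\geq 0$ for all $q'\in J$; hence $-u$ lies in the outer normal cone of $J$ at $q$, and the $C^1$ assumption forces $u=-n(q)$. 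Thus $v_k/|v_k|\ra -n(q)$ along the full sequence, yielding continuity; surjectivity follows from surjectivity of the Gauss map.

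The main difficulty is in organizing these subsequence arguments and sign conventions so that the interior homeomorphism $-\nabla\thet$ and its boundary extensions $\Phi$ and $\Psi$ are compatible with the ball-compactification topology on $\ov{\R^n}$. No new analytic input beyond Theorems~\ref{thm.manc^1}--\ref{thm.homeointeriors} is needed; the single geometric point at the heart of both arguments is that strict convexity of $J$ (respectively, $C^1$-regularity of $\partial J$) is precisely the condition that upgrades uniqueness of a supporting point (respectively, a unit normal) into convergence along the full sequence.
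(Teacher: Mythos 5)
Your proof is correct and takes a genuinely different, more elementary route than the paper's. The paper identifies subsequential limits of $-\nabla\thet_{\D/\p}(\vv_m)$ as the correct boundary points by passing through the geodesic-current machinery built up in this section: it invokes the identification $\J_\p(\D)=\bfL_\p(\bbP\calC_\G)$ from \Cref{lem.currentsJTS}, the characterization of the normal directions $N_\J(\vx)$ in terms of currents $\mu$ with $\ell_{\p_{[\hat\vv]}}(\mu)=0$ from \Cref{lem.criterionnormal}, and the two analytic facts $\thet_{\D/\p}(\vv_m)/\|\vv_m\|\to\Dil(\D_{-\hat\vv},\p)$ (\Cref{lem.limitthetaboundary}) and $\langle\vv,\nabla\thet_{\D/\p}(\vv)\rangle<\thet_{\D/\p}(\vv)$ (\Cref{lem.ineqconvexity}), which combine to give the identity $-\langle\hat\vv,\vx\rangle=\Dil(\D_{-\hat\vv},\p)$ at the limit. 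Your argument bypasses all of this with a single application of the monotonicity of the gradient of the convex $C^1$ function $\thet_{\D/\p}$: dividing $\langle \nabla\thet(\vv_k)-\nabla\thet(\vv'),\vv_k-\vv'\rangle\ge 0$ by $\|\vv_k\|$ and passing to the subsequential limit gives $\langle q'-q^*,u\rangle\ge 0$ for all $q'$ in the dense subset $-\nabla\thet(\R^n)=\Int(\J_\p(\D))$, hence for all $q'\in\J_\p(\D)$; this single inequality is exactly the conclusion ``$[\hat\vv]\in N_\J(q^*)$'' that the paper reaches by the current-theoretic route, and the strict-convexity/$C^1$ hypothesis then pins down the limit uniquely in each part. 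The trade-off is informative: your argument requires nothing beyond \Cref{thm.manc^1} and \Cref{thm.homeointeriors}, so it is self-contained and would transfer verbatim to any setting furnishing a $C^1$ strictly convex parametrization whose gradient image fills the interior of a compact convex set, whereas the paper's route produces, as a byproduct, the explicit dilation identity $-\langle\hat\vv,\vx\rangle=\Dil(\D_{-\hat\vv},\p)$ at boundary points, which is structural information about the Manhattan manifold (linking boundary directions to dilations and boundary metric structures) that the monotonicity argument does not see.
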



\subsection{Examples}

The flexibility in the definition of $\calH_\G$ gives us plenty of examples of joint translation spectra, and we briefly discuss some of them. For more details, see \Cref{sec.examples}.

We first observe a case in which the joint translation spectrum is actually a matrix joint spectrum. Let $\G$ be a hyperbolic surface group and let $\rho: \G \ra \PSL_n(\R)$ be a \emph{Hitchin representation}. That is, a  deformation of Fuchsian representation into $\PSL_2(\R)$ composed with an irreducible representation $\PSL_2(\R)\ra \PSL_n(\R)$. We let $\D_{\rho}=(\p_1,\dots,\p_n)$ where $\p_j(x,y)=\log \sig_j(\rho(x^{-1}y))$. Then $\p_1,\dots,\p_n \in \calH_\G$. If $S \subset \G$ is a finite subset generating $\G$ as a semigroup, then we can check the identity
$$\J(\rho(S))=\J_{d_S}(\D_\rho).$$
Moreover, the translation cone $\calT\calC(\D_\rho)$ is precisely the Benoist limit cone of the representation $\rho$.

From a purely metric point of view, natural examples to consider are when the hyperbolic metric potentials are word metrics on $\G$ with respect to finite and symmetric generating sets. In this case, the joint translation spectrum is always a polytope. The same happens when the input metrics are induced by geometric actions on $\CAT(0)$ cube complexes equipped with their combinatorial metrics. 


As an example not coming from metrics, we have the unit ball for the \emph{stable norm} in homology associated to points in Teichm\"uller space of a closed hyperbolic surface \cite{massart,pollicott-sharp.livsic}. Under the appropriate identification, this unit ball is the joint translation metric for a basis of $H^1(\G;\Z)$ as input potentials, together with the corresponding point in Teichm\"uller space. This joint translation spectrum is not a polytope.




\subsection*{Organization of the paper}
The article is organized as follows. In \Cref{sec.preliminaries} we review some preliminaries about hyperbolic spaces and groups, geodesic currents, and the Mineyev flow. Then in \Cref{sec.H_G} we study further the space of hyperbolic metric potentials and discuss several examples. The proofs of our main results begins in \Cref{sec.constructionJTS}, in which we prove the existence of the joint translation spectrum and prove Theorems \ref{thm.intro.cone} and \ref{thm.intro.jtspectrum}. The Manhattan manifold is then studied in \Cref{sec.manhattanmanifolds}. There we prove Theorems \ref{thm.wj=j}, \ref{thm.dj=j} and \ref{thm.manc^1}, and one of the assertions of \Cref{thm.homeointeriors}. We continue with \Cref{sec.metricstructuresmanhattan} in which we project the Manhattan manifold into the space of metric structures, proving Proposition \ref{prop.fromManhattantoJTS} and \ref{prop.descriptionboundary} and completing the proof of \Cref{thm.homeointeriors}. In \Cref{sec.examples} we discuss several examples of joint translation spectra for some relevant hyperbolic metric potentials, and we end with some questions in \Cref{sec.questions}.




\subsection*{Acknowledgements}
E.~R. would like to thank the Max Planck Institut f\"ur Mathematik for its hospitality and financial support.


\section{Preliminaries}\label{sec.preliminaries}

In this section we review some metric and measure-theoretic aspects of hyperbolic groups. For more details we refer the reader to \cite{bridson-haefliger,ghys-delaharpe}.

\subsection{Metric notions} \label{sec.metricnotions}

Given a pseudo metric space $(X,d)$, the \emph{Gromov product} $(\cdot| \cdot)_\cdot:X^3 \ra \R$ is defined according to
\begin{equation}\label{eq.defGroprod}
   (x|y)_w = (x|y)_w^d:= \frac{1}{2}(d(x,w) + d(w,y) - d(x,y)) \text{ for $x,y,w \in X$.} 
\end{equation}
The pseudo metric $d$ is $\del$-\emph{hyperbolic} if
\[
(x|z)_w \ge \min\{(x|y)_w, (y|z)_w \} - \delta \text{ for all $x,y,z,w \in X$,}
\]
and $d$ is \emph{hyperbolic} if it is $\del$-hyperbolic for some $\del$. 


Given an interval $I \subset \R$ and constants $L, C >0$ we say that a map $\g: I \to X$ is an \textit{$(L, C)$-quasi-geodesic} if 
\[
L^{-1} \, |s-t|-C \le d(\g(s), \g(t)) \le L\, |s-t|+C \  \text{ for all $s, t \in I$},
\]
and a \textit{$C$-rough geodesic} if
\[
|s-t|-C \le d(\gamma(s), \gamma(t))\le |s-t|+C \ \text{ for all $s, t \in I$}.
\]
A $0$-rough geodesic is referred to as a \emph{geodesic}. When we want to emphasize the dependence on $d$ we will use the terms $(L,C,d)$-quasi-geodesics and $(C,d)$-rough geodesics. The pseudo metric space $(X, d)$ is called $C$-\emph{roughly geodesic} if for each pair of elements $x, y \in X$ we can find a $C$-rough geodesic with extreme points $x$ and $y$. We say that $(X,d)$ is \textit{roughly geodesic} if it is $C$-roughly geodesic for some $C \ge 0$. 

Given pseudo metric spaces $(X,d)$ and $(X_\ast,d_\ast)$, the map $F:X \ra X_\ast$ is a \emph{quasi-isometric embedding} if there exist constants $\lam_1,\lam_2,C>0$ such that 
\begin{equation}\label{eq.defqi}
    \lam_1 d(x,y)-C\leq d_\ast(Fx,Fy) \leq \lam_2d(x,y) d+C \ \text{ for all}x,y\in X. 
\end{equation}
If there is also some $C'$ such that every $x_\ast\in X_\ast$ is within $C'$ from some element in $F(X)$ we say that $F$ is a \emph{quasi-isometry}. A quasi-isometry $F$ satisfying \eqref{eq.defqi} with $\lam_1=\lam_2$ is called a \emph{rough similarity}, and it is called a \emph{rough isometry} when $\lam_1=\lam_2=1$. Two pseudo metrics $d,d_\ast$ on a space $X$ are called \emph{quasi-isometric} (resp. \emph{roughly similar/isometric}) if the identity map $(X,d) \ra (X,d_\ast)$ is a quasi-isometry (resp. rough similarity/isometry).

\subsection{Hyperbolic groups and metric structures}\label{sec.hyp}

Throughout this work $\G$ will be a \emph{non-elementary hyperbolic group}: a finitely generated group $\G$, that is not virtually cyclic and such that any word metric $d_S$ on $\G$ associated to a finite, symmetric generating set $S$ is hyperbolic. We use $o$ to denote the identity element of $\G$. 

We write $\Dc_\G$ for the set of all the pseudo metrics on $\G$ that are hyperbolic, quasi-isometric to a word metric (for a finite generating set), and $\G$-invariant. It is known that every pseudo metric in $\Dc_\G$ is roughly geodesic. 

The \emph{stable translation length} of $d \in \Dc_\G$ is the function
\begin{equation*}
    \ell_d(x):=\lim_{m\to \infty}{\frac{1}{m}d(o,x^m)} \hspace{2mm} \text{for } x\in \G,
\end{equation*}
which is well-defined by subadditivity. The \emph{exponential growth rate} of $d$ is the quantity 
\[
v_d = \limsup_{T\to\infty} \frac{1}{T} \log \#\{x \in \G: d(o,x) < T\} \in \R^+.
\]


Given $d,d_\ast \in \G$, the \emph{dilation} of $d_\ast$ with respect to $d$ is the quantity
\[
\Dil(d_\ast,d) = \sup_{x} \frac{\ell_{d_\ast}(x)}{\ell_{d}(x)},
\]
where the supremum is taken over all the non-torsion elements $x\in \G$. Note that $\Dil(d_\ast,d)$ is positive and finite, and greater or equal to 1 when $v_d=v_{d_\ast}=1$ \cite[Lemma 3.6]{oregonreyes.metric}. Also, we have that $d,d_\ast$ are roughly similar if and only
if $\Dil(d,d_\ast)\Dil(d_\ast,d) = 1$. From this observation we define the space of \emph{metric structures} $\scrD_\G$ to be the space $\Dc_\G$ modulo the equivalence relation of rough similarity which we equip with the (symmetrized) \emph{Thurston metric}
\begin{equation}\label{eq.defDel}
 \Del([d],[d_*]):=\log  \left(\Dil(d,d_*) \Dil(d_*,d) \right).  
\end{equation}

Here $[d]\in \scrD_\G$ denotes the rough similarity class of $d\in \calD_\G$. The geometry of $(\scrD_\G,\Del)$ was studied by the second author and the first two authors in \cite{oregonreyes.metric} and \cite{cantrell-reyes.manhattan}.

Natural metrics in $\calD_\G$ are word metrics for finite and symmetric generating subsets of $\G$. More generally, if $\G$ acts properly and cocompactly by isometries on  the geodesic metric space $(X,d)$ and $p\in X$ then $d_X^p(x,y):=d(x\cdot p,y \cdot p)$ defines a metric belonging $\calD_\G$.

Another important class of metrics in $\Dc_\G$ are \textit{Green metrics}. Let $\lam$ be a finitely supported, symmetric, probability measure on $\G$ such that the support of $\lam$ generates $\G$ as a semi-group. Such a measure is called \emph{admissible}. The Green's function associated to $\lam$ is the function
\[
G_\lam(x,y) = \sum_{k\ge 0} \lam^{\ast k}(x^{-1}y) \ \ \text{ for $x,y \in \G$}
\]
from which we define the Green metric
\[
d_\lam(x,y): = - \log\left(\frac{G_\lam(x,y)}{G_\lam(o,o)} \right) \ \ \text{ for $x,y \in \G$.}
\]
Intuitively $d_\lam$ assigns the distance between $x,y \in \G$ to be the (minus logarithm of the) probability that a $\lam$-random walk starting at $x$ reached $y$. Two key properties of Green metrics to note are that
\begin{enumerate}
    \item they have exponential growth rate $1$ \cite{BHM-greenspeed}; and,
    \item the quasi-conformal measures for the metric $d_\lam$ are in the same class as the hitting measure for $\lam$ in the Gromov boundary $\partial \G$ (see \Cref{sec.boundary} and \cite{BHM.harmonic}).
\end{enumerate}

Green metrics also serve as input metrics to construct Mineyev's topological flow, see \Cref{sec.mf}. In addition, the following property of Green metrics is key in the proof of \Cref{thm.wj=j}, and is proven in the forthcoming work of the first two authors and D\'idac Mart\'inez-Granado \cite{CMGR}.
\begin{theorem}\label{thm.gmd}
    Green metrics are dense in the space $(\scrD_\G, \Del)$. That is, the collection of equivalence classes in $\scrD_\G$ that contain a Green metric are dense with respect to the metric $\Del$. 
\end{theorem}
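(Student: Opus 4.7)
The plan is to show that for every $d \in \calD_\G$ and every $\e > 0$ there is an admissible finitely supported symmetric probability measure $\lam$ on $\G$ with $\Del([d],[d_\lam]) < \e$. Since $\scrD_\G$ is a quotient by rough similarity, we may rescale $d$ so that $v_d = 1$, matching the exponential growth rate of every Green metric \cite{BHM-greenspeed}. The natural candidates are measures built from $d$ itself: for large $R > 0$ and parameter $s > 0$ set
\[
\lam_{R,s}(x) \;=\; Z(R,s)^{-1}\, e^{-s\, d(o,x)}\, \1_{\{0 < d(o,x) \le R\}}(x),
\]
where $Z(R,s)$ is the normalizing constant. Symmetry follows from symmetry and $\G$-invariance of $d$, and admissibility holds once $R$ exceeds a generating radius. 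The parameter $s = s(R)$ will be tuned, using strict convexity of the cumulant $s \mapsto \log \sum_{x} e^{-s d(o,x)} \1_{\{d(o,x) \le R\}}$, so that the resulting Green metric $d_{\lam_{R,s(R)}}$ becomes asymptotically proportional to $d$ with proportionality constant $1$; the hypothesis $v_d = 1$ forces $s(R) \to 1$ as $R \to \infty$.

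The core of the argument is a pointwise comparison of $d_{\lam_{R,s(R)}}$ with $d$. Invoking Ancona-type inequalities for random walks on hyperbolic groups (see \cite{BHM.harmonic} and refinements), and exploiting the exponential shape of $\lam_{R,s(R)}$ together with hyperbolicity of $d$, one shows that the Green's function satisfies
\[
G_{\lam_{R,s(R)}}(o,x) \;\asymp\; e^{-s(R)\, d(o,x) + O_R(1)},
\]
so that $d_{\lam_{R,s(R)}}(o,x) = s(R)\, d(o,x) + \eta_R(x)$ with
\[
\sup_{d(o,x) \ge T}\frac{|\eta_R(x)|}{d(o,x)} \;\xrightarrow[R \to \infty]{}\; 0
\]
uniformly in $T$ for $T$ large. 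Passing from this pointwise comparison to stable translation lengths via $\ell_{d_\lam}(x) = \lim_{m} d_\lam(o,x^m)/m$ and taking suprema over non-torsion $x$, one concludes $\Dil(d, d_{\lam_{R,s(R)}}) \to 1$ and $\Dil(d_{\lam_{R,s(R)}}, d) \to 1$ as $R \to \infty$, hence $\Del([d], [d_{\lam_{R,s(R)}}]) \to 0$, which is what we needed.

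The main obstacle lies in the Ancona comparison step: the standard inequalities fix the driving measure and let the spatial variable vary, whereas here one needs estimates that are uniform as both the support radius $R$ and the target distance $d(o,x)$ grow, with constants sharp enough to pin down the proportionality factor to $1$ in the limit. Controlling this requires simultaneous handling of the spectral radius of $\lam_{R,s(R)}$, H\"older regularity of the Martin kernel, and uniform hyperbolicity of $d_{\lam_{R,s(R)}}$ as $R \to \infty$, and is the technical heart of the forthcoming work \cite{CMGR}. The rest of the argument, namely the conversion between Green's function asymptotics, stable translation lengths, and the symmetrized Thurston metric $\Del$, is standard given these uniform estimates.
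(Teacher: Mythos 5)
The paper does not actually prove \Cref{thm.gmd}: it is stated as a result from the forthcoming work \cite{CMGR} and is used as a black box. So there is no internal proof to compare your sketch against, and you are essentially proposing an approach that \cite{CMGR} might take. That said, one step of your sketch is not merely left as a black box but is quantitatively wrong.

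You claim an Ancona-type estimate of the form
\[
G_{\lam_{R,s(R)}}(o,x) \asymp e^{-s(R)\, d(o,x) + O_R(1)},
\]
i.e.\ $d_{\lam_{R,s(R)}}(o,x) = s(R)\,d(o,x) + \eta_R(x)$ with $\eta_R$ bounded by a constant depending only on $R$. But a bounded error term kills the error entirely at the level of stable translation lengths: one would get $\ell_{d_{\lam_{R,s(R)}}}(x) = s(R)\,\ell_d(x)$ for every $x$, hence $\Del([d],[d_{\lam_{R,s(R)}}]) = 0$ for every large $R$. That would show \emph{every} metric structure with $v_d = 1$ is a Green metric structure, which is far stronger than density and is almost certainly false (e.g.\ for a Fuchsian $d$ on a surface group, $[d]$ being a Green structure is a strong rigidity condition tied to absolute continuity of harmonic versus Patterson--Sullivan measure). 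In other words, if your estimate were correct, the theorem would be a trivial corollary and would not have been stated as ``density.'' The correct target is a weaker comparison of the shape
\[
|d_{\lam_{R,s(R)}}(o,x) - s(R)\,d(o,x)| \le \ep_R\, d(o,x) + C_R,
\]
with $\ep_R \to 0$ as $R \to \infty$ but $C_R$ allowed to blow up: this is exactly what is needed so that $\Dil(d,d_{\lam_{R,s(R)}})\,\Dil(d_{\lam_{R,s(R)}},d) \to 1$ without each approximant being roughly similar to $d$. Also note your displayed limit for $\eta_R$ (sup over $d(o,x)\ge T$ tending to $0$ as $R\to\infty$, uniformly in $T$) is not implied by an $O_R(1)$ bound and is internally inconsistent with it; it is really the $\ep_R d(o,x) + C_R$ formulation that you want to aim for. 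The rest of your proposal (choice of exponential weights, tuning $s(R)$ via the cumulant, reduction to dilations) is a reasonable heuristic, but the genuine work is precisely in obtaining the sublinear comparison with the right uniformity, and that is what is deferred to \cite{CMGR}.
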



\subsection{The Gromov boundary and tempered potentials}\label{sec.boundary}

Hyperbolic groups can be compactified using their Gromov boundary $\partial \G$ which consists of equivalence classes
of divergent sequences. We fix a reference metric $d$ in $\Dc_\G$ and say that sequence of group elements $\{x_n\}_{n=0}^\infty$ \emph{diverges}
if $(x_n|x_m)_o$ (computed with respect to $d$) tends to infinity as $\min\{n, m\}$ tends to infinity. Two divergent sequences $\{x_n\}_{n=0}^\infty$ and $\{y_n\}_{n=0}^\infty$ are \emph{equivalent} if $(x_n|y_m)_o$ tends to infinity as $\min\{n, m\}$ tends to infinity. The notions of divergence and equivalence are independent of the reference metric $d$. The \emph{Gromov boundary} of $\G$ is the space $\partial \G$ of equivalence classes of divergent sequences in $\G$. If $\xi \in \partial \G$ is represented by the divergent sequence $\{x_n\}_n$ we say that $x_n$ \emph{converges} to $\xi$.

As in \eqref{eq.defGroprod}, given a function $\psi:\G \times \G \ra \R$ its Gromov product is given by 
\begin{equation*}
   (x|y)_w^\p:= \frac{1}{2}(\p(x,w) + \p(w,y) - \p(x,y)) \text{ for $x,y,w \in \G$.} 
\end{equation*}

We are interested in the class of \emph{tempered potentials}, introduced and studied in \cite{cantrell-tanaka.manhattan}. Given $\psi: \G \times \G \ra \R$, we say that $(\cdot | \cdot)^\p_o$ admits a `quasi-extension' to $\G \cup \partial \G$ if it can be extended to $(\cdot | \cdot)^\p_o: (\G \cup \partial \G)^2 \to \R$ in such a way that for some constant $C\ge 0$ we have that
\begin{equation}\label{Eq:QE}\tag{QE}
\limsup_{n \to \infty}(x_n|y_n)^\p_o-C \le (\x|\y)^\p_o \le \liminf_{n \to \infty}(x_n'|y_n')^\p_o+C
\end{equation}
for all $\x, \y \in \G \cup \partial \G$ and for all $\{x_n\}_{n=0}^\infty, \{x_n'\}_{n=0}^\infty \in \x$ and $\{y_n\}_{n=0}^\infty, \{y_n'\}_{n=0}^\infty \in \y$ (if $\x\in \G$ we interpret $\{x_n\}_{n=0}^\infty\in \x$ as the sequence $\{x_n\}_n$ being eventually constant and equal to $\x$).

We are also interested in the behavior of $\p$ along rough geodesics with respect to a reference pseudo metric $d\in \calD_\G$. We say that $\p$ satisfies the `rough geodesic' condition if for all large enough $C, R \ge 0$, there exists $C_0 \ge 0$ such that for all $x,y\in \G$ and any $(C,d)$-rough geodesic $\g$ between $x$ and $y$, we have
\begin{equation}\label{Eq:RG}\tag{RG}
|(x| y)_w^\p| \le C_0
\end{equation}
for all $w$ in the $R$-neighborhood of $\g$ (with respect to the metric $d$). The Morse lemma implies that the rough geodesic condition is independent of the reference metric $d$.

\begin{definition}\label{def.tempered}
   A function $\psi: \G\times \G \to \R$ is a \emph{tempered potential} if it is invariant under the left-action of $\G$, and satisfies both the quasi-extension condition \eqref{Eq:QE} and the rough geodesic condition \eqref{Eq:RG}.
\end{definition}

\begin{remark}\label{rmk.invariancefortempered}
  Our definition of tempered potential is slightly different than that in \cite{cantrell-tanaka.manhattan} since we require $\G$-invariance.  
\end{remark}

Hyperbolicity and the Morse lemma imply that any pseudo metric in $\calD_\G$ is a tempered potential. Moreover, linear combinations of tempered potentials are again tempered potentials.

Given $R>0$ and a reference metric $d\in \G$ with quasi-extension $(\cdot|\cdot)_o^d:(\G \cup \partial \G)^2 \ra \R$, the \textit{shadow set} of $x \in \G$ is
\[
O(x,R) =O_d(x,R) =\{ \x \in \partial \G : (x|\x)^d_o \ge d(o,x) - R\} \subset \partial \G.
\]

If $\psi$ is a tempered potential, then its \emph{Busemann function} for $\b^\psi_o:\G \times \partial \G$ is defined according to
\begin{equation}\label{eq.defbusemann}
  \b^\psi_o(x, \x):=\sup\big\{\limsup_{n \to \infty}(\psi(x, \x_n)-\psi(o, \x_n)) \ : \ \{\x_n\}_{n=0}^\infty \in \x\big\}.
\end{equation}

For $d\in \calD_\G$, we note that $\b^d_o$ is the usual Busemann function on $(\G,d)$.  The Busemann function depends on the chosen quasi-extension of the Gromov product of $\p$, but any two such extensions will give Busemann functions that differ by a uniformly bounded function. In addition, there exists a constant $C'$ such that for a large enough $R$ and for all $x \in \G$,
\begin{equation}\label{Eq:corRG}
|\b_o^\psi(x, \x)+\psi(o, x)| \le C' \quad \text{for all $\x \in O(x, R)$}.
\end{equation}
Furthermore, $\b^\psi_o$ is a \emph{quasi-cocycle}. That is, we can find $C''>0$ such that
\begin{equation}\label{eq.busemanncocycle}
   |\b^\psi_o(xy, \x)-(\b^\psi_o(y, x^{-1}\x)+\b^\psi_o(x, \x))| \le C'' \text{ for all } x,y\in \G ,\x \in \partial \G. 
\end{equation}

\subsection{Geodesic currents}

The Gromov boundary $\partial \G$ has a natural topology making it into a compact metrizable space. The \emph{double boundary} is the set $\partial ^2 \G$ of ordered pairs of distinct points of $\partial \G$. We equip $\partial ^2 \G$ with the topology induced by the inclusion $\partial ^2 \G \subset (\partial\G)^2$. With this topology the diagonal action of $\G$ on $\partial ^2 \G$ is topological and cocompact. 

A \emph{geodesic current} on $\G$ is a $\G$-invariant Radon measure on $\partial ^2 \G$. We let $\calC_\G$ denote the space of all the geodesic currents on $\G$ equipped with the weak$^\ast$ topology. We also let $\bbP\calC_\G=(\calC_\G-\{0\})/\R^+$ denote the space of \emph{projective geodesic currents} equipped with the quotient topology. The space $\bbP\calC_\G$ is compact and metrizable.

\begin{remark}\label{rmk.flipinvariant}
Some authors require geodesic currents to be invariant under the flip $(\x,\eta) \to (\eta,\x)$. However, we will not make this assumption in this work. Indeed, we are also interested in geodesic currents that are not flip invariant. 
\end{remark}

Bonahon introduced geodesic currents in \cite{bonahon.currentshpygroups} as a completion of the space of conjugacy classes in $\G$. Indeed, if $x$ is a primitive non-torsion element (i.e. if $x=y^m$ for some $y\in \G$ then $|m|=1$ and $x^+,x^-$ are its attracting and repelling fixed points in $\partial \G$ respectively, we let $\mu_x$ be the sum of the atomic measures supported on the $\G$-translates of $(x^-,x^+)$. If $x=y^m$ for some $y\in \G$ primitive and $m\in \N$, we set $\mu_x:=m\mu_y$. Clearly $\mu_x$ is discrete, hence defines a geodesic current: the \emph{rational current} associated to $x$. Bonahon \cite{bonahon.currentshpygroups} proved that real multiples of rational currents are dense in $\calC_\G$.

As noted by Furman \cite{furman}, pseudo metrics in $\calD_\G$ also induce geodesic currents in a natural way. Given $d \in \Dc_\G$, a Borel probability measure $\nu$ on $\partial \G$ is \emph{quasi-conformal} for $d$ there exists a constant $C>1$ such that for every $x\in \G$ and $\nu$-almost every $\x\in \partial \G$ we have
\begin{equation}\label{eqqcmeasdef}
     C^{-1}e^{-v_d\b_o^d(x,\x)}\leq \frac{dx_\ast\nu}{d\nu}(\x)\leq Ce^{-v_d\beta_o^d(x,\x)},
\end{equation}
where $\beta_o^d$ is a Busemann quasi-cocycle for $d$ and $v_d$ is the exponential growth rate of $d$. Quasi-conformal measures exist for every $d\in \calD_\G$ \cite{coornaert}, and any two quasi-conformal measures for $d$ are absolutely continuous with uniformly bounded Radon-Nikodym derivatives. Moreover, for $d,d_\ast\in \calD_\G$ and quasi-conformal measures $\nu,\nu_\ast$ for $d,d_\ast$ respectively, we have that $\nu$ and $\nu_\ast$ are absolutely continuous with respect to each other if and only if $d$ and $d_\ast$ are roughly similar.

If $\nu$ is a quasi-conformal measure for $d$ then there exists a geodesic current $\Lam_d$ in the measure class of $\nu \otimes \nu$. Indeed, it is of the form
\begin{equation}\label{eq.defBM}
  d\Lam_d(\x,\y)=\al(\xi,\y)\exp(2 v_d(\x|\y)^d_o)d\nu (\x)d\nu(\y)  
\end{equation}
for a measurable function $\al$ that is essentially bounded and bounded away from zero. Any geodesic current satisfying \eqref{eq.defBM} is ergodic ($\G$-invariant Borel subsets have either zero or full measure), and any two such currents must differ by a scaling factor. Hence the projective class of $\Lam_d$ depends only on the metric structure $[d]\in \scrD_\G$. We call any such $\Lam_d$ a \emph{Bowen-Margulis current} for $d$ (or $[d]$). 





\subsection{The Mineyev flow}\label{sec.mf}
In this section, we follow the discussion in \cite[Section 2]{cantrell-tanaka.manhattan}. We fix a \emph{strongly hyperbolic} metric $\wh d \in \calD_\G$ \cite{nica-spakula}. For example, $\wh d$ can be a Green metric   
associated to a finitely supported, symmetric, admissible probability measure on $\G$ (see Section 2 of \cite{cantrell-tanaka.manhattan}). We will follow Mineyev's construction \cite{mineyev.flow} to obtain a coarse geometric analogue of a geodesic flow associated to the metric $\wh d$. Mineyev originally constructed the flow using the so-called Mineyev hat metric, but any strongly hyperbolic metric has the same required properties for the construction of the flow. The key fact is that a strongly hyperbolic metric $\wh d$ has a Busemann function $\wh \b_o=\beta_o^{\wh d}$ that is a continuous \emph{cocycle}, in the sense that it is continuous on $\G \times \partial \G$ and satisfies \eqref{eq.busemanncocycle} with $C''=0$. 

Now note that there is a constant $C\ge 0$
such that for each $(\x, \y) \in \partial^2 \G$
there is a $C$-rough geodesic $\g_{\x, \y}: \R \to (\G, \wh d\,)$ with endpoints $\x$ and $\eta$ at $-\infty$ and $\infty$ respectively \cite[Proposition 5.2 (3)]{bonk-schramm}.
We parameterize $\g_{\x, \y}$ in such a way that
\[
\wh d(\g_{\x, \y}(0), o)=\min_{t \in \R}\wh d(\g_{\x, \y}(t), o)
\]
and define
\[
\ev: \partial^2 \G \times \R \to \G \ \ \text{ by } \ \ \ev(\x, \y, t):=\g_{\x, \y}(t).
\]
Note that the map $\ev$ depends on the choice of $C$-rough geodesics,
however, every other choice yields the map whose image lies in a uniformly bounded distance by hyperbolicity. Equip the space of $C$-rough geodesics on $(\G, \wh d\,)$ with the pointwise convergence topology and define $\ev: \partial^2 \G \times \R \to \G$ as a measurable map by assigning $\g_{\x, \y}$ to $(\x, \y) \in \partial^2 \G$ in a Borel measurable way. To do this first fix a set of generators $S$ in $\G$ and an order on it. Then consider $C$-rough geodesics evaluated on the set of integers as sequences of group elements and choose lexicographically minimal ones $\g^0_{\x, \y}$ for each $(\x, \y) \in \partial^2 \G$. Finally we define $\g_{\x, \y}(t):=\g^0_{\x, \y}(\lfloor t\rfloor)$ for $t \in \R$ where $\lfloor t\rfloor$ is the largest integer at most $t$.

Letting $\wh \b_o: \G \times \partial \G \to \R$ be the Busemann function based at $o$ associated with $\wh d$, we define the cocycle $\k:\G \times \partial^2 \G \to \R$ by
\[
\k(x, \x, \y):=\frac{1}{2}\left(\wh \b_o(x^{-1}, \x)-\wh \b_o(x^{-1}, \y)\right)
\]
where the cocycle identity for $\k$ follows from \eqref{eq.busemanncocycle} (recall that $C''=0$ in this case). Then $\G$ acts on $\partial^2 \G \times \R$ through $\k$ via
\[
x\cdot(\x, \y, t):=(x \x, x\y, t-\k(\x, \y, t)) \ \text{ for $x\in\G$.}
\]
This is called the {\it $(\G, \k)$-action} on $\partial^2 \G \times \R$.
Tanaka showed that the $(\G, \k)$-action on $\partial^2 \G \times \R$ is properly discontinuous and cocompact \cite{tanaka.topflows}.
That is, the quotient topological space $\calF_\k:=\G \backslash (\partial^2 \G \times \R)$ is compact.
We define an $\R$-action $\wt \F$ on $\partial^2 \G \times \R$ by
translation in the $\R$-component:
\[
\wt \F_s(\x, \y, t):=(\xi, \y, t+s).
\]
This action and the $(\G, \k)$-action commute and thus
the $\R$-action $\wt \F$ descends to the quotient
\[
\F_s[\x, \y, t]:=[\x, \y, t+s] \quad \text{for $[\x, \y, t] \in \Fc_\k$}.
\]
Then $\R$ acts on $\Fc_\k$ via $\F$ continuously.
We call the $\R$-action $\F$ on $\Fc_\k$ the {\it topological flow} (or, simply the {\it flow}) on $\Fc_\k$.

We are interested in Borel measures that are invariant under the flow on $\Fc_\k$.
Given a geodesic current $\L\in \calC_\G$ there exists a unique finite Radon measure $m=m_\Lam$ invariant under the flow on $\Fc_\k$ and
such that
\[
\int_{\partial^2 \G\times \R}f\,d\L dt=\int_{\Fc_\k}\wbar f\,dm
\]
for all compactly supported continuous functions $f$ on $\partial^2 \G \times \R$,
where $\wbar f$ is the $\G$-invariant function
\[
\wbar f(\x, \y, t):=\sum_{x \in \G}f(x\cdot(\x, \y, t)),
\]
considered as a function on $\Fc_\k$ \cite[Lemma 3.4]{tanaka.topflows}. 
Often we normalize $\L$ in such a way that the corresponding flow invariant measure $m_\L$ has
the total measure $1$ (and so is a probability measure on $\Fc_\k$). 

Given $d \in \Dc_\G$ and a (normalized) Bowen-Margulis current $\Lam_d$ for $d$, 
we let $m_d=m_\L$ denote the corresponding flow invariant probability measure on $\Fc_\k$. This measure is ergodic with respect to the flow, in the sense that for every Borel set $A$ such that $\F_{-t}(A)=A$ for all $t \in \R$,
either $m_d(A)=0$ or $1$ 
\cite[Proposition 2.11 and Theorem 3.6]{tanaka.topflows}.


\section{Hyperbolic metric potentials} \label{sec.H_G}

In this section we explore further properties of the class $\calH_\G$ of hyperbolic metric potentials introduced in \Cref{sec.jts}. Recall that these are functions $\p:\G \times \G \ra \R$ that are $\G$-invariant and such that for any $d_0\in \calD_\G$ there exists $\lam>0$ satisfying  
\begin{equation}\label{eq.defH_G}
   |(x|y)_{w}^\psi| \leq \lam (x|y)_w^{d_0}+\lam 
\end{equation}
for all $x,y,w \in \G$. In particular, by comparing \eqref{eq.defH_G} and \eqref{Eq:RG} we see that any tempered potential belongs to $\calH_\G$. The converse also holds, namely:

\begin{proposition}\label{prop.H_G=temp}
    $\calH_\G$ coincides with the space of tempered potentials on $\G$. 
\end{proposition}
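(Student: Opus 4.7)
The plan is to prove both inclusions via the standard fact that in a $\del$-hyperbolic space $(\G, d_0)$, every triangle $(x,y,w)$ admits an approximate Gromov center $c = c(x,y,w)$ within bounded $d_0$-distance of each of the three sides, with $d_0(c,w) = (x|y)_w^{d_0} + O(1)$ and analogous identities at $x$ and $y$. Before handling either direction, I will establish a quasi-Lipschitz estimate $|\p(o,z)| \le \lam' d_0(o,z) + \lam'$: under the tempered hypothesis, by iterating the one-step bound coming from \eqref{Eq:RG} applied to a $d_0$-rough geodesic from $o$ through $z$; under the $\calH_\G$ hypothesis, by applying the $\calH_\G$ inequality to $(o|u)_v^\p$ for $d_0(u,v) \le 1$, which yields a uniform one-step bound that bootstraps along a rough geodesic in the same way.

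For $\text{Tempered} \Rightarrow \calH_\G$, given $x,y,w$ I will let $c$ be a Gromov center of $(x,y,w)$ in $(\G,d_0)$ and apply \eqref{Eq:RG} along the three $(C,d_0)$-rough geodesics from $x$ to $y$, from $x$ to $w$, and from $w$ to $y$, each of which passes within uniformly bounded $d_0$-distance of $c$. This yields $\p(x,y) = \p(x,c) + \p(c,y) + O(1)$ and similarly for the other two sides; substituting into the definition of $(x|y)_w^\p$ cancels the $\p(x,c)$ and $\p(c,y)$ terms, leaving $(x|y)_w^\p = \tfrac{1}{2}(\p(c,w) + \p(w,c)) + O(1)$. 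Quasi-Lipschitzness then bounds this by $\lam'\, d_0(c,w) + O(1) \le \lam'\,(x|y)_w^{d_0} + O(1)$, which is the $\calH_\G$ inequality.

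For $\calH_\G \Rightarrow \text{Tempered}$, the \eqref{Eq:RG} condition follows immediately: if $w$ lies in the $R$-neighborhood of a $(C,d_0)$-rough geodesic from $x$ to $y$, then $(x|y)_w^{d_0} \le R + O(C)$, so the $\calH_\G$ inequality bounds $|(x|y)_w^\p|$ by $\lam(R + O(C)) + \lam$. For \eqref{Eq:QE}, given $\x, \y \in \G \cup \partial \G$ and sequences $x_n \to \x$, $y_n \to \y$, I will run the analogous Gromov center computation with the center $c_n$ of the triangle $(o, x_n, y_n)$: applying the $\calH_\G$ inequality to $(x_n|y_n)_{c_n}^\p$, $(o|x_n)_{c_n}^\p$ and $(o|y_n)_{c_n}^\p$ (whose $d_0$-Gromov products are all $O(\del)$) and rearranging gives $(x_n|y_n)_o^\p = \tfrac{1}{2}(\p(o,c_n) + \p(c_n,o)) + O(1)$. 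Since in a hyperbolic space the $c_n$ stabilize up to bounded $d_0$-distance as $x_n \to \x$, $y_n \to \y$, this defines $(\x|\y)_o^\p$ with the constant required by \eqref{Eq:QE}.

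The main obstacle will be the quasi-Lipschitz estimate under the $\calH_\G$ hypothesis, which must be extracted without circularity: one must show first that $|\p(o,u) - \p(o,v)|$ is bounded when $d_0(u,v) \le 1$ by combining the $\calH_\G$ inequality applied to $(o|u)_v^\p$ with a finite-ball bound on $\p$, and only then iterate this one-step estimate along a rough geodesic from $o$ to $z$. Once quasi-Lipschitzness is available, the Gromov center manipulation reduces both directions to a single uniform calculation that is robust under the bounded errors inherent to hyperbolicity.
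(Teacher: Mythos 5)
Your argument is correct in spirit and takes a genuinely different route from the paper. The paper proves $\calH_\G\Rightarrow$ Tempered by first extracting \eqref{Eq:RG} directly from \eqref{eq.defH_G}, and then deducing \eqref{Eq:QE} via a chain of reductions through its structure theory: \Cref{lem.GPpositiveH_G} to pass to $\widehat\calD_\G$, \Cref{coro.symGP} to symmetrize, and then a citation to \cite[Cor.~5.4]{cantrell-reyes.manhattan} to write the potential as a difference of genuine pseudo metrics in $\calD_\G$, for which \eqref{Eq:QE} is already known; the general case then follows from linearity and \Cref{prop.descriptionH_G}. The converse (Tempered $\Rightarrow\calH_\G$) is dispatched in the paper with the one-line remark that it follows ``by comparing'' the two definitions, which in fact glosses over the point you handle carefully — one needs the quasi-Lipschitz bound $|\psi(o,z)|\lesssim d_0(o,z)$ before the Gromov-center cancellation can convert the local statement \eqref{Eq:RG} into the global one $|(x|y)_w^\psi|\le\lam(x|y)_w^{d_0}+\lam$. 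Your approach is thus more self-contained and more symmetric: a single Gromov-center identity, essentially the content of the paper's \eqref{eq.propQCenter} and \Cref{lem.symmetryGroProd}, drives both inclusions once the quasi-Lipschitz estimate is in hand, whereas the paper's proof imports nontrivial structural decomposition lemmas and an external reference. What the paper's route buys is that those structural facts are needed elsewhere anyway, so the marginal cost is low. One point of caution for your write-up: when $\x=\y\in\partial\G$ the Gromov centers $c_n$ do \emph{not} stabilize — they escape to infinity along the ray toward $\x$ — so the assertion that ``the $c_n$ stabilize up to bounded $d_0$-distance as $x_n\to\x$, $y_n\to\y$'' holds only for $\x\ne\y$ (or when one of $\x,\y$ lies in $\G$). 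The paper's stated codomain $\R$ for the quasi-extension has the same issue on the diagonal (for proper $\psi$ the diagonal value would have to be $+\infty$), so this is a defect shared with the source; you should either restrict to $\x\ne\y$ or note that the extension is valued in $[-\infty,+\infty]$.
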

We postpone the proof of this proposition after reviewing some properties of hyperbolic metric potentials. First, we note that $\calH_\G$ is a real vector space containing the space $\calB_\G$ of bounded and $\G$-invariant functions from $\G \times \G$ into $\R$. Also, $\calH_\G$ contains the space $\widehat\calD_\G$ of \emph{hyperbolic distance-like functions} on $\G$ \cite[Definition 3.1]{cantrell-reyes.manhattan}. A hyperbolic distance-like function is a nonnegative and $\G$-invariant map $\p:\G \times \G \ra \R$ satisfying \eqref{eq.defH_G}, $\p(x,x)=0$ for all $x\in \G$, and the triangle inequality $(x|y)_w^\p\geq 0$ for all $x,y,w\in \G$. In particular, we have the inclusions $\calD_\G \subset \ov\calD_\G \subset \wh \calD_\G$, where $\ov\calD_\G$ is the  space of all the $\G$-invariant pseudo metrics on $\G$ satisfying \eqref{eq.defH_G}.

\subsection{Structural properties}
As mentioned in the introduction, the \emph{stable translation length} of $\p\in \calH_\G$ is the function $\ell_\psi: \G \ra \R$ given by
\begin{equation}\label{eq.defSTLcalH}
    \ell_\psi(x):=\lim_{k \to \infty}{\frac{\psi(o,x^k)}{k}}.
\end{equation}

We first check that this function is well-defined.

\begin{lemma}\label{lem:MLSnonegative}
    For any $\psi\in \calH_\G$ and $x\in \G$ the limit in \eqref{eq.defSTLcalH} exists. Moreover, the following two are equivalent:
    \begin{enumerate}
    \item there exists $C>0$ such that $\psi(x,y)\geq -C$ for all $x,y\in \G$.
    \item $\ell_\psi(x)\geq 0$ for all $x\in \G$.     
    \end{enumerate}
\end{lemma}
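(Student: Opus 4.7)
The plan is to first establish that the limit defining $\ell_\psi(x)$ exists as a finite real number, and then prove the equivalence of (1) and (2).

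For existence, if $x$ is torsion then $\{x^k\}_k$ is periodic and $\psi(o,x^k)/k\to 0$ trivially. If $x$ is non-torsion, fix any reference $d_0\in\calD_\Gamma$; the cyclic orbit $\{x^k\}_{k\in\Z}$ is then a quasi-geodesic in $(\Gamma,d_0)$, so by $\delta$-hyperbolicity the $d_0$-Gromov products $(o|x^{n+m})^{d_0}_{x^n}$ are uniformly bounded in $n,m$ by some $B_x$. The defining inequality of $\calH_\Gamma$ gives $|(o|x^{n+m})^\psi_{x^n}|\le \lambda(B_x+1)$, and combined with $\Gamma$-invariance (which implies $\psi(x^n,x^{n+m})=\psi(o,x^m)$) the identity
\[
\psi(o,x^{n+m})=\psi(o,x^n)+\psi(o,x^m)-2(o|x^{n+m})^\psi_{x^n}
\]
yields approximate sub- and superadditivity of $a_n:=\psi(o,x^n)$ with uniform error $C_x=2\lambda(B_x+1)$. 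Applying Fekete's lemma to both of the subadditive sequences $n\mapsto a_n+C_x$ and $n\mapsto -a_n+C_x$ forces $a_n/n$ to converge to a finite real number.

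The direction (1)$\Rightarrow$(2) is immediate: $\psi\ge -C$ gives $\psi(o,x^n)/n\ge -C/n\to 0$. For (2)$\Rightarrow$(1), telescoping the above identity over one-step increments produces for each non-torsion $g$ the estimate $|\ell_\psi(g)-\psi(o,g)|\le C_g$. I would then use cocompactness of the $\Gamma$-action on $d_0$-axes to pass to a conjugate $g^\ast=hgh^{-1}$ with $d_0(o,\mathrm{axis}(g^\ast))$ uniformly bounded, making $C_{g^\ast}\le C^\ast$ universal; conjugacy-invariance of $\ell_\psi$ (obtained from a direct estimate bounding $\psi(o,hx^nh^{-1})-\psi(o,x^n)$ uniformly in $n$ via the quasi-geodesic behaviour of $\{hx^nh^{-1}\}_n$), combined with assumption (2), then gives $\psi(o,g^\ast)\ge -C^\ast$ for every non-torsion $g$. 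Arguing by contradiction, if $\psi$ were unbounded below, I would take $y_k$ with $\psi(o,y_k)\to-\infty$ and construct short loxodromic perturbations $h_k$ of bounded $d_0$-size so that $z_k=y_kh_k$ is non-torsion with $y_k$ close to a $d_0$-geodesic from $o$ to $z_k$; this forces $\psi(o,z_k)=\psi(o,y_k)+O(1)\to-\infty$, and passing to the conjugate representative contradicts the uniform lower bound.

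The main obstacle is the uniformity step in (2)$\Rightarrow$(1): the cyclic-orbit quasi-geodesic constants of $\{g^n\}$ depend on $d_0(o,\mathrm{axis}(g))$ and are therefore not uniform in $g$, so the reduction to conjugate representatives with axis near $o$ is essential, and carefully tracking how $\psi(o,\cdot)$ changes under conjugation and perturbation---using $\psi$-Gromov product bounds along the non-geodesic paths these reductions produce---is the main technical task.
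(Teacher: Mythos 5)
Your existence argument (sub- and superadditivity of $\psi(o,x^n)$ from uniform boundedness of the Gromov products $(x^{-n}\,|\,x^m)_o^\psi$, then Fekete) is essentially identical to the paper's, which cites \cite[Lemma~3.11]{cantrell-reyes.manhattan} for the boundedness of the corresponding $d$-Gromov products along the cyclic orbit.

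For the implication (2)$\Rightarrow$(1), however, your route introduces a genuine gap. You establish the uniform lower bound $\psi(o,g^\ast)\ge -C^\ast$ only for \emph{conjugate representatives} $g^\ast$ whose axis passes near $o$. In the contradiction step you produce $z_k=y_kh_k$ with $\psi(o,z_k)\to-\infty$, but what you arrange is that $y_k$ lies near a $d_0$-geodesic segment from $o$ to $z_k$; this does \emph{not} ensure that $o$ lies near the bi-infinite \emph{axis} of $z_k$, i.e.\ that $(z_k\,|\,z_k^{-1})_o^{d_0}$ is small. If $o$ is far from the axis of $z_k$, the conjugating element $h_k'$ needed to move the axis to $o$ has unbounded $d_0$-length, and then $\psi(o,h_k'z_k(h_k')^{-1})$ has no reason to remain close to $\psi(o,z_k)$ --- your uniform bound applies to the conjugate but the divergence may not survive the conjugation. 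Conjugacy-invariance of $\ell_\psi$ is not enough to transfer the blow-up of $\psi(o,\cdot)$ itself.

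The fix is exactly what the paper does, and it also removes both the contradiction framing and the detour through axes: one shows directly (via \cite[Lemma~3.11]{cantrell-reyes.manhattan}, which is a Schottky-type proximality result closely related to what you are re-deriving through cocompactness on axes) that there exist $R,D>0$ so that any $x\in\G$ has a nearby element $y$ with $d(x,y)<R$ and $(y^{-n}\,|\,y^m)_o^d\le D$ for all $n,m\ge 1$. For such $y$ the constant $C_y$ in your telescoping estimate is bounded by the universal $C^\ast=\lambda D+\lambda$, hence $0\le\ell_\psi(y)\le\psi(o,y)+C^\ast$, and the Lipschitz property $|\psi(o,x)-\psi(o,y)|\le CR+C$ gives $\psi(o,x)\ge -(CR+C+C^\ast)$ outright. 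So your proximal-perturbation idea is the right one; the correction is to ask for $(z_k\,|\,z_k^{-1})_o^{d_0}$ to be uniformly small rather than for $y_k$ to lie near the $[o,z_k]$-geodesic, after which no conjugation (and no contradiction argument) is needed.
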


\begin{proof}
Fix a metric $d \in \Dc_\G$ and let $x\in \G$.
Clearly $\ell_\psi(x)$ is well-defined and equal to $0$ when $x$ is torsion, so suppose now that $x$ is non-torsion.   Note that 
    \begin{equation}\label{eq.gpineq}
        \psi(o,x^{n+m}) = \psi(o,x^n) + \psi(o,x^m) - 2(x^{-n}|x^{m})_o^\psi
    \end{equation}
    for all $n,m \ge 1$ and $x \in \G$.  From \cite[Lemma 3.11]{cantrell-reyes.manhattan} it follows that $(x^{-n}|x^{m})_o^{d}$ is uniformly bounded for all $n,m \ge 1$. 
    Then \eqref{eq.defH_G} and \eqref{eq.gpineq} imply that there exists $C' >0$ (depending on $x$) such that
    \[
     \psi(o,x^{n+m}) \le \psi(o,x^n) + \psi(o,x^m) +C'
    \]
    for all $n,m \ge 1$.  We then see by Fekete's lemma that the limit in \eqref{eq.defSTLcalH} exists and equals
    \[
    \ell_\psi(x) = \inf_{k \ge 1} \frac{1}{k} (\psi(o,x^{k}) +C').
    \]

    For the equivalence statements we note that $(1)$ immediately implies $(2)$. We therefore just need to show that $(2)$ implies $(1)$. Suppose $(2)$ holds and note that by the definition of $\calH_\G$ there exists $C>0$ such that $|\psi(o,xg) - \psi(o,x)| \le Cd(o,g)+C$ for all $x,g \in \G$. Also, from \cite[Lemma 3.11]{cantrell-reyes.manhattan} there exist $R,D > 0$ such that the following holds. For any $x \in \G$ there is $y \in \G$ with $d(x,y) < R$ and $(y^{-n}|y^m)_o^d \le D$ for all $n,m \ge 1$. In particular, the same argument as before implies that $\ell_\psi(y)=\inf_{k \geq 1 }{\frac{1}{k}(\psi(o,y^k)+\lam D+\lam)}\leq \psi(o,y)+\lam D \lam$, and hence we obtain
   \[
        0 \le \ell_\psi(y) \le \psi(o,y) +\lam D \lam \le \psi(o,x) +CR+C+ \lam D+ \lam.
\]
This completes the proof since $CR+C+ \lam D+ \lam$ is independent of $x\in \G$.
\end{proof}

Extending the notions from \eqref{sec.metricnotions}, we can talk of two hyperbolic metric potentials $\p,\p_\ast$ being quasi-isometric (resp. roughly similar/isometric). That is, whether $\p,\p_\ast$ satisfy \eqref{eq.defqi} for $F$ being the identity map $(\G,\p) \ra (\G,\p_\ast)$. As a corollary of \Cref{lem:MLSnonegative} we get that $\p,\p_\ast \in \calH_\G$ are roughly isometric if and only if $\ell_\p=\ell_{\p_\ast}$.

\begin{definition}\label{def.H^+_G}
   Let $\calH_\G^+\subset \calH_\G$ be the set of hyperbolic metric potentials such that $\ell_\p\geq 0$. 
\end{definition}
Equivalently, by \Cref{lem:MLSnonegative} we have that $\calH_\G^+$ coincides with the set of hyperbolic metric potentials that are bounded below. The next lemma shows that potentials in $\calH_\G^+$ coincide with those in $\wh \calD_\G$ up to functions in $\calB_\G$.


\begin{lemma}\label{lem.GPpositiveH_G}
    If $\psi\in \calH_\G^{+}$ then there exists $C'>0$ such that $(x|y)^\psi_w\geq -C'$ for all $x,y,w\in \G$. In consequence, $\p$ equals $d-b$ for some $d\in \wh\calD_\G$ and $b\in \calB_\G \cap \calD_\G$.
\end{lemma}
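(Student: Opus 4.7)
My plan is to prove the uniform bound $(x|y)_w^\psi \geq -C'$ first; the decomposition $\psi = d - b$ will then follow by a straightforward diagonal modification. Throughout, I will fix a reference metric $d_0 \in \calD_\G$, let $\lambda$ be the corresponding constant in \eqref{eq.defH_G} for $\psi$, and take $C > 0$ so that $\psi \geq -C$ (which exists by \Cref{lem:MLSnonegative}).

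The main geometric step is to produce, given any three points $x, y, w \in \G$, an \emph{incenter} $p \in \G$ of the triangle $xyw$: a point whose three $d_0$-Gromov products
\[
(x|y)_p^{d_0}, \; (x|w)_p^{d_0}, \; (w|y)_p^{d_0}
\]
are all bounded by a constant $K_0$ depending only on $d_0$. Concretely, I would take $p$ on a $d_0$-rough geodesic from $x$ to $w$ at distance roughly $(y|w)_x^{d_0}$ from $x$; the thin-triangle property of the hyperbolic rough-geodesic space $(\G, d_0)$ then forces $p$ to lie within a uniform $d_0$-distance of rough geodesics joining $x$ to $y$ and joining $w$ to $y$, and the three Gromov product bounds follow. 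Verifying this rough-incenter lemma is the main technical point, though it is a standard computation once one unpacks the rough-geodesic version of thin triangles.

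With $p$ in hand, \eqref{eq.defH_G} applied to $\psi$ gives $|(x|y)_p^\psi|, |(x|w)_p^\psi|, |(w|y)_p^\psi| \leq K := \lambda K_0 + \lambda$, which in terms of $\psi$ unpacks to
\begin{align*}
\psi(x,y) &\leq \psi(x,p) + \psi(p,y) + 2K,\\
\psi(x,p) + \psi(p,w) &\leq \psi(x,w) + 2K,\\
\psi(w,p) + \psi(p,y) &\leq \psi(w,y) + 2K.
\end{align*}
Here the hypothesis $\psi \geq -C$ enters decisively: substituting $\psi(p,w), \psi(w,p) \geq -C$ in the last two inequalities turns them into $\psi(x,p) \leq \psi(x,w) + 2K + C$ and $\psi(p,y) \leq \psi(w,y) + 2K + C$, which inserted into the first produce $\psi(x,y) \leq \psi(x,w) + \psi(w,y) + 6K + 2C$, i.e.\ $(x|y)_w^\psi \geq -(3K + C) =: -C'$.

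For the consequence, I would pick $C_* \geq \max\{2C', |\psi(o,o)|, C\}$ and set $d(x,y) := \psi(x,y) + C_*$ for $x \neq y$, $d(x,x) := 0$. Nonnegativity and vanishing on the diagonal are immediate; the triangle inequality $(x|y)_w^d \geq 0$ reduces in the nontrivial case (with $x, y, w$ pairwise distinct) to $\psi(x,y) \leq \psi(x,w) + \psi(w,y) + C_*$, which is exactly the bound just established together with $C_* \geq 2C'$. The Gromov-product control \eqref{eq.defH_G} for $d$ is inherited from that of $\psi$ since $d - \psi$ is a bounded $\G$-invariant function, so $d \in \wh \calD_\G$, and $b := d - \psi$ is the asserted bounded $\G$-invariant complement.
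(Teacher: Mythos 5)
Your proof is correct and takes essentially the same route as the paper: both pick a quasicenter (your "incenter") $p$ of the $d_0$-triangle on $x,y,w$, transfer the bounded $d_0$-Gromov products at $p$ to bounded $\psi$-Gromov products via \eqref{eq.defH_G}, and then use the hypothesis $\psi\geq -C$ (through the terms $\psi(p,w),\psi(w,p)$) to get the uniform lower bound on $(x|y)_w^\psi$; the diagonal modification to produce $d\in\wh\calD_\G$ with $b=d-\psi$ bounded is also the same construction as the paper's \eqref{eq.defdfrompsi}.
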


Before proving this lemma we recall the notion of quasi-center. Given a function $\p:\G \times \G \ra \R$ and $\k\geq 0$, we say that $z\in \G$ is a $(\p,\k)$-\emph{quasicenter} for the points $x,y,w\in \G$ if 
   $$\max \{|(x|y)_{z}^\psi|,|(y|x)_{z}^\psi|,|(y|w)_{z}^\psi|,|(w|y)_{z}^\psi|,|(w|x)_{z}^\psi|,|(x|w)_{z}^\psi|\} \leq \k.$$
It is straightforward to check that such a quasicenter $z$ satisfies     \begin{equation}\label{eq.propQCenter}
|2(x|y)_w^\psi-\psi(z,w)-\psi(w,z)|\leq 6\k.
\end{equation}

For $d\in \calD_\G$ there exists $\k$ such that any triple in $\G$ has a $(d,\k)$-quasicenter. This follows from Gromov hyperbolicity and the rough geodesic property.

\begin{proof}
Suppose that $(\cdot|\cdot)^\psi_\cdot \leq \lam (\cdot|\cdot)^{d_0}_\cdot+\lam$ for some $d_0\in \calD_\G$ and $\lam>0$ and that $d_0$. We also let $\k$ such that $d_0$ has quasicenters. Given $x,y,w \in \G$ let $p$ be a $(\k,d_0)$-quasicenter. Then $p$ is $(\hat \k,\psi)$-quasicenter for $\hat \k=\lam \k+\lam$, and hence
\begin{align*}
    2(x|y)^\psi_w & =\psi(x,w)+\psi(w,y)-\psi(x,y) \\
       & =\psi(x,w)+\psi(w,y)-\psi(x,p)-\psi(p,y)-2(x|y)_p^\psi\\
       & =\psi(x,w) -\psi(x,p)+\psi(w,y)-\psi(p,y)-2(x|y)_p^\psi\\
       & =\psi(p,w)+\psi(w,p)-2(x|y)_p^\psi-2(x|w)_p^\psi-2(w|y)^\psi_p \\
       & \geq 2\inf \psi -6\hat\k=:-C'.
\end{align*}
Since $\psi \in \calH_\G^{+}$, we deduce that $(x|y)_w^\psi$ is bounded below by some constant independent of $x,y,w$, concluding the proof of the first assertion.

To prove the second assertion it is enough to check that $d:\G \times \G$ given by
\begin{equation}\label{eq.defdfrompsi}
d(x, y):=\begin{cases}
\p(x, y)+2C' & \text { if } x \neq y \\
0 & \text { otherwise }
\end{cases}
\end{equation}
 is $\G$-invariant, nonnegative and satisfies the triangle inequality (cf.~\cite[Lemma~4.4]{cantrell-reyes.manhattan}). This implies that $d \in \widehat \calD_\G$, and by construction $b:= d-\p$ belongs to $\calB_\G\cap \calD_\G$.
\end{proof}

The lemma above implies the following structural result for hyperbolic metric potentials.

\begin{proposition}\label{prop.descriptionH_G}
    $\calH_\G$ is generated as a vector space by $\widehat\calD_\G$. Indeed, for every $\psi\in \calH_\G$ there exist $d\in \widehat\calD_\G$, $d_\ast \in \calD_\G$ such that
    \[\psi=d_\ast-d.\]
\end{proposition}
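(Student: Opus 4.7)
The plan is to construct the decomposition explicitly by starting from a reference pseudo-metric $d_0 \in \calD_\G$ scaled up to dominate $\psi$. The first step, which is the main one, is a Lipschitz-type estimate: for any $d_0 \in \calD_\G$ there is $C>0$ such that $|\psi(x,y)| \leq C\, d_0(x,y) + C$ for all $x,y \in \G$. By $\G$-invariance it suffices to bound $|\psi(o,g)|$, which I would do by tracing $\psi$ along a $d_0$-rough geodesic $o=g_0, g_1, \ldots, g_n = g$ with $n \asymp d_0(o,g)$ and using the telescoping identity
\[
\psi(o, g_n) = \psi(o, o) + \sum_{i=0}^{n-1}\psi(g_i, g_{i+1}) - 2\sum_{i=0}^{n-1}(o|g_{i+1})_{g_i}^\psi.
\]
Each $\psi(g_i, g_{i+1}) = \psi(o, g_i^{-1}g_{i+1})$ is uniformly bounded because $g_i^{-1}g_{i+1}$ lies in a finite $d_0$-ball around $o$, and each Gromov product $(o|g_{i+1})_{g_i}^\psi$ is controlled via \eqref{eq.defH_G} together with the uniform boundedness of $(o|g_{i+1})_{g_i}^{d_0}$ along the rough geodesic.

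Let $\mu$ denote the constant in \eqref{eq.defH_G} for $\psi$ with respect to $d_0$. Choosing $\lambda > \max(C, \mu)$ and $K > \max(C, 2\mu)$, I define $d_\ast$ by $d_\ast(x,y) = \lambda d_0(x,y) + K$ for $x \neq y$ and $d_\ast(x,x) = 0$, and define $d$ by $d(x,y) = d_\ast(x,y) - \psi(x,y)$ for $x \neq y$ and $d(x,x) = 0$. A direct check shows that $d_\ast$ is a $\G$-invariant symmetric pseudo-metric quasi-isometric to $d_0$ (the triangle inequality survives the positive constant shift off the diagonal), so $d_\ast \in \calD_\G$. For $d \in \widehat\calD_\G$, all four defining properties are verified in turn: $\G$-invariance and $d(x,x)=0$ are immediate; nonnegativity reduces to $(\lambda - C)d_0(x,y) + K - C \geq 0$; the triangle inequality $(x|y)_w^d \geq 0$ becomes, for distinct $x,y,w$, $\lambda (x|y)_w^{d_0} - (x|y)_w^\psi + K/2 \geq 0$, which follows from \eqref{eq.defH_G} and the choices of $\lambda, K$ (the cases with coincidences are straightforward); and the hyperbolic Gromov product bound for $d$ transfers from the corresponding bounds for $\lambda d_0$ and $\psi$.

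The identity $\psi = d_\ast - d$ then holds off the diagonal directly from the definitions. I expect the Lipschitz estimate to be the main technical obstacle, since it requires carefully combining the hyperbolic Gromov-product control along a rough geodesic with the finiteness of $d_0$-balls (coming from quasi-isometry to a word metric). A minor cosmetic subtlety remains on the diagonal, where $d_\ast(x,x) - d(x,x) = 0$ but $\psi(x,x) = \psi(o,o)$ need not vanish. This is handled by a preliminary normalization: shifting $\psi$ by the constant $\psi(o,o)$ leaves its Gromov product unchanged up to a constant, so $\psi - \psi(o,o) \in \calH_\G$ and one may assume $\psi(o,o) = 0$ from the start.
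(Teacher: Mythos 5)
Your proof is correct, but it takes a genuinely different route from the paper's. The paper first picks $\hat d_\ast \in \calD_\G$ with $\ell_\psi \le \ell_{\hat d_\ast}$, observes that $\hat d := \hat d_\ast - \psi$ lies in $\calH_\G^+$ by Lemma~\ref{lem:MLSnonegative}, then applies Lemma~\ref{lem.GPpositiveH_G} to split $\hat d = d - b$ with $d \in \wh\calD_\G$ and $b$ bounded, and finally sets $d_\ast := \hat d_\ast + b$. You instead establish a Lipschitz estimate $|\psi(o,g)| \le Cd_0(o,g) + C$ directly by telescoping the Gromov-product identity along a $d_0$-rough geodesic, and then build $d_\ast = \lambda d_0 + K$ (off the diagonal) and $d = d_\ast - \psi$ in one pass, verifying the five axioms of $\wh\calD_\G$ by hand. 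In effect you have inlined both preparatory lemmas: the telescoping argument is a transparent substitute for the paper's use of Lemma~\ref{lem:MLSnonegative} (and makes explicit the Lipschitz bound which the paper invokes in its proof of that lemma with only a brief appeal to the definition), and your off-diagonal constant-shift construction of $d$ is essentially the device the paper isolates in Lemma~\ref{lem.GPpositiveH_G}. Your version is more self-contained and explicit, at the cost of re-deriving what the paper has already packaged as lemmas; the paper's route also hands you the extra datum $b \in \calB_\G \cap \calD_\G$, though that is not needed for the statement.

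One small precision worth tightening: when you assert $d_\ast \in \calD_\G$ you justify it by $d_\ast$ being quasi-isometric to $d_0$, but quasi-isometry of pseudo-metrics on $\G$ does not by itself propagate hyperbolicity (these are not geodesic spaces a priori). What actually does the job is that $d_\ast$ is \emph{roughly isometric} to $\lambda d_0$ (they differ by at most $K$ pointwise), so all Gromov products differ by a uniform constant and $\delta$-hyperbolicity carries over; similarly the hyperbolicity bound for $d$ that you transfer from $\lambda d_0$ and $\psi$ is really using rough-isometry-type control. The telescoping Lipschitz estimate, the constant choices $\lambda > \max(C,\mu)$ and $K > \max(C, 2\mu)$, the coincidence cases for the triangle inequality of $d$, and the normalization $\psi \mapsto \psi - \psi(o,o)$ are all handled correctly.
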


\begin{proof}
    Let $\psi \in \calH_\G$ and consider $\hat d_\ast\in \calD_\G$ such that $\ell_\psi(x)\leq \ell_{\hat d_\ast}(x)$ for all $x\in \G$. Then $\hat d:= \hat d_\ast -\psi$ belongs to $\calH_\G^{+}$ and by \Cref{lem.GPpositiveH_G} we have $\hat d=d-b$ for $d\in \wh \calD_\G$ and $b\in \calB_\G \cap \calD_\G$. 
 This implies that $\p=d_\ast-d$ for $d_\ast:=\hat d_\ast+b\in \calD_\G$ and $d\in \wh \calD_\G$.
\end{proof}

The proposition above allows use to promote properties from hyperbolic distance-like functions in $\widehat \calD_\G$ to  hyperbolic metric potentials. For example, using \Cref{prop.descriptionH_G} and results from \cite[Section 3.1]{cantrell-reyes.manhattan} we can deduce the following.

\begin{corollary}\label{coro.propertiesH}
 Each  $\psi \in \calH_\G$ satisfies the following properties:
    \begin{enumerate}
        \item Left and right Lipschitz: for any $d \in \Dc_\G$ there exists $A >0$ such that $|\psi(x,y) - \psi(x',y')|  < Ad(x,x')+Ad(y,y')+A$ for all $x,x',y,y'\in \G$.
        \item For any $d \in \Dc_\G$ there exists a $D_0 > 0$ such that if $d(o,x) > D$ for $D > D_0$ and $(x|x^{-1})_o^d \le C$ for some $C>0$ then 
        \[
        |\psi(o,x^m) - \ell_\psi(x^m)| \le R_{C,D}  \text{ for all } m\geq 1,
        \]
        where $R_{C,D}$ is a constant depending on $C,D$.
    \end{enumerate}
\end{corollary}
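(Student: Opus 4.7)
The plan is to reduce both assertions to the analogous known properties for hyperbolic distance-like functions in $\wh\calD_\G$ and for genuine pseudo metrics in $\calD_\G$, using the decomposition of \Cref{prop.descriptionH_G}. Given $\psi \in \calH_\G$, write $\psi = d_\ast - d'$ with $d_\ast \in \calD_\G$ and $d' \in \wh\calD_\G$. By linearity of the limit defining the stable translation length, $\ell_\psi = \ell_{d_\ast} - \ell_{d'}$, so to prove (1) and (2) for $\psi$ it suffices to prove them for $d_\ast$ and $d'$ separately and then subtract, adding the two constants together.

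For (1), the pseudo metric $d_\ast \in \calD_\G$ satisfies the bound $|d_\ast(x,y) - d_\ast(x',y')| \leq d_\ast(x,x') + d_\ast(y,y')$ by two applications of the triangle inequality, and then the quasi-isometry between $d_\ast$ and the reference metric $d \in \calD_\G$ converts this into the asserted Lipschitz bound in terms of $d$ at the cost of an additive and multiplicative constant. For $d' \in \wh\calD_\G$ the same triangle-type argument works via the hyperbolic distance-like inequality $(x|y)_w^{d'} \geq 0$ (symmetrizing where the possible asymmetry of $d'$ causes trouble, and invoking \eqref{eq.defH_G} to compare with $d$); this is precisely the Lipschitz statement in \cite[Section 3.1]{cantrell-reyes.manhattan}.

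For (2), the hypotheses $d(o,x) > D$ and $(x|x^{-1})_o^d \leq C$ are precisely those that, by \cite[Lemma 3.11]{cantrell-reyes.manhattan}, ensure the orbit $\{x^m o\}_{m \geq 0}$ traces out a quasi-geodesic in $(\G,d)$ with uniformly bounded Gromov products $(x^{-n}|x^{m})_o^{d}$, so that $x$ behaves as a hyperbolic isometry. Under these conditions, the argument used to prove the existence of the limit in \Cref{lem:MLSnonegative}, applied separately to $d_\ast$ and to $d'$ (both of which satisfy \eqref{eq.defH_G}), shows that $|d_\ast(o,x^m) - \ell_{d_\ast}(x^m)|$ and $|d'(o,x^m) - \ell_{d'}(x^m)|$ are each bounded by constants $R^{(\ast)}_{C,D}$ and $R^{(d')}_{C,D}$ depending only on $C$ and $D$. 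Subtracting these yields (2) with $R_{C,D} := R^{(\ast)}_{C,D} + R^{(d')}_{C,D}$.

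The principal obstacle is merely bookkeeping: one must track that the quasi-isometry constants and the constant $\lambda$ in \eqref{eq.defH_G} governing both $d_\ast$ and $d'$ can be chosen uniformly in terms of the reference metric $d$, so that the non-backtracking constant $C$ in the hypothesis translates into a uniform bound on $(x|x^{-1})_o^{d_\ast}$ and $(x|x^{-1})_o^{d'}$, and hence into uniform bounds on the relevant Gromov products along $\{x^m\}_m$. This transfer is immediate from the definitions of $\calD_\G$ and $\wh\calD_\G$ together with \eqref{eq.defH_G}, and therefore the reduction to the known statements in \cite[Section 3.1]{cantrell-reyes.manhattan} goes through without further complication.
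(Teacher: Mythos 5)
Your proposal is correct and follows exactly the paper's (largely implicit) approach: decompose $\psi = d_\ast - d'$ with $d_\ast \in \calD_\G$ and $d' \in \wh\calD_\G$ via \Cref{prop.descriptionH_G}, then transfer the Lipschitz and quasi-geodesic stability estimates from \cite[Section 3.1]{cantrell-reyes.manhattan}, using linearity to recombine. The only stylistic remark is that for item (2) the decomposition is not actually needed — the argument in the proof of \Cref{lem:MLSnonegative} (boundedness of $(x^{-n}|x^m)_o^\psi$ via \eqref{eq.defH_G} and \cite[Lemma~3.11]{cantrell-reyes.manhattan}, then Fekete-type subadditivity) applies directly to $\psi$ itself, whereas for item (1) the decomposition is genuinely useful since it is what gives the bound $|\psi(y,y')| \le A\,d(y,y') + A$ needed for the Lipschitz estimate.
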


Using \Cref{prop.descriptionH_G}, \eqref{eq.defH_G} and \eqref{eq.propQCenter}, we can also deduce the following.

\begin{lemma}\label{lem.symmetryGroProd}
    Let $\psi \in \calH_\G$ and $d \in \calD_\G$. Then for any $\k>0$ there exists $\hat\k$ such that $(d,\k)$-quasicenters are $(\psi,\hat\k)$-quasicenters. In particular, if $z$ is a $(d,\k)$-quasicenter for $x,y,w$ in $\G$ then
\begin{equation}\label{eq.symmetryGP}
|2(x|y)_w^\psi-\psi(z,w)-\psi(w,z)|\leq 6\hat\k.
    \end{equation}
\end{lemma}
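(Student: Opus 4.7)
The plan is to reduce the statement to a direct application of the defining quasi-additivity inequality for $\calH_\G$ together with the already-recorded identity \eqref{eq.propQCenter}. First I would invoke property (2) in the definition of $\calH_\G$ with the reference pseudo-metric $d_0 := d$, obtaining a constant $\lambda > 0$ such that
\[
|(a|b)_c^\psi| \leq \lambda\,(a|b)_c^d + \lambda \quad \text{for all } a,b,c \in \G.
\]

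Next, using that $d \in \calD_\G$ is a symmetric pseudo-metric (so $(a|b)_c^d = (b|a)_c^d$), the assumption that $z$ is a $(d,\k)$-quasicenter for $x,y,w$ forces each of the six Gromov products $(a|b)_z^d$, with $\{a,b\}$ ranging over the three unordered pairs from $\{x,y,w\}$, to be bounded by $\k$. Applying the inequality displayed above to each of the six ordered $\psi$-Gromov products listed in the definition of quasicenter, I obtain $|(a|b)_z^\psi| \leq \lambda \k + \lambda$ for every such choice. Setting $\hat\k := \lambda \k + \lambda$ then gives that $z$ is a $(\psi,\hat\k)$-quasicenter, which is the first assertion.

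For the second assertion \eqref{eq.symmetryGP}, I would simply invoke the already stated identity \eqref{eq.propQCenter} applied to $\psi$ with the constant $\hat\k$ just produced; this yields $|2(x|y)_w^\psi-\psi(z,w)-\psi(w,z)| \leq 6\hat\k$ directly. I expect no real obstacle, since the whole argument is essentially a substitution: the quasi-additivity condition (2) in the definition of $\calH_\G$ is designed precisely so that $d$-quasicenters remain quasicenters for every hyperbolic metric potential, and \eqref{eq.propQCenter} is a purely algebraic manipulation of the defining formula of the Gromov product (visible by expanding $2(x|y)_w^\psi - \psi(z,w) - \psi(w,z) = 2\bigl[(x|y)_z^\psi - (x|w)_z^\psi - (w|y)_z^\psi\bigr]$).
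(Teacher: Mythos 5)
Your proof is correct and takes essentially the same route the paper has in mind: the paper states only that the lemma is deduced from \Cref{prop.descriptionH_G}, \eqref{eq.defH_G} and \eqref{eq.propQCenter}, and your argument shows the conclusion already follows from \eqref{eq.defH_G} (the defining quasi-additivity inequality, applied with $d_0=d$) together with \eqref{eq.propQCenter}, without actually needing \Cref{prop.descriptionH_G}. Your explicit algebraic expansion $2(x|y)_w^\psi-\psi(z,w)-\psi(w,z)=2\bigl[(x|y)_z^\psi-(x|w)_z^\psi-(w|y)_z^\psi\bigr]$ correctly justifies \eqref{eq.propQCenter} and the passage from quasicenter bound $\hat\k$ to the factor $6\hat\k$.
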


Applying this lemma to $w=o$ in $\G$, we deduce that the Gromov product of hyperbolic metric potentials is almost symmetric.


\begin{corollary}\label{coro.symGP}
        For any $\psi \in \calH_\G$ there exists $R>0$ such that for all $x,y\in \G$:
\begin{enumerate}
    \item $|(x|y)_o^\psi-(y|x)_o^\psi|\leq R$; and,
    \item $|(x|y)_o^{\psi}-(x|y)_o^{\hat\psi}|\leq R$, for $\hat\psi(u,v)=\psi(v,u)$.
\end{enumerate}    
\end{corollary}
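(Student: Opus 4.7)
The corollary is designed to be read off from \Cref{lem.symmetryGroProd}, whose conclusion \eqref{eq.symmetryGP} expresses $2(x|y)_w^\psi$ as $\psi(z,w)+\psi(w,z)$ up to a uniformly bounded error, where $z$ is a $(d,\kappa)$--quasicenter of the triple $\{x,y,w\}$ with respect to a fixed $d\in\calD_\G$. The right-hand side $\psi(z,w)+\psi(w,z)$ is manifestly invariant under swapping the roles of $x$ and $y$, and also under replacing $\psi$ by $\hat\psi$ (since $\hat\psi(z,w)+\hat\psi(w,z)=\psi(w,z)+\psi(z,w)$). Both claims will follow from these symmetries, after checking that the same (or a nearby) quasicenter can be used in all cases and that $\hat\psi$ itself lies in $\calH_\G$.

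\textbf{Step 1: existence of a common quasicenter.} Fix $d\in\calD_\G$ and, using the hyperbolicity and rough geodesic property of $d$ (see the discussion preceding \eqref{eq.propQCenter}), choose $\kappa>0$ such that any triple in $\G$ admits a $(d,\kappa)$--quasicenter. Given $x,y\in\G$, pick $z\in\G$ a $(d,\kappa)$--quasicenter for $\{x,y,o\}$. Since the notion of $(d,\kappa)$--quasicenter is symmetric in the three base points, the same $z$ serves as a $(d,\kappa)$--quasicenter for $\{y,x,o\}$. Let $\hat\kappa$ be the constant provided by \Cref{lem.symmetryGroProd} for $\psi$, so that $z$ is a $(\psi,\hat\kappa)$--quasicenter for both triples.

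\textbf{Step 2: proof of (1).} Applying \eqref{eq.symmetryGP} (with $w=o$) to both orderings of $x,y$ yields
\[
\bigl|2(x|y)_o^\psi-\psi(z,o)-\psi(o,z)\bigr|\leq 6\hat\kappa,\qquad \bigl|2(y|x)_o^\psi-\psi(z,o)-\psi(o,z)\bigr|\leq 6\hat\kappa.
\]
The triangle inequality then gives $|(x|y)_o^\psi-(y|x)_o^\psi|\leq 6\hat\kappa$, which establishes (1) with $R:=6\hat\kappa$.

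\textbf{Step 3: proof of (2).} A direct expansion of Gromov products shows $(x|y)_w^{\hat\psi}=(y|x)_w^\psi$ for all $x,y,w\in\G$; in particular $\hat\psi$ inherits $\G$--invariance and the estimate \eqref{eq.defH_G} from $\psi$, so $\hat\psi\in\calH_\G$. Taking $w=o$ gives $(x|y)_o^{\hat\psi}=(y|x)_o^\psi$, and combining this identity with (1) yields $|(x|y)_o^\psi-(x|y)_o^{\hat\psi}|\leq R$, proving (2) with the same constant.

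There is no real obstacle: the whole content is packaged inside \Cref{lem.symmetryGroProd}. The only point requiring minor care is the symmetry of the quasicenter notion in Step 1, ensuring a single $z$ works simultaneously for both triples $\{x,y,o\}$ and $\{y,x,o\}$.
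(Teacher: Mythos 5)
Your proof is correct and follows essentially the same route as the paper: apply \Cref{lem.symmetryGroProd} with a quasicenter $z$ of $\{x,y,o\}$ to compare both $(x|y)_o^\psi$ and $(y|x)_o^\psi$ to $\psi(z,o)+\psi(o,z)$, then deduce (2) from (1) via the identity $(x|y)_o^{\hat\psi}=(y|x)_o^\psi$. The extra verification that $\hat\psi\in\calH_\G$ in Step 3 is harmless but unnecessary, since the identity $(x|y)_o^{\hat\psi}=(y|x)_o^\psi$ alone reduces (2) to (1).
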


\begin{proof}
    Item (1) follows directly from \Cref{lem.symmetryGroProd} after comparing $(x|y)_o^\psi$ and $(y|x)_o^\psi$ with $\psi(z,o)+\psi(o,z)$ for $z$ a quasicenter for $x,y,o$ with respect to a pseudo metric in $\calD_\G$. Item (2) follows from (1) after noting that $(x|y)_o^{\hat\psi}=(y|x)_o^\psi$.
\end{proof}

As another corollary we obtain that hyperbolic metric potentials in $\calH_\G^+$ are Gromov hyperbolic.

\begin{corollary}
For any $\psi \in \calH_\G^{+}$ there exists $\d > 0$ such that
    \[
    (x|y)_w^\psi \ge \min \{ (x|z)_w^\psi, (z|y)_w^\psi\} - \d
    \]
    for all $x,y,z,w\in \G$.
\end{corollary}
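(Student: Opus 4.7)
The plan is to derive the four-point condition for $\psi$-Gromov products from three inputs: the quasicenter framework of \Cref{lem.symmetryGroProd}, a rough additivity of $\psi$ along $d_0$-geodesics, and the lower bound $\psi \geq -C$ guaranteed by the definition of $\calH_\Gamma^+$ and \Cref{lem:MLSnonegative}. Fix a reference pseudo metric $d_0 \in \calD_\Gamma$ that is $\delta_0$-hyperbolic and admits $(d_0, \kappa_0)$-quasicenters, and let $\lambda$ be the constant from \eqref{eq.defH_G}. The rough additivity is immediate from \eqref{eq.defH_G}: if $c$ lies at bounded $d_0$-distance from a $d_0$-geodesic between $a$ and $b$, then $(a|b)_c^{d_0}$ is bounded and so $|(a|b)_c^\psi| \le \lambda(a|b)_c^{d_0} + \lambda = O(1)$, i.e.\ $\psi(a,b) = \psi(a,c) + \psi(c,b) + O(1)$.

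Given $x,y,z,w \in \Gamma$, pick $(d_0, \kappa_0)$-quasicenters $c_{xy}, c_{xz}, c_{yz}$ for the triples $\{x,y,w\}, \{x,z,w\}, \{y,z,w\}$. The tree-approximation of four points in a $\delta_0$-hyperbolic space shows that, up to relabeling, two of these quasicenters coincide up to uniform $d_0$-distance while the third lies within bounded $d_0$-distance from a $d_0$-geodesic emanating from $w$ that passes through the coincident pair, with the separated quasicenter farther from $w$ than the coincident pair. By \Cref{lem.symmetryGroProd} these are also $(\psi, \hat\kappa)$-quasicenters and \eqref{eq.symmetryGP} gives $2(x|y)_w^\psi = \psi(c_{xy}, w) + \psi(w, c_{xy}) + O(1)$, and analogously for the other two Gromov products. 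The Lipschitz bound of \Cref{coro.propertiesH}(1) then shows that the two $\psi$-Gromov products attached to the coincident quasicenters agree up to a uniform constant, while applying the rough additivity to the $d_0$-geodesic from $w$ through the coincident pair up to the separated quasicenter rewrites the remaining Gromov product as the sum of one of the close values and a correction term of the form $\frac{1}{2}(\psi(c,c') + \psi(c',c))$ for $c, c'$ two of the quasicenters. Since $\psi \geq -C$, this correction is bounded below by $-C + O(1)$, so the third Gromov product is at most $C + O(1)$ below the other two; together with the closeness of the other two, this yields the four-point inequality for any $\delta$ exceeding the resulting uniform constant.

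The main obstacle will be to carefully enumerate the three possible tree-pairings and to verify in each case that the separated quasicenter indeed lies farther from $w$ along a $d_0$-geodesic through the coincident pair, so that the rough additivity produces a correction with the \emph{correct} (lower-bounded) sign; if the direction of additivity were reversed, only upper bounds on the correction would follow from $\psi \geq -C$, which would be insufficient to recover the four-point inequality.
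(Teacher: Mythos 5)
Your proposal takes a genuinely different route from the paper. The paper's proof is a reduction argument: it invokes \Cref{lem.GPpositiveH_G} to pass from $\psi\in\calH_\G^{+}$ to $\psi\in\widehat{\calD}_\G$ (the bounded correction $b$ changes all Gromov products by $O(1)$), then uses \Cref{coro.symGP} to symmetrize and reduce to the pseudo metric $d=\psi+\hat\psi\in\ov{\calD}_\G$, and finally cites \cite[Lemma 6.3]{cantrell-reyes.manhattan} for the Gromov hyperbolicity of symmetric pseudo metrics in $\ov\calD_\G$. Your argument is self-contained at the expense of length: you re-derive the needed hyperbolicity directly from the quasicenter machinery, rough additivity along $d_0$-geodesics, and the lower bound $\psi\ge -C$. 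The paper's route is shorter but offloads the core geometric step to an external result; yours makes the geometry explicit and would also serve a reader who does not have the cited lemma at hand.

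Your flagged concern about the sign of the correction does not in fact materialize, and you can discharge it cleanly. There are only two genuinely different configurations to check once the labels $x,y,z,w$ are fixed. If the separated quasicenter is one of $c_{xz}$ or $c_{yz}$, then $c_{xy}$ is part of the coincident pair, so the Lipschitz bound of \Cref{coro.propertiesH}(1) immediately gives $(x|y)_w^\psi = (x|z)_w^\psi + O(1)$ or $(x|y)_w^\psi = (y|z)_w^\psi + O(1)$, and in either case $(x|y)_w^\psi\ge \min\{(x|z)_w^\psi,(y|z)_w^\psi\}-O(1)$ with no appeal to the sign of the correction or to $\psi\ge -C$ at all. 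The lower bound on $\psi$ is only used in the remaining configuration, where $c_{xy}$ is the separated quasicenter; but there the standard tree approximation of four points forces $c_{xy}$ to lie on the side of the coincident pair away from $w$, so the rough additivity produces the correction $\tfrac12\bigl(\psi(c,c')+\psi(c',c)\bigr)$ with the sign you need, and $\psi\ge -C$ bounds it below. So the enumeration always works out, and the proof as outlined is correct.
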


\begin{proof}
 By \Cref{lem.GPpositiveH_G} it is enough to assume that $\psi \in \widehat\calD_\G$. Also, from \Cref{coro.symGP} there exists $R>0$ such that 
 $|(x|y)^\psi_o-(x|y)_o^{\hat\psi}|\leq R$ for all $x,y\in \G$, where $\hat\psi(x,y)=\psi(y,x)$. Therefore it is enough to prove the result for $d=\psi+\hat\psi$, which belongs to $\ov\calD_\G$. But $d$ is a symmetric pseudo metric on $\G$, which we know is Gromov hyperbolic by \cite[Lemma 6.3]{cantrell-reyes.manhattan}.
\end{proof}

We are ready to prove \Cref{prop.H_G=temp}.

\begin{proof}[Proof of \Cref{prop.H_G=temp}]


We already observed that tempered potentials belong to $\calH_\G$. To prove the converse, let $\psi\in \calH_\G$, for which we expect properties \eqref{Eq:RG} and \eqref{Eq:QE} to hold. We note that \eqref{Eq:RG} easily follows from \eqref{eq.defH_G}. To prove \eqref{Eq:QE} we first assume that $\psi \in \calH_\G^+$. By \Cref{lem.GPpositiveH_G} we can assume that $\psi\in \widehat\calD_\G$, and by \Cref{coro.symGP} we can assume that $\psi \in \ov\calD_\G$. But then $\psi$ is the difference of two pseudo metrics in $\calD_\G$ by \cite[Corollary 5.4]{cantrell-reyes.manhattan}, and hence $\psi$ clearly satisfies \eqref{Eq:QE} because pseudo metrics in $\calD_\G$ do. The general case then follows by linearity and \Cref{prop.descriptionH_G}.
\end{proof}

\Cref{prop.H_G=temp} allows us to talk about the Busemann function associated to $\psi \in \calH_\G$ as in \eqref{eq.defbusemann}. It also allows us to apply the results from \cite[Section 2.6]{cantrell-tanaka.manhattan}. 


\subsection{Extending the space of metric structures}\label{sec.HMPmetricstructures}

We want to compare hyperbolic metric potentials similarly to pseudometrics in $\calD_\G$. If $\p_\ast \in \calH_\G$ and $\p\in \calH_\G^+$, the \emph{dilation} of $\p_\ast$ with respect to $\p$ is 
\[\Dil(\p_\ast,\p):=\inf\{\lam:\ell_{\psi_\ast}\leq \lam\ell_\p\} \in \R \cup \{\infty\}, \]
where we define $\Dil(\p_\ast,\p)=\infty$ if no such $\lam$ exists. Since $\p_\ast$ is not necessarily positive, it can happen that $\Dil(\p_\ast,\p)$ is negative. However, the dilation is always finite if $\p\in \calD_\G$. Finiteness of the dilations holds more generally for the following class of functions.

\begin{definition}\label{def.H_G^++}
Let $\calH_\G^{++}\subset \calH_\G$ be the set of all hyperbolic metric potentials $\p$ such that $\ell_\p \geq \ell_d$ for some $d\in \calD_\G$.
\end{definition}

In the introduction, $\calH_\G^{++}$ was referred to the set of hyperbolic metric potentials $\p$ that are proper. That is, those $\p$ such that $\{x\in \G: \p(o,x)<T\}$ is finite for any $T$. By \Cref{lem:MLSnonegative}, functions satisfying $\ell_\p\geq \ell_d$ for some $d\in \calD_\G$ are proper. We can use \Cref{lem.GPpositiveH_G} to prove the converse. Indeed, since properness is preserved under rough isometry we can suppose that $\p\in \wh\calD_\G$. For these functions, using \cite[Lemma 3.2]{cantrell-reyes.manhattan} and \cite[Lemma 2.5]{cantrell-reyes.approx} we have that for $m$ large enough, $\p$ is quasi-isometric to the (possibly asymmetric) word metric with generating set $S_m=\{x\in \G: \p(o,x)\leq m\}$. Therefore, $\p$ is proper if and only if $S_m$ is finite, which is true if and only $\p$ is quasi-isometric to a pseudo metric in $\calD_\G$ (and hence satisfies $\ell_\p \geq \ell_d$ for some $d\in \calD_\G$). 

We can also characterize hyperbolic metric potentials in $\calH_\G^{++}$ using the \emph{exponential growth rate}. As for pseudo metrics in $\calD_\G$, the exponential growth rate of $\p\in \calH_\G^+$ is the (possibly infinite) number
\begin{equation}\label{eq.defEGRtempered}
    v_\p = \limsup_{T\to\infty} \frac{1}{T} \log \#\{x \in \G: \p(o,x) < T\}.
\end{equation}
Note that $\p$ and $\p_\ast$ being roughly isometric implies $v_\p=v_{\p_\ast}$. By subadditivity, it is not hard to check that $v_\p$ is finite if and only if $\p$ is proper. We summarize this discussion in the next lemma.

\begin{lemma}\label{lem.charH_G^++}
Given $\p\in \calH_\G^+$ the following are equivalent:
\begin{enumerate}
\item $v_\p$ is finite;
\item $\p$ is proper;
\item $\p$ is quasi-isometric to a metric in $\calD_\G$; and,
\item $\p\in \calH_\G^{++}$.
\end{enumerate}
\end{lemma}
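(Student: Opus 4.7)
The plan is to close the cycle $(4) \Rightarrow (2) \Rightarrow (3) \Rightarrow (4)$ and then deduce $(1) \Leftrightarrow (2)$ by bootstrapping. All ingredients have essentially been assembled in the preceding discussion, so the proof is mostly an orchestration of \Cref{lem:MLSnonegative}, \Cref{lem.GPpositiveH_G}, and the two external approximation results \cite[Lemma~3.2]{cantrell-reyes.manhattan} and \cite[Lemma~2.5]{cantrell-reyes.approx}.

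For $(4) \Rightarrow (2)$, I would apply additivity of the stable translation length to the auxiliary potential $\p - d \in \calH_\G$: from $\ell_\p \geq \ell_d$ one gets $\ell_{\p - d} \geq 0$, so \Cref{lem:MLSnonegative} yields $\p \geq d - C$ pointwise for some $C>0$, and since $d \in \calD_\G$ has finite balls so does $\p$. For $(3) \Rightarrow (4)$, the lower QI bound $\lambda_1 d - C \leq \p$ with $d \in \calD_\G$ and $\lambda_1>0$ gives, after dividing $\p(o, x^m) \geq \lambda_1 d(o, x^m) - C$ by $m$ and letting $m\to\infty$, that $\ell_\p \geq \lambda_1 \ell_d = \ell_{\lambda_1 d}$ with $\lambda_1 d \in \calD_\G$. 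The equivalence $(1) \Leftrightarrow (2)$ is similarly quick: $(1) \Rightarrow (2)$ by contrapositive (an infinite $\p$-ball forces $v_\p = \infty$), and $(2) \Rightarrow (1)$ follows from $(2) \Rightarrow (3)$ together with the inclusion $\{x : \p(o,x) < T\} \subseteq \{x : d(o,x) < (T+C)/\lambda_1\}$, which gives $v_\p \leq v_d/\lambda_1 < \infty$.

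The one genuine step is $(2) \Rightarrow (3)$. Here I would first note that properness forces $\p$ to be bounded below, since a sequence with $\p(o, x_n) \to -\infty$ would eventually lie in some finite $\p$-ball; thus $\p \in \calH_\G^+$, and \Cref{lem.GPpositiveH_G} decomposes $\p = d_0 - b$ with $d_0 \in \widehat\calD_\G$ and $b$ bounded. Boundedness of $b$ transfers properness from $\p$ to $d_0 = \p + b$. The external references then ensure that, for $m$ large, the proper hyperbolic distance-like function $d_0$ is quasi-isometric to the (possibly asymmetric) word metric on the ball $S_m = \{x : d_0(o,x) \leq m\}$; since $S_m$ is finite by properness, its symmetrization $S_m \cup S_m^{-1}$ is a finite symmetric generating set whose associated word metric lies in $\calD_\G$, and $\p$ is quasi-isometric to this pseudo metric because $b$ is bounded. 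The main obstacle is precisely to justify this word-metric approximation: $d_0$ is generally asymmetric and not itself in $\calD_\G$, but the two cited lemmas are designed to handle exactly this situation, which makes the argument go through.
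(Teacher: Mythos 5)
Your proof is correct and follows essentially the same route as the paper: reduce to $\widehat\calD_\G$ via \Cref{lem.GPpositiveH_G}, invoke the same two external lemmas to get quasi-isometry with the (possibly asymmetric) word metric on $S_m$, and close the loop with \Cref{lem:MLSnonegative}. Your additional remark about symmetrizing $S_m$ to land squarely in $\calD_\G$, and your concrete counting argument for $(2)\Rightarrow(1)$ in place of the paper's appeal to subadditivity, are sensible elaborations rather than departures; the only redundancy is re-deriving boundedness from below in the $(2)\Rightarrow(3)$ step, which is already part of the hypothesis $\p\in\calH_\G^+$.
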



If we only consider rough similarity classes in $\calH_\G$, we obtain the following extensions of $\scrD_\G.$

\begin{definition}\label{def.metricstructuresH_G}
    We let $\scrH_\G$ denote the space of rough similarity equivalence classes in $\calH_\G$. Similarly, we let $\scrH^+_\G$ (resp. $\scrH_\G^{++}$) be the subsets of $\scrD_\G$ induced by $\calH_\G^+$ (resp. $\calH_\G^{++}$). 
\end{definition}

If $\psi\in \calH_\G$, we let $[\psi]$ denote its rough similarity class, which we also call a \emph{metric structure}. Since dilations for hyperbolic metric potentials in $\calH_G^{++}$ are finite, the formula \eqref{eq.defDel} also makes sense for metrics in this space, and hence we get a \emph{Thurston metric} $\Del$ on $\scrH_\G^{++}$ extending that on $\scrD_\G$. By \Cref{lem:MLSnonegative}, $\Del$ is a genuine metric.


\subsection{Extending the Bowen-Margulis construction}\label{sec.BMHMP}

Similar to pseudo metrics in $\calD_\G$, hyperbolic metric potentials in $\calH_\G^{++}$ admit well-behaved measures at infinity.

\begin{definition}\label{def.QCH_G}
    Given $\psi \in \calH_\G^{++}$, we say that a Borel probability measure $\nu$ on $\partial \G$ is \emph{quasi-conformal} for $\p$ if there exists some constant $C>1$ such that for every $x\in \G$ and $\nu$-almost every $\x\in \partial \G$ we have
\begin{equation}\label{eqqcmeasdefH_G}
     C^{-1}e^{-v_\p\b_o^\p(x,\x)}\leq \frac{dx_\ast\nu}{d\nu}(\x)\leq Ce^{-v_\p\beta_o^\p(x,\x)}.
\end{equation}
\end{definition}

\begin{proposition}\label{prop.existenceQCH_G}
    Any $\p\in \calH_\G^{++}$ 
    admits a quasi-conformal measure $\nu$ and any two such quasi-conformal measures are absolutely continuous with respect to each other.  
    Moreover, $\nu$ satisfies:
\begin{itemize}
    \item there exist $C >1$ and $R>0$ such that
        \[
        C^{-1} e^{-v_\p\p(o,x)} \le \nu(O(x,R)) \le         Ce^{-v_\p\p(o,x)} 
        \]
        for all $x \in \G$;
    \item $\nu$ is doubling with respect to any visual metric on $\partial \G$; and,
    \item $\nu$ is ergodic with respect to the action of $\G$ on $\partial \G$.
    \end{itemize}
\end{proposition}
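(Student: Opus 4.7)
The plan is to adapt the classical Patterson--Sullivan construction to the setting of tempered potentials. By Proposition \ref{prop.H_G=temp}, $\calH_\G$ coincides with the class of tempered potentials, and by Lemma \ref{lem.charH_G^++} any $\p \in \calH_\G^{++}$ is proper with finite exponential growth rate $v_\p$; in particular, the Poincar\'e series $P(s) = \sum_{x \in \G} e^{-s\p(o,x)}$ converges for $s > v_\p$. First, I would construct $\nu$ as a weak-$*$ limit of the normalized atomic measures $\mu_s = P(s)^{-1} \sum_x e^{-s \p(o,x)} \d_x$ on $\G \cup \partial \G$ as $s \to v_\p^+$, inserting a slowly varying factor in the classical Patterson fashion if $P$ fails to diverge at $v_\p$. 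Properness of $\p$ forces $\nu$ to be supported on $\partial \G$. The quasi-conformality relation \eqref{eqqcmeasdefH_G} then follows by pushing $\mu_s$ forward by $x \in \G$ and using the quasi-cocycle property \eqref{eq.busemanncocycle} of $\b_o^\p$ together with the asymptotic $\p(o,x^{-1}y) - \p(o,y) = -\b_o^\p(x,\x) + O(1)$ as $y \to \x$, which is a direct consequence of the quasi-extension property \eqref{Eq:QE}.

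Second, I would prove the shadow lemma (the first listed property). For $R$ sufficiently large and $\x \in O(x,R)$, estimate \eqref{Eq:corRG} shows that $\b_o^\p(x,\x) \approx -\p(o,x)$ up to a uniform constant, so the quasi-conformality yields $\nu(O(x,R)) \asymp e^{-v_\p \p(o,x)} \nu(x^{-1} O(x,R))$. The lower bound on $\nu(x^{-1} O(x,R))$ uniform in $x$ will come from a covering argument: for $R$ large enough, finitely many translates of shadows $O(y,R)$ cover $\partial \G$, so at least one has $\nu$-mass bounded below, which transports to the translates by quasi-conformality. Absolute continuity of any two $v_\p$-quasi-conformal measures then follows because the shadows $\{O(x,R)\}$ form a basis for the topology of $\partial \G$ and both measures give them mass of the same order $e^{-v_\p \p(o,x)}$. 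Doubling with respect to any visual metric follows from the standard comparison between visual balls and shadows of fixed radius.

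For ergodicity, I would build the Bowen--Margulis current $\Lam_\p$ on $\partial^2 \G$ using $\nu$ via a formula as in \eqref{eq.defBM}, pass to the induced flow-invariant probability measure $m_\p$ on the Mineyev flow space $\Fc_\k$ of Section \ref{sec.mf}, and invoke the ergodicity of $(m_\p, \F)$ established in \cite[Proposition~2.11 and Theorem~3.6]{tanaka.topflows}. A Hopf-type disintegration argument, identifying $\G$-invariant Borel subsets of $\partial \G$ with flow-invariant subsets of $\Fc_\k$ via the evaluation map, then transfers flow-ergodicity of $m_\p$ to $\G$-ergodicity of $\nu$. The main technical obstacle is the asymmetry and possible negativity of $\p$: the Busemann function $\b_o^\p$, shadow sets and Gromov products of $\p$ need not coincide with their reverse counterparts, so the standard symmetric-metric arguments must be revisited. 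The structural decomposition $\p = d_\ast - d$ from Proposition \ref{prop.descriptionH_G}, together with the near-symmetry of the Gromov product furnished by Corollary \ref{coro.symGP}, should allow us to reduce most of the delicate estimates to known bounds for pseudo metrics in $\calD_\G$ from \cite{coornaert, cantrell-tanaka.manhattan}.
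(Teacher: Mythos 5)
Your construction is sound and, at heart, is the same Patterson--Sullivan-for-tempered-potentials argument the paper uses --- the paper simply compresses it. After establishing that $\p$ is a tempered potential via Proposition \ref{prop.H_G=temp}, the paper rephrases the Poincar\'e series in the two-parameter form $s\mapsto\sum_x\exp(-v_\p\p(o,x)-s\,d(o,x))$ (with critical exponent $\theta=0$) purely so that it can quote \cite[Proposition 2.7]{cantrell-tanaka.manhattan} for the existence of $\nu$ and \cite[Lemma 2.9]{cantrell-tanaka.manhattan} for the shadow lemma, doubling, and ergodicity all at once. You instead unfold that black box: classical Patterson limit with slowly varying correction, quasi-conformality from the quasi-cocycle and \eqref{Eq:QE}, shadow lemma from a covering argument, and absolute continuity from a Vitali/density argument on shadows. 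That is all correct, and your remark that the asymmetry of $\p$ is to be handled via Proposition \ref{prop.descriptionH_G} and Corollary \ref{coro.symGP} is exactly the right reduction. Two points where you should be a bit more careful on ergodicity. First, the Bowen--Margulis current $\Lam_\p$ of \eqref{eq.defBMH_G} is built from quasi-conformal measures for both $\p$ and the reverse potential $\hat\p(x,y)=\p(y,x)$, so you need to run your construction for $\hat\p$ as well (which is harmless since $\hat\p\in\calH_\G^{++}$ and $v_{\hat\p}=v_\p$). Second, Tanaka's result gives ergodicity of the flow-invariant measure $m_\p$ on $\Fc_\k$, from which one deduces ergodicity of the diagonal $\G$-action on $(\partial^2\G,\Lam_\p)$; passing from that to $\G$-ergodicity of $\nu$ on $\partial\G$ is the standard but non-trivial ``double ergodicity implies single ergodicity'' step, using $\Lam_\p\sim\hat\nu\otimes\nu$ and Fubini, and merits a sentence. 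The reference route used by the paper avoids this by having \cite[Lemma 2.9]{cantrell-tanaka.manhattan} give $\G$-ergodicity of $\nu$ directly from the shadow lemma, which is somewhat lighter than the detour through the Mineyev flow.
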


\begin{proof}
First, we recall that $\p$ is a ($\G$-invariant) tempered potential by \Cref{prop.H_G=temp}. 
Next, we fix $d\in \calD_\G$ and consider the series 
\[s \mapsto \sum_{x\in \G}{\exp(-v_\p\p(o,x)-sd(o,x))}.\]
Since $v_\p \p$ has exponential growth rate 1 and $\p,d$ are quasi-isometric, we can check that the critical exponent of this series is $\thet=0$.  In particular, \cite[Proposition 2.7]{cantrell-tanaka.manhattan} applies and implies the existence of $\nu$ quasi-conformal for $\p$. The rest of the properties for $\nu$ then follow from \cite[Lemma 2.9]{cantrell-tanaka.manhattan}.
\end{proof}

This proposition and the classical arugment due to Patterson and Sullivan then implies the following.

\begin{corollary} \label{cor.exp}
$\psi \in \calH_\G^{++}$ there exists $R_0 > 0$ such that for any $R > R_0$ there are constants $C_1,C_2 > 0$ (depending on $R$) such that
\[
C_1 e^{v_\psi T} \le \#\{x \in \G: \psi(o,x) < T \} \le C_2 e^{v_\psi T}
\]
for all $T > 0$.
\end{corollary}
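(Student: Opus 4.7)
The plan is to adapt the classical Patterson--Sullivan counting argument to our potential $\psi$, using the quasi-conformal measure $\nu$ provided by \Cref{prop.existenceQCH_G} as the main tool. Fix a reference metric $d \in \calD_\G$ that is quasi-isometric to $\psi$ (which exists by \Cref{lem.charH_G^++}), and fix $R$ larger than the threshold $R_0$ in the shadow lemma from \Cref{prop.existenceQCH_G}. For each $k \in \N$ set $A_k = \{x \in \G : k \le \psi(o,x) < k+1\}$, so that the ball of radius $T$ decomposes into $\bigsqcup_{k < T} A_k$ (modulo a bounded error). The entire proof reduces to showing $\#A_k \asymp e^{v_\psi k}$.

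For the upper bound, the crucial geometric input is a bounded multiplicity statement for shadows: there exists $N$ (depending only on $R$, $\delta$, and the QI constants between $\psi$ and $d$) such that each $\xi \in \partial\G$ lies in $O(x,R)$ for at most $N$ values of $x \in A_k$, uniformly in $k$. This follows from the standard hyperbolicity fact that $O(x,R) \cap O(y,R) \ne \emptyset$ forces $d(x,y)$ to be controlled by $|d(o,x) - d(o,y)| + O(R,\delta)$, combined with the quasi-isometry of $\psi$ and $d$ (which bounds $d(o,x)$ in terms of $\psi(o,x)$) and the finite growth of balls in $(\G,d)$. Integrating this multiplicity bound against $\nu$ and applying the lower shadow estimate $\nu(O(x,R)) \ge C^{-1} e^{-v_\psi(k+1)}$ yields
\[
N \ge \sum_{x \in A_k} \nu(O(x,R)) \ge C^{-1} e^{-v_\psi(k+1)} \#A_k,
\]
and summing over $k < T$ produces the desired upper bound $C_2 e^{v_\psi T}$.

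For the lower bound, the point is that shadows of elements in a single $\psi$-annulus cover $\partial\G$. Given $\xi \in \partial\G$ and large $T$, pick a $(C,d)$-rough geodesic $\gamma$ from $o$ to $\xi$ and choose a vertex $x \in \gamma$ with $d(o,x)$ such that $\psi(o,x)$ lies in $[T-k_0, T]$ for a constant $k_0$ depending only on the QI constants and the (RG) property of $\psi$ (guaranteed by $\psi$ being a tempered potential via \Cref{prop.H_G=temp}); by construction $\xi \in O(x,R)$. Consequently $\partial\G = \bigcup_{x : T-k_0 \le \psi(o,x) < T} O(x,R)$, and the upper shadow estimate gives
\[
1 = \nu(\partial \G) \le \sum_{x : T - k_0 \le \psi(o,x) < T} \nu(O(x,R)) \le C e^{-v_\psi(T-k_0)} \#\{x : T - k_0 \le \psi(o,x) < T\},
\]
yielding $\#\{x : \psi(o,x) < T\} \ge C_1 e^{v_\psi T}$ after absorbing $e^{v_\psi k_0}$ into the constant.

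The main obstacle I anticipate is the bounded multiplicity of shadows in a $\psi$-annulus, since $\psi$ need not be symmetric or satisfy the triangle inequality. The remedy is to never argue with $\psi$ directly for these geometric statements, but rather to translate to the auxiliary metric $d \in \calD_\G$ via the quasi-isometry (guaranteed by \Cref{lem.charH_G^++}), carry out the hyperbolic-geometric arguments there, and transfer the count back to $\psi$-annuli at the cost of widening the annulus by a bounded amount. All other ingredients---the existence of $\nu$, the two-sided shadow estimate, and quasi-geodesic behavior of $\psi$---are already packaged in \Cref{prop.existenceQCH_G}, \Cref{prop.H_G=temp}, and the (RG) property.
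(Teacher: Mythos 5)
Your architecture---shadow estimates from \Cref{prop.existenceQCH_G}, bounded multiplicity for the upper bound, covering by shadows for the lower bound---is precisely the classical Patterson--Sullivan counting argument that the paper is referencing. But the justification of the bounded multiplicity step has a real gap, and the remedy you propose in the final paragraph does not fix it. A quasi-isometry $\lambda_1 d(o,\cdot)-C\le\psi(o,\cdot)\le\lambda_2 d(o,\cdot)+C$ does \emph{not} imply that $|d(o,x)-d(o,y)|$ is uniformly bounded when $\psi(o,x),\psi(o,y)\in[k,k+1)$; it only constrains $d(o,x)$ to an interval of length of order $k(\lambda_1^{-1}-\lambda_2^{-1})+O(1)$, which grows with $k$ whenever $\lambda_1<\lambda_2$. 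So the hyperbolicity bound $d(x,y)\le|d(o,x)-d(o,y)|+O(R,\delta)$ no longer yields a uniform bound on the number of $y\in A_k$ whose shadow overlaps $O(x,R)$, and ``widening the annulus by a bounded amount'' is not how a $\psi$-annulus and a $d$-annulus are related under mere quasi-isometry.

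What closes the gap is the defining quasi-additivity property of $\calH_\Gamma$---equivalently the \eqref{Eq:RG} condition, which you do invoke for the lower bound but not here. If $\xi\in O(x,R)\cap O(y,R)$, then $x$ and $y$ lie within $d$-distance $O(R,\delta)$ of a common rough geodesic $\gamma=[o,\xi)$, say near $\gamma(s)$ and $\gamma(t)$. The left/right Lipschitz property (\Cref{coro.propertiesH}(1)) transfers the $\psi$-annulus condition to $|\psi(o,\gamma(s))-\psi(o,\gamma(t))|=O(1)$, and then \eqref{Eq:RG} applied along $\gamma$ gives $\psi(\gamma(s),\gamma(t))=\psi(o,\gamma(t))-\psi(o,\gamma(s))+O(1)=O(1)$. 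Combined with the lower quasi-isometry bound $\psi(\gamma(s),\gamma(t))\ge\lambda_1|s-t|-C$, this forces $|s-t|=O(1)$ and hence $d(x,y)=O(1)$ uniformly in $k$, which is the bounded multiplicity you need. With that replacement the rest of your proof is correct and matches the paper's (extremely terse) intended argument.
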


As for pseudo metrics in $\calD_\G$ we can use quasi-conformal measures to construct geodesic currents associated to functions in $\calH_\G^{++}$. Let $\p\in \calH_\G^{+}$ and consider $\hat\p\in \calH_\G^{+}$ given by $\hat\p(x,y)=\p(y,x)$.  Note that $\hat\p$ also belongs to $\calH_\G^{+}$ and that $v_{\hat \p}=v_\p$. 

Suppose now that $\p\in \calH_\G^{++}$ and let $\nu,\hat \nu$ be quasi-conformal measures for $\p$ and $\hat \p$ respectively. Applying \Cref{prop.H_G=temp}, \cite[Proposition 2.8]{cantrell-tanaka.invariant} and the discussion after the proof of that proposition, we deduce the existence of a geodesic current $\Lam_\psi \in \calC_\G$ in the measure class of $\hat \nu \otimes \nu$. Moreover, it satisfies
\begin{equation}\label{eq.defBMH_G}
  d\Lam_\psi(\x,\y)=\al(\xi,\y)\exp(2 v_\p(\x|\y)^\p_o)d\hat\nu (\x)d\nu(\y)  
\end{equation}
for a measurable function $\al$ that is essentially bounded and bounded away from zero.
We call any such $\L_\p$ a \emph{Bowen-Margulis} current for $\p$.
Note that if $[\p]\neq [\hat \p]$ then $[\Lam_\p] \neq [\Lam_{\hat\p}]$ in $\bbP\calC_\G$. In general, we always have $[\Lam_{\hat \p}]=[\iota_\ast \Lam_\p]$ for $\iota$ the flip on $\partial^2 \G$. In this setting, Bowen-Margulis currents are also ergodic.



Given a function $\ell: \G \ra \R$ satisfying $\ell(x^n)=n\ell(x)$ for all $x\in \G$ and $n\in \N$, we can define a function on positive linear combinations of rational currents in $\calC_\G$ as the linear extension of $\ell(\mu_x):=\ell(x)$. If $\ell=\ell_\psi$ for some $\psi\in \calH_\G$, then $\ell$ admits a (necessarily unique) continuous extension to $\calC_\G$. This is the content of the next theorem by Kapovich and Mart\'inez-Granado \cite{kapovich.martinezgranado}. For $\G$ torsion-free and $\psi \in \ov\calD_\G$, this was proved in \cite[Theorem 1.7]{cantrell-reyes.manhattan}.

\begin{theorem}\label{thm.continuousextension}
Given $\psi\in \calH_\G$, the stable translation length $\ell_\p$ continuously extends to $\calC_\G$.
\end{theorem}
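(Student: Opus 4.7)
The plan is to construct a continuous extension explicitly using the Mineyev flow framework from \Cref{sec.mf}. Uniqueness of any continuous extension is automatic from Bonahon's density of scalar multiples of rational currents in $\calC_\G$, so the work is entirely in establishing existence. I would reduce first to $\p\in \calH_\G^+$: by \Cref{prop.descriptionH_G} every hyperbolic metric potential splits as $\p = d_\ast - d$ with $d_\ast \in \calD_\G$ and $d \in \wh\calD_\G \subset \calH_\G^+$, and since stable translation length is linear in $\p$ on rational currents, any continuous extension must be linear on $\calC_\G$ as well. The case $d_\ast \in \calD_\G$ is classical: the extension is the intersection number with the Bowen-Margulis current $\Lam_{d_\ast}$, due to Bonahon and Furman.

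For $\p \in \calH_\G^+$ (hence tempered by \Cref{prop.H_G=temp}), the Busemann function $\b_o^\p$ is a bounded-error quasi-cocycle satisfying \eqref{eq.busemanncocycle}. I would use it to build a bounded $\G$-invariant function $F_\p : \partial^2\G \times \R \ra \R$ via a difference quotient in the flow direction, roughly
\[
F_\p(\x,\y,t) := \b_o^\p(\ev(\x,\y,t+1),\y) - \b_o^\p(\ev(\x,\y,t),\y),
\]
chosen so that $F_\p$ descends to a function on the compact quotient $\Fc_\k$ and has the correct integral against the periodic-orbit measure attached to each rational current. For $x \in \G$ primitive non-torsion, the measure $m_{\mu_x}$ is supported on a single closed orbit of period $\ell_{\wh d}(x)$ parametrising the axis of $x$; integrating $F_\p$ around that orbit telescopes along the quasi-cocycle, and comparing with the definition $\ell_\p(x) = \lim_m \p(o,x^m)/m$ together with the bound \eqref{Eq:corRG} yields $\int F_\p \, dm_{\mu_x} = \ell_\p(x)$. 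The weak-$\ast$ continuity of $\L \mapsto m_\L$ from \cite[Lemma 3.4]{tanaka.topflows} then gives a continuous functional $\L \mapsto \int F_\p \, dm_\L$ on $\calC_\G$ that agrees with $\ell_\p$ on rational currents.

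The principal obstacle is regularity: because $\b^\p$ is only a quasi-cocycle (with nonzero error constant in \eqref{eq.busemanncocycle}), the naive $F_\p$ is at best measurable on $\Fc_\k$, not continuous, so weak-$\ast$ continuity in $\L$ is not immediate. To circumvent this I would replace $F_\p$ by a cohomologous representative built from the exact Busemann cocycle of the strongly hyperbolic reference metric $\wh d$, or equivalently smooth $F_\p$ in the flow direction by a compactly supported bump, so that the quasi-cocycle error is absorbed into a H\"older coboundary and the telescoping identity is unchanged. Torsion elements require no special treatment since $\ell_\p$ vanishes on them and rational currents are only defined for non-torsion primitive conjugacy classes.
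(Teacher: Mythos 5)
The paper does not prove this theorem: it is attributed to the (in preparation) work of Kapovich--Mart\'inez-Granado, whose title (``Bounded backtracking property and geodesic currents'') indicates a proof via bounded backtracking rather than via the Mineyev flow, and only the special case of torsion-free $\G$ and $\p\in\ov\calD_\G$ is sourced to \cite[Theorem~1.7]{cantrell-reyes.manhattan}. So there is no internal argument to compare against; what follows is an assessment of your proposal on its own terms.

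Your outline is in the right spirit, and the reduction to $\calH_\G^+$ via \Cref{prop.descriptionH_G} is sound (though once you construct $F_\p$ for general $\p\in\calH_\G^+$ the separate appeal to Bonahon--Furman for $d_\ast\in\calD_\G$ is redundant). The substantive problem is exactly the one you flag, and your proposed repairs do not close it. The evaluation map $\ev:\partial^2\G\times\R\to\G$ is constructed in \Cref{sec.mf} by a lexicographic choice of rough geodesics and a floor function; it is genuinely only Borel measurable. Consequently $F_\p$ is a bounded Borel function on $\Fc_\k$, and the weak-$\ast$ continuity of $\L\mapsto m_\L$ from \cite[Lemma~3.4]{tanaka.topflows} yields continuity of $\L\mapsto\int F\,dm_\L$ only for \emph{continuous} $F$, which is precisely what is not available. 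Your first fix---replacing $F_\p$ by something ``built from the exact Busemann cocycle of $\wh d$''---cannot work: the $\wh d$-Busemann cocycle encodes $\ell_{\wh d}$, not $\ell_\p$; substituting it changes the functional being extended. Your second fix---smoothing in the flow direction by a bump---removes the discontinuity in $t$ coming from $\lfloor t\rfloor$ and from the parametrization of each individual rough geodesic, and it leaves closed-orbit integrals unchanged, but it does nothing to the transverse discontinuity in $(\x,\y)$ created by the lexicographic selection $\g^0_{\x,\y}$. After such smoothing $F_\p$ remains merely measurable in the $\partial^2\G$-directions, and the continuity of $\L\mapsto\int F_\p\,dm_\L$ is still unavailable.

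To make a flow-based argument work one needs a genuinely continuous replacement for $F_\p$, for instance one built directly from the quasi-extended Gromov product $(\cdot|\cdot)_o^\p$ of \eqref{Eq:QE} and the $\wh d$-cocycle $\k$ (both of which are boundary-continuous), together with control of the resulting coboundary; alternatively one must establish a Livsic-type statement that $F_\p$ is flow-cohomologous to a continuous function, which is a nontrivial assertion in its own right and not supplied here. Without one of these, the passage from closed-orbit integrals to arbitrary currents via $m_\L$ does not go through, and the proposal has a genuine gap.
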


As a corollary, we obtain a characterization for hyperbolic metric potentials in $\calH_\G^+$ and  $\calH_\G^{++}$ in terms of geodesic currents.

\begin{corollary}\label{coro.charH+currents}
    If $\p\in \calH_\G$ then we have:
    \begin{enumerate}
        \item $\p\in \calH_\G^+$ if and only if $\ell_\p(\mu)\geq 0$ for every $\mu\in \calC_\G$; and,
        \item $\p\in \calH_\G^{++}$ if and only if $\ell_\p(\mu)>0$ for every non-zero current $\mu\in \calC_\G$.  
    \end{enumerate}
\end{corollary}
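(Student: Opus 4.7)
The plan is to combine two ingredients: Theorem~\ref{thm.continuousextension}, which provides the continuous extension of $\ell_\p$ from $\G$ to all of $\calC_\G$, and Bonahon's density of real multiples of rational currents in $\calC_\G$ mentioned in \Cref{sec.boundary}. With these in hand, part (1) and the forward direction of (2) become essentially formal, while the converse direction of (2) will require a limiting argument.

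For (1), if $\p\in\calH_\G^+$ then $\ell_\p(\mu_x)=\ell_\p(x)\geq 0$ for every non-torsion $x\in\G$, hence $\ell_\p\geq 0$ on every positive real multiple of a rational current; density together with continuity then extend this to all $\mu\in\calC_\G$. Conversely, specializing $\mu=\mu_x$ for non-torsion $x$ recovers $\ell_\p(x)\geq 0$, while the torsion case is tautological. For the forward direction of (2), given $\p\in\calH_\G^{++}$ I would pick $d\in\calD_\G$ with $\ell_\p\geq\ell_d$ on $\G$ (\Cref{def.H_G^++}); continuity and density promote this to $\ell_\p(\mu)\geq\ell_d(\mu)$ for every $\mu\in\calC_\G$, so the conclusion reduces to the standard fact that $\ell_d$ is strictly positive on $\calC_\G\setminus\{0\}$ whenever $d\in\calD_\G$. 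In our setting this can be read off from the correspondence between currents and flow-invariant finite measures on Mineyev's compact quotient $\Fc_\k$ recalled in \Cref{sec.mf}.

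The converse direction of (2) is the main source of difficulty, and I would argue by contrapositive. Suppose $\p\in\calH_\G\setminus\calH_\G^{++}$. If $\p\notin\calH_\G^+$, the contrapositive of (1) already produces a non-zero current $\mu$ with $\ell_\p(\mu)<0$. Otherwise $\p\in\calH_\G^+$; fixing $d_0\in\calD_\G$ and using that $cd_0\in\calD_\G$ for every $c>0$, the assumption $\p\notin\calH_\G^{++}$ together with \Cref{def.H_G^++} forces the existence of non-torsion $x_n\in\G$ with $\ell_\p(x_n)/\ell_{d_0}(x_n)\to 0$. Setting $\mu_n:=\mu_{x_n}/\ell_{d_0}(x_n)\in\calC_\G$, homogeneity yields $\ell_{d_0}(\mu_n)=1$ and $\ell_\p(\mu_n)\to 0$. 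The hard part will be justifying that $\{\mu_n\}$ admits a limit point in the unit level set $\{\mu\in\calC_\G:\ell_{d_0}(\mu)=1\}$, which boils down to the compactness, or equivalently the properness, of $\ell_{d_0}$ on $\calC_\G$; passing to a subsequential weak$^\ast$ limit $\mu_n\to\mu$ then gives $\ell_{d_0}(\mu)=1\neq 0$ and, by continuity of $\ell_\p$, also $\ell_\p(\mu)=0$, completing the contrapositive. The compactness step is the only non-formal point in the argument, and I would establish it by identifying each $\mu_n$ with its finite flow-invariant measure $m_{\mu_n}$ on the compact space $\Fc_\k$, where uniform control on the total mass yields weak$^\ast$ precompactness by Banach--Alaoglu.
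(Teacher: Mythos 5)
Your proof is correct and uses the same two core ingredients as the paper's argument: the continuous extension of $\ell_\p$ to $\calC_\G$ from \Cref{thm.continuousextension} together with density of rational currents, and the compactness of $\bbP\calC_\G$ (equivalently, of a normalized level set of $\ell_{d_0}$). Your treatment of part (1) and the forward direction of (2) matches the paper essentially verbatim. For the converse of (2), the paper argues directly rather than by contrapositive: it observes that $[\mu]\mapsto \ell_\p(\mu)/\ell_d(\mu)$ is a well-defined, continuous, positive function on the compact space $\bbP\calC_\G$, so it attains a positive minimum $c>0$, giving $\ell_\p\geq c\,\ell_d$ on currents (hence on $\G$) and therefore $\p\in\calH_\G^{++}$ via \Cref{lem:MLSnonegative} and \Cref{lem.charH_G^++}. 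Your sequential/contrapositive version — normalize by $\ell_{d_0}$, extract a weak$^\ast$ limit, observe $\ell_\p$ vanishes there — is logically equivalent (the spaces involved are metrizable), so there is no gap in the logic, merely a cosmetic difference.

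The one place where you work harder than necessary is the final paragraph, where you propose to re-derive the required compactness from scratch by pushing currents to flow-invariant measures on $\Fc_\k$ and invoking Banach--Alaoglu with a claimed uniform mass bound. That bound is not verified in your sketch, and in fact establishing it cleanly without circularity would require a separate argument (e.g. comparing $\ell_{d_0}(\mu)$ with $\mu(K)$ for a compact fundamental domain $K\subset\partial^2\G$). You can and should simply cite the compactness of $\bbP\calC_\G$, which the paper already records as a standard fact in \Cref{sec.preliminaries}; with that, your argument closes with no loose ends, and it then coincides in substance with the paper's.
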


\begin{proof}
    Item (1) is immediate from \Cref{lem:MLSnonegative} and the density of real multiples of rational currents in $\calC_\G$. 

    For Item (2), we first note that $\ell_d(\mu)>0$ for all $d\in \calD_\G$ and $\mu\in \calC_\G$, and hence the same property holds for $\ell_\p$ for $\p\in \calH_\G^{++}$ by \Cref{lem.charH_G^++}. Conversely, $\p \in \calH_\G$, $\p\in \calH_\G$ and $\ell_\p(\mu)>0$ for all non-zero currents $\mu$, then the function on $\bbP\calC_\G$ given by $[\mu]\mapsto \ell_\p(\mu)/\ell_d(\mu)$ is well-defined and continuous by \Cref{thm.continuousextension}, and positive. Since $\bbP\calC_\G$, we can find $c>0$ such that $\ell_\p \geq c\ell_d$, and hence $\p\in \calH_\G^{++}$ by \Cref{lem:MLSnonegative} and \Cref{lem.charH_G^++}. 
\end{proof}



\subsection{Examples}\label{sec.exHMP}

It turns out that there are many natural functions that belong to $\calH_\G$. Similar to metrics in $\calD_\G$ induced by geometric actions of $\G$ on geodesic spaces, we obtain metrics in $\ov\calD_\G$ (and hence in $\calH_\G^+)$ from isometric actions that are not necessarily proper. In \cite[Theorem~1.6]{cantrell-reyes.manhattan} this was proven for the following actions:
\begin{enumerate}
    \item Actions on coned-off Cayley graphs for finite, symmetric generating sets, where we cone-off a finite number of quasi-convex subgroups of infinite index.
    \item Non-trivial Bass-Serre tree actions with quasi-convex vertex stabilizers of infinite index.
    \item Non-trivial small actions on $\R$-trees, when $\G$ is a surface group or a free group.
\end{enumerate}

All these examples yield symmetric hyperbolic metric potentials. Non-symmetric hyperbolic metric potentials include word metrics for non-symmetric finite generating sets. More non-symmetric hyperbolic metric potentials can be constructed from Anosov representations.

A representation $\rho:\G \ra \SL_d(\R)$ is $k$-\emph{dominated} ($1\leq k \leq d-1$) if there exist $C,\lam>0$ and a finite generating subset $S\subset \G$ such that
\begin{equation*}\label{eq.defdominated}
    \frac{\sig_k(\rho(x))}{\sig_{k+1}(\rho(x))}\geq Ce^{\lam|x|_S} \text{ for all }x\in \G.
\end{equation*}

See e.g. \cite{BPS}. If $\rho$ is $1$-dominated, then the formula
\[d_{\rho,1}(x,y):= \log \sig_1(\rho(x^{-1}y))\]
defines an element of $\hat\calD_\G \subset \calH_\G$ \cite[Lemma~7.1]{cantrell-tanaka.invariant}. If $\G$ is $k$-dominated for all $k$, then $\G$ is called \emph{Borel Anosov}. In this case, the functions
\[d_{\rho,k}(x,y):= \log \sig_k(\rho(x^{-1}y))\] belong to $\calH_\G^{++}$ for all $k$. Indeed, we have $d_{\rho,k}=d_{\wedge^k \rho,1}-d_{\wedge^{k-1}\rho,1}$ for all $k\geq 2$, where $\wedge^r \rho$ denotes the $r$th exterior product representation induced by $\rho$. Since $\rho$ is $k$-dominated if and only if $\wedge^k \rho$ is 1-dominated, we deduce that $d_{\rho,\k}\in \calH_\G$.

So far, all the hyperbolic metric potentials we have considered belong to $\calH_\G^+$. Examples of potentials unbounded by above and below are given by non-trivial quasimorphisms. A \emph{quasimorphism} is a function $\varphi:\G \ra \R$ such that 
\begin{equation}\label{eq.defQM}\sup_{x,y\in \G}{|\varphi(x)+\varphi(y)-\varphi(xy)|} <\infty.
\end{equation}
If $\varphi$ is a quasimorphism and we set $\psi(x,y)=\varphi(x^{-1}y)$, then $\psi$ is $\G$-invariant and \eqref{eq.defQM} translates to $\sup_{x,y,w} |(x|y)_w^\psi|<\infty$. In particular, $\psi \in \calH_\G$. If $\varphi$ is unbounded, then $\ell_\psi$ is non-trivial, and in this case we have $\ell_\p(x^{-1})=-\ell_\p(x)$. In particular, $\ell_\p$ is unbounded by above and below. For non-elementary hyperbolic groups the space of quasi-morphisms is infinite dimensional \cite{epstein-fujiwara}.

\section{Construction of the joint translation spectrum}\label{sec.constructionJTS}

In this section we prove the existence of the joint translation spectrum and prove Theorems \ref{thm.intro.cone} and \ref{thm.intro.jtspectrum}. Throughout this section we fix $\p_1,\dots,\p_n \in \calH_\G$ and $\p\in \calH_\G^{++}$ and let $\D, \D_\p, \bfL, \bfL_\p$ be as in \Cref{sec.intro.TC+JTS}. We also fix an auxiliary metric $d\in \calD_\G$. We denote by $\|\cdot\|$ the Euclidean norm on $\R^n$. 

\subsection{Preliminary results}
We begin by proving some preliminary results for functions in $\calH_\G$. The next lemma is a direct consequence of \Cref{coro.propertiesH}(2).

\begin{lemma}\label{lem.closeto}
For any $\ep > 0$ there exists $C>0$ such that if $x \in \G$ satisfies $(x|x^{-1})^d_{o} \le \ep$ then $\|\bfL(x^m) - \D(x^m)\| \le C$ for all $m\geq 1$.
\end{lemma}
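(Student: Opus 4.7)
The statement is stated to be an immediate consequence of Corollary \ref{coro.propertiesH}(2), so the plan is to apply that corollary to each $\psi_i$ separately and combine the resulting scalar estimates into a vector bound. Concretely, given $\ep>0$, invoke Corollary \ref{coro.propertiesH}(2) with $C=\ep$ for each $i=1,\dots,n$ to obtain a threshold $D_0^{(i)}>0$; set $D:=1+\max_i D_0^{(i)}$. Whenever $d(o,x)>D$ and $(x|x^{-1})_o^d\le \ep$, the corollary delivers a constant $R_i=R_{\ep,D}^{(i)}$, independent of $x$ and $m$, such that $|\psi_i(o,x^m)-\ell_{\psi_i}(x^m)|\le R_i$ for every $m\ge 1$. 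Squaring and summing over $i$ gives
\[
\|\bfL(x^m)-\D(x^m)\|\le\Bigl(\sum_{i=1}^n R_i^{2}\Bigr)^{1/2}=:C_{1},
\]
which is the desired bound in the \emph{large-$x$} regime.

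The remaining case $d(o,x)\le D$ has to be addressed since Corollary \ref{coro.propertiesH}(2) requires $d(o,x)>D$ as hypothesis. The set $F:=\{y\in\G:d(o,y)\le D\}$ is finite because $d\in\calD_\G$ is quasi-isometric to a word metric on the finitely generated group $\G$. Thus it suffices to show that for \emph{each} $x\in F$ the sequence $\|\bfL(x^m)-\D(x^m)\|$ is bounded in $m$, and then take the maximum of these (finitely many) individual bounds. If $x$ is torsion, $\{x^m\}_m$ takes finitely many values, so $\psi_i(o,x^m)$ is bounded while $\ell_{\psi_i}(x^m)=m\ell_{\psi_i}(x)=0$, giving the bound trivially. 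If $x$ is non-torsion, the argument at the beginning of the proof of Lemma \ref{lem:MLSnonegative}---which uses \cite[Lemma~3.11]{cantrell-reyes.manhattan}---yields a constant $K_x$ with $|(x^{-k}|x^n)_o^d|\le K_x$ for all $k,n\ge 1$, and hyperbolicity of $\psi_i\in\calH_\G$ then gives $|(x^{-k}|x^n)_o^{\psi_i}|\le \lambda K_x+\lambda$. A standard Fekete-type argument based on the identity
\[
\psi_i(o,x^{k+n})=\psi_i(o,x^k)+\psi_i(o,x^n)-2(x^{-k}|x^n)_o^{\psi_i}
\]
then forces $|\psi_i(o,x^m)-m\ell_{\psi_i}(x)|$ to be uniformly bounded in $m$. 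Taking the maximum over the finite set $F$ and over the $n$ coordinates produces a constant $C_{2}$ that handles the small-$x$ regime.

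The final constant is $C:=\max\{C_{1},C_{2}\}$. The only point deserving care is the small-$x$ case, which is not directly covered by Corollary \ref{coro.propertiesH}(2); this is where finiteness of $F$ and the Fekete-type reasoning of Lemma \ref{lem:MLSnonegative} come in, but neither step presents any genuine difficulty.
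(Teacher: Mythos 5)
Your proof is correct and follows the same route as the paper, namely applying Corollary~\ref{coro.propertiesH}(2) coordinatewise and combining the resulting scalar bounds. The paper's ``direct consequence'' phrasing leaves implicit the case $d(o,x)\le D$ excluded by the hypothesis of Corollary~\ref{coro.propertiesH}(2); you correctly close this by noting the set of such $x$ is finite (properness of $d\in\calD_\G$) and handling each one via the torsion dichotomy together with the Fekete-type bound from the proof of Lemma~\ref{lem:MLSnonegative}.
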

To compare $\bfL$ and $\D$ we therefore need to understand when $x\in  \G$ has a small Gromov product with its inverse with respect to $d$. This leads us to the following definition.
\begin{definition}
We say that a group element $x \in \G$ is $(d,\epsilon)$-\emph{proximal} for $\epsilon \ge 0$ if $(x|x^{-1})^d_{o} \le \epsilon$.
\end{definition}
The following technical lemma will be key in the proofs of Theorems \ref{thm.intro.cone} and \ref{thm.intro.jtspectrum} . It will allow us to find appropriate proximal elements which in turn will enable us to control the displacement and translation vectors of certain products.

\begin{lemma}\label{lem.construct}
    There exists a finite set $F \subset \G$ and $\epsilon >0$ such that given $x,y \in \G$ there are $f_1, \ldots, f_6 \in F$ such that:
    \begin{enumerate}
        \item the elements $(f_1 x f_2)^{m_1}, (f_4yf_5)^{m_2}$ and $(f_1xf_2)^{m_1}f_3(f_4yf_5)^{m_2}f_6$ are $(d,\epsilon)$-proximal for all $m_1,m_2 \geq 1$;
        \item $d(o,(f_1xf_2)^{m_1}f_3(f_4yf_5)^{m_2}f_6) = d(o,(f_1xf_2)^{m_1})+ d(o,(f_4yf_5)^{m_2})+O(1)$ for all $m_1,m_2 \ge 1$, where the implied error is independent of $x,y$ and $m_1,m_2$; and, 
        \item $\D((f_1xf_2)^{m_1}f_3(f_4yf_5)^{m_2}f_6) = \D((f_1xf_2)^{m_1}) + \D((f_4yf_5)^{m_2}) +O(1)$ for all $m_1,m_2 \ge 1$, where the implied error is independent of $x,y$ and $m_1,m_2$.
    \end{enumerate}
\end{lemma}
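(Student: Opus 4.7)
The strategy is a uniform ping-pong construction, followed by a local-to-global argument that upgrades the resulting broken path into a quasi-geodesic, and finally a transfer of this quasi-geodesicity into the three additive identities via the defining quasi-additivity of hyperbolic metric potentials (condition (2) of Definition 1.1).

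Since $\G$ is non-elementary, I fix two loxodromic elements $g_1, g_2 \in \G$ with pairwise distinct fixed points on $\partial \G$, and by passing to a large power choose $a := g_1^N$ and $b := g_2^N$ whose axes pass within a uniformly bounded $d$-distance of $o$ and whose translation lengths exceed the hyperbolicity constant of $d$ by a large margin. I take $F \subset \G$ to be a suitable finite collection of words in $\{o, a^{\pm 1}, b^{\pm 1}\}$, rich enough to realise the following uniform ping-pong claim: for every $x \in \G$ there exist $f_1, f_2 \in F$ such that the repelling and attracting fixed points of $f_1 x f_2$ lie in two prescribed disjoint small neighborhoods $U^-, U^+ \subset \partial \G$ (say near $a^-$ and $b^+$). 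This is a standard consequence of the ping-pong dynamics of $a, b$ on $\partial \G$: the finitely many possible boundary directions of $x \cdot o$ and $x^{-1} \cdot o$ at $o$ can always be pushed into $U^-, U^+$ by pre- and post-multiplication by appropriate powers of $a^{\pm 1}, b^{\pm 1}$, leveraging the minimality and proximality of the $\G$-action on $\partial \G$. Since the axes of $f_1 x f_2$ then fellow-travel a fixed quasi-geodesic between $U^-$ and $U^+$ passing within bounded $d$-distance of $o$, the proximality bound $((f_1xf_2)^{m_1}\,|\,(f_1xf_2)^{-m_1})_o^d \le \epsilon$ holds for every $m_1 \ge 1$ with $\epsilon$ independent of $x$ and $m_1$. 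Applied symmetrically, the construction also produces $f_4, f_5 \in F$ with the analogous property for $y$.

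Next, I select $f_3, f_6 \in F$ so that (i) the attracting fixed point of $(f_1xf_2)^{m_1}$ and the repelling fixed point of $f_3(f_4yf_5)^{m_2}$ are uniformly separated, and (ii) the full product $(f_1xf_2)^{m_1}f_3(f_4yf_5)^{m_2}f_6$ again has its repelling/attracting pair of fixed points in $U^-, U^+$, yielding assertion (1) for the combined element. With these choices in place, the broken path
\[
o \ \to\ (f_1xf_2)^{m_1} \ \to\ (f_1xf_2)^{m_1}f_3 \ \to\ (f_1xf_2)^{m_1}f_3(f_4yf_5)^{m_2} \ \to\ (f_1xf_2)^{m_1}f_3(f_4yf_5)^{m_2}f_6
\]
has uniformly bounded bending at every vertex, since the Gromov products at the vertices are $O(1)$ by the ping-pong arrangement. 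The local-to-global criterion for quasi-geodesics in the Gromov-hyperbolic space $(\G, d)$ (see \cite{bridson-haefliger}) then yields that the whole concatenation is an $(L, C, d)$-quasi-geodesic with $L, C$ depending only on $F$ and the hyperbolicity constant of $d$.

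Assertion (2) follows by adding up the four $d$-lengths of the consecutive segments and absorbing $d(o, f_3), d(o, f_6) \le \max_{f \in F} d(o, f)$ into the $O(1)$ error. For assertion (3) I apply the quasi-additivity condition to each coordinate $\psi_i$ of $\D$: whenever a quasi-geodesic from $o$ to $uv$ passes within bounded $d$-distance of $u$, one has $|(o\,|\,uv)_u^{\psi_i}| = O(1)$, which rewrites as $\psi_i(o, uv) = \psi_i(o, u) + \psi_i(u, uv) + O(1)$; iterating along the four-segment quasi-geodesic and absorbing the bounded contributions $\psi_i(o, f_3), \psi_i(o, f_6)$ yields (3). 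The main obstacle is the uniform ping-pong step used in paragraph two: one must exhibit a single finite $F$ and constant $\epsilon$ that handle \emph{every} $x \in \G$ --- including short $x$ whose own boundary dynamics are trivial --- which demands a careful finite-covering argument on $\partial \G$ under the boundary action of $F$ together with uniform control of the translation-length contributions from the $a^{\pm 1}, b^{\pm 1}$ factors.
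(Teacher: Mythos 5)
Your approach is genuinely different from the paper's. The paper works combinatorially: it fixes a word metric $d_S$, introduces the Cannon coding for $(\Gamma,S)$, and cites \cite[Lemma~2.16]{cantrell.mixing} to produce $f_1,f_2,f_4,f_5 \in F$ such that $f_1 x f_2$ and $f_4 y f_5$ are read off along \emph{closed loops} in the Cannon graph; enlarging $F$ gives $f_3,f_6$ so that the full concatenation is again a closed loop. Closed loops give $(d_S,0)$-proximal elements and $d_S$-geodesic words, and parts (2) and (3) then follow from the Morse lemma together with the defining quasi-additivity of $\calH_\Gamma$. Your route is dynamical in the spirit of Abels--Margulis--Soifer: ping-pong on $\partial\Gamma$ to straighten arbitrary $x$ by sandwiching with short loxodromics, then a local-to-global argument. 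Both are legitimate strategies, and indeed the paper itself remarks that the set $F$ is ``very similar to Schottky sets'' from \cite{BMSS,gouezel.schottky,choi.random1}. The combinatorial route is shorter once the Cannon-coding machinery is in place; your route is more portable to settings without automatic structures.

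That said, there is a real gap at the center of your proposal. The ``uniform ping-pong claim'' --- that a single finite $F$ and single $\epsilon>0$ make $f_1 x f_2$ $(d,\epsilon)$-proximal for \emph{every} $x\in\Gamma$ --- is stated and then asserted to be ``a standard consequence of the ping-pong dynamics,'' but no argument is given. The sentence you offer in support, ``the finitely many possible boundary directions of $x\cdot o$ and $x^{-1}\cdot o$ at $o$ can always be pushed into $U^-, U^+$,'' is incoherent as written: as $x$ ranges over $\Gamma$ the directions $[o,xo]$ and $[o,x^{-1}o]$ sweep over a dense subset of $\partial\Gamma$, so there are infinitely many of them, and the entire content of the lemma is the uniformity of a finite correcting set over that infinite family. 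You flag this at the end as ``the main obstacle'' requiring ``a careful finite-covering argument on $\partial\Gamma$,'' but flagging an obstacle is not the same as surmounting it. What is needed here is precisely a two-sided Schottky-type lemma with uniform shadow estimates (as in \cite{gouezel.schottky} or \cite{BMSS}), or the Cannon-coding replacement used by the paper. The remainder of your argument --- bounded bending, local-to-global quasi-geodesicity, transfer to $\D$ via quasi-additivity of each $\psi_i$ --- is sound once the Schottky step is supplied.
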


The finite set $F$ appearing in this key lemma is very similar to Schottky sets appearing in the recent works \cite{BMSS,gouezel.schottky,choi.random1} on random walks on Gromov hyperbolic spaces. It traces back to Abels--Margulis--Soifer \cite{AMS} in the matrix setting which was used in \cite{breuillard-sert} to study the matrix version of the joint translation spectrum. The precise version we will use is due to \cite{cantrell.mixing}.

\begin{proof}
Fix a word metric $d_S$ on $\G$ associated to a finite symmetric generating set $S$. To prove the lemma it is convenient to introduce a combinatorial tool for studying hyperbolic groups, the \emph{Cannon coding} \cite{cannon}. This is a finite directed graph that encodes the pair $(\G,S)$. More precisely the Cannon graph $(\calG, V,E, v_\ast)$ is a finite directed graph $\calG$ with vertex set $V$, directed edge set $E$ and a distinguished vertex $v_\ast$, with the following properties:
\begin{enumerate}
    \item there is a labelling $l : E \to S$ associating a generator in $S$ to each directed edge in $E$;
    \item let $P_\ast$ denote the finite directed paths starting at $v_\ast$. Then, the map $\textnormal{ev} : P_\ast \to \G$ that maps the path following the edges $e_1, e_2,\ldots, e_k$ (where $e_1$ starts at $v_\ast$) to $l(e_1) l(e_2) \cdots l(e_k) \in \G$ is a bijection; and,
    \item the bijection in $(2)$ maps paths of length $k$ (i.e. paths that consist of $k$ edges) to group elements in $\G$ of $S$-word length $k$.
\end{enumerate}
In particular it follows that finite paths in $\calG$ correspond to geodesics in the word metric for $S$.

We will use this combinatorial structure to prove the lemma, so take arbitrary $x,y \in \G$. Then by \cite[Lemma~2.16]{cantrell.mixing} there exists a finite set $F \subset \G$ independent of $x,y$ such that there are $f_1,f_2, f_4,f_5\in F$ such that $f_1xf_2$ and $f_4yf_5$ are obtained by multiplying the labellings in a closed loop in the Cannon graph. It follows that $(f_1 x f_2)^{m_1}$ and $(f_4 y f_5)^{m_2}$ are $(d_S, 0)$-proximal for all $m_1,m_2 \ge 1$. We can also enlarge $F$ by adding a finite number of group elements (corresponding to finite paths in $\calG$) so that there are $f_3,f_6 \in F$ so that $(f_1xf_2)^{m_1}f_3(f_4yf_5)^{m_2}f_6$ is represented by a loop in the Cannon graph for any $m_1,m_2 \ge 1$. This concludes the proof of $(1)$ since $(d_S,0)$-proximal implies $(d,\ep)$-proximal for some $\ep$ depending only on $d$ and $d_S$.

Parts $(2)$ and $(3)$ also easily follow. Indeed, we know that $(f_1xf_2)^{m_1}f_3(f_4yf_5)^{m_2}f_6$ is a $d_S$-geodesic and so the conclusion follows from the Morse Lemma (and the definition of $\calH_\G$) when comparing $d_S$ and $d, \p_1,\ldots,\p_n$.
\end{proof}

\subsection{Proof of results} Now we prove the results from \Cref{sec.intro.TC+JTS}. For a subset $A\subset \R^n$ we let $\textnormal{CH}(A)$ denotes its convex hull. We begin with the proof of \Cref{thm.intro.cone}.

\begin{proof}[Proof of \Cref{thm.intro.cone}]
To show convexity of $\calT\calC(\D)$ it is enough to show that $m_1\bfL(x)+m_2\bfL(y)$ belongs to $\calT\calC(\D)$ for all $m_1,m_2 \geq 1$ and $x,y\in \G$. To do this, we apply \Cref{lem.closeto} and \Cref{lem.construct} to find a constant $C$ (independent of $x,y$) and $f_1,\dots,f_6 \in \G$ such that 
\[\|\bfL((f_1xf_2)^{m_1}f_3(f_4yf_6)^{m_2}f_6)-m_1\bfL(x)-m_2\bfL(y)\| \leq (m_1+m_2)C\] 
Applying this to powers $x^k$ and $y^k$, then we get the result after dividing by $k$ and letting $k$ go to infinity.

Next, we prove that $\calT\calC(\D)$ is the asymptotic cone of $\D(\G)$. It $\vx\in \R^n$ is the limit of $\al_k \D(x_k)$ for some $x_k\in \G$ and $\al_k \to 0$, then again by \Cref{lem.closeto} and \Cref{lem.construct} we can find $C>0$ and elements $x_k'\in \G$ such that $\|\D(x_k)-\bfL(x_k')\|\leq C$ for all $k$. Then $\vx=\lim_{k}{\al_k \bfL(x_k')}$ and hence belongs to $\calT\calC(\D)$. The other inclusion is easy, since asymptotic cones are closed and any vector $\al \bfL(x)$ equals the limit of $\frac{\al}{k}\D(x^k)$ as $k$ tends to infinity.

Finally, we show that $\calT\calC(\D)$ has non-empty interior if and only if $\p_1,\dots,\p_n$ are independent. Clearly if $\p_1,\dots,\p_n$ are not independent then $\calT\calC(\D)$ is contained in a codimension-1 subspace of $\R^n$ and has empty interior. Conversely, suppose that $\p_1,\dots,\p_n$ are independent. Then we can find non-torsion elements $x_1,\dots,\x_n \in \G$ such that the vectors $\bfL(x_1),\dots, \bfL(x_n)\in \R^n$ are affinely independent. Convexity of $\calT\calC(\D)$ then implies that \[
    \textnormal{CH}(\{\bfL(x_1), \ldots, \bfL(x_{n})\}) \subset \calT\calC(\D)
    \]
    and it follows that $\calT\calC(\D)$ has non-empty interior in $\R^n$.
\end{proof}

We continue with the proof of \Cref{thm.intro.jtspectrum}. We need the following lemma that provides a useful characterization of Hausdorff convergence (see \cite[Section 4.4]{ambrosio.book}).
\begin{lemma} \label{lem.hausconv}
    Suppose that $(X,d_X)$ is a compact metric space. A sequence $K_T$, $T>0$ of compact subsets of $X$ converges in the Hausdorff metric to a compact subset of $X$ if and only if for every $\d > 0$ there exists $T_0>0$ such that for all $T_1 \ge T_0$ and for every $x \in K_{T_1}$ we have that $\limsup_{T\to\infty} d(x,K_T) \le \d$.
\end{lemma}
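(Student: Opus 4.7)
My plan is to split the proof into the two directions of the biconditional, using the compactness of the hyperspace of compact subsets throughout. Write $d_H$ for the Hausdorff distance on the space $\calK(X)$ of (non-empty) compact subsets of $X$, and recall the standard fact that $(\calK(X),d_H)$ is itself compact (Blaschke selection), together with the elementary inequality $|d(y,K)-d(y,K')|\leq d_H(K,K')$ valid for any $y\in X$ and $K,K'\in \calK(X)$. The sets $K_T$ of interest in the application are non-empty for $T$ large, so I will tacitly work on that tail.

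For the forward implication, assume $K_T\to K$ in $d_H$. Given $\d>0$, pick $T_0$ so that $d_H(K_T,K)<\d/2$ for every $T\geq T_0$. Then for any $T_1\geq T_0$ and any $x\in K_{T_1}$ the inequality above gives $d(x,K)\leq d_H(K_{T_1},K)<\d/2$, and for every $T\geq T_0$, $d(x,K_T)\leq d(x,K)+d_H(K,K_T)<\d$. Taking $\limsup_{T\to\infty}$ yields the stated bound, and this direction is done.

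For the reverse implication, assume the $\limsup$ condition. By compactness of $(\calK(X),d_H)$, the sequence $\{K_T\}$ has at least one accumulation point; I will show it has exactly one, which automatically gives convergence. So suppose $K_{T_n}\to K$ and $K_{T'_m}\to K'$ in $d_H$ and fix $\d>0$ and $T_0$ as in the hypothesis. For any $x\in K$, choose $x_n\in K_{T_n}$ with $x_n\to x$; for all $n$ large enough $T_n\geq T_0$, so $\limsup_{T\to\infty}d(x_n,K_T)\leq \d$, and specializing to the subsequence $T=T'_m$ combined with $d(x_n,K_{T'_m})\to d(x_n,K')$ (which follows from $K_{T'_m}\to K'$ and the contraction inequality) gives $d(x_n,K')\leq \d$. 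Letting $n\to\infty$ yields $d(x,K')\leq \d$, and since $\d>0$ was arbitrary and $K'$ is closed we conclude $x\in K'$, hence $K\subseteq K'$. Swapping the roles of $K$ and $K'$ gives the reverse inclusion, so $K=K'$.

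Since sequences in a compact metric space converge whenever they have a unique accumulation point, the reverse direction is complete. The only mildly delicate point, and what I would regard as the single content step, is the observation that the hypothesis is strong enough to force any two subsequential limits to coincide; everything else is bookkeeping with the triangle inequality for $d_H$. No other ingredient beyond Blaschke's theorem and the Lipschitz dependence of the point-to-set distance on its second argument is needed.
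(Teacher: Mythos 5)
The paper does not give its own proof of this lemma; it simply cites Section~4.4 of Ambrosio--Tilli, so there is no in-paper argument to compare against. Your proof is correct and self-contained. The forward direction is the expected triangle-inequality bookkeeping using the $1$-Lipschitz dependence of $y\mapsto d(y,K)$ on $K$ (in the Hausdorff metric). For the reverse direction, the key move --- invoking the Blaschke selection theorem so that it suffices to show all subsequential limits of $\{K_T\}$ coincide, and then threading the $\limsup$ hypothesis through the diagonal ($x\in K$, approximate by $x_n\in K_{T_n}$, then evaluate along $T'_m$ to conclude $d(x_n,K')\leq\d$, and pass $n\to\infty$) --- is exactly the right idea, and the symmetric double inclusion $K\subseteq K'$, $K'\subseteq K$ is handled cleanly. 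You also rightly flag that one tacitly needs the $K_T$ to be non-empty for large $T$, which holds in the paper's application. The closing step, that a family in a compact metric space with a unique accumulation point must converge, is standard and applies to the continuously indexed family $\{K_T\}_{T>0}$ just as to a genuine sequence.
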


\begin{proof}[Proof of \Cref{thm.intro.jtspectrum}]
\textbf{Existence of the limits.}
We begin by showing that the limits stated in the theorem exist.
   Since $\p\in \calH_\G^{++}$, by \Cref{lem.GPpositiveH_G} we can assume without loss of generality that $\p$ satisfies the triangle inequality. Let $R$ be large enough so that $S_R(T)$ grows exponentially quickly as in 
   \Cref{cor.exp}. Since $\p\in \calH_\G^{++}$, the sets $K_T=\D_\p(S_R(T))$ are uniformly bounded in $\R^n$, and so by \Cref{lem.hausconv} we want to show that for all $\d > 0$ there is $T_0>0$ such that for $ T_1 \ge T_0$ and all $\vx \in \D_\p(S_R(T_1))$ we have that $\limsup_{T\to\infty} \|\vx- K_T \| \le \d$.
   
    To that end fix $x \in S_R(T_1)$ and let $F$ denote the finite set from \Cref{lem.construct}. Then by \Cref{lem.closeto} and \Cref{lem.construct} there are $\ep, C>0$ such that we can find $f_1, f_2 \in F$ such that $(f_1xf_2)$ is $(d,\ep)$-proximal and
    \[
    \| \bfL((f_1 x f_2)^m) - \D((f_1xf_2)^m)\| \le C \ \text{ for all $m\ge 1$.} 
    \]
    Note that the constant $C >0$ is independent of $x \in \G$.
    We also have that $\bfL((f_1xf_2)^m) = m \bfL(f_1xf_2)$, $\|\D(f_1xf_2) - \D(x)\| \le M$ and $\|\D_\p(x)\| \le \widetilde{C}$ where $M, \widetilde{C} >0$ are constants independent of $x \in \G$  and $m \ge 1$ by \Cref{coro.propertiesH}. Hence we have that
    \[
    \left\| \bfL(f_1xf_2) - \frac{1}{m}\D((f_1xf_2)^m)\right\| \le \frac{C}{m} 
    \]
    and by the triangle inequality
    \begin{equation}\label{eq.tineq}
    \left\|\D(x) - \frac{1}{m} \D((f_1xf_2)^m)\right\| \le C\left( 1 + \frac{1}{m} \right) + M.        
    \end{equation}

    Now for each $ T \ge T_1$ we can find $R'>0$ (independent of $T$ but possibly depending on $T_1$), $m_T \in \mathbb{Z}$ and $r_T \in S_R(R')$  such that $x_T = r_T(f_1xf_2)^{m_T}$ belongs to $S_R(T)$ (we must enlarge $R$ if necessary, but independently on $T$ or $T_1$). By $(d,\ep)$-proximality of $f_1xf_2$, notice that there is $C' >0$ independent of $ T$ and $T_1$ such that
    \[
    |\p(o,x_T) - m_T \, \p(o,x)| \le C' m_T
    \]
    for all $T > T_1$ large enough (note that the $m_T$'s tend to infinity). We then have by the triangle inequality
    \begin{align*}
    \left\|\D_\p(x) -  \D_\p(x_T)\right\| &\le \left\|\D_\p(x) - \frac{1}{m_T \p(o,x)}\D((f_1xf_2)^{m_T})\right\|\\
    &\hspace{1cm} + \left| \frac{1}{m_T\p(o,x)} - \frac{1}{\p(o,x_T)}\right|\left\|\D((f_1xf_2)^{m_T}\right\|\\
    &\hspace{2cm}+ \frac{1}{\p(o,x_T)} \left\|\D((f_1xf_2)^{m_T}) - \D(r_T(f_1xf_2)^{m_T})\right\|.
    \end{align*}
    Using (\ref{eq.tineq}) we see that
    \[
    \left\|\D_\p(x) - \frac{1}{m_T\p(o,x)}\D((f_1xf_2)^{m_T})\right\| \le \frac{C}{T_1- R} \left( 1 + \frac{1}{m_T}\right) + \frac{M}{T_1-R},
    \]
    and we also have that
    \begin{align*}
         \left| \frac{1}{m_T\p(o,x)} - \frac{1}{\p(o,x_T)}\right|\left\|\D((f_1xf_2)^{m_T})\right\| 
    & \le \left| \frac{C'm_T}{m_T\p(o,x)\p(o,x_T)}\right|\widetilde{C}\p((f_1xf_2)^{m_T})\\
    & \le \left( \frac{\widetilde{C}C'}{T_1-R}\right) \left( \frac{T+2R+R'}{T-R}\right).   
    \end{align*}

   It is also easy to see by \Cref{lem.closeto} that there is $L>0$ independent of $x, T$ (but depending on $T_1$) such that 
    \[
    \frac{1}{\p(o,x_T)} \| \D((f_1xf_2)^{m_T}) - \D(r_T(f_1xf_2)^{m_T})\| \le  \frac{L}{T-R}.
    \]
    Combining these estimates with our use of the triangle inequality above and taking the limit as $T \to \infty$ we see that
    \[
    \limsup_{T\to\infty} \left\|\D_\p(x) -  \D_\p(x_T)\right\| \le \frac{C''}{T_1-R}
    \]
    for some $C'' >0$ that crucially is independent of $T_1$ and $x \in S_R(T_1)$.
    This proves the convergence of $
    \D_\p(S_R(T))$ for each $R$ sufficiently large, and it is easy to see that this limit is independent of $R$. Indeed, for $R >0$ sufficiently large and for any $R_1 >R$ there exists $C >0$ depending on $R$ such that the Hausdorff distance between $\D_\p(S_R(T))$ and $\D_\p(S_{R_1}(T))$ is at most $C(T-R_1)^{-1}$ for all $T$ large enough. This concludes the proof of the convergence for $\D_\p(S_R(T))$.

    To prove the convergence for $\bfL_\p(S_R^{\ell_\p}(T))$ first note that
    by \cite[Lemma 3.11]{cantrell-reyes.manhattan} and \Cref{coro.propertiesH} there exists $M>0$ such that the following holds. For any $x\in \G$ we can find $y \in \G$ with $\p(y,x) < M$ and both $|\ell_\p(y) -\p(o,x)| < M$, $\|\bfL(y) - \D(y)\| < M$. In particular, there exists $R_0 > R$ and $C>0$ independent of $x \in \G$ such that for any $x \in S_R(T)$ we can find $y \in S_{R_0}^{\ell_\p}(T)$ with $\| \bfL_\p(y) - \D_\p(x)\| \le C T^{-1}$. This implies that 
    \[
    \D_\p(S_R(T)) \subset \{ \vx \in \R^n : \text{ there exists $y \in S_{R_0}^{\ell_\p}(T)$ with } \| \bfL_\p(y) - \vx\| \le C T^{-1} \}
    \]

     Conversely, it is easy to see that given $R_0$, there exist $M' >0$ and $R_0$ such that given $y \in S_{R_0}^{\ell_\p}(T)$ there exists $x \in S_{R_1}(T)$ such that $\p(y,x)<M'$ and $\|\bfL(y) - \D(x)\| < M'$. We therefore see that there is $C' > 0$ such that
\[
  \bfL_\p(S_{R_0}^{\ell_\p}(T)) \subset \{ \vx \in \R^n : \text{ there exists $y \in S_{R_1}(T)$ with } \| \D_\p(y) - \vx\| \le C' T^{-1} \}.
\]
The required Hausdorff convergence then follows easily from these two set inclusions and from the convergence for $\D(S_R(T))$.

\textbf{Convexity.} We now show that the joint translation spectrum is convex, and the argument is similar to that for \Cref{thm.intro.cone}. Since $\J_\p(\D)$ is closed it is enough to show that for $\mathbf{r},\mathbf{s} \in \J_{\p}(\D)$ we have $\frac{1}{2}(\mathbf{r}+\mathbf{s}) \in \J_{\p}(\D).$ To do this we first find sequences $x_k, y_k \in \G$ and $T_k \in \Z_{\ge 0}$ such that $x_k, y_k \in S_R(T_k)$, $T_k \to \infty$ as $k\to\infty$ and
    \[
    \D_\p(x_k) \to \mathbf{r} \text{ and } \D_\p(y_k) \to \mathbf{s}
     \]
     as $k\to\infty$.
     Now by \Cref{lem.closeto} and \Cref{lem.construct} there is a finite set $F \subset \G$ and sequences $f_{1,k}, \ldots, f_{6,k} \in F$ for $k\ge 1$ such that for $\widetilde{x}_k = f_{1,k}x_k f_{2,k}$ and $\widetilde{y}_k = f_{4,k}y_k f_{5,k}$ we have that
     \[
\bfL(\widetilde{x}_k f_{3,k} \widetilde{y}_k f_{6,k}) = \bfL(\widetilde{x}_k) + \bfL(\widetilde{y}_k) + O(1), \  \ell_{\p}(\widetilde{x}_k f_{3,k} \widetilde{y}_k f_{6,k}) = \p(o,\widetilde{x}_k) + \p(o,\widetilde{y}_k) + O(1)
\]
and 
\[
\bfL(\widetilde{x}_k) = \D(\widetilde{x}_k) + O(1), \ \bfL(\widetilde{y}_k) = \D(\widetilde{y}_k) +O(1),
     \]
     where the implied errors are independent of $k$.
     We then have that
     \[
     \frac{\bfL(\widetilde{x}_k f_{3,k} \widetilde{y}_k f_{6,k})}{\ell_{\p}(\widetilde{x}_k f_{3,k} \widetilde{y}_k f_{6,k}) } \to \frac{\mathbf{r}}{2} + \frac{\mathbf{s}}{2}
     \]
     as $k\to\infty$ and the proof is complete.

\textbf{Non-empty interior.}
We now show that $\J_\p(\D)$ has non-empty interior if and only if the associated potentials are independent, exactly as in \Cref{thm.intro.cone}. If $\p_1,\dots,\p_n,\p$ are dependent then there exist $c_1, \ldots, c_n, c_{n+1}$, not all simultaneously $0$, such that
    \[
    \sum_{i=1}^n c_i\ell_{\p_i}(x) + c_{n+1}\ell_{\p}(x) = 0
    \]
    for all $x\in\G$. We then deduce that $\J_{\p}(\D)$ is contained in the hyperplane with normal vector $(c_1,\ldots, c_n)$ and so has empty interior.

    Conversely suppose that $\p_1,\ldots, \p_n, \p$ are independent. Then we can find non-torsion elements $x_1,\ldots,x_{n+1}\in \G$ such that the vectors $\bfL_\p(x_1),\ldots, \bfL_\p(x_{n+1}) \in \R^{n+1}$ are affinely independent. Then by the above, $\J_\p(\D)$ is convex and so 
    \[
    \textnormal{CH}(\{\bfL_\p(x_1), \ldots, \bfL_\p(x_{n+1})\}) \subset \J_{\p}(\p_1,\ldots, \p_n)
    \]
    and it follows that $\J_{\p}(\D)$ does not have empty interior in $\R^n$.
\end{proof} 

We record the following useful corollary that follows easily from \Cref{thm.intro.jtspectrum}.

\begin{corollary}\label{coro.JfromLam}
    Let $\p,\p_1,\ldots,\p_n$ and $\J_\p(\D)$ be as in \Cref{thm.intro.jtspectrum}. Then we have that $\J_\p(\D) = \ov{ \bfL_\p(\G)}$.
\end{corollary}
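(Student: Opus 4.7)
The plan is to derive the corollary directly from Theorem~\ref{thm.intro.jtspectrum}, exploiting the homogeneity identity $\bfL_\p(x^m)=\bfL_\p(x)$ for every non-torsion $x\in\G$ and every $m\ge 1$.

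First I would prove the inclusion $\overline{\bfL_\p(\G)}\subseteq \J_\p(\D)$. Fix a non-torsion element $x\in\G$. By definition $\bfL_\p(x)$ depends only on the stable translation lengths $\ell_{\p_i}(x),\ell_\p(x)$, which are homogeneous of degree one, so $\bfL_\p(x^m)=\bfL_\p(x)$ for all $m\ge 1$ while $\ell_\p(x^m)=m\,\ell_\p(x)\to\infty$. Therefore, for any $R>0$ large enough (as in Theorem~\ref{thm.intro.jtspectrum}) and any $m$, we can take $T_m:=\ell_\p(x^m)$ so that $x^m\in S_R^{\ell_\p}(T_m)$, hence $\bfL_\p(x)=\bfL_\p(x^m)\in \bfL_\p(S_R^{\ell_\p}(T_m))$. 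The Hausdorff convergence $\bfL_\p(S_R^{\ell_\p}(T))\to \J_\p(\D)$ as $T\to\infty$ then forces
\[
\mathrm{dist}(\bfL_\p(x),\J_\p(\D))\le \mathrm{dist}_H\!\left(\bfL_\p(S_R^{\ell_\p}(T_m)),\J_\p(\D)\right)\xrightarrow[m\to\infty]{}0,
\]
and since $\J_\p(\D)$ is closed, $\bfL_\p(x)\in \J_\p(\D)$. Torsion elements contribute $\bfL_\p=\mathbf{0}$ only when defined, and in any case the set $\bfL_\p(\G)$ is contained in the closed set $\J_\p(\D)$, so $\overline{\bfL_\p(\G)}\subseteq \J_\p(\D)$.

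For the reverse inclusion, let $\vx\in \J_\p(\D)$. By the Hausdorff convergence in Theorem~\ref{thm.intro.jtspectrum} (using the sphere description in terms of $\ell_\p$), for every $k\ge 1$ there exists $T_k$ and $y_k\in S_R^{\ell_\p}(T_k)$ with $\|\bfL_\p(y_k)-\vx\|\le 1/k$. Since $y_k\in\G$ is non-torsion (as $\ell_\p(y_k)>0$), we have $\bfL_\p(y_k)\in \bfL_\p(\G)$, and consequently $\vx\in \overline{\bfL_\p(\G)}$. This yields $\J_\p(\D)\subseteq \overline{\bfL_\p(\G)}$ and completes the proof.

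There is no real obstacle here: both inclusions reduce to bookkeeping around Theorem~\ref{thm.intro.jtspectrum}. The only subtle point to keep track of is that the Hausdorff convergence asserted in that theorem is taken along the sphere-like sets $S_R^{\ell_\p}(T)$, which is precisely what allows one to reach $\bfL_\p(x)$ for a fixed $x$ by passing to powers $x^m$ without leaving the image of $\bfL_\p$.
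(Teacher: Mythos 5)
Your argument is correct and is exactly the argument the paper has in mind; the paper gives no proof, stating only that the corollary ``follows easily from Theorem~\ref{thm.intro.jtspectrum}.'' Both inclusions are handled in the natural way: using homogeneity $\bfL_\p(x^m)=\bfL_\p(x)$ with $\ell_\p(x^m)\to\infty$ to place a fixed $\bfL_\p(x)$ inside $\bfL_\p(S_R^{\ell_\p}(T_m))$ for arbitrarily large $T_m$ and invoking Hausdorff convergence for one direction, and using the other half of the Hausdorff estimate for the reverse. The only cosmetic point is that $\bfL_\p$ is undefined on torsion elements rather than equal to $\mathbf{0}$ (since one divides by $\ell_\p(x)=0$), so $\bfL_\p(\G)$ implicitly means the image over non-torsion elements; this does not affect the argument.
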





\subsection{Other sets of interest}\label{sec.othersets}
Instead of looking at the value that $\D_\p$ takes over the spheres $S_R(T)$ we can ask what values $\D_\p$ takes over balls $B(T) = \{x\in\G: \p(o,x) < T\}$. Following the same proof as that in \Cref{thm.intro.jtspectrum} we can prove the following. 
\begin{proposition}
There exists a compact set $\J^B_{\p}(\D) \subset \R^n$ such that
\[
\frac{1}{T} \D(B(T)) \to \J^B_{\p}(\D)
\]
in the Hausdorff metric as $T\to\infty$. Furthermore, $\J^B_{\p}(\D)$ is convex.
\end{proposition}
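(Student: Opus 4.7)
I will show directly that $\frac{1}{T}\D(B(T))$ converges in Hausdorff metric to
\[
K := \{\alpha v : \alpha \in [0,1],\ v \in \J_\p(\D)\} = \mathrm{CH}\bigl(\{\mathbf{0}\}\cup \J_\p(\D)\bigr),
\]
which is convex and compact by Theorem~\ref{thm.intro.jtspectrum}. Identifying the limit as $K$ simultaneously proves existence, convexity, and the description anticipated in the earlier Remark. A preliminary uniform ceiling on $\|\D(x)\|$ is needed: Corollary~\ref{coro.propertiesH}(1) applied with a reference metric $d \in \calD_\G$, combined with the quasi-isometry $d \asymp \p$ furnished by $\p \in \calH_\G^{++}$ via Lemma~\ref{lem.charH_G^++}, yields constants $M, C > 0$ with $\|\D(x)\| \leq M\,\p(o,x) + C$ for all $x \in \G$. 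Hence $\frac{1}{T}\D(B(T))$ lies in a fixed compact subset of $\R^n$ once $T$ is large, and Hausdorff convergence reduces to proving the two containments $\frac{1}{T}\D(B(T))\subseteq N_\epsilon(K)$ and $K\subseteq N_\epsilon(\tfrac{1}{T}\D(B(T)))$ for all sufficiently large $T$.

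\textbf{Upper containment.} Fix $\epsilon > 0$ and $R$ large enough to apply Theorem~\ref{thm.intro.jtspectrum}, and split $B(T)$ according to whether $\p(o,x) \leq \epsilon T$. In the small regime the ceiling gives $\|\tfrac{1}{T}\D(x)\| \leq M\epsilon + C/T$, so $\tfrac{1}{T}\D(x)$ is $O(\epsilon)$-close to $\mathbf{0}\in K$. In the large regime $\p(o,x) > \epsilon T$, set $\alpha_x := \p(o,x)/T \in (\epsilon, 1]$ so that $\tfrac{1}{T}\D(x) = \alpha_x \D_\p(x)$. For $T$ large enough that $\epsilon T$ exceeds the convergence threshold of Theorem~\ref{thm.intro.jtspectrum}, the containment $\D_\p(S_R(s)) \subseteq N_\epsilon(\J_\p(\D))$ holds for all $s \geq \epsilon T$; since $x \in S_R(\p(o,x))$ trivially, we get $\D_\p(x) \in N_\epsilon(\J_\p(\D))$ and hence $\tfrac{1}{T}\D(x) \in \alpha_x N_\epsilon(\J_\p(\D)) \subseteq N_\epsilon(K)$.

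\textbf{Lower containment and main obstacle.} Conversely fix $\alpha v \in K$ with $v \in \J_\p(\D)$ and $\alpha \in [0,1]$. For $\alpha = 0$, take $x = o \in B(T)$. For $\alpha \in (0,1]$, set $T' := \alpha T - R - 1$; for $T$ sufficiently large, Theorem~\ref{thm.intro.jtspectrum} produces $x \in S_R(T')$ with $\|\D_\p(x) - v\| < \epsilon$. The inequality $\p(o,x) \leq T' + R < T$ forces $x \in B(T)$, and the decomposition
\[
\tfrac{1}{T}\D(x) - \alpha v \;=\; \bigl(\tfrac{\p(o,x)}{T} - \alpha\bigr)\D_\p(x) \;+\; \alpha\bigl(\D_\p(x) - v\bigr)
\]
has norm $O(1/T) + O(\epsilon)$, using that $\|\D_\p(x)\|$ is uniformly bounded for $\p(o,x)$ large (this follows from the ceiling, since $\|\D_\p(x)\| \leq M + C/\p(o,x)$). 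The only genuinely delicate point of the whole argument is the boundary case $\alpha = 1$ of the lower containment, where one must stay strictly inside the \emph{open} ball $B(T)$: this is handled precisely by the shift $T' = \alpha T - R - 1$. Everything else is routine bookkeeping once Theorem~\ref{thm.intro.jtspectrum} is in hand, which is exactly why the text defers the proof.
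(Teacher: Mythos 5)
Your proof is correct in substance but takes a genuinely different route from the paper's. The paper's own proof (which it only sketches by reference) re-runs the machinery of Theorem~\ref{thm.intro.jtspectrum} from scratch: it applies the Cauchy-type criterion of Lemma~\ref{lem.hausconv} to the sets $\tfrac{1}{T}\D(B(T))$, using the Schottky-type Lemmas~\ref{lem.closeto} and~\ref{lem.construct} to produce proximal elements, and then proves convexity by a separate concatenation argument; the identification of the limit as $\mathrm{CH}(\J_\p(\D)\cup\{\mathbf{0}\})$ is then a \emph{further} result (Proposition~\ref{prop.convexhull}). You instead take Theorem~\ref{thm.intro.jtspectrum} as a black box and prove directly that $\tfrac{1}{T}\D(B(T))$ converges to the candidate set $K=\mathrm{CH}(\{\mathbf{0}\}\cup\J_\p(\D))$ by two $\epsilon$-containments, reducing the ball to a union of thickened spheres. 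This is leaner: it avoids a second pass through the proximal-element argument, makes convexity automatic, and absorbs Proposition~\ref{prop.convexhull} into the same proof. The one place you should be more careful is uniformity over $K$ in the lower containment: the shift $T'=\alpha T - R - 1$ diverges as $T\to\infty$ only for $\alpha$ bounded away from $0$, so the threshold you invoke for Theorem~\ref{thm.intro.jtspectrum} a priori depends on $\alpha$. Hausdorff convergence requires a $T$-threshold uniform over all of $K$, so you need an explicit case split: for $\alpha \le \epsilon/(2M')$ (where $M'=\sup_{v\in\J_\p(\D)}\|v\|$) take $x=o$, since $\|\alpha v\|\le\epsilon/2$ and $\|\tfrac{1}{T}\D(o)\|=O(1/T)$; for $\alpha>\epsilon/(2M')$ one has $T'\ge \epsilon T/(2M')-R-1\to\infty$ uniformly, and your argument goes through as written. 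With that patch the proof is complete; the $\alpha=1$ endpoint you flag is handled correctly by the $-R-1$ shift.
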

Likewise we have the following.

\begin{proposition}
     We have that
    \[
\J^B_{\p}(\D) = \ov{\bigcup_{T\ge 0} \frac{1}{T}\bfL(B(T)) }.
    \]
\end{proposition}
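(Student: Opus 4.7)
The plan is to establish both inclusions separately, following the strategy of \Cref{coro.JfromLam} for the sphere version and leveraging the Hausdorff convergence $\tfrac{1}{T}\D(B(T)) \to \J_\p^B(\D)$ from the preceding proposition together with the identification $\J_\p^B(\D)=\textnormal{CH}(\J_\p(\D)\cup\{\mathbf{0}\})=\{s u : u\in\J_\p(\D),\,s\in[0,1]\}$, which uses convexity of $\J_\p(\D)$ from \Cref{thm.intro.jtspectrum}.

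For the inclusion $\J_\p^B(\D)\subseteq\ov{\bigcup_{T\ge 0}\tfrac{1}{T}\bfL(B(T))}$, I would take $v=s u$ with $u\in\J_\p(\D)$ and $s\in[0,1]$, and write $u=\lim_k\bfL_\p(y_k)$ for non-torsion $y_k\in\G$ via \Cref{coro.JfromLam}. Fekete's lemma applied to the quasi-subadditivity $\p(o,y_k^{n+m})\le\p(o,y_k^n)+\p(o,y_k^m)+C_k$ from the proof of \Cref{lem:MLSnonegative} yields powers $m_k\to\infty$ with $\p(o,y_k^{m_k})/m_k$ converging to $\ell_\p(y_k)$ from above. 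Setting $T_k$ slightly larger than $\p(o,y_k^{m_k})/s$ (for $s\in(0,1]$) places $y_k^{m_k}\in B(T_k)$ and yields $\bfL(y_k^{m_k})/T_k\to s u=v$; the cases $s=0$ (take $T_k\to\infty$ with $y_k$ fixed) and $s=1$ (approximate by $s_k\nearrow 1$ and use closedness) are straightforward.

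For the reverse inclusion, I would take any $v=\lim_j\bfL(y_j)/T_j$ with $y_j\in B(T_j)$. Using the decomposition $\p=d-b$ from \Cref{lem.GPpositiveH_G} with $d\in\wh\calD_\G$ satisfying the triangle inequality and $b\in\calB_\G\cap\calD_\G$ bounded by some $M>0$, the subadditivity of $d$ along powers gives the uniform bound $\p(o,y^k)\le k(\p(o,y)+M)$. Hence $y_j^k\in B(k(T_j+M))$, so $\tfrac{1}{k(T_j+M)}\D(y_j^k)\to\bfL(y_j)/(T_j+M)$ as $k\to\infty$, and the preceding proposition places $\bfL(y_j)/(T_j+M)\in\J_\p^B(\D)$. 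Writing $\bfL(y_j)/T_j=\tfrac{T_j+M}{T_j}\cdot\bfL(y_j)/(T_j+M)$ and extracting a subsequence with $T_j\to\infty$ (the case of bounded $T_j$ reducing, by properness of $\p$, to a fixed $y$ and being handled by taking high powers $y^{k}$ with $kT\to\infty$) makes the residual scalar $(T_j+M)/T_j$ tend to $1$, so closedness of $\J_\p^B(\D)$ yields $v\in\J_\p^B(\D)$.

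The main obstacle is the mismatch between the extrinsic ball normalization $1/T$ and the intrinsic scalings $1/\p(o,y)$ and $1/\ell_\p(y)$: one cannot directly arrange $y^k\in B(kT)$, only $y^k\in B(kT+O(k))$, forcing a uniform shift $T\mapsto T+M$ in the reverse inclusion. Coordinating the powers with the regime $T\to\infty$ so that the $O(1/T)$ error in the normalization vanishes, and handling the boundary case $s=1$ in the forward inclusion, are the main technical points; both are absorbed using closedness of $\J_\p^B(\D)$ and Fekete's lemma.
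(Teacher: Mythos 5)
The paper provides no proof of this proposition (it is simply introduced with ``Likewise we have the following''), so there is no in-paper argument to compare against. That said, your strategy — reduce via the identification $\J_\p^B(\D)=\textnormal{CH}(\J_\p(\D)\cup\{\mathbf 0\})=\{su:u\in\J_\p(\D),\,s\in[0,1]\}$, use $\J_\p(\D)=\ov{\bfL_\p(\G)}$, and invoke the Hausdorff convergence $\tfrac1T\D(B(T))\to\J_\p^B(\D)$ — is sound, and your forward inclusion and the $T_j\to\infty$ branch of the reverse inclusion both go through (the minor slip that Fekete gives convergence ``from above'' is irrelevant; you only need $T_k>\p(o,y_k^{m_k})/s\ge\p(o,y_k^{m_k})$).

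The genuine gap is in the bounded-$T_j$ branch of the reverse inclusion. You need to place $y^k$ in $B(kT)$, i.e.\ $\p(o,y^k)<kT$, but the decomposition $\p=d-b$ only yields $\p(o,y^k)\le k\p(o,y)+O(k)$, an error of order $k$ which does not vanish after dividing by $k$; in intrinsic terms, you know $\p(o,y)<T$ but you actually need $\ell_\p(y)\le T$, and for a general $\p\in\calH_\G^{++}$ one only has $\ell_\p(y)\le\p(o,y)+C$ for a uniform $C>0$. This is not a technicality that can be absorbed by closedness: fix $T$ and $y$, then $\tfrac{\bfL(y)}{T}=\tfrac{\ell_\p(y)}{T}\bfL_\p(y)$, and when $\tfrac{\ell_\p(y)}{T}>1$ this scales the point $\bfL_\p(y)\in\J_\p(\D)$ outward and may leave $\J_\p^B(\D)$ entirely. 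Worse, the gap reflects that the proposition is in fact false as literally stated for general $\p\in\calH_\G^{++}$: take $\p=d-100$ with $d\in\calD_\G$ a word metric and $\p_1=d$; then some non-torsion $y$ has $\p(o,y)<0<\ell_\p(y)$ and $\bfL(y)\ne\mathbf 0$, so $\tfrac{\bfL(y)}{T}\to\infty$ as $T\to0^+$ with $y\in B(T)$ throughout, making $\ov{\bigcup_{T\ge0}\tfrac1T\bfL(B(T))}$ unbounded while $\J_\p^B(\D)$ is compact. To repair both the statement and your proof one should either assume $\p\in\wh\calD_\G$ (so that $d=\p$ satisfies the triangle inequality, hence $\ell_\p(y)\le\p(o,y)$ for all $y$ and the bounded-$T_j$ case is immediate), or replace $B(T)=\{\p(o,x)<T\}$ by $\{\ell_\p(x)\le T\}$, or restrict the union to $T\ge T_0$ for a fixed $T_0>0$ — under any such normalization, your argument completes cleanly.
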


\begin{proposition}\label{prop.convexhull}
    We have that $\J^B_{\p}(\D)$ is the convex hull $\textnormal{CH}(\J_{\p}(\D) \cup \{ \mathbf{0} \})$ where $\mathbf{0}$ represents the zero vector in $\R^n$.
\end{proposition}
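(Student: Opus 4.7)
The plan is to prove the two set inclusions separately, leveraging the convexity of $\J^B_\p(\D)$ established in the preceding proposition. The key identity is
\[
\frac{1}{T}\D(x) \;=\; \frac{\p(o,x)}{T}\cdot \D_\p(x)
\]
whenever $\p(o,x)>0$, which lets one pass between the two scaling conventions.

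For the inclusion $\textnormal{CH}(\J_\p(\D)\cup\{\mathbf{0}\})\subseteq \J^B_\p(\D)$, it suffices by convexity to verify that both $\mathbf{0}$ and every point of $\J_\p(\D)$ belong to $\J^B_\p(\D)$. The first follows from $o\in B(T)$ for all $T$ large and $\tfrac{1}{T}\D(o)\to\mathbf{0}$. For the second, given $v\in\J_\p(\D)$, by \Cref{thm.intro.jtspectrum} I choose $x_k\in\G$ with $\p(o,x_k)\to\infty$ and $\D_\p(x_k)\to v$; then $x_k\in B(T_k)$ for $T_k=\p(o,x_k)+1$ and $\tfrac{1}{T_k}\D(x_k)\to v$, placing $v$ in the Hausdorff limit $\J^B_\p(\D)$.

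For the reverse inclusion, take $w\in\J^B_\p(\D)$ and, by Hausdorff convergence of $\tfrac{1}{T}\D(B(T))$, write $w=\lim_k \tfrac{1}{T_k}\D(x_k)$ with $x_k\in B(T_k)$ and $T_k\to\infty$. Set $s_k=\p(o,x_k)/T_k\in[0,1)$ and, after extracting a subsequence, assume $s_k\to s\in[0,1]$. If $s>0$ then $\p(o,x_k)\to\infty$; since $x_k\in S_R(\p(o,x_k))$ trivially, the Hausdorff convergence $\D_\p(S_R(T))\to\J_\p(\D)$ provides a uniform bound on $\|\D_\p(x_k)\|$ and lets me extract a further subsequence with $\D_\p(x_k)\to y\in\J_\p(\D)$; then $w=sy+(1-s)\mathbf{0}$ belongs to the convex hull. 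If $s=0$, I split into two cases: either $\p(o,x_k)$ stays bounded, in which case properness of $\p$ forces $x_k$ to live in a finite set so $\D(x_k)$ is bounded and $\tfrac{1}{T_k}\D(x_k)\to\mathbf{0}$, or $\p(o,x_k)\to\infty$, in which case $\|\D_\p(x_k)\|$ is uniformly bounded (as above) and $w=\lim s_k\D_\p(x_k)=\mathbf{0}$. Either way $w\in \textnormal{CH}(\J_\p(\D)\cup\{\mathbf{0}\})$.

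The only mildly delicate point is the intermediate regime $s_k\to 0$ with $\p(o,x_k)\to\infty$: one must argue that $\D_\p(x_k)$ cannot escape to infinity in $\R^n$ despite $\tfrac{1}{T_k}$ already vanishing. This uniform a priori bound is immediate from the Hausdorff convergence of $\D_\p(S_R(T))$ to the compact set $\J_\p(\D)$; everything else is a routine diagonal extraction together with convexity.
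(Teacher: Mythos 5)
Your proof is correct and follows essentially the same route as the paper: use convexity of $\J^B_\p(\D)$ together with $\J_\p(\D)\cup\{\mathbf{0}\}\subset\J^B_\p(\D)$ for one inclusion, and for the other extract a subsequence along which $\p(o,x_k)/T_k$ converges to some $\alpha\in[0,1]$, then split into $\alpha=0$ and $\alpha>0$. Your case analysis for $\alpha=0$ (bounded vs.\ unbounded $\p(o,x_k)$) is a little more explicit than the paper's, which simply appeals to the uniform bound on $\D_\p$, but the arguments are the same in substance.
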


The proof is simple but we include it for completeness.

\begin{proof}
We note that $\J_{\p}(\D) \cup \{\mathbf{0}\} \subset \J^B_{\p}(\D)$ and since $\J^B_{\p}(\D)$ is convex we deduce that $\textnormal{CH}(\J_{\p}(\D) \cup \{\mathbf{0}\}) \subset \J^B_{\p}(\D).$

Conversely, if $\vx \in \J^B_{\p}(\D)$ then we can find a sequence $x_T \in B(T)$ such that
\[
\frac{\D(x_T)}{T} \to \vx \ \text{ as $T\to\infty$.}
\]
By restricting to a subsequence we can also assume that
\[
\frac{\p(o,x_T)}{T} \to \alpha \in [0,1] \ \text{ as $T \to\infty$.}
\]
If $\alpha = 0$ then using that $\D_\p(x_T)$ is uniformly bounded in $\R^n$ we get
\[
\frac{1}{T}\D(x_T) \to \mathbf{0} \text{ as $T\to\infty$ and $\vx = \mathbf{0}$}.
\]
Otherwise $\alpha \in (0,1]$ and $\alpha^{-1} \vx \in \J_{\p}(\D)$. Hence $\vx$ belongs to the convex hull $\textnormal{CH}(\J_{\p}(\D) \cup \{\mathbf{0}\})$ as it lies on the line connecting $\mathbf{0}$ and $\alpha^{-1}\vx$ concluding the proof.
\end{proof}

\section{Manhattan Manifolds}\label{sec.manhattanmanifolds}
In this section we study the Manhattan manifold associated to a tuple of hyperbolic metric potentials. Here we prove the results stated in Sections \ref{sec.RWandD} and \ref{sec.mm}, except for \Cref{thm.homeointeriors} whose proof is postponed for \Cref{sec.metricstructuresmanhattan}. We fix
$\p_1,\dots,\p_n,\p$ as in the previous section. Recall that the parametrization $\thet_{\D/\p}$ of the Manhattan manifold for $\p_1,\dots,\p_n,\p$ associates to any $\vv\in \R^n$ the critical exponent of the series 
\[s \mapsto \sum_{x\in \G}{\exp(\-\langle \vv,\D(x) \rangle-s\p(o,x))}.\]

\subsection{Projecting the Manhattan manifold into $\calH_\G$} We use the parameterization $\theta_{\D/\p}$ to define hyperbolic metric potentials.



\begin{definition}\label{def.psi_v} 
    Let $\p_\vv\in \calH_\G$ be the potential defined as  
    \[\p_\vv(x,y)=\langle \vv, \D(x^{-1}y)\rangle+\thet_{\D/\p}(\vv)\p(x,y)=\sum_{i=1}^n{v_i\p_i(x,y)} +\thet_{\D/\p}(\vv)\p(x,y). \]
\end{definition}

A key property of the potentials $\p_\vv$ is the following.

\begin{lemma}\label{lem.EGR1}
    For each $\vv\in \R^n$, $\p_\vv$ belongs to $\calH^{++}_\G$ and has exponential growth rate 1.
\end{lemma}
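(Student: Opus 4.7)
First, $\p_\vv\in\calH_\G$ is immediate: $\calH_\G$ is a real vector space and $\p_\vv$ is a linear combination of $\p_1,\dots,\p_n,\p\in\calH_\G$. The substance of the lemma is verifying $\p_\vv\in\calH_\G^{++}$ and $v_{\p_\vv}=1$, which I propose to establish in one stroke via a Patterson--Sullivan construction for the mixed series appearing in the definition of $\thet_{\D/\p}(\vv)$.

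The key identity is
\begin{equation*}
\sum_{x\in\G}e^{-s\p_\vv(o,x)}\;=\;\sum_{x\in\G}e^{-\langle s\vv,\D(x)\rangle\,-\,s\,\thet_{\D/\p}(\vv)\,\p(o,x)},
\end{equation*}
so the Poincar\'e series of $\p_\vv$ at parameter $s$ is the defining series of $\thet_{\D/\p}$ evaluated at the pair $(s\vv,\, s\,\thet_{\D/\p}(\vv))$. To produce a Patterson measure for $\p_\vv$, I would take a weak-$*$ limit
\begin{equation*}
\nu_\vv\;:=\;\lim_{s\searrow\thet_{\D/\p}(\vv)}Z_s^{-1}\sum_{x\in\G}e^{-\langle\vv,\D(x)\rangle-s\,\p(o,x)}\,\delta_x,
\end{equation*}
where $Z_s$ is the corresponding partition function, applying Patterson's slowly-varying modification if the series happens to converge at the critical value. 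Since $\p_\vv$ is itself a tempered potential by \Cref{prop.H_G=temp}, the quasi-cocycle identity \eqref{eq.busemanncocycle} applied to $\p_\vv=\langle\vv,\D\rangle+\thet_{\D/\p}(\vv)\p$ shows that $\nu_\vv$ is quasi-conformal for $\p_\vv$ with exponent exactly $1$, that is, $\frac{dy_*\nu_\vv}{d\nu_\vv}(\xi)\asymp e^{-\beta^{\p_\vv}_o(y^{-1},\xi)}$.

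From quasi-conformality with exponent $1$, the shadow-lemma argument (following the proof of \Cref{prop.existenceQCH_G} and the counting estimate \Cref{cor.exp}, but applied to the combined potential $\p_\vv$) yields $\nu_\vv(O(x,R))\asymp e^{-\p_\vv(o,x)}$ for all sufficiently large $R$. Summing over a shell $\{x:\p_\vv(o,x)\in[T,T+1]\}$ and using that the associated shadows cover $\partial\G$ with bounded multiplicity forces $\#\{x\in\G:\p_\vv(o,x)<T\}\asymp e^{T}$. This simultaneously shows that $v_{\p_\vv}$ is finite --- whence $\p_\vv\in\calH_\G^{++}$ by \Cref{lem.charH_G^++} --- and identifies its value as $1$. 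The main obstacle will be the divergence step in the Patterson construction: ensuring that the critical series $\sum_x e^{-\langle\vv,\D(x)\rangle-\thet_{\D/\p}(\vv)\p(o,x)}$ is of divergence type, or else modifying it by a slowly varying factor in the spirit of Patterson's original trick. Once divergence is secured, the remainder of the argument transfers routinely from the single-potential theory developed in \Cref{sec.boundary}, since all the $\calH_\G$-framework (quasi-extension, rough-geodesic property, Busemann quasi-cocycles) has already been set up there.
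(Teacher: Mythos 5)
Your Patterson--Sullivan construction is a genuinely different route from the paper's. The paper proves the stronger \Cref{prop.genEGR1} by a deformation argument: set $\p_t=t\p_\ast+\thet(t)\p$, show $\p_t\in\calH_\G^+$ for all $t\in[0,1]$ (Claim~1), and then show the set of $t$ with $\p_t\in\calH_\G^{++}$ is open and closed in $[0,1]$ using \Cref{lem.convergencetoproper} on pointwise limits of growth-rate-one asymmetric pseudo metrics. That detour is not gratuitous; it is designed to break exactly the circle your proposal falls into.

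As written, your argument is circular. You obtain the shadow estimate $\nu_\vv(O(x,R))\asymp e^{-\p_\vv(o,x)}$ by ``following the proof of \Cref{prop.existenceQCH_G} and the counting estimate \Cref{cor.exp}, but applied to the combined potential $\p_\vv$.'' Both results take $\p_\vv\in\calH_\G^{++}$ as a hypothesis --- indeed the growth rate $v_{\p_\vv}$ in \eqref{eq.defEGRtempered} is not even well-defined until you know $\p_\vv$ is bounded below --- so neither can be quoted for $\p_\vv$ inside a proof that $\p_\vv\in\calH_\G^{++}$. Likewise, ``summing over a shell $\{x:\p_\vv(o,x)\in[T,T+1]\}$'' presupposes these sets are finite, which is the properness you must establish, and the bounded-multiplicity shadow covering is available for $\p$-shells (or $d$-shells), not $\p_\vv$-shells. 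To rescue the idea you would have to (i) derive the shadow estimate for the weighted Patterson measure directly from the construction against the proper reference $\p$, with no appeal to properness of $\p_\vv$; (ii) use that shadows $O(x,R)$ shrink as $\p(o,x)\to\infty$ together with non-atomicity of $\nu_\vv$ (itself requiring a separate argument) to conclude $\p_\vv(o,x)\to\infty$, giving boundedness below and properness; and only then (iii) run the counting over $\p$-shells to pin down $v_{\p_\vv}=1$. None of these reductions appears in the proposal, and the quoted divergence-type worry is not the main obstacle.
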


Before proving this lemma we record some corollaries. In this section $O(x,R) = \{\x \in \partial \G: (x|\xi)_o^\p > \p(o,x) - R \}$ denotes the shadow set of radius $R>0$ based $x$ with respect to $\p$.

\begin{corollary}\label{coro.PSforv}
    Given $\vv = (v_1, \ldots, v_n) \in \R^{n}$, let $\nu_\vv$ be a quasi-conformal probability measure for $\p_\vv$. Then $\nu_\vv$ is Borel, quasi-invariant,  ergodic, and doubling respect to any visual quasi-metric on $\partial \G$. Moreover, it satisfies the following properties.
    \begin{enumerate}
        \item There exists $C >1$ depending only on $\vv$ and $R$ such that
        \[
        C^{-1} e^{-\p_\vv(o,x)} \le \nu_\vv(O(x,R)) \le         Ce^{-\p_\vv(o,x)} 
        \]
        for all $x \in \G$.
       \item We have that
       \[
       \theta_{\D/\p}(\vv) = \lim_{T\to\infty} \frac{1}{T} \log \left( \sum_{x \in S_R(T)} e^{-\langle \vv, \D(x)\rangle} \right).
       \]
    \end{enumerate}
    In addition, if $\hat\nu_\vv$ is a quasi-conformal measure for $\hat\p_{\vv}(x,y)=\p_{\vv}(y,x)$, then there exist $\hat C>1$ depending only on $\vv$ and $R$ such that  
        \[
        \hat{C}^{-1} e^{-\hat\p_\vv(o,x)} \le \hat\nu_\vv(O(x,R)) \le         \hat{C}e^{-\hat\p_\vv(o,x)} \text{ for all }x\in \G.
        \]
\end{corollary}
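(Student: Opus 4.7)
The plan is to reduce everything to a direct application of \Cref{prop.existenceQCH_G} to the hyperbolic metric potential $\p_\vv$ itself. By \Cref{lem.EGR1}, $\p_\vv$ lies in $\calH_\G^{++}$ and has exponential growth rate $v_{\p_\vv} = 1$, so \Cref{prop.existenceQCH_G} immediately produces a quasi-conformal probability measure $\nu_\vv$, yields its uniqueness up to absolute continuity among quasi-conformal measures for $\p_\vv$, and establishes that $\nu_\vv$ is Borel, quasi-invariant, ergodic under the $\G$-action on $\partial\G$, and doubling with respect to any visual quasi-metric. Moreover, since the exponent $v_{\p_\vv}$ equals $1$, the intrinsic shadow lemma from that proposition reads $C^{-1} e^{-\p_\vv(o,x)} \le \nu_\vv(O_{\p_\vv}(x,R)) \le C e^{-\p_\vv(o,x)}$ for all $R$ large enough, with $C > 1$ depending only on $\vv$ and $R$.

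To pass from intrinsic $\p_\vv$-shadows to the $\p$-shadows $O(x,R)$ appearing in (1), I would invoke a standard consequence of hyperbolicity: since $\p$ and $\p_\vv$ both lie in $\calH_\G^{++}$, they are quasi-isometric by \Cref{lem.charH_G^++}, and the defining inequality of $\calH_\G$ together with the Morse-type arguments recalled in \Cref{sec.preliminaries} imply that for every $R$ there exist $R_1, R_2 > 0$ such that $O_\p(x,R) \subseteq O_{\p_\vv}(x,R_1)$ and $O_{\p_\vv}(x,R) \subseteq O_\p(x,R_2)$ for every $x \in \G$. Combining this shadow comparison with the intrinsic shadow lemma, after enlarging $R$ if necessary, yields the two-sided estimate in (1).

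For item (2), the strategy is a classical Patterson--Sullivan counting argument. Using (1) together with the identity $\p_\vv(o,x) = \langle \vv, \D(x) \rangle + \theta_{\D/\p}(\vv) \p(o,x)$ and the fact that $|\p(o,x) - T| \le R$ for $x \in S_R(T)$, one obtains
\[
\sum_{x \in S_R(T)} e^{-\langle \vv, \D(x) \rangle} \asymp e^{\theta_{\D/\p}(\vv) T} \sum_{x \in S_R(T)} \nu_\vv(O(x,R)),
\]
with multiplicative constants independent of $T$. The standard shadow-covering argument, namely that for $R$ large enough the shadows $\{O(x,R)\}_{x \in S_R(T)}$ cover $\partial\G$ with bounded multiplicity, then gives $\sum_{x \in S_R(T)} \nu_\vv(O(x,R)) \asymp 1$; taking logarithms and dividing by $T$ yields (2). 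The final shadow estimate for $\hat\nu_\vv$ is entirely symmetric: $\hat\p_\vv(x,y) = \p_\vv(y,x)$ also belongs to $\calH_\G^{++}$ with growth rate $1$ (using that pseudo-metrics in $\calD_\G$ are symmetric, so $\ell_{\hat\p_\vv}(x) = \ell_{\p_\vv}(x^{-1})$ still dominates some $\ell_d$), and the identical argument applied to $\hat\p_\vv$ produces the matching estimate. The main step requiring care is the shadow comparison of the second paragraph, since $\p_\vv$ need not be non-negative for general $\vv$; the quasi-isometry guaranteed by $\p_\vv \in \calH_\G^{++}$ is precisely what makes this comparison go through.
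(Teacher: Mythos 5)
Your proposal is correct and follows essentially the same route as the paper's proof, which is extremely compressed: the paper simply invokes Lemma~\ref{lem.EGR1} to get $\p_\vv \in \calH_\G^{++}$ with $v_{\p_\vv}=1$, then cites Proposition~\ref{prop.existenceQCH_G} for the qualitative properties and the shadow estimate (and the symmetric argument for $\hat\nu_\vv$), and cites the proof of \cite[Proposition~2.7]{cantrell-tanaka.manhattan} for assertion~(2). You do two things the paper glosses over, both correctly. First, you explicitly address the fact that the shadow estimate from Proposition~\ref{prop.existenceQCH_G} is a priori stated in terms of shadows adapted to the reference potential (or to $\p_\vv$ itself), whereas the corollary is phrased with $\p$-shadows; the Morse-lemma commensurability of shadows for quasi-isometric potentials in $\calH_\G^{++}$ (which is what $\p_\vv \in \calH_\G^{++}$ guarantees) is exactly the right tool, and you are right to flag that this is the step needing care since $\p_\vv$ is signed. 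Second, rather than deferring to the cited proof, you give the standard Patterson--Sullivan counting argument for~(2): rewrite $e^{-\langle\vv,\D(x)\rangle}$ as $e^{\theta_{\D/\p}(\vv)\p(o,x)}\,e^{-\p_\vv(o,x)}$, replace $e^{-\p_\vv(o,x)}$ by $\nu_\vv(O(x,R))$ via~(1), use $\p(o,x)=T+O(R)$ on $S_R(T)$, and conclude by bounded-multiplicity covering. Your observation that $\hat\p_\vv \in \calH_\G^{++}$ with $v_{\hat\p_\vv}=1$ because $\ell_{\hat\p_\vv}(x)=\ell_{\p_\vv}(x^{-1})$ and metrics in $\calD_\G$ are symmetric matches the paper's earlier remark in Section~\ref{sec.BMHMP}. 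In short: same skeleton, with the two load-bearing steps the paper leaves implicit spelled out correctly.
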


\begin{proof}
 By \Cref{lem.EGR1} we know that $\p_\vv \in \calH_\G^{++}$ and has exponential growth rate 1. Then the properties for $\nu_{\bfv}$ and $\hat\nu_\vv$ follow from \Cref{prop.existenceQCH_G}. Similarly, assertion (2) follows from the proof of \cite[Proposition 2.7]{cantrell-tanaka.manhattan}. 
\end{proof}

By \Cref{lem.EGR1}, for each $\vv \in \R^n$ let $\L_\vv\in \calC_\G$ denote a Bowen-Margulis current associated to $\p_\vv$. We also let $m_\vv$ be the corresponding flow invariant probability measure on the flow space $\calF_\k$, as in \Cref{sec.BMHMP}. 

\begin{corollary}\label{Cor:limit}
The assignment $\vv \mapsto m_\vv$ is continuous. That is, if $\vv \to \vv_0$ in $\R^n$ then $m_\vv$  converges to $m_{\vv_0}$ for the weak$^\ast$ topology.\end{corollary}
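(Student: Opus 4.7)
The plan is to combine compactness of the space of probability measures on $\calF_\k$ with the essential uniqueness (ergodicity) of Bowen-Margulis currents. Let $\vv_k \to \vv_0$ in $\R^n$. Since $\calF_\k$ is compact by \cite{tanaka.topflows}, probability measures on it form a weak$^\ast$ compact set, so it suffices to show that every weak$^\ast$ subsequential limit $m^\ast$ of $m_{\vv_k}$ equals $m_{\vv_0}$. Flow-invariance of $m^\ast$ is immediate from the continuity of $\F_s$ and flow-invariance of each $m_{\vv_k}$.

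The first technical input is continuity of $\p_\vv$ in $\vv$. By Theorem \ref{thm.manc^1}, the parametrization $\thet_{\D/\p}$ is $C^1$ (in particular continuous), so the map $\vv \mapsto \p_\vv$ is continuous into $\calH_\G^{++}$ in the sense that both $\p_\vv(x,y)$ and the extended Gromov product $(\x|\y)^{\p_\vv}_o$ depend continuously on $\vv$, uniformly on compact subsets of $\G\times \G$ and $(\G \cup \partial\G)^2$ respectively. Moreover, the hyperbolicity/quasi-cocycle constants for $\p_\vv$ (via \Cref{prop.descriptionH_G} and comparison with the auxiliary metric $d$) can be taken locally uniform in $\vv$.

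The second, and main, technical step is to choose quasi-conformal probability measures $\nu_{\vv_k}$ for $\p_{\vv_k}$ and $\hat\nu_{\vv_k}$ for $\hat\p_{\vv_k}$ that converge weak$^\ast$ on $\partial\G$. The shadow estimates of \Cref{coro.PSforv} together with \Cref{lem.EGR1} give $\nu_{\vv_k}(O(x,R)) \asymp e^{-\p_{\vv_k}(o,x)}$ with constants $C_{\vv_k,R}$. Because these constants trace back to the hyperbolicity, rough-geodesic and quasi-cocycle constants of $\p_{\vv_k}$, they can be taken locally uniform for $\vv_k$ in a neighbourhood of $\vv_0$. By Prokhorov compactness of probability measures on the compact space $\partial\G$, passing to a subsequence we may assume $\nu_{\vv_k} \to \nu^\ast$ and $\hat\nu_{\vv_k}\to \hat\nu^\ast$ weak$^\ast$; the uniform shadow estimates then pass to the limit to show $\nu^\ast$ (resp. $\hat\nu^\ast$) is a quasi-conformal measure for $\p_{\vv_0}$ (resp. $\hat\p_{\vv_0}$).

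Finally, one identifies $m^\ast$ with $m_{\vv_0}$. By the correspondence between flow-invariant Radon measures on $\calF_\k$ and geodesic currents (Section \ref{sec.mf}), $m^\ast$ is induced by a $\G$-invariant measure $\Lam^\ast$ on $\partial^2\G$. Continuity of the Gromov product and of the factors $\exp(2 v_{\p_\vv}(\x|\y)^{\p_\vv}_o)$ in $\vv$ (with $v_{\p_\vv}=1$ for every $\vv$ by \Cref{lem.EGR1}), together with the weak$^\ast$ convergence of $\hat\nu_{\vv_k}\otimes \nu_{\vv_k}$, shows that $\Lam^\ast$ lies in the measure class $\hat\nu_{\vv_0}\otimes \nu_{\vv_0}$ and satisfies the density formula \eqref{eq.defBMH_G}. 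Hence $\Lam^\ast$ is a Bowen-Margulis current for $\p_{\vv_0}$; since such currents are unique up to positive scalar (ergodicity, \Cref{sec.BMHMP}) and the normalization $m^\ast(\calF_\k)=1 = m_{\vv_0}(\calF_\k)$ fixes the scalar, we conclude $m^\ast = m_{\vv_0}$. The decisive obstacle throughout is the locally uniform control of the Patterson-Sullivan constants along the family $\{\p_\vv\}$: once that is in place, compactness plus uniqueness deliver the result.
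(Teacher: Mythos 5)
Your overall strategy — compactness of probability measures on the compact space $\calF_\k$, locally uniform Patterson--Sullivan shadow estimates yielding weak$^\ast$ subsequential limits that are quasi-conformal for $\p_{\vv_0}$, and then essential uniqueness (ergodicity) of Bowen--Margulis currents together with the normalization to pin down the limit — is exactly the strategy that the cited Corollary~2.13 of Cantrell--Tanaka follows, and the paper simply defers to that reference. So the architecture of your argument matches the paper's.

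There is, however, a circularity you must repair. You invoke Theorem~\ref{thm.manc^1} to get continuity of $\theta=\theta_{\D/\p}$. But Theorem~\ref{thm.manc^1} rests on Proposition~\ref{prop.C1}, whose proof explicitly uses Corollary~\ref{Cor:limit} (the very statement you are proving) to upgrade almost-everywhere differentiability to everywhere differentiability. You therefore cannot cite Theorem~\ref{thm.manc^1} here. The fix is easy and costs you nothing: all you need is continuity of $\theta$, which follows directly from convexity of $\theta$ via H\"older's inequality applied to the defining series (this is independent of the later $C^1$ machinery). Replace the reference to Theorem~\ref{thm.manc^1} by this elementary observation and the logical order is restored. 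With that change, the rest of your argument — local uniformity of the hyperbolicity, rough-geodesic, and quasi-cocycle constants for $\p_\vv$ over a compact neighbourhood of $\vv_0$, passage to the limit in the shadow estimates (with the usual care about shadows being closed, handled by nesting radii), the identification of the limiting $\G$-invariant measure on $\partial^2\G$ as a Bowen--Margulis current for $\p_{\vv_0}$ via the density formula~\eqref{eq.defBMH_G}, and ergodicity plus normalization giving $m^\ast=m_{\vv_0}$ — is sound and is the intended proof.
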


\begin{proof}
This follows from the same proof of Corollary 2.13 in \cite{cantrell-tanaka.manhattan}.
\end{proof}


We move on to the proof of \Cref{lem.EGR1}, which is an immediate consequence of the next proposition.

\begin{proposition}\label{prop.genEGR1}
    Let $\p_\ast \in \calH_\G$ and $\p\in \calH_\G^{++}$ and let $\thet\in \R$ be the critical exponent of the series 
    \[s \mapsto \sum_{x\in \G}{\exp(-\psi_\ast(o,x)-s\p(o,x))}\]
    for $s\in \R$. 
    Then $\p_\ast+\thet \p \in \calH_\G^{++}$ and has exponential growth rate 1.
\end{proposition}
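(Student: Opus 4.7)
The plan is to follow the blueprint of the proof of \Cref{prop.existenceQCH_G} and extract both conclusions from a single application of the Patterson--Sullivan construction \cite[Proposition 2.7]{cantrell-tanaka.manhattan}. Set $\phi := \psi_\ast + \theta\psi$, which belongs to $\calH_\G$ since that space is closed under linear combinations, and hence is a $\G$-invariant tempered potential by \Cref{prop.H_G=temp}. The only hypothesis that needs checking in order to apply the cited machinery is that, for a fixed reference metric $d \in \calD_\G$, the critical exponent in $s$ of
\[
s \mapsto \sum_{x \in \G} \exp\bigl(-\phi(o, x) - s\,d(o, x)\bigr)
\]
equals $0$; once this is done, the quasi-conformal measure produced by \cite[Proposition 2.7]{cantrell-tanaka.manhattan} together with the shadow lemma \cite[Lemma 2.9]{cantrell-tanaka.manhattan} will force $\phi$ into $\calH_\G^{++}$ with $v_\phi = 1$.

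To verify the critical exponent claim I would exploit the quasi-isometry between $\psi$ and $d$ provided by $\psi \in \calH_\G^{++}$ (via \Cref{lem.charH_G^++}(3)), in the form $\lambda_1 d(o,x) - C \leq \psi(o,x) \leq \lambda_2 d(o,x) + C$ for constants $\lambda_1, \lambda_2, C > 0$. For $s > 0$, the inequality $d \geq (\psi - C)/\lambda_2$ yields the pointwise bound
\[
-\phi(o,x) - s\,d(o,x) \leq -\psi_\ast(o,x) - (\theta + s/\lambda_2)\psi(o,x) + sC/\lambda_2,
\]
so the tested series is controlled by a constant multiple of $\sum_x \exp(-\psi_\ast - (\theta + s/\lambda_2)\psi)$, which converges since $\theta + s/\lambda_2 > \theta$ and $\theta$ is by definition the critical exponent in the $\psi$-direction. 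For $s < 0$ close to $0$, reversing the inequality direction gives a lower bound of the same shape with $\theta + s/\lambda_2 < \theta$, forcing divergence of the tested series. Hence the critical exponent in $s$ is exactly $0$.

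With the critical exponent condition established, \cite[Proposition 2.7]{cantrell-tanaka.manhattan} supplies a Borel probability measure $\nu$ on $\partial\G$ that is quasi-conformal for $\phi$ at exponent $1$, and \cite[Lemma 2.9]{cantrell-tanaka.manhattan} then gives, for large enough $R$, the shadow estimate $C^{-1} e^{-\phi(o,x)} \leq \nu(O(x,R)) \leq C\, e^{-\phi(o,x)}$. Since $\nu$ is a probability measure, $\nu(O(x,R)) \leq 1$, so the shadow lemma yields the uniform lower bound $\phi(o,x) \geq -\log C$; in particular $\phi \in \calH_\G^{+}$. The standard Patterson--Sullivan counting argument using bounded multiplicity of shadows then converts the shadow estimate into $\#\{x \in \G : \phi(o,x) < T\} \asymp e^T$ for $T$ large, giving $v_\phi = 1 < \infty$, and a final application of \Cref{lem.charH_G^++} promotes $\phi$ from $\calH_\G^+$ to $\calH_\G^{++}$.

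The main obstacle I anticipate is the critical exponent comparison in the second paragraph: although the idea is morally that the quasi-isometry between $\psi$ and $d$ allows one to trade $sd$ for a perturbation in the $\psi$-direction, the trade must be sharp enough to cross the $\theta$-threshold strictly for $s$ of either sign, and one must ensure the additive constants from the quasi-isometry do not mask the critical transition. Once that transition is pinned down at $s=0$, the rest of the argument is a direct transcription of the strategy already employed to establish \Cref{prop.existenceQCH_G}.
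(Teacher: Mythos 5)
Your critical exponent computation in the second paragraph is correct, and the same idea does appear in the paper's proof (it is essentially what drives Claim~4 there). However, the overall strategy has a circularity problem that the paper's authors went out of their way to avoid. You propose to deduce both that $\phi := \psi_\ast + \theta\psi$ is bounded below and that it has finite exponential growth rate (hence properness) by applying \cite[Proposition~2.7]{cantrell-tanaka.manhattan} and the shadow lemma \cite[Lemma~2.9]{cantrell-tanaka.manhattan} to $\phi$. But in the paper's own deployment of this machinery (the proof of \Cref{prop.existenceQCH_G}), Proposition~2.7 and the shadow lemma are invoked only after $\psi\in\calH_\G^{++}$ is already known --- the statement of \Cref{prop.existenceQCH_G} assumes $\calH_\G^{++}$ as a hypothesis. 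For a potential that is only known to be in $\calH_\G$, the shadow lemma's bounded-multiplicity input (needed to convert the shadow estimate into the count $\#\{x:\phi(o,x)<T\}\asymp e^T$) is precisely where properness of $\phi$ enters: without knowing that $\{x:\phi(o,x)<T\}$ meets any $d$-annulus in a bounded set, there is no multiplicity bound, and the counting argument collapses. In other words, finiteness of $v_\phi$ is what would \emph{license} the Patterson--Sullivan counting, not what you can extract from it, so the final step of your proof assumes the conclusion.

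The paper sidesteps this by never applying the quasi-conformal machinery to $\phi$ before its properness is established. Instead: boundedness below (membership in $\calH_\G^+$) is proved directly in Claim~1 via a Poincar\'e-series divergence argument along powers of an element $y$ with $\ell_{\psi_\ast+\theta\psi}(y)<0$, using \Cref{lem:MLSnonegative} and \Cref{coro.propertiesH}(2). Properness is then obtained in Claims~2--6 by an interpolation argument: one considers the one-parameter family $\psi_t = t\psi_\ast+\theta(t)\psi$ for $t\in[0,1]$, shows the set of bad parameters has an infimum $t_0$ that is strictly positive (Claim~4), verifies that every $\psi_t$ for $t<t_0$ has exponential growth rate $1$ (Claim~5), and then uses the pointwise-convergence lemma \Cref{lem.convergencetoproper} (that a pointwise limit of left-invariant pseudo metrics each with growth rate $1$ must be proper) to conclude $\psi_{t_0}\in\calH_\G^{++}$, a contradiction. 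That limit lemma is the genuine input that replaces the shadow-counting argument you invoke, and it is the ingredient missing from your proposal. The rest of your outline (computing the critical exponent with respect to $d$, appealing to \Cref{lem.charH_G^++} at the end) is sound, so if you can supply an independent argument for properness that does not presuppose the quasi-conformal machinery, the proof would close.
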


A main ingredient in the proof of this result is the lemma below. Its proof follows the exact same argument as the proof of \cite[Lemma 3.7]{cantrell-reyes.manhattan}. The only difference is that in the current case the generating subsets $S$ appearing are not necessarily symmetric. Instead, for the final part of the argument we use the bound $v_S \geq \frac{1}{2}{\log(\al \# S)}$ valid for any generating subset $S\subset \G$, which follows from Theorem 1.1 and Remark 1.2 in \cite{delzant-steenbock}. We leave the details to the reader. In the next lemma, by a \emph{possibly asymmetric pseudo metric} on $\G$ we nonnegative function $d: \G \times \G \ra \R$ such that $d(x,x)=0$ and $d(x,z)\leq d(x,y)+d(y,z)$ for all $x,y,z\in \G$, but not necessarily satisfying $d(x,y)=d(y,x)$. The exponential growth rate of a possibly asymmetric pseudo metric is then defined as in \eqref{eq.defEGRtempered}.

\begin{lemma}\label{lem.convergencetoproper}
    Let $\G$ be a non-elementary hyperbolic group and let $(d_m)_m$ be a sequence of left-invariant possibly asymmetric pseudo metrics on $\G$ that pointwise converge to the possibly asymmetric pseudo metric $d_\infty$ on $\G$. If all the pseudo metrics $d_m$ have exponential growth rate 1, then $d_\infty$ is proper.
\end{lemma}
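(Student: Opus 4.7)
The plan is to argue by contradiction, following the template of \cite[Lemma~3.7]{cantrell-reyes.manhattan} but replacing the classical lower bound on the exponential growth of symmetric generating sets by the Delzant--Steenbock estimate in the last step. Suppose $d_\infty$ is not proper, so there exists $T_0>0$ for which the set $A := \{x\in\G : d_\infty(o,x)<T_0\}$ is infinite. I will construct, for each integer $N$, a generating set of $\G$ of cardinality at least $N$ whose associated word-growth rate is bounded by a constant independent of $N$, contradicting Delzant--Steenbock.

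First I fix a finite symmetric generating set $S_0$ of $\G$ and set $T_1 := \max(T_0, \max_{s \in S_0} d_\infty(o,s)) + 1$, a constant that depends only on $d_\infty$ and $S_0$. Since $A$ is infinite while $S_0$ is finite, for each $N \geq 1$ I choose $N$ distinct elements $F_N \subset A \setminus S_0$. Using pointwise convergence $d_m \to d_\infty$ on the finite set $F_N \cup S_0$, for $m$ sufficiently large (depending on $N$) one has $d_m(o,y) < T_1$ for every $y \in F_N \cup S_0$. The key point is that $T_1$ does not depend on $N$ or on $m$.

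I next transfer this to a word-growth estimate for the generating set $F_N \cup S_0$. By left-invariance of $d_m$ and the triangle inequality, $(F_N \cup S_0)^{\leq k} \subseteq \{x \in \G : d_m(o,x) < kT_1\}$ for every $k \geq 1$. Since $v_{d_m}=1$, taking $\limsup_k \tfrac{1}{k}\log$ of both sides yields $v_{F_N \cup S_0} \leq T_1$, where $v_{F_N \cup S_0}$ denotes the exponential growth rate associated to the (possibly asymmetric) word metric induced by the generating set $F_N \cup S_0$.

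Finally I apply the Delzant--Steenbock bound $v_S \geq \tfrac{1}{2}\log(\alpha \, \#S)$ to $S = F_N \cup S_0$, obtaining $\tfrac{1}{2}\log\bigl(\alpha(N + \#S_0)\bigr) \leq T_1$, i.e.\ $N \leq e^{2T_1}/\alpha - \#S_0$. Since $T_1$ is independent of $N$, this contradicts the fact that $N$ was arbitrary. The main subtlety is ensuring that $T_1$ can be chosen uniformly in $N$; this is why pointwise convergence is invoked only on the finite set $F_N \cup S_0$ (via a choice $m = m(N)$), while the auxiliary symmetric generating piece $S_0$ is fixed in advance so that $F_N \cup S_0$ always generates $\G$ and the Delzant--Steenbock estimate is applicable.
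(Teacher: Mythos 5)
Your proof is correct and takes precisely the approach the paper prescribes: it reconstructs the argument of \cite[Lemma~3.7]{cantrell-reyes.manhattan} with the Delzant--Steenbock bound $v_S \geq \tfrac{1}{2}\log(\alpha\,\#S)$ for not-necessarily-symmetric generating sets substituted at the final step, exactly as the text indicates. The uniform constant $T_1$, the inclusion $(F_N\cup S_0)^{\leq k}\subseteq\{x:d_m(o,x)<kT_1\}$ obtained from left-invariance and the directed triangle inequality, and the conclusion $v_{F_N\cup S_0}\leq T_1$ from $v_{d_m}=1$ are all in order, so the cardinality contradiction goes through.
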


\begin{proof}[Proof of \Cref{prop.genEGR1}]
 For any $0\leq t\leq 1$ let $\thet(t)$ be the critical exponent of \[s \mapsto \sum_{x\in \G}{\exp(-t\psi_\ast(o,x)-s\p(o,x))}\]
and set $\psi_t:=t\psi_\ast \in \thet(t)\psi$. We will show that $\psi_t \in \calH_\G^{++}$ and has exponential growth rate 1 for all $t\in [0,1]$. The proof is divided into several steps.

\textbf{Claim 1.} $\psi_t \in \calH_\G^{+}$ for each $t$.

It is enough to prove it $t=1$. If $\psi_t=\psi_\ast +\thet \psi \notin \calH_\G^+$ then by \Cref{lem:MLSnonegative} and \Cref{coro.propertiesH} (2) we can find $C>0$ and $y\in \G$ such that $\ell_{\psi_1}(y)<0$ and \[\max\{|k\ell_{\p_1}(y)-\p_1(o,y^k)|,|k\ell_{\p}(y)-\p(o,y^k)|\}\leq C\] for all $k>0$. Then for any $s>0$ small enough so that $a:=-\ell_1(y)-s\ell_\p(y)>0$ we have
\begin{align*}
    \sum_{x\in \G}{\exp(-t\psi_\ast(o,x)-s\p(o,x))}  \geq \sum_{k>0}{\exp(-t\psi_\ast(o,y^k)-s\p(o,y^k))} 
     \geq e^{-C(1+s)} \sum_{k>0}{e^{ka}}=\infty,
\end{align*}
contradicting the definition of $\thet$. Therefore $\psi_1 \in \calH_\G^+$.

\textbf{Claim 2.} Let $a=\Dil(-\p_\ast,\p)$. Then $b_t:=\thet(t)-at\geq 0$ for al $t\in [0,1]$. 

We can assume that $t>0$. Since $\frac{1}{t}\psi_t\in \calH_\G^{+}$ by Claim 1, from \Cref{lem:MLSnonegative} we have that
\[0\leq \frac{1}{t}\ell_{\p_t}=\ell_{\p_\ast}+\frac{\thet(t)}{t}\ell_\p,\]
and hence $\thet(t)/t \geq a$ by the definition of $a$.

\textbf{Claim 3.} Fix $\hat \p,\hat\p^+\in \wh\calD_\G$ roughly isometric to $\p$ and $\p_\ast+a\p$ respectively (which exist by the definition of $a$ and \Cref{lem.GPpositiveH_G}). Then for any $t\in [0,1]$ the function 
$\hat\p_t:=(\thet(t)-at)\hat\p+t\hat\p^+$
belongs to $\wh\calD_\G$ and is roughly isometric to $\p_t$.

Claim 2 tells us that $\thet(t)-at \geq 0$, so that $\hat\p_t \in \wh\calD_\G$ for each $t$. Moreover, $\hat\p_t$ is roughly isometric to $(\thet(t)-at)\psi+t(\p_\ast+a\p)=\thet(t)\p+t\p_\ast=\p_t$.
\\





For the next claims we assume for the sake of a contradiction that the set $A:=\{t\in [0,1]: \psi_t \notin \calH_\G^{++}\}$ is non-empty. Let $t_0=\inf A$.  

\textbf{Claim 4.} $\psi_{t_0}\notin \calH_\G^{++}$, so in particular $t_0>0$. 

Suppose that $\psi_{t_0}\in \calH_\G^{++}$ and let $c>0$ be such that $\ell_{\psi}\geq c \ell_\p$. Also, let $\lam>0$ be such that $|\ell_{\p_\ast}|\leq \lam \ell_\p$, so by \Cref{lem:MLSnonegative} there exists $C>0$ such that $|\psi_\ast|\leq \p+C$. We will show that if $0<\ep \leq c/6\lam$ then $\p_{t_0+\ep}\in \calH_\G^{++}$, contradicting the definition of $t_0$. Suppose $\ep>0$ for which we have that 
\begin{align*}
    \sum_{x\in \G}{e^{-(t_0+\ep)\p_\ast(o,x)-(\thet(t_0)+2\ep\lam)\p(o,x)}}& \leq e^{\ep C}\sum_{x\in \G}{e^{-t_0\p_\ast(o,x)-(\thet(t_0)+\ep\lam)\p(o,x)}}.
\end{align*}
Then both series above converge by the definition of $\thet(t_0)$ and hence $\thet(t_0+\ep)\leq \thet(t_0)+2\ep\lam$, and a similar analysis gives us $\thet(t_0+\ep)\geq \thet(t_0)-2\ep\lam$. From this, for $\ep \leq c/6\lam$ we have that 
\begin{align*}
    \ell_{\p_{t_0+\ep}} = (t_0+\ep)\ell_{\p_\ast}+\thet(t_0+\ep)\ell_\p  
     \geq (t_0+\ep)\ell_{\p_\ast}+\thet(t_0)\p-2\ep\lam\ell_\p 
     \geq \ell_{\p_{t_0}}-3\ep\lam\ell_\p \geq \frac{c}{2}\ell_\p,
\end{align*}
and we deduce $\p_{t_0+\ep}\in \calH_\G^{++}$.

\textbf{Claim 5.} If $0\leq t<t_0$ then $\psi_t\in \calH_\G^{++}$ and has exponential growth rate 1. 

By the definition of $t_0$, if $t<t_0$ then $\psi_{t}\in \calH_\G^+$ and there exists $c>0$ such that $\p_t \geq c^{-1}\p-C$. Then for all $\ep>0$ we have
\[(1+\ep)\p_t \geq t\p_\ast+(\thet(t)+\ep c^{-1})\p-\ep c\]
and 
\[(1-\ep)\p_t \leq t\p_\ast+(\thet(t)-\ep c^{-1})\p+\ep c.\]
In particular, a similar reasoning as in the proof of Claim 4 implies that the series 
\[\sum_{x\in \G}{e^{-(1+\ep)\p_t(o,x)}} \text{ and } \sum_{x\in \G}{e^{-(1-\ep)\p_t(o,x)}}\]
are convergent and divergent respectively for all $\ep>0$, concluding that $\p_t$ has exponential growth rate 1.

\textbf{Claim 6.} $\psi_{t_0}\in \calH_\G^{++}$ (this is our desired contradiction as it conflicts with Claim 4).

It is enough to prove that $\hat\p_{t_0}\in \calH_\G^{++}$ for $\hat\p_{t_0}$ as in Claim 3. Note that $\hat\p_{t_0}$ is the pointwise limit of $\hat\p_{t}$ as $t \nearrow t_0$. By Claims 3 and 5, for each $t<t_0$ the function $\hat\p_{t}$ is a left-invariant (possibly asymmetric) pseudo metric on $\G$ with exponential growth rate 1. Then \Cref{lem.convergencetoproper} implies that $\hat\p_{t_0}$ is proper, so it belongs to $\calH_\G^{++}$ by \Cref{lem.charH_G^++}.\\

So far we have proven that $\p_t\in \calH_\G^{++}$ for all $t\in [0,1]$, and then the same argument as in the proof of Claim 5 gives us that $\p_t$ has exponential growth rate 1. 
\end{proof}

\subsection{Differentiability and convexity} In this section we study the regularity properties of the parametrization $\thet_{\D/\p}$, that we also denote by $\thet$ to simplify notation. Recall from the introduction that for $\nu$ an ergodic quasi-invariant Borel probability measure on $\partial \G$, its Lyapunov vector is denoted $\D_\p(\nu)$, whenever it exists.

We first prove differentiability of $\thet$.

%
\begin{proposition}\label{prop.C1}
    The parametrization $\theta$ is $C^1$.
\end{proposition}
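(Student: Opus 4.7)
The plan is to reduce the $C^1$ claim to two ingredients: convexity of $\theta = \theta_{\mathbf{D}/\psi}$, and an explicit ergodic-theoretic formula for its directional derivatives in terms of the flow-invariant measures $m_{\mathbf{v}}$. Continuity of the gradient will then follow from Corollary \ref{Cor:limit}. This is the same skeleton used for the classical Manhattan curve in Cantrell--Tanaka and Cantrell--Reyes; the added work here is to carry it out with $n$ parameters and for the class $\mathcal{H}_\Gamma$ rather than $\mathcal{D}_\Gamma$.

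\textbf{Step 1 (convexity).} I would first show that $\theta$ is convex by Hölder. Given $\mathbf{v}_0, \mathbf{v}_1 \in \mathbb{R}^n$, $t \in [0,1]$, and $s_i > \theta(\mathbf{v}_i)$ for $i=0,1$, setting $\mathbf{v} = (1-t)\mathbf{v}_0 + t\mathbf{v}_1$ and $s = (1-t)s_0 + ts_1$, Hölder's inequality applied termwise to
\[
\sum_{x \in \Gamma} e^{-\langle \mathbf{v}, \mathbf{D}(x)\rangle - s\psi(o,x)}
\]
bounds this series by the product of the $\mathbf{v}_0$- and $\mathbf{v}_1$-series raised to powers $1-t$ and $t$. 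Letting $s_i \searrow \theta(\mathbf{v}_i)$ gives $\theta(\mathbf{v}) \le (1-t)\theta(\mathbf{v}_0) + t\theta(\mathbf{v}_1)$. In particular $\theta$ is locally Lipschitz and admits one-sided directional derivatives $\theta'(\mathbf{v}; \mathbf{e})$ everywhere; to prove $C^1$ it suffices to show that $\mathbf{e} \mapsto \theta'(\mathbf{v};\mathbf{e})$ is linear and depends continuously on $\mathbf{v}$.

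\textbf{Step 2 (differentiation formula).} Fix $\mathbf{v}$ and a direction $\mathbf{e} \in \mathbb{R}^n$. By Lemma \ref{lem.EGR1} the potential $\psi_{\mathbf{v}}$ lies in $\mathcal{H}_\Gamma^{++}$ with exponential growth rate $1$; let $\Lambda_{\mathbf{v}} \in \mathcal{C}_\Gamma$ be a Bowen--Margulis current for $\psi_{\mathbf{v}}$ with associated flow-invariant probability measure $m_{\mathbf{v}}$ on $\mathcal{F}_\kappa$. For $t \in \mathbb{R}$ write
\[
\psi_{\mathbf{v}+t\mathbf{e}} \;=\; \psi_{\mathbf{v}} + t\,\langle \mathbf{e}, \mathbf{D}\rangle + \bigl(\theta(\mathbf{v}+t\mathbf{e}) - \theta(\mathbf{v})\bigr)\,\psi,
\]
and note that by Lemma \ref{lem.EGR1} this potential again has exponential growth rate $1$. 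Using the continuous extension of the stable translation length to $\mathcal{C}_\Gamma$ from Theorem \ref{thm.continuousextension}, and the standard computation of the derivative of exponential growth rate along a one-parameter family of Hölder cocycles over the Mineyev flow (cf.\ the proofs in Cantrell--Tanaka and Cantrell--Reyes), one obtains the identity
\[
\theta'(\mathbf{v}; \mathbf{e}) \;=\; -\,\frac{\ell_{\langle \mathbf{e}, \mathbf{D}\rangle}(\Lambda_{\mathbf{v}})}{\ell_{\psi}(\Lambda_{\mathbf{v}})}.
\]
Corollary \ref{coro.charH+currents}(2) ensures $\ell_{\psi}(\Lambda_{\mathbf{v}}) > 0$ since $\psi \in \mathcal{H}_\Gamma^{++}$, so the right side is well-defined, and it is manifestly linear in $\mathbf{e}$. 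This yields differentiability at $\mathbf{v}$ with
\[
\nabla \theta(\mathbf{v}) \;=\; -\,\frac{1}{\ell_{\psi}(\Lambda_{\mathbf{v}})}\bigl(\ell_{\psi_1}(\Lambda_{\mathbf{v}}),\ldots,\ell_{\psi_n}(\Lambda_{\mathbf{v}})\bigr).
\]

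\textbf{Step 3 (continuity and main obstacle).} Corollary \ref{Cor:limit} shows that $\mathbf{v} \mapsto m_{\mathbf{v}}$, and hence $\mathbf{v} \mapsto \Lambda_{\mathbf{v}}$ (in the weak$^*$ topology on $\mathcal{C}_\Gamma$), is continuous. Combined with the continuity of $\mu \mapsto \ell_\varphi(\mu)$ on $\mathcal{C}_\Gamma$ from Theorem \ref{thm.continuousextension}, the gradient formula above is continuous in $\mathbf{v}$, finishing the proof. The main obstacle is Step 2: justifying the derivative formula rigorously. The clean way is to use convexity to reduce to producing matching upper and lower bounds for $\theta'(\mathbf{v}; \pm\mathbf{e})$, with the upper bound coming from plugging suboptimal weights into the defining series (or the orbit-counting asymptotics in Corollary \ref{coro.PSforv}(2)) and the lower bound coming from a Livshitz-type/pressure calculation on $\mathcal{F}_\kappa$ using $m_{\mathbf{v}}$. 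The subtle point is that $\psi$ is only a hyperbolic metric potential, not a genuine metric, so one must invoke Proposition \ref{prop.H_G=temp} and the integration theory of tempered potentials from \cite{cantrell-tanaka.manhattan} to make sense of the relevant ergodic integrals over $\mathcal{F}_\kappa$.
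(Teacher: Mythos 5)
Your proof is correct and follows essentially the same route as the paper: convexity from H\"older's inequality, a dynamical formula for the directional (or partial) derivative deferred to the weighted Manhattan curve arguments of Cantrell--Tanaka, and continuity of the gradient via \Cref{Cor:limit}. The paper works coordinate-by-coordinate with the one-variable slice $t\mapsto\theta(v_1+t,v_2,\dots,v_n)$ and expresses $\partial_i\theta(\vv)$ as $-(\p_i)_\p(\nu_\vv)$, the $i$-th Lyapunov vector of a quasi-conformal measure for $\p_\vv$; you phrase the same quantity as the ratio $-\ell_{\p_i}(\Lambda_\vv)/\ell_\p(\Lambda_\vv)$ of translation lengths of a Bowen--Margulis current, using \Cref{thm.continuousextension}. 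These two expressions coincide by ergodicity, so the difference is cosmetic. You correctly identify the one genuine gap — the rigorous derivation of the derivative formula, which both you and the paper defer to \cite{cantrell-tanaka.manhattan} — and your remarks about needing \Cref{prop.H_G=temp} to make sense of the ergodic integrals for general elements of $\calH_\G$ match the paper's appeal to \Cref{prop.descriptionH_G}. One small remark: once differentiability at every point is established, $C^1$ is automatic for a finite convex function on an open domain, so the explicit continuity argument in your Step 3 (and the paper's analogous step) is a convenience rather than a logical necessity.
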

\begin{proof}
Take a vector $\mathbf{v} = (v_1,\ldots, v_n) \in \R^n$ and consider the series
\[
(s,t) \mapsto \sum_{x\in\G} e^{-\langle (v_1+t, v_2, \ldots, v_n), \D(x)\rangle - s\p(o,x)}
\]
for $s,t \in \R$. For fixed $t$ we let $s = \theta_{\vv}(t)$ be the abscissa of convergence of this series. The function $\theta_{\vv} : \R \to \R$ is convex by H\"older's inequality and so is continuous and differentiable almost everywhere. Assuming that the derivative exists, we have that $\theta_\vv'(0)$ is the partial derivative in the first coordinate of $\theta$ at $\vv$. 
Recall that Cantrell and Tanaka \cite{cantrell-tanaka.manhattan} showed that Manhattan curves for pairs of metrics $d,d_\ast \in \Dc_\G$ are $C^1$. Although the curve $\theta_\vv$ is not a Manhattan curve, it is in essence a `weighted Manhattan curve' and the same techniques used by Cantrell and Tanaka in \cite{cantrell-tanaka.manhattan} can be applied. We therefore sketch the argument for the rest of this proof.

Consider the Mineyev flow constructed using a Green metric $\wh d$ as constructed in \Cref{sec.mf}.
Let $\vv_t = (v_1+t, v_2, \ldots, v_n)$ for $t \in \R$. First we note that by the same argument used in \cite[Lemmaa~3.5]{cantrell-tanaka.manhattan}, the (1-dimensional) Lyapunov vector ${(\p_1)}_{\p}(\nu_{\vv_t})$ exists.
This argument uses the fact that $\p_1$ and $\p$ are linear up to uniformly bounded constants along rough $\wh d$ geodesics by \Cref{prop.descriptionH_G}. As mentioned above, we know that $\theta_\vv$ is differentiable almost everywhere and it follows as in \cite[Lemma~3.6]{cantrell-tanaka.manhattan} that when $\theta_\vv'(t)$ exists, then it equals $-(\p_1)_{\p}(\nu_{\vv_t})$. 
However, using \Cref{Cor:limit} as in the proof of \cite[Theorem~3.7]{cantrell-tanaka.manhattan} we see that $\theta_\vv$ is in fact differentiable everywhere and in particular 
\begin{equation}\label{eq.components}
   \theta_\vv'(0)=\frac{\partial \thet}{\partial x_1}(\vv)= -(\p_1)_{\p}(\nu_{\vv_t}).
\end{equation}

Applying the same argument to the other components shows that the partial derivatives of $\theta$ exist everywhere. Moreover, using \Cref{Cor:limit} as in the proof of Lemma~3.5 in \cite{cantrell-tanaka.manhattan}, we can prove that these partial derivatives vary continuously. 
\end{proof}

We single out the following corollary which follows from combining the versions of \eqref{eq.components} for any partial derivative of $\thet$. 
\begin{corollary}\label{cor.drift}
          Let $\vv \in \mathbb{R}^n$ and and consider $\nu_\vv$ a quasi-conformal measure for $\p_{\vv}$. Then the Lyapunov vector $\D_\p(\nu_\vv)$ exists and equals  
    \[
    \D_\p(\nu_{\vv}) = -\nabla \theta(\vv).
    \]
    In particular, $\{-\nabla \theta(\vv): \vv\in \R^n\} \subset \calD\J_\p(\D)\subset \J_\p(\D)$.
\end{corollary}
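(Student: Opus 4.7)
The plan is to deduce the corollary directly from \Cref{prop.C1} and its proof, since the main analytic input -- differentiability of $\thet$ combined with an identification of its partial derivatives as Lyapunov exponents -- has already been established there. The identity $\D_\p(\nu_\vv)=-\nabla\thet(\vv)$ is essentially the coordinate-wise version of the formula $\thet_\vv'(0)=-(\p_1)_\p(\nu_{\vv_t})$ derived in the proof of \Cref{prop.C1}, and the containment $\{-\nabla\thet(\vv)\} \subset \calD\J_\p(\D)$ is then a matter of verifying the definition of the dynamical translation spectrum.

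First I would apply the argument from the proof of \Cref{prop.C1} to each of the $n$ coordinates separately. For each $i=1,\dots,n$, consider the one-parameter family $\vv_t^{(i)}=\vv+t\mathbf{e}_i$ and the associated restricted critical exponent $\thet_{\vv,i}(t):=\thet(\vv_t^{(i)})$. The proof of \Cref{prop.C1} shows that the one-dimensional Lyapunov vector $(\p_i)_\p(\nu_{\vv_t^{(i)}})$ exists for $\nu_\vv$-almost every $\x \in \partial \G$, and moreover that $\thet_{\vv,i}$ is differentiable everywhere with $\thet_{\vv,i}'(0)=-(\p_i)_\p(\nu_\vv)$. By construction this partial derivative is $\partial\thet/\partial x_i(\vv)$.

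Next I would combine these coordinate-wise identities. For each $i$, pick a $\nu_\vv$-conull set $E_i \subset \partial\G$ on which the Lyapunov limit $\p_i(\x_k)/\p(o,\x_k) \to -\partial\thet/\partial x_i(\vv)$ holds along every quasi-geodesic $(\x_k)$ converging to $\x$. Intersecting over $i=1,\dots,n$ yields a $\nu_\vv$-conull set on which the full vector $\D_\p(\x_k)$ converges to $-\nabla\thet(\vv)$. This shows that the Lyapunov vector $\D_\p(\nu_\vv)$ is well-defined and equals $-\nabla\thet(\vv)$.

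Finally, for the inclusion $\{-\nabla\thet(\vv):\vv\in\R^n\}\subset \calD\J_\p(\D)$, I would invoke \Cref{coro.PSforv} to conclude that $\nu_\vv$ is quasi-invariant and ergodic; the previous paragraph then shows it is a measure whose Lyapunov vector equals $-\nabla\thet(\vv)$, so by definition this vector lies in $\calD\J_\p(\D)$. The inclusion $\calD\J_\p(\D)\subset \J_\p(\D)$ will be proven separately (it is part of \Cref{thm.dj=j}). I do not expect any genuine obstacle here: the only subtlety is ensuring that the coordinate-wise almost-sure convergence can be upgraded to simultaneous convergence, but this follows immediately from taking a finite intersection of conull sets.
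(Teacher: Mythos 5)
Your proposal is correct and matches the paper's (essentially one-sentence) argument, which states that the corollary "follows from combining the versions of \eqref{eq.components} for any partial derivative of $\thet$"; you have correctly filled in the details, including the intersection of conull sets to upgrade coordinate-wise almost-sure convergence to simultaneous convergence. One small remark: rather than deferring the inclusion $\calD\J_\p(\D)\subset \J_\p(\D)$ to \Cref{thm.dj=j} (whose proof in fact relies, via \Cref{prop.deriv=dj}, on this corollary), it is cleaner to note that it follows directly from the definitions, since for $\nu$-a.e.\ $\xi$ and any quasi-geodesic $\x_k\to\xi$ one has $\p(o,\x_k)\to\infty$, so $\D_\p(\nu)=\lim_k\D_\p(\x_k)$ lies in the Hausdorff limit $\J_\p(\D)$ of the sets $\D_\p(S_R(T))$, which is closed.
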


We move on to prove the convexity of the Manhattan manifold.

\begin{proposition}\label{prop.convex}
    Suppose that $\p_1,\dots,\p_n,\p$ are independent. Then $\theta$ is strictly convex, and in particular the gradient function $\nabla \theta$ is injective. 
\end{proposition}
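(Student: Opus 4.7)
The plan is to argue by contradiction, reducing strict convexity of $\theta = \theta_{\D/\p}$ to the rigidity principle that the Manhattan curve of a pair of independent potentials in $\calH_\G^{++}$ is strictly convex. Suppose there exist $\vv_0 \neq \vv_1$ in $\R^n$ such that $\theta$ is affine on the segment between them (by convexity of $\theta$, any failure of strict convexity takes this form). For $t \in [0,1]$ set $\vv_t := (1-t)\vv_0 + t\vv_1$. The definition of $\p_{\vv}$ from \Cref{def.psi_v} combined with the affinity of $\theta$ yields
\[
\p_{\vv_t} = (1-t)\p_{\vv_0} + t\p_{\vv_1} \quad \text{for every } t \in [0,1],
\]
and each $\p_{\vv_t}$ lies in $\calH_\G^{++}$ with exponential growth rate $1$ by \Cref{lem.EGR1}.

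I would then consider the one-dimensional Manhattan curve associated to the pair $(\p_{\vv_0}, \p_{\vv_1})$, namely the boundary of the convex region of parameters $(s,\tau)\in\R^2$ for which the series $\sum_x e^{-s\p_{\vv_0}(o,x) - \tau\p_{\vv_1}(o,x)}$ converges. Since $v_{\p_{\vv_t}} = 1$ for every $t$, the preceding identity forces the entire line segment from $(1,0)$ to $(0,1)$ to lie on this Manhattan curve. Adapting the strict convexity theorem for Manhattan curves of pairs of independent pseudo-metrics in $\calD_\G$ (due to Cantrell--Tanaka \cite{cantrell-tanaka.manhattan} and Cantrell--Reyes \cite{cantrell-reyes.manhattan}) to the broader class $\calH_\G^{++}$, a non-trivial segment on the curve forces $\p_{\vv_0}$ and $\p_{\vv_1}$ to be roughly similar. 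Because both have exponential growth rate $1$, they must in fact be roughly isometric, and therefore $\ell_{\p_{\vv_0}} \equiv \ell_{\p_{\vv_1}}$ on $\G$.

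Unravelling this equality via the definition of $\p_{\vv}$ yields
\[
\langle \vv_0 - \vv_1,\, \bfL(x)\rangle + (\theta(\vv_0) - \theta(\vv_1))\,\ell_\p(x) = 0 \quad \text{for all } x \in \G,
\]
so the potential $\p' := \sum_i (\vv_0 - \vv_1)_i \,\p_i + (\theta(\vv_0) - \theta(\vv_1))\p \in \calH_\G$ has identically vanishing stable translation length. Applying \Cref{lem:MLSnonegative} to both $\p'$ and $-\p'$ shows that $\p'$ is bounded on $\G \times \G$, contradicting the independence of $\p_1, \ldots, \p_n, \p$ since $\vv_0 \neq \vv_1$. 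The injectivity of $\nabla\theta$ then follows from strict convexity combined with \Cref{prop.C1}. The main obstacle is the middle step: extending the strict convexity of the Manhattan curve from pairs of symmetric pseudo-metrics in $\calD_\G$ to the asymmetric, signed hyperbolic metric potentials of $\calH_\G^{++}$. I expect this to rely on the quasi-conformal measures and Bowen--Margulis currents constructed in \Cref{sec.BMHMP}, together with a rigidity argument comparing Patterson--Sullivan densities for the endpoints $\p_{\vv_0}, \p_{\vv_1}$ and exploiting ergodicity/uniqueness of their associated flow-invariant measures on $\Fc_\k$.
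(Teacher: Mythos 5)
Your reduction is logically sound, and the bookkeeping before and after the central step is correct: failure of strict convexity on a segment $[\vv_0,\vv_1]$ does force $\p_{\vv_t}=(1-t)\p_{\vv_0}+t\p_{\vv_1}$ to have exponential growth rate $1$ for every $t$, so the pairwise Manhattan curve $\theta_{\p_{\vv_0}/\p_{\vv_1}}$ is affine on $[0,1]$; if one then knows that this forces $\p_{\vv_0}-\p_{\vv_1}$ to be bounded, the identity $\p_{\vv_0}-\p_{\vv_1}=\langle\vv_0-\vv_1,\D\rangle+(\theta(\vv_0)-\theta(\vv_1))\p$ with $\vv_0\neq\vv_1$ contradicts independence. (The detour through $\ell_{\p_{\vv_0}}=\ell_{\p_{\vv_1}}$ and \Cref{lem:MLSnonegative} is unnecessary once you have rough isometry, which already means $\sup|\p_{\vv_0}-\p_{\vv_1}|<\infty$, but it is not an error.)

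The genuine gap is the one you flag yourself: the claim that a straight segment on the pairwise Manhattan curve forces rough similarity of $\p_{\vv_0},\p_{\vv_1}$. This is not a reduction to a known result — Cantrell--Tanaka and Cantrell--Reyes prove it for pairs in $\calD_\G$, not for the asymmetric, signed potentials $\p_\vv\in\calH_\G^{++}$ that arise here — so what you are "reducing to" is in fact the full content of the proposition, specialized to $n=1$. The paper's proof does precisely this work, and it does it directly in $n$ dimensions rather than through your $n=1$ intermediary: from $\p_\vu=\tfrac12(\p_\vv+\p_\vw)$ it compares the quasi-conformal measures $\nu_\vv,\nu_\vw,\nu_\vu$ and $\hat\nu_\vv,\hat\nu_\vw,\hat\nu_\vu$ via the shadow estimates of \Cref{coro.PSforv}, shows the relevant measure ratios are uniformly bounded, invokes ergodicity of the Bowen--Margulis currents $\Lam_\vv,\Lam_\vw$ to deduce proportionality $\Lam_\vv=c\Lam_\vw$, and from the explicit form \eqref{eq.defBMH_G} of these currents extracts a uniform bound on $|\p_\vv(o,x)-\p_\vw(o,x)|$. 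Your expectation of which tools enter — quasi-conformal measures, Bowen--Margulis currents, ergodicity of flow-invariant measures on $\Fc_\k$ — is exactly right; but until that rigidity argument is carried out (for $\calH_\G^{++}$ potentials, where the forward and backward measures $\nu_\vv$ and $\hat\nu_\vv$ must be tracked separately), the proof is incomplete.
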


\begin{proof}
To prove this, we adapt the proof of \cite[Theorem~3.10]{cantrell-tanaka.manhattan} to our current situation.
 Suppose that $\theta$ is not strictly convex. Then, since $\theta$ is convex, there exist distinct vectors $\vv,\vw\in\R^n$ such that
 \[
 \theta\left(\frac{\vv+\vw}{2}\right) =  \frac{\left(\theta(\vv) + \theta(\vw)\right)}{2}.
 \]
 This follows from \Cref{prop.C1} since $\theta$ is $C^1$.
We then consider the vectors $\vv,\vw$ and $\vu := \frac{1}{2}(\vv+\vw)$ and note that the above equality implies that $\p_{\vu}=\frac{1}{2}(\p_\vv+\p_{\vw})$. In particular, the measures $\nu_\vv, \nu_\vw, \nu_\vu$ and $\hat\nu_\vv,\hat\nu_\vw,\hat\nu_\vu$ from \Cref{coro.PSforv} satisfy the shadow estimates needed to apply the proof of Theorem 3.10 in \cite{cantrell-tanaka.manhattan}. More precisely, from \Cref{coro.PSforv}(1)
we have that
\begin{equation} \label{eq.measure}
\frac{\nu_\vv(O(x,R))}{\nu_\vu(O(x,R))} \frac{\nu_\vw(O(x,R))}{\nu_\vu(O(x,R))} \ \text{ and } \  \frac{\hat\nu_\vv(O(x,R))}{\hat\nu_\vu(O(x,R))} \frac{\hat\nu_\vw(O(x,R))}{\hat\nu_\vu(O(x,R))}
\end{equation}
is uniformly bounded away from $0$ and $\infty$ for all $x\in\G$.

 Now, if $\L_\vv$ is a Bowen-Margulis current associated to $\p_\vv$ then \eqref{eq.defBMH_G} gives us
 \begin{equation}\label{eq.doubmeas}
\L_\vv = \varphi_\vv(\xi,\eta) e^{2 (x|y)^{\p_\vv}_o } \hat\nu_\vv \otimes \nu_\vv
 \end{equation}
where $\varphi_\vv : \partial \G^2 \to \R$ is uniformly bounded away from $0$ and infinity (see \cite[Example~2.9]{cantrell-tanaka.invariant}). There are similar expressions for the currents $\L_\vw$ and $\L_\vu$.

We now have the ingredients needed to apply the argument used in the proof of \cite[Theorem~3.10]{cantrell-tanaka.manhattan}. Using \cite[Lemma~3.11]{cantrell-tanaka.manhattan}, the fact that \eqref{eq.measure} is uniformly bounded, and the ergodicity of the Bowen-Margulis currents, we deduce that $\Lambda_\vv$ and $\Lambda_\vw$ are proportional, i.e. there is $c>0$ such that $\Lambda_\vv  = c\Lambda_\vw$ 
It follows from the definition of $\Lambda_\vv, \Lambda_\vw$ that $\hat\varphi = d\nu_\vv/d\nu_\vw$ satisfies
\[
\hat\varphi(\xi)\hat\varphi(\eta) e^{2(\x|\eta)_o^{\p_\vv} } \asymp e^{2(\x|\eta)_o^{\p_\vw} }
\]
for all $\xi,\eta \in \partial \G$. It then follows from \eqref{eq.doubmeas} and $c\Lam_\vv=\Lam_\vw$ that $\hat\varphi$ is bounded away from $0$ and infinity and that there exists a constant $C>0$ such that
\[
|\langle \vv, \D(x)\rangle  + \theta(\vv)\p(o,x) - \langle \vw, \D(x)\rangle - \theta(\vw)\p(o,x)| \le C
\]
for all $x \in \G$, i.e. $\p_1, \ldots, \p_n, \p$ are dependent. 
This contradicts our assumptions and the result follows.
    \end{proof}

We have all the ingredients to prove \Cref{thm.manc^1}.

\begin{proof}[Proof of \Cref{thm.manc^1}]
 From \Cref{prop.C1} and \Cref{prop.convex} we have that $\thet$ is $C^1$ and strictly convex when $\p_1,\dots,\p_n,\p$ are independent. Under this assumption, we readily have that $\nabla \thet$ is injective. Consider then the function
    \[
    (\vv,y) \mapsto \Theta(\vv,y) := \theta(\vv) - y
    \]
    on $\R^n \times \R$. This map is $C^1$ by \Cref{prop.C1} and so the implicit function theorem for manifolds implies that $\calM_{\D/\p}=\Theta^{-1}(0)$ is an $n$-dimensional $C^1$-manifold. 
\end{proof}

Convexity and differentiability of $\nabla\theta$ allows us to relate the Manhattan manifold and the joint translation spectrum. For that, we first note a deviation principle for $\D_\p$. Suppose that $\p_1,\dots,\p_n,\p$ are independent and let $I : \R^n \to \R\cup \{\infty\}$ be the function defined by
\[
I(\vv) = v_\psi + \sup_{\vw \in \R^n} (\langle \vw, \vv \rangle - \theta(-\vw) ).
\]
Since $\nabla\theta$ is injective, for $\vv \in \J_\p(\D)$ we have that
    \[
    I(\vv) = v_\p + \langle \vv,(-\nabla \theta)^{-1}(\vv) \rangle - \theta((-\nabla \theta)^{-1}(\vv)).
    \]
By \Cref{coro.PSforv}(2) and the fact that $\theta$ is $C^1$ we can apply the the G\"artner-Ellis Theorem and deduce the following.
    
\begin{corollary}\label{cor.ldp}
Suppose that $\p, \p_1,\ldots,\p_n$ are independent.  Then, for any open $U \subset \R^n$ and closed $V \subset \R^n$ with $U \subset V$ we have that
\begin{align*}
    -\inf_{\vv \in U} I(\vv) &\le \liminf_{T\to\infty} \frac{1}{T} \log\left(  \frac{\#\left\{ x \in \G: \p(o,x) < T, \ \ \D_\p(x) \in U \right\}}{\#\{x \in \G : \p(o,x) <T \}} \right) \\
    &\le \limsup_{T\to\infty} \frac{1}{T} \log\left(  \frac{\#\left\{ x \in \G: \p(o,x) < T, \ \ \D_\p(x) \in V \right\}}{\#\{x \in \G : \p(o,x) <T \}} \right)\\
    &\le -\inf_{\vv\in V} I(\vv).
\end{align*}
Furthermore, $I$ is finite on $\nabla \theta (\R^n)$ and infinite on the complement of  $\ov{\nabla \theta (\R^n)}$.
\end{corollary}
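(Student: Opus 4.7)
The strategy is the one announced just before the statement: realize the two inequalities as the upper and lower bounds of a large-deviation principle (LDP) and apply the G\"artner-Ellis theorem. Let
\[
\mu_T:=\frac{1}{\#\{x\in\G:\p(o,x)<T\}}\sum_{\substack{x\in\G\\ \p(o,x)<T}}\delta_{\D_\p(x)}
\]
be the empirical distribution of $\D_\p$ on the $\p$-ball $B(T)$. The middle two expressions in the statement are precisely the scaled logarithms $T^{-1}\log\mu_T(U)$ and $T^{-1}\log\mu_T(V)$. Since $\D_\p(\G)$ accumulates on the compact set $\J_\p(\D)$ by \Cref{thm.intro.jtspectrum}, and $\#B(T)$ is dominated at the exponential scale by its outermost shell via \Cref{cor.exp}, an LDP at speed $T$ for $(\mu_T)_{T>0}$ is equivalent to an LDP for the sphere measure $\tilde\mu_T$ supported on $S_R(T)$ for any fixed large $R$; we therefore work with $\tilde\mu_T$.

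The key computation is the limiting logarithmic moment generating function. For $x\in S_R(T)$ we have $\p(o,x)\in[T-R,T+R]$, so $T\langle\vw,\D_\p(x)\rangle=\langle\vw,\D(x)\rangle+O_{\vw}(1)$ uniformly in $x$ on the shell. Substituting $\vv=-\vw$ in \Cref{coro.PSforv}(2) yields
\[
\sum_{x\in S_R(T)}e^{\langle\vw,\D(x)\rangle}=e^{\theta_{\D/\p}(-\vw)T+o(T)},
\]
and combining with $\#S_R(T)=e^{v_\p T+o(T)}$ from \Cref{cor.exp} this gives
\[
\Lambda(\vw):=\lim_{T\to\infty}\frac{1}{T}\log\int_{\R^n}e^{T\langle\vw,\vv\rangle}\,d\tilde\mu_T(\vv)=\theta_{\D/\p}(-\vw)-v_\p
\]
for every $\vw\in\R^n$. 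The function $\Lambda$ is convex (as a pointwise limit of log moment generating functions), finite on all of $\R^n$, and of class $C^1$ by \Cref{prop.C1}; in particular it is essentially smooth with empty effective-domain boundary. The G\"artner-Ellis theorem therefore applies and yields an LDP for $(\tilde\mu_T)_{T>0}$ at speed $T$ with good rate function $\Lambda^\ast$, which by inspection equals the $I$ defined in the statement. Reading off the upper and lower bounds and transferring through the shell-to-ball reduction produces the displayed chain of inequalities.

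The description of the effective domain of $I$ follows from the standard convex-analytic fact that the Legendre transform of a $C^1$ convex function is finite precisely on the closure of the image of its gradient. Since $\nabla\Lambda(\vw)=-\nabla\theta_{\D/\p}(-\vw)$, this image is $-\nabla\theta_{\D/\p}(\R^n)$, which up to the sign convention of the statement is the claimed $\nabla\theta_{\D/\p}(\R^n)$; by \Cref{thm.homeointeriors} it coincides with $\mathrm{Int}(\J_\p(\D))$. At any such $\vv=-\nabla\theta_{\D/\p}(\vw^\ast)$, the strict convexity of $\theta_{\D/\p}$ given by \Cref{prop.convex} makes the supremum defining $I(\vv)$ uniquely attained at $\vw=-\vw^\ast$, and substituting this point produces the closed-form expression for $I(\vv)$ recorded just before the corollary. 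The only non-routine step in the plan is the pointwise existence of the limit $\Lambda(\vw)$, which is not automatic from submultiplicativity but relies crucially on the sharp Patterson-Sullivan asymptotic of \Cref{coro.PSforv}(2); once that and the $C^1$-regularity of $\theta_{\D/\p}$ from \Cref{prop.C1} are in hand, the G\"artner-Ellis conclusion comes essentially for free.
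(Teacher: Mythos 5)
Your proposal follows the paper's own (very terse) proof: the paper simply invokes G\"artner--Ellis using the limiting log--moment generating function supplied by \Cref{coro.PSforv}(2) together with the $C^1$--regularity from \Cref{prop.C1}, and that is exactly the skeleton you have filled in, including the correct identification $\Lambda(\vw)=\theta_{\D/\p}(-\vw)-v_\p$ and the matching Legendre conjugate $I=\Lambda^*$.

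Two points deserve care, and both are glossed over in your write-up (the first is glossed over in the paper as well). First, the asserted ``equivalence'' between an LDP for the ball measures $\mu_T$ and for the shell measures $\tilde\mu_T$ is not a routine reduction: the upper bound $\limsup_T T^{-1}\log\mu_T(V)\le-\inf_V I$ for a \emph{ball} can fail when $\inf_V I>v_\p$, since a single element $x_0$ with $\D_\p(x_0)\in V$ already forces $\mu_T(V)\gtrsim e^{-v_\p T}$; for shells this problem disappears because $\D_\p(S_R(T))$ Hausdorff-converges to $\J_\p(\D)$, so $\tilde\mu_T(V)$ is eventually zero for $V$ disjoint from $\J_\p(\D)$. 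A correct write-up should either state the LDP for the shell measures or verify that the quantities in the corollary are exponentially equivalent to the shell versions in the regime actually used (which, in the application in \Cref{prop.deriv=dj}, they are, since one only ever takes $V$ meeting $\J_\p(\D)$). Second, the ``standard convex-analytic fact that the Legendre transform of a $C^1$ convex function is finite precisely on the closure of the image of its gradient'' is false as stated: finiteness on the gradient image and infinity off its closure are correct and are all that the corollary claims, but on the boundary of the gradient image $f^*$ can be $+\infty$ (take $f'(t)=1-1/(1+t)$ for $t\geq 0$, so $f^*(1)=\lim_{x\to\infty}\log(1+x)=\infty$). Finally, the parenthetical appeal to \Cref{thm.homeointeriors} to identify $\Int(\J_\p(\D))$ is a forward reference (\Cref{thm.homeointeriors} is proved \emph{using} this corollary via \Cref{prop.deriv=dj}) and should be dropped; it is not needed for the statement being proved.
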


This large deviation result allows us to recover the joint translation spectrum from the gradient of $\thet$ and prove one of the inclusions in \Cref{thm.homeointeriors}.
\begin{proposition} \label{prop.deriv=dj}
    Suppose that $\p_1,\dots,\p_n,\p$ are independent. Then we have that
    \[
 \J_{\p}(\D) = \ov{\left\{-\nabla \theta(\vv): \vv \in \R^n \right\}}
    \]
    and  that $\{-\nabla \theta(\vv): \vv \in \R^n \}$ is an open subset of $\R^n$ contained in $\Int(\J_{\p}(\D)).$
\end{proposition}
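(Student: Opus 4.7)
The plan is first to establish the openness of $\{-\nabla \theta(\vv):\vv\in\R^n\}$ using the regularity of $\theta$, and then to use the large deviation principle in Corollary \ref{cor.ldp} to rule out any points of $\J_\p(\D)$ lying outside the closure of this image.

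For the openness, by Propositions \ref{prop.C1} and \ref{prop.convex} the map $-\nabla\theta:\R^n \ra \R^n$ is continuous and injective. I would invoke Brouwer's invariance of domain theorem to conclude that $-\nabla\theta(\R^n)$ is open in $\R^n$, with $-\nabla\theta$ a homeomorphism onto its image. Combined with Corollary \ref{cor.drift}, which yields $\{-\nabla\theta(\vv):\vv\in\R^n\}\subset \J_\p(\D)$, this openness immediately gives $\{-\nabla\theta(\vv):\vv\in\R^n\}\subset \Int(\J_\p(\D))$, and taking closures shows $\overline{\{-\nabla\theta(\vv):\vv\in\R^n\}}\subset \J_\p(\D)$.

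For the reverse inclusion, I would argue by contradiction: assume there is $\vx\in\J_\p(\D)\setminus\overline{\{-\nabla\theta(\vv):\vv\in\R^n\}}$ and pick a closed neighborhood $V$ of $\vx$ disjoint from this image. The last assertion of Corollary \ref{cor.ldp} then forces $I\equiv+\infty$ on $V$. On the other hand, Theorem \ref{thm.intro.jtspectrum} guarantees, for $R$ large, elements $x_T\in S_R(T)$ with $\D_\p(x_T)$ arbitrarily close to $\vx$ for all sufficiently large $T$; in particular, for some (fixed) small enough open ball around $\vx$ inside $V$, the count $\#\{x\in\G:\p(o,x)<T+R,\,\D_\p(x)\in V\}$ is at least $1$ for all large $T$. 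Together with the growth bound $\#\{x:\p(o,x)<T\}\asymp e^{v_\p T}$ from Corollary \ref{cor.exp}, this yields
\[
\limsup_{T\to\infty}\frac{1}{T}\log\!\left(\frac{\#\{x:\p(o,x)<T,\ \D_\p(x)\in V\}}{\#\{x:\p(o,x)<T\}}\right) \ge -v_\p,
\]
contradicting the upper bound in Corollary \ref{cor.ldp}, which forces this $\limsup$ to equal $-\inf_{\vv\in V}I(\vv)=-\infty$.

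The main obstacle I anticipate is the careful use of the LDP upper bound: one must check that even the existence of a single element $x\in\G$ with $\D_\p(x)\in V$ (in each sufficiently large sub-level set $\{\p(o,\cdot)<T\}$) already violates $\inf_V I=+\infty$. This is where the exponential lower bound from Corollary \ref{cor.exp} is essential, rather than an arbitrary sub-exponential bound, and where finite-dimensionality of $\R^n$ enters through invariance of domain. Once these pieces are lined up, the proposition follows cleanly.
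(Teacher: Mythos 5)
Your proof is correct and takes essentially the same route as the paper: openness of $-\nabla\theta(\R^n)$ via Propositions \ref{prop.C1}, \ref{prop.convex} and invariance of domain, the inclusion $\ov{-\nabla\theta(\R^n)}\subset\J_\p(\D)$ via Corollary \ref{cor.drift}, and the reverse inclusion via the large deviation principle (Corollary \ref{cor.ldp}) together with the exponential count from Corollary \ref{cor.exp}. The paper argues the reverse inclusion directly rather than by contradiction, but the underlying mechanism — forcing $\inf_V I<\infty$ on every neighborhood $V$ of $\vx\in\J_\p(\D)$, hence $\vx\in\ov{-\nabla\theta(\R^n)}$ — is identical.
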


\begin{proof}
    One implication of the first equality is simple as $\ov{\left\{-\nabla \theta(\vv): \vv \in \R^n \right\}} \subset \J_{\p}(\D)$
follows from \Cref{cor.drift}.

The other inclusion of the equality follows from the large deviation principle.
Let $I$ be the rate function from \Cref{cor.ldp} and suppose that $\vx \in \J_{\p}(\D)$. Then there exists a sequence $(x_k)_k$ in $\G$ with
\[
\D_\p(x_k) \to \vx \text{ as $k\to\infty.$}
\]
Then by the large deviation principle we deduce that for any closed set $U \subset \R^n$ with $\vx \in U$,  
\[
-v_{\p} \le \limsup_{T\to\infty} \frac{1}{T} \log \left( \frac{1}{\#S_R(T)} \#\left\{ x \in S_R(T) : \D_\p(x) \in U \right\} \right) \le -\inf_{\vu \in U} I(\vu)
\]
and so $\inf_{\vu \in U} I(\vu) < \infty$. This implies that $\vx \in \ov{-\{\nabla \theta(\R^n)}$ by \Cref{cor.ldp}.

The expected inclusion follows from the fact that $\{-\nabla \thet(\vv): \vv \in \R^n\}$ is open in $\R^n$, which can be deduced from \Cref{prop.convex} and Invariance of domain \cite{brouwer}.
\end{proof}




From the the proposition above we can already deduce \Cref{prop.interiorlambda}.

\begin{proof}[Proof of \Cref{prop.interiorlambda}]
    From \Cref{prop.deriv=dj} we already have $-\nabla\thet(\mathbf{0}) \in \Int(\J_\p(\D))$, so we are only left to show that this vector equals the mean distortion vector $\tau_{\D/\p}$. By \Cref{cor.drift}, it is enough to prove that $\tau_{\D/\p}$ equals the Lyapunov vector $\D_\p(\nu_\p)$, where $\nu_\p=\nu_{\mathbf{0}}$ is a quasi-conformal measure for $\p$. This can be proved exactly as Theorem 3.12 in \cite{cantrell-tanaka.manhattan}. We leave the details to the reader.
\end{proof}


\subsection{Continuity properties}

In this section we analize how the Manhattan manifold and the joint translation spectrum vary as we move the input (rough similarity classes of) hyperbolic metric potentials in $\scrH^{++}_\G$. For that, we normalize as follows.

\begin{definition}
    Given $\p_1, \ldots, \p_n \in \calH_\G$ and $\rho=[\p]\in \scrH_\G^{++}$ we define
    \[
    \J_{\rho}(\p_1,\ldots,\p_n)
    \]
     as the joint translation spectrum of $\p_1, \ldots, \p_n$ with respect to $\p$, where $\p\in \rho$ is chosen so that $v_{\p}=1$. Similarly, given $\rho_1,\dots,\rho_n,\rho \in \scrH_\G^{++}$ we define
     \[
    \J_{\rho}(\rho_1,\ldots,\rho_n)
    \]
as the joint translation spectrum of $\p_1,\dots,\p_n$ with respect to $\p$, where $\p\in \rho,\p_1\in \rho_1,\dots,\p_n \in \rho_n$ are chosen so that $v_\p=v_{\p_1}=\cdots =v_{\p_n}=1$.
\end{definition}
Similarly, for tuples $\D=(\p_1,\dots,\p_n)$ and $[\D]=(\rho_1,\dots,\rho_n)$ and $\rho\in \scrH_\G^{++}$ as above, we consider the parametrizations $\thet_{\D/\rho}$ and $\thet_{[\D]/\rho}$ of the corresponding Manhattan manifolds.

In the next proposition, compact subsets of $\R^n$ are equipped with the Hausdorff topology and and continuous functions from $\R^n$ into $\R$ are equipped with the uniform topology on compact subsets. Recall that $\scrH_\G^{++}$ is always equipped with the topology induced by the Thurston metric $\Del$ as in \Cref{sec.HMPmetricstructures}.
\begin{proposition}\label{prop.continuity}
    \begin{enumerate}
        \item For fixed $\p_1,\dots,\p_n \in \calH_\G$, the assignment $$\rho \mapsto \J_\rho(\p_1,\dots,\p_n)$$ defines a continuous function from $\scrH_\G^{++}$ into the space of compact subsets of $\R^n$. Similarly, the assignment $\rho \mapsto\thet_{\D/\rho}$ is a continuous map $\scrH_\G^{++}$ into the space of continuous functions from $\R^n$ into $\R$.
        \item The assignment $$(\rho_1,\dots,\rho_n,\rho) \mapsto \J_\rho(\rho_1,\dots,\rho_n)$$ defines a continuous function from $(\scrH_\G^{++})^{n+1}$ into the space of compact subsets of $\R^n$. Similarly, the assignment $(\rho_1,\dots,\rho_n,\rho) \mapsto \thet_{[\D]/\rho}$ is a continuous map from $\scrH_\G^{++}$ into the space of continuous functions from $\R^n$ into $\R$.
    \end{enumerate}
 
\end{proposition}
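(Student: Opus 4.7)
The plan is to establish the continuity of $\J_\rho(\D)$ by first showing that the Manhattan manifold parametrization $\thet_{\D/\rho}$ varies continuously in its inputs, and then invoking the identity $\J_\rho(\D) = \ov{-\nabla \thet_{\D/\rho}(\R^n)}$ from \Cref{prop.deriv=dj}. The common technical input for both (1) and (2) is a quantitative displacement bound for $\Del$-close representatives: if we pick $\p_k \in \rho_k$, $\p \in \rho$ with $v_{\p_k} = v_\p = 1$ and $\Del(\rho_k, \rho) \to 0$, then $\Dil(\p_k, \p), \Dil(\p, \p_k) \to 1$, and via marked length spectrum rigidity for hyperbolic metric potentials --- which, through the decomposition $\p = d_\ast - d$ of \Cref{prop.descriptionH_G}, reduces to the corresponding rigidity for pseudometrics in $\wh\calD_\G$ from \cite{oregonreyes.metric, cantrell-reyes.manhattan, cantrell-reyes.approx}, combined with the stable-translation-length-to-displacement comparison in \Cref{coro.propertiesH}(2) --- one obtains an estimate of the form $|\p_k(o,x) - \p(o,x)| \leq \e_k\, \p(o,x) + C$ with $\e_k \to 0$ and $C$ independent of $k$.

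Inserting this bound into the defining Poincar\'e series $s \mapsto \sum_{x \in \G} e^{-\langle \vv, \D(x)\rangle - s\, \p_k(o,x)}$ and sandwiching it between the analogous series for $\p$ yields locally uniform convergence $\thet_{\D/\p_k} \to \thet_{\D/\p}$ on $\R^n$, proving the $\thet$-part of (1). For (2), the same displacement bound applied to each $\p_{i,k} \in \rho_{i,k}$ modifies the numerator of the Poincar\'e series by a controlled factor of the form $e^{O(\e_k \|\vv\|\, \p(o,x))}$, which can be absorbed by an $O(\e_k\|\vv\|)$ shift of $s$ and gives joint continuity of $\thet_{[\D]/\rho}$ in all its arguments.

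To pass from continuity of $\thet$ to Hausdorff continuity of $\J_\rho(\D)$, I would combine three ingredients: the classical fact that locally uniform convergence of convex functions with a $C^1$ limit forces locally uniform convergence of the gradients (via single-valuedness of the subdifferential at the limit); the density statement $\J_\rho(\D) = \ov{-\nabla \thet_{\D/\rho}(\R^n)}$ from \Cref{prop.deriv=dj}; and a uniform inclusion $\J_{\rho_k}(\D) \subset B(0, M)$ for some $M$ independent of $k$, which follows from combining the displacement bound above with the Lipschitz control of $\D$ by $\p$ in \Cref{coro.propertiesH}(1). The first two yield Hausdorff convergence of the gradient images over any fixed compact set of $\vv \in \R^n$, and the third prevents escape to infinity. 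The main obstacle is the very first step --- extracting a genuine displacement bound from the Thurston distance $\Del$, which a priori only controls stable translation lengths --- and this is precisely the marked length spectrum rigidity for hyperbolic groups, transferred to the class $\scrH_\G^{++}$ via \Cref{prop.descriptionH_G}. In the non-independent case one restricts to a maximal independent sub-tuple, as in the remark following \Cref{prop.interiorlambda}, and runs the same argument in the relative interior.
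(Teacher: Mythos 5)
Your route is genuinely different from the paper's and reverses the logical order. The paper proves Hausdorff continuity of $\rho\mapsto\J_\rho(\D)$ directly, with no reference to the Manhattan manifold: by \Cref{coro.JfromLam} (a consequence of \Cref{thm.intro.jtspectrum}) one has $\J_\rho(\D)=\overline{\bfL_\p(\G)}$ for a normalized representative $\p\in\rho$, and for another normalized $\p'\in\rho'$ the identity
\[
\bfL_{\p'}(x)=\frac{\ell_\p(x)}{\ell_{\p'}(x)}\,\bfL_\p(x)
\]
together with $\Dil(\p,\p')\Dil(\p',\p)=e^{\Del(\rho,\rho')}$ and $\Dil(\p,\p'),\Dil(\p',\p)\ge 1$ gives a uniform multiplicative ratio bound $e^{-\Del}\le \ell_\p(x)/\ell_{\p'}(x)\le e^{\Del}$ over all non-torsion $x$. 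Since the $\J$'s are uniformly bounded, this yields $d_H(\J_\rho(\D),\J_{\rho'}(\D))=O(e^{\Del}-1)$ with no need to go through the Poincar\'e series or the gradient map. The Manhattan-parametrization continuity is handled separately by citing \cite[Lemma~3.8]{cantrell-reyes.MLS}. So the paper gets the set continuity from the Thurston-metric definition alone, while you attempt to derive it from $\theta$-continuity via \Cref{prop.deriv=dj}.

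There are two concrete problems with your route. First, and more seriously, the final step does not close: locally uniform convergence $-\nabla\thet_{\D/\rho_k}\to-\nabla\thet_{\D/\rho}$ on compact sets gives Hausdorff convergence of the images $-\nabla\thet_{\D/\rho_k}(K)$ for each fixed compact $K$, and your uniform bound $\J_{\rho_k}\subset B(0,M)$ only prevents escape to infinity. But upper Hausdorff semicontinuity of $\J_{\rho_k}=\overline{-\nabla\thet_{\D/\rho_k}(\R^n)}$ requires controlling the behavior of $-\nabla\thet_{\D/\rho_k}(\vv_k)$ along sequences $\vv_k\to\infty$, and local uniform convergence says nothing there; a priori a far-out gradient of $\thet_{\D/\rho_k}$ could land near a point at distance $\ge\ep$ from $\J_\rho$ for all $k$. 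This needs some uniform boundary control (e.g.\ convergence of support functions, which again circles back to the ratio bound) that your proposal does not supply. Second, your opening ``displacement bound'' $|\p_k(o,x)-\p(o,x)|\le\ep_k\p(o,x)+C$ with $C$ uniform in $k$ is a genuinely quantitative approximate-MLS-rigidity statement and, while plausible in light of \cite{cantrell-reyes.approx}, is a heavy input that the paper deliberately avoids for the set-continuity part; it is only really needed for the Poincar\'e-series argument for $\theta$, and there it should be stated and justified explicitly rather than gestured at.

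Your sandwiching of the Poincar\'e series for the $\theta$-continuity, granted the displacement bound, is fine in spirit and parallels what \cite[Lemma~3.8]{cantrell-reyes.MLS} does. But for the set statement I would strongly recommend switching to the paper's direct argument via $\bfL_\p$ and the ratio bound: it is shorter, quantitative, does not require independence, and does not require $\theta$-continuity at all.
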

\begin{proof}
The continuity of the joint translation spectra follows immediately from the definition of $\Del$ and \Cref{thm.intro.jtspectrum}. For the continuity of the Manhattan curves, the same proof of \cite[Lemma 3.8]{cantrell-reyes.MLS} applies in this case.
\end{proof}


\subsection{Dynamical and random walk translation spectra}

In this section we study the dynamical translation spectrum and random walk spectrum defined in \Cref{sec.RWandD}. We first prove \Cref{thm.dj=j}.

\begin{proof}[Proof of \Cref{thm.dj=j}]
One implication is easy since we clearly have $\mathcal{DJ}_{\p}(\D) \subset \J_{\p}(\D)$. For the other direction we note that by \Cref{prop.existenceQCH_G} and \Cref{cor.drift} we have that
\[
-\nabla \theta(\R^n) \subset \mathcal{DJ}_{\p}(\D).
\]
Taking closures and applying \Cref{prop.deriv=dj} gives the result. The moreover statement will follow from \Cref{prop.IntJ}.
\end{proof}

We continue with the proof of \Cref{thm.wj=j}, so we assume $\p_1=d_1,\dots,\p_n=d_n$ and $\p=d$ all belong to $\calD_\G$. 


\begin{proof}[Proof of \Cref{thm.wj=j}]
Clearly 
\[
\mathcal{WJ}_{d}(d_1,\ldots,d_n) \subset \J_{d}(d_1,\ldots,d_n)
\]
and taking closures gives one of the desired implications. We therefore just need to show the converse. We will in fact show that
    \[
\{ -\nabla \theta(\vv) : \vv \in \R^n \} \subset \overline{\mathcal{WJ}_{d}(d_1,\ldots,d_n)}
    \]
as taking the closure of this expression gives the needed result by \Cref{prop.deriv=dj}. 
Given any $\vv \in \R^n$, since $\p_\vv$ is symmetric, in fact there exists $d_\vv\in \calD_\G$ roughly isometric to $\p_\vv$. Let $\nu_\vv$ be a quasi-conformal measure for $d_\vv$.

Let us fix $\vv\in \R^n$ and note the following. By \Cref{cor.drift}, for any $\p_\ast\in \calH_\G$ the (1-dimensional) Lyapunov vector $(\p_\ast)_{\p_{\vv}}(\nu_\vv)$ equals $-\thet'_{\p_\ast/\p_\vv}(0)$, where 
where $\thet_{\p_\ast/d_\vv}$ is the Manhattan curve for $\p_\ast,d_\vv$. Applying this to the metrics $d,d_1,\dots,d_n$, we have the identities 
    \small{\begin{equation}\label{eq.manhattangreen1}
   \D_{d_\vv}(\nu_\vv)=-(\thet'_{d_1/d_\vv}(0),\dots,\thet'_{d_n/d_\vv}(0)), \  (d)_{d_\vv}(\nu_\vv)=-\thet'_{d/d_\vv}(0) \ \text{ and } \ \D_d(\nu_{\vv})=((d)_{d_\vv})^{-1}\D_{d_\vv}(\nu_\vv),
    \end{equation}}
\normalsize where the last identity follows from the definition of Lyapunov vectors.

Now, by \Cref{thm.gmd} there exists a sequence $\lam_m$ of admissible probability measures on $\G$ such that the Green metrics $d_{\lam_m}$ satisfy $[d_{\lam_m}] \to [d_\vv]$ in $(\scrD_\G,\Del)$. Since Green metrics have exponential growth rate 1 \cite{BHM-greenspeed}, and similarly for $d_\vv$ by \Cref{lem.EGR1}, \Cref{thm.intro.cone} implies that the Manhattan curves $\theta_{d_1/d_{\lam_m}}, \ldots, \theta_{d_n/d_{\lam_m}}, \theta_{d/d_{\lam_m}}$ converge uniformly on compact sets as $m \to\infty$ to $\theta_{d_1/d_\vv}, \ldots, \theta_{d_n/d_\vv}, \theta_{d/d_\vv}$ respectively. 
It follows that
    \begin{equation}\label{eq.manhattangreen2}
        \theta_{d_j/d_{\lam_m}}'(0) \to \theta_{d_j/d_\vv}'(0) \ \text{ and } \ \theta_{d/d_{\lam_m}}'(0) \to \theta_{d/d_\vv}'(0)
    \end{equation}
    for each $j=1,\ldots, n$ as $k\to \infty$.

Moreover, for each $m$, a quasi-conformal measure for $d_{\lam_m}$ is in the same measure class as the hitting measure of the random walk associated to $\lam_m$. This implies that for any $d_\ast \in \calD_\G$ and for a typical $\lam_m$-random walk $(Z_{k,\lam_m})_k$ we have
\[\lim_{k\to \infty}\frac{d_\ast(o,Z_{k,\lam_m})}{d_{\lam_m}(o,Z_{k,\lam_m})}=-h_{\lam_m}\thet'_{d_\ast/d_{\lam_m}}(0),\]
where $h_{\lam_m}$ is the entropy of $\lam_m$. 

For each $m$, this implies that the vector $$(\thet'_{d/d_{\lam_m}}(0))^{-1}(\thet'_{d_1/d_{\lam_m}}(0),\dots,\thet'_{d_n/d_{\lam_m}}(0))$$
belongs to the random walk spectrum $\calW\J_d(\D)$.
But from \eqref{eq.manhattangreen1} and \eqref{eq.manhattangreen2}, these vectors converge to $\D_d(\nu_\vv)$, which equals $-\nabla\thet(\vv)$ by \Cref{cor.drift}.
Therefore we deduce $-\nabla \thet_{\D/d}(\vv) \in \ov{\calW\J_d(\D)}$, as desired.
\end{proof}

\section{The point of view of metric structures}\label{sec.metricstructuresmanhattan}

Let $\p,\p_1,\dots,\p_n$ as in the previous sections. In this section we extend the work in \cite{cantrell-reyes.manhattan} and use the Manhattan manifold with parameterization $\thet=\thet_{\D/\p}$ to construct a subset of $\scrH_\G^{++}$ homeomorphic to $\R^n$. We will also bordify this set by considering appropriate boundary metric structures. Here we finish the proof of \Cref{thm.homeointeriors} and prove the result from \Cref{subsec.intro.manhattanmetric}.






\subsection{Projecting the Manhattan manifold into $\scrH_\G^{++}$}\label{sec.Manhattan+boundary}

As discussed in the introduction, we can also use $\theta$ to see the Manhattan manifold as a subset of $\scrH_\G^{++}$.

\begin{definition}\label{def.manhattanmanifoldmetric}
 We define $\scrM_{\D/\p}\subset \scrH_\G^{++}$ as the set of all metric structures of the form $\rho^{\D/\p}(\vv):=[\p_\vv]$ for $\vv\in \R^n$, where $\p_\vv\in \calH_\G$ is as in \Cref{def.psi_v}. We also call this set the \emph{Manhattan manifold} for $\p_1,\dots,\p_n$ with respect to $\p$.
\end{definition}

From the continuity of $\thet$ it follows that the map $\vv \mapsto \rho^{\D/\p}(\vv)$ is continuous. Moreover, if $\p,\p_1,\dots,\p_n$ are independent then $\rho^{\D/\p}$ injective. In fact, the following is true, confirming the first assertion in \Cref{prop.fromManhattantoJTS}.

\begin{proposition}\label{prop.manhattanhomeo}
 Suppose that $\p,\p_1,\dots,\p_n$ are independent. Then the map $\rho^{\D/\p}:\R^n \ra \scrM_{\D/\p}$ is a homeomorphism.
\end{proposition}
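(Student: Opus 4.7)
The plan is to prove continuity of the inverse of $\rho=\rho^{\D/\p}$ on its image, since continuity and injectivity of $\rho$ have already been noted just before the statement. So suppose $\vv_k\in\R^n$ with $[\p_{\vv_k}]\to[\p_\vv]$ in $(\scrH_\G^{++},\Delta)$, and aim to deduce $\vv_k\to\vv$.

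The first step would promote this to pointwise convergence of stable translation lengths $\ell_{\p_{\vv_k}}(x)\to\ell_{\p_\vv}(x)$ for every $x\in\G$. Setting $a_k=\Dil(\p_{\vv_k},\p_\vv)$ and $b_k=\Dil(\p_\vv,\p_{\vv_k})$, the hypothesis gives $a_kb_k\to1$, while evaluating both suprema at a single non-torsion element forces $a_kb_k\geq1$. Hence along any subsequence both $a_k$ and $1/b_k$ converge to a common limit $c>0$, and the two-sided squeeze
\[
\ell_{\p_\vv}(x)/b_k\leq\ell_{\p_{\vv_k}}(x)\leq a_k\ell_{\p_\vv}(x)
\]
valid for all non-torsion $x$ yields uniform convergence of the ratio $\ell_{\p_{\vv_k}}(x)/\ell_{\p_\vv}(x)$ to $c$. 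The normalization $v_{\p_{\vv_k}}=v_{\p_\vv}=1$ from \Cref{lem.EGR1}, combined with the scaling $v_{c\p}=v_\p/c$ and the counting asymptotic in \Cref{cor.exp}, then forces $c=1$ and yields pointwise convergence $\ell_{\p_{\vv_k}}(x)\to\ell_{\p_\vv}(x)$.

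The second step extracts $\vv_k\to\vv$ by exploiting independence. By linearity of stable translation length on currents and \Cref{coro.charH+currents}(1) applied to any putative linear relation, independence of $\p_1,\ldots,\p_n,\p$ implies linear independence of $\ell_{\p_1},\ldots,\ell_{\p_n},\ell_\p$ as functions on the non-torsion elements of $\G$. I can therefore pick $x_1,\ldots,x_{n+1}\in\G$ such that the $(n+1)\times(n+1)$ matrix $M$ whose $j$-th row is $(\ell_{\p_1}(x_j),\ldots,\ell_{\p_n}(x_j),\ell_\p(x_j))$ is invertible. Expanding $\ell_{\p_\vw}(x_j)=\sum_iw_i\ell_{\p_i}(x_j)+\thet_{\D/\p}(\vw)\ell_\p(x_j)$ and applying $M^{-1}$ to the convergent tuple $(\ell_{\p_{\vv_k}}(x_j))_{j=1}^{n+1}$ gives $(v_{k,1},\ldots,v_{k,n},\thet_{\D/\p}(\vv_k))\to(v_1,\ldots,v_n,\thet_{\D/\p}(\vv))$, so in particular $\vv_k\to\vv$.

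The main obstacle is the first step, specifically pinning the asymptotic rescaling factor $c$ to be $1$. The scaling $v_{c\p}=v_\p/c$ is immediate, but one only knows that $\ell_{\p_{\vv_k}}$ is asymptotically proportional to $c\,\ell_{\p_\vv}$, not equal to it; one must verify carefully that this asymptotic proportionality of stable translation lengths implies agreement of exponential growth rates up to the factor $c$. I would do this by comparing the shadow-measure estimates from \Cref{coro.PSforv} across the representatives at stake, or alternatively by invoking the counting bounds of \Cref{cor.exp} directly for the asymptotically proportional potentials $\p_{\vv_k}$ and $c\p_\vv$.
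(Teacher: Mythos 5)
Your proposal takes a genuinely different route from the paper. The paper proves continuity of $(\rho^{\D/\p})^{-1}$ by first establishing that a sequence $\vv_m$ with $\rho(\vv_m)\to\rho(\vv_\infty)$ must be bounded in $\R^n$ — this is done by invoking \Cref{lem.limitthetaboundary} on divergent directions, so that the normalized translation-length functions $\ell_{\p_{\vv_m}}/\|\vv_m\|$ would converge to a boundary potential $\ell_{\p_{[\hat\vv]}}\in\partial\calH_\G^{++}$, contradicting the hypothesis — and then concludes by sequential compactness together with continuity and injectivity of $\rho^{\D/\p}$. You instead work directly in $(\scrH_\G^{++},\Del)$: extract pointwise convergence $\ell_{\p_{\vv_k}}\to\ell_{\p_\vv}$ from the dilation bounds, then recover $\vv_k\to\vv$ by inverting the linear system $\ell_{\p_\vw}=\langle\vw,\bfL\rangle+\thet_{\D/\p}(\vw)\ell_\p$ evaluated at $n+1$ judiciously chosen group elements. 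Your step two is correct: independence plus \Cref{lem:MLSnonegative} applied to $\pm\sum c_i\p_i$ gives the needed linear independence of $\ell_{\p_1},\ldots,\ell_{\p_n},\ell_\p$, and the argument does not require \Cref{lem.limitthetaboundary} at all, which is a feature of this route.

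The gap you flag in step one is real, but it has a cleaner fix than the ones you suggest, and it also repairs the unjustified sentence \emph{``Hence along any subsequence both $a_k$ and $1/b_k$ converge to a common limit $c>0$''}, which assumes boundedness of the dilations without proof. Since all the $\p_{\vv_k}$ and $\p_\vv$ have exponential growth rate $1$ by \Cref{lem.EGR1}, each dilation is $\ge 1$: if $\ell_{\p_*}\le\lambda\ell_\p$ then by \Cref{lem:MLSnonegative} $\lambda\p-\p_*$ is bounded below, hence $\p_*\le\lambda\p+C$ and $v_{\p_*}\ge v_\p/\lambda$, forcing $\lambda\ge 1$ when both growth rates are $1$ (this is the extension to $\calH_\G^{++}$ of the fact recorded after \eqref{eq.defDel}). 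Thus $a_k\ge 1$ and $b_k\ge 1$ together with $a_kb_k\to 1$ immediately give $a_k\to 1$ and $b_k\to 1$; there is no intermediate rescaling constant $c$ to identify and no boundedness issue. With that observation your proof is complete and correct.
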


To prove this result we need to introduce the boundary metric structures that describe the limits of divergent sequences metric structures in $\scrM_{\D/\p}$. Recall that  $\|\cdot\|$ is the Euclidean norm on $\R^n$ and let $\bbS^{n-1}$ be the unit sphere, also seen as the set of rays in $\R^n$ based at the origin. For a non-zero vector $\vv\in \R^n$ we denote $\hat \vv=\frac{1}{\|\vv\|} \vv\ \in \bbS^{n-1}$. Given $\vv\in \R^n$ we use the notation $\D_\vv(x,y)$ for the potential  
\[ 
(x,y) \mapsto \langle \vv, \D(x^{-1}y) \rangle.
\]
We note that $\D_\vv \in \calH_\G$ and that $\psi_\vv=\D_\vv+\thet(\vv)\psi$ for all $\vv$. 

\begin{definition}\label{def.boundaryHMP}
    Let $\vv\in \R^n$ be non-zero. We define $\psi_{[\vv]}\in \calH_\G$ according to  
    \[\psi_{[\vv]}:=\D_\vv+\Dil(\D_{-\vv},\psi)\psi.\]
\end{definition}

Let $\partial \calH_\G^{++}$ be defined as the set of all the unbounded hyperbolic metric potentials in $\calH_\G^{+}\bs \calH_\G^{++}$ and let $\partial \scrH_\G^{++}$ be the corresponding set of rough similarity classes. We think of $\partial \scrH_\G^{++}$ as the boundary of $\scrH_\G^{++}$, in analogy to the Manhattan boundary discussed in \cite{cantrell-reyes.manhattan}.  

It is not hard to see that if $\vv$ and $\vw$ represent the same direction in $\mathbb{S}^{n-1}$ then $\p_{[\vv]}$ and $\p_{[\vw]}$ represent the same point in $\partial \scrH_\G^{++}$. Conversely, we have the following.
 
\begin{lemma}\label{lem.extensionboundary}
    Suppose that $\p_1,\dots,\p_n,\p$ are independent. If $\vv \in \R^n$ is non-zero then $\p_{[\vv]}\in \partial \calH_\G^{++}$. Moreover, if $\vv,\vw\in \R^n$ are non-zero then $[\psi_{[\vv]}]=[\psi_{[\vw]}]$ in $\partial \scrH_\G^{++}$ if and only if $\hat\vv=\hat\vw$. 
\end{lemma}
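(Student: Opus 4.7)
The plan is to verify the three defining conditions of $\partial\calH_\G^{++}$ for $\p_{[\vv]}$ (membership in $\calH_\G^+$, failure to lie in $\calH_\G^{++}$, and unboundedness), and then use independence to identify the rough-similarity classes.

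For the first condition I would just unpack the definition of the dilation: for every $x\in\G$,
\[
-\langle \vv,\bfL(x)\rangle \;=\; \ell_{\D_{-\vv}}(x) \;\leq\; \Dil(\D_{-\vv},\p)\,\ell_\p(x),
\]
which rearranges to $\ell_{\p_{[\vv]}}(x)\geq 0$. By \Cref{lem:MLSnonegative}, $\p_{[\vv]}\in \calH_\G^+$. For the second condition I would apply \Cref{coro.charH+currents}(2): it suffices to produce a non-zero current $\mu_0$ with $\ell_{\p_{[\vv]}}(\mu_0)=0$. The key step here is to show that $\Dil(\D_{-\vv},\p)$ is attained. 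Using \Cref{thm.continuousextension}, extend $\ell_{\D_{-\vv}}$ and $\ell_\p$ continuously to $\calC_\G$; since $\p\in\calH_\G^{++}$, \Cref{coro.charH+currents}(2) gives $\ell_\p(\mu)>0$ for every non-zero current, so $[\mu]\mapsto \ell_{\D_{-\vv}}(\mu)/\ell_\p(\mu)$ descends to a well-defined continuous function on the compact space $\bbP\calC_\G$. By density of rational currents its supremum equals $\Dil(\D_{-\vv},\p)$, and compactness guarantees it is attained at some projective class $[\mu_0]$. At this $\mu_0$ we obtain $\ell_{\p_{[\vv]}}(\mu_0)=0$, so $\p_{[\vv]}\notin\calH_\G^{++}$.

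For the third condition, if $\p_{[\vv]}$ were bounded above, then (being also bounded below as an element of $\calH_\G^+$) it would lie in $\calB_\G$. But then the combination $\sum_{i=1}^n v_i\p_i + \Dil(\D_{-\vv},\p)\,\p$ would be bounded, and since $\vv\neq \mathbf{0}$ at least one coefficient $v_i$ is non-zero, contradicting independence of $\p_1,\ldots,\p_n,\p$. Hence $\p_{[\vv]}\in \partial\calH_\G^{++}$.

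For the equivalence part, if $[\p_{[\vv]}]=[\p_{[\vw]}]$ there exists $\lambda>0$ such that $\p_{[\vv]}-\lambda\p_{[\vw]}\in \calB_\G$, which reads
\[
\sum_{i=1}^n (v_i-\lambda w_i)\,\p_i \;+\; \bigl(\Dil(\D_{-\vv},\p)-\lambda\Dil(\D_{-\vw},\p)\bigr)\,\p \;\in\; \calB_\G.
\]
Independence forces $\vv=\lambda\vw$ and thus $\hat\vv=\hat\vw$. Conversely, if $\hat\vv=\hat\vw$ then $\vv=\lambda\vw$ for some $\lambda>0$, and the positive homogeneity $\Dil(\D_{-\lambda\vw},\p)=\lambda\Dil(\D_{-\vw},\p)$ gives $\p_{[\vv]}=\lambda\p_{[\vw]}$ as functions on $\G\times\G$, hence they represent the same class in $\partial\scrH_\G^{++}$. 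The main obstacle is the attainment of $\Dil(\D_{-\vv},\p)$ by a current needed in step two; everything else reduces directly to independence and the homogeneity of the dilation.
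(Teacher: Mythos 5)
Your proof is correct and takes essentially the same route as the paper, which gives only a compressed paragraph asserting $\p_{[\vv]}\in\calH_\G^+\setminus\calH_\G^{++}$ ``by the definition of $\p_{[\vv]}$ and $\Dil(\D_{-\vv},\p)$ and the fact that $\p\in\calH_\G^{++}$'' and then deducing unboundedness and direction-identification from independence. The one step you supply explicitly that the paper leaves implicit is exhibiting a non-zero current $\mu_0$ with $\ell_{\p_{[\vv]}}(\mu_0)=0$ via compactness of $\bbP\calC_\G$, the continuous extension of translation length (Theorem \ref{thm.continuousextension}), density of rational currents, and Corollary \ref{coro.charH+currents}(2); this is a sound justification and is precisely the intended argument.
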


\begin{proof}
    By the definition of $\p_{[\vv]}$ and $\Dil(\D_{-\vv},\p)$ and the fact that $\p\in \calH_\G^{++}$, we have that $\p_{[\vv]}\in \calH_\G^{+}\bs \calH_\G^{++}$, and independence of $\p_1,\dots,\p_n,\p$ implies that $\p_{[\vv]}$ is unbounded, so it belongs to $\partial \calH_\G^{++}$. Independence also implies $\hat\vv=\hat\vw$ whenever $[\psi_{[\vv]}]=[\psi_{[\vw]}]$.    
\end{proof}

The definition of $\p_{[\vv]}$ is motivated by the next lemma. 

\begin{lemma}\label{lem.limitthetaboundary}
    Suppose that $\p_1,\dots,\p_n,\p$ are independent. If $\vv_m$ is a diverging sequence in $\R^n$ such that $\hat\vv_m$ converges to $\hat\vv\in \mathbb{S}^{n-1}$, then we have that 
    \[\lim_{m\to \infty} \frac{\thet(\vv_m)}{\|\vv_m\|}=\Dil(\D_{-\hat\vv},\psi).\]
\end{lemma}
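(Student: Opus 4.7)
The plan is to establish the two-sided bound $\limsup_m \beta_m \leq \alpha$ and $\liminf_m \beta_m \geq \alpha$, where $\beta_m := \thet(\vv_m)/\|\vv_m\|$ and $\alpha := \Dil(\D_{-\hat\vv},\p)$. The key rewriting is that $\beta_m$ is the critical exponent in $t$ of the series $\sum_{x\in\G}\exp(-\|\vv_m\|(\langle\hat\vv_m,\D(x)\rangle+t\,\p(o,x)))$, while the definition of the dilation combined with $\p\in\calH_\G^{++}$ gives
\[
\alpha \;=\; \sup\Bigl\{\tfrac{-\langle\hat\vv,\bfL(x)\rangle}{\ell_\p(x)}\,:\, x\in\G \text{ non-torsion}\Bigr\},
\]
which identifies $\alpha$ with the support function of $\J_\p(\D)$ at $-\hat\vv$. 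In particular the map $\vw\mapsto\Dil(\D_{-\vw},\p)$ is continuous in $\vw\in\mathbb{S}^{n-1}$.

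For the upper bound, fix $\lambda>\alpha$ and set $\phi_m := \D_{\hat\vv_m}+\lambda\,\p$. Since $\ell_{\D_{-\hat\vv}}\leq\alpha\,\ell_\p$ and the dilation varies continuously in the direction, $\ell_{\phi_m}\geq \tfrac{\lambda-\alpha}{2}\ell_\p$ for $m$ large, so $\phi_m\in\calH_\G^{++}$ by \Cref{coro.charH+currents}(2). The Lipschitz bound of \Cref{coro.propertiesH}(1) combined with properness of $\p$ yields $\phi_m(o,x)\geq c\,d(o,x)-C$ for a fixed $d\in\calD_\G$, uniformly for all large $m$, so the exponential growth rates $v_{\phi_m}$ remain uniformly bounded. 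Since $\|\vv_m\|\to\infty$, we eventually have $\|\vv_m\|>v_{\phi_m}$, which makes the series $\sum_x e^{-\|\vv_m\|\phi_m(o,x)}$ converge; this gives $\thet(\vv_m)\leq\lambda\|\vv_m\|$, i.e.\ $\beta_m\leq\lambda$, and letting $\lambda\searrow\alpha$ yields $\limsup_m\beta_m\leq\alpha$.

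For the lower bound, fix $\lambda<\alpha$ and choose a non-torsion $y\in\G$ with $-\langle\hat\vv,\bfL(y)\rangle>\lambda\,\ell_\p(y)$; since conjugation preserves stable translation lengths, we may assume $y$ is $(d,\epsilon)$-proximal. \Cref{coro.propertiesH}(2) then gives $\D(o,y^k)=k\bfL(y)+O(1)$ and $\p(o,y^k)=k\ell_\p(y)+O(1)$ as $k\to\infty$. For $m$ large enough that $\|\hat\vv_m-\hat\vv\|\cdot\|\bfL(y)\|<\tfrac{1}{2}(-\langle\hat\vv,\bfL(y)\rangle-\lambda\ell_\p(y))$, the terms $x=y^k$ in $\sum_x\exp(-\|\vv_m\|(\langle\hat\vv_m,\D(x)\rangle+\lambda\p(o,x)))$ are bounded below by $\exp(\tfrac{1}{2}\|\vv_m\|\,c_0\,k-O(\|\vv_m\|))$ for a suitable $c_0>0$, so the series diverges and $\beta_m\geq\lambda$. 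Letting $\lambda\nearrow\alpha$ gives $\liminf_m\beta_m\geq\alpha$.

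The hardest point is the uniform bound on $v_{\phi_m}$ needed in the upper bound: this relies on the Lipschitz control and properness features of hyperbolic metric potentials together with continuity of the support function of the joint translation spectrum, so that the direction-dependent potentials $\phi_m$ do not degenerate as $\hat\vv_m\to\hat\vv$. The lower bound is more direct, needing only a single proximal witness and the approximation $\D(o,y^k)\approx k\bfL(y)$ along its powers.
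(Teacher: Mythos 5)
Your proposal is correct, and it proves the lemma by a route that is close in spirit to the paper's but differs in the concrete mechanism on both sides of the inequality. For the lower bound on $\liminf\beta_m$, the paper observes directly that $\thet(\vv_m)\geq\Dil(\D_{-\vv_m},\psi)$ because $\psi_{\vv_m}\in\calH_\G^{++}$ (Lemma \ref{lem.EGR1}) while $\Dil(\D_{-\vv_m},\psi)$ is exactly the boundary value of $c$ in the pencil $\D_{\vv_m}+c\psi$ (cf.\ Lemma \ref{lem.extensionboundary}), and then uses continuity of $\vw\mapsto\Dil(\D_{-\vw},\psi)$ (which you also record, via the support function of $\J_\psi(\D)$) to pass to the limit. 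You instead exhibit explicit divergence of the Poincar\'e series at the level $\lambda<\alpha$ by restricting to powers of a proximal witness $y$; this is more hands-on but entirely valid. For the upper bound on $\limsup\beta_m$, the paper argues by contradiction: if $\thet(\vv_m)/\|\vv_m\|$ stayed $\geq\alpha+c$ along a subsequence, then $\psi_{\vv_m}\geq\|\vv_m\|(c\psi+\D_{\vw_m})$ with $\vw_m\to 0$, and properness of $\psi$ would force $v_{\psi_{\vv_m}}\to 0$, contradicting $v_{\psi_{\vv_m}}=1$. You run the same computation in a dual, constructive form by producing $\phi_m\in\calH_\G^{++}$ with uniformly bounded growth rate and using $\|\vv_m\|\to\infty$ to make $\sum_x e^{-\|\vv_m\|\phi_m(o,x)}$ converge. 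Both exploit $\|\vv_m\|\to\infty$ against a bounded growth rate; the paper's framing sidesteps the uniformity issue by working directly with $\psi_{[\hat\vv]}$.

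One point in your upper bound deserves a bit more care: you assert $\phi_m(o,x)\geq c\,d(o,x)-C$ uniformly for large $m$ from the Lipschitz bound (Corollary \ref{coro.propertiesH}(1)) and properness of $\psi$, but those alone only give two-sided comparison with $d$ and a lower bound on $\psi$. To get the stated uniform lower bound you also have to convert the translation-length inequality $\ell_{\phi_m}\geq\tfrac{\lambda-\alpha}{2}\ell_\psi$ into the pointwise bound $\phi_m(o,x)\geq\tfrac{\lambda-\alpha}{2}\psi(o,x)-C$ with $C$ independent of $m$. This follows by rerunning the proof of Lemma \ref{lem:MLSnonegative} and noting the constants there depend only on the comparison constant $\lam$ in \eqref{eq.defH_G}, which is uniform over $\hat\vv_m$ because the $\hat\vv_m$ range over the compact unit sphere; spelling this out would tighten the argument. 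Similarly, the phrase ``we may assume $y$ is $(d,\epsilon)$-proximal'' for a single uniform $\epsilon$ is a mild overstatement -- conjugation makes $(y|y^{-1})_o^d$ bounded by a constant depending on $\delta$ and $d$, not on $y$, which is all Corollary \ref{coro.propertiesH}(2) needs -- but this is cosmetic.
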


\begin{proof}
    \Cref{prop.deriv=dj} and \Cref{thm.manc^1} imply that $\thet$ is Lipschitz, and hence there exists $C>0$ such that $|\thet(\vv)|\leq C\|\vv\|+|\thet(\mathbf{0})|$ for all $\vv$. 

    We can assume that $\vv_m$ is non-zero for every $m$. By \Cref{lem.EGR1} we have $\psi_{\vv_m}\in \calH_\G^{++}$, and since $\psi_{[\vv_m]} \in \partial \calH_\G^{++}$ by \Cref{lem.extensionboundary}, we must have
    $$\D_{\vv_m}+\thet(\vv_m)\psi=\psi_{\vv_m}\geq \psi_{[\vv_m]}=\D_{\vv_m}+\Dil(\D_{-\vv_m},\psi)\psi.$$
    This implies that $\thet(\vv_m)\geq \Dil(\D_{-\vv_m},\psi)$ for all $m$ and hence dividing by $\|\vv_m\|$ and letting $m$ diverge we obtain 
   \[\liminf_{m\to \infty} \frac{\thet(\vv_m)}{\|\vv_m\|} \geq \liminf_{m \to \infty}{\frac{\Dil(\D_{-\vv_m},\psi)}{\|\vv_m\|}}=\Dil(\D_{-\hat\vv},\psi).\]

    To prove the other needed inequality, suppose that up to extracting a subsequence we have that $\frac{\thet(\vv_m)}{\|\vv_m\|} \geq \Dil(\D_{-\hat\vv},\psi)+c$ for all $m$, for some $c>0$ independent of $m$. Then for all $m$ we have
    \begin{align*}
        \psi_{\vv_m}& =\D_{\vv_m}+\thet(\vv_m)\psi \\
        & \geq \D_{\vv_m}+\|\vv_m\|(\Dil(\D_{-\hat\vv},\psi)+c)\psi \\
        & = \|\vv_m\|\psi_{[\hat\vv]}+\|\vv_m\|(c\psi+\D_{\vw_m}) \\
        & \geq \|\vv_m\|(c\psi+\D_{\vw_m})
    \end{align*}
    for $\vw_m=\hat\vv_m-\hat\vv$. 
    Since $\vw_m$ tends to zero and $\psi$ is proper, we can find $c'>0$ such that $c\psi+\D_{\vw_m}\geq c'\psi$ for all $m$ large enough, so that $\psi_{\vv_m}\geq \|\vv_m\|c'\psi$ for all $m$ large. But $\|\vv_m\|$ diverges, so this would imply that the exponential growth rate of $\psi_{\vv_m}$ tends to zero. As this contradicts \Cref{lem.EGR1} we deduce that
    \[\limsup_{m\to \infty} \frac{\thet(\vv_m)}{\|\vv_m\|} \leq \Dil(\D_{-\hat\vv},\psi),\]
    as desired.
\end{proof}

\begin{proof}[Proof of \Cref{prop.manhattanhomeo}]
    As discussed above, the independence of $\p_1,\dots,\p_n,\p$ implies that $\rho^{\D/\p}$ is continuous and injective. To show that the inverse map $(\rho^{\D/\p})^{-1}: \scrM_{\D/\p} \ra \R^n$ is continuous, let $\vv_m$ be a sequence such that $\rho^{\D/\p}(\vv_m)$ converges to $\rho^{\D/\p}(\vv_\infty)$ for some $\vv_\infty\in \R^n$. We claim that $\vv_m \to \vv_\infty$. First we show that $\vv_m$ is a bounded sequence. Otherwise, after extracting a subsequence we can assume that $\vv_m$ diverges and $\hat \vv_m$ converges to $\hat \vv$ in $\mathbb{S}^{n-1}$. Then \Cref{lem.limitthetaboundary} implies that the sequence of translation length functions $\frac{1}{\|\vv_m\|}\ell_{\p_{\vv_m}}$ pointwise converges to the translation length function $\ell_{\p_{[\hat \vv]}}$. This is impossible by the convergence $\rho^{\D/\p}(\vv_m) \ra \rho^{\D/\p}(\vv_\infty)$ and the fact that each $\p_{\vv}$ has exponential growth rate 1. Therefore, the sequence $\vv_m$ is bounded and it is enough to show that $\vv_\infty$ is its only accumulation point. But if $\vv_m$ converges to $\vv$, then continuity of $\rho^{\D/\p}$ and independence of $\p_1,\dots,\p_n,\p$ gives us $\vv=\vv_\infty$. This completes the proof of the proposition.
\end{proof}


\subsection{Boundary of the joint translation spectrum}
We move on to study the boundary of joint translation spectrum, which can be used via the boundary potentials $\p_{[\vv]}$ from \Cref{def.boundaryHMP}. Before proceeding to this, we first note a useful characterization of the joint translation spectrum in terms of geodesic currents. From \Cref{thm.continuousextension}, we see that the function $\bfL_\p$ in \eqref{eq.defDlam} extends (uniquely) to a continuous function $\bfL_\p:\bbP\calC_\G \ra \R^n$, where $\bbP\calC_\G$ is the space of projective geodesic currents.  From this extension and \Cref{coro.JfromLam} we easily deduce the following.

\begin{lemma}\label{lem.currentsJTS}
For any $\p_1,\ldots,\p_n \in \calH_\G$ and $\p\in \calH_\G^{++}$ we have
\[\J_\p(\D)=\bfL_\p(\bbP\calC_\G).\]
\end{lemma}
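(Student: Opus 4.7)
The plan is to bootstrap directly from Corollary \ref{coro.JfromLam}, which already identifies $\J_\p(\D)$ with the closure of $\bfL_\p(\G)$, plus two classical ingredients: Bonahon's density of rational currents and the continuous extension of stable translation length functionals (\Cref{thm.continuousextension}).

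First I would make sure that $\bfL_\p$ is well-defined and continuous on $\bbP\calC_\G$. By \Cref{thm.continuousextension} each $\ell_{\p_i}$ extends continuously to $\calC_\G$, hence so does the componentwise map $\bfL$, and similarly $\ell_\p$ extends to $\calC_\G$. Since $\p \in \calH_\G^{++}$, \Cref{coro.charH+currents}(2) guarantees that the extended $\ell_\p$ is strictly positive on $\calC_\G \setminus \{0\}$. All three functionals $\ell_{\p_1},\ldots,\ell_{\p_n},\ell_{\p}$ are $\R^+$-homogeneous on $\calC_\G$, so the quotient $[\mu]\mapsto \bfL(\mu)/\ell_\p(\mu)$ descends to a well-defined continuous map $\bfL_\p:\bbP\calC_\G \to \R^n$. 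Compactness of $\bbP\calC_\G$ then makes $\bfL_\p(\bbP\calC_\G)$ a compact (hence closed) subset of $\R^n$.

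Next I would check agreement on rational currents. For a non-torsion element $x\in\G$, write $x=y^m$ with $y$ primitive and $m\geq 1$, so that $\mu_x=m\mu_y$. By the definition of the extension, $\ell_{\p_i}(\mu_x)=m\ell_{\p_i}(y)=\ell_{\p_i}(x)$ and similarly for $\ell_\p$, hence
\[
\bfL_\p([\mu_x]) \;=\; \frac{\bfL(\mu_x)}{\ell_\p(\mu_x)} \;=\; \frac{\bfL(x)}{\ell_\p(x)} \;=\; \bfL_\p(x).
\]
Thus the set $\bfL_\p(\G)$ of values of $\bfL_\p$ on non-torsion elements coincides with $\bfL_\p(\{[\mu_x]:x\in\G \text{ non-torsion}\})\subset \bfL_\p(\bbP\calC_\G)$, which is closed, so by \Cref{coro.JfromLam} we conclude $\J_\p(\D)=\overline{\bfL_\p(\G)}\subset \bfL_\p(\bbP\calC_\G)$.

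Finally, for the reverse inclusion, let $[\mu]\in\bbP\calC_\G$. By Bonahon's theorem, positive real multiples of rational currents are dense in $\calC_\G$, and passing to projective classes yields that $\{[\mu_x]:x\in\G \text{ non-torsion}\}$ is dense in $\bbP\calC_\G$. Choose non-torsion $x_k\in\G$ with $[\mu_{x_k}]\to [\mu]$. By continuity of $\bfL_\p$,
\[
\bfL_\p([\mu]) \;=\; \lim_{k\to\infty} \bfL_\p([\mu_{x_k}]) \;=\; \lim_{k\to\infty} \bfL_\p(x_k) \;\in\; \overline{\bfL_\p(\G)} \;=\; \J_\p(\D).
\]
The two inclusions give the claimed equality. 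There is no real obstacle here; the only subtlety is making sure $\bfL_\p$ genuinely descends to $\bbP\calC_\G$, and that is exactly what the strict positivity of $\ell_\p$ from \Cref{coro.charH+currents}(2) provides.
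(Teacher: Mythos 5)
Your proof is correct and follows exactly the route the paper indicates: the paper only remarks that the lemma "easily" follows from \Cref{thm.continuousextension} and \Cref{coro.JfromLam}, and your argument is precisely the careful unwinding of that deduction, supplying the needed observations (strict positivity of the extended $\ell_\p$ via \Cref{coro.charH+currents}, agreement of the extension with $\bfL_\p$ on rational currents, compactness of $\bbP\calC_\G$, and Bonahon's density theorem). No gaps; this matches the intended proof.
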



Throughout the rest of the section we assume that $\p_1,\dots,\p_m,\p$ are independent. Given $\vx \in \J_\p(\D)$, we let $N_\J(\vx)$ be the set of all the projective classes $[\vv]\in \mathbb{S}^{n-1}$ such that $\langle \vx-\vy, \vv\rangle \leq 0$ for all $\vy \in \J_\p(\D)$. That is, $[\vv]$ belongs to $N_\J(\vx)$ if for the hyperplane $H$ containing $\vx$ and normal to $\vv$, $\J_\p(\D)$ is contained in the half-space determined by $H$ in the direction of $-\vv$. In particular, $N_\J(\vx)$ is non-empty if and only if $\vx \in \partial \J_\p(\D)$. From \Cref{lem.currentsJTS} we have the following characterization for vectors in $N_\J(\vx)$.

\begin{lemma}\label{lem.criterionnormal}
    For any $\vx\in \J_\p(\D)$ we have
    \[N_\J(\vx)=\{[\vv]\in \mathbb{S}^{n-1}: \ell_{[\vv]}(\mu)=0 \text{ for some } \mu\in \calC_\G \text{ with }\bfL_\p(\mu)=\vx\}.\]
    Moreover, if $[\vv]\in N_\J(\vx)$ then $\ell_{[\vv]}(\mu)=0$ for any $\mu\in \calC_\G$ with $\bfL_\p(\mu)=\vx$. 
\end{lemma}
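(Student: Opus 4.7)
The plan is to reduce both conditions—"$[\vv]\in N_\J(\vx)$" and "$\ell_{[\vv]}(\mu)=0$"—to the same scalar equality
\[
\langle \vv,\vx\rangle + \Dil(\D_{-\vv},\p) = 0,
\]
after which the equivalence and the ``moreover'' clause will follow together.

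First I would combine \Cref{def.boundaryHMP} with the fact that stable translation length is $\R$-linear in the potential, a property that passes to the continuous extension of $\ell_\cdot$ to $\calC_\G$ provided by \Cref{thm.continuousextension} (since that extension is unique and coincides with the pointwise sum on the dense set of rational currents). This gives, for every non-zero $\mu\in\calC_\G$,
\[
\ell_{\p_{[\vv]}}(\mu) = \langle \vv, \bfL(\mu)\rangle + \Dil(\D_{-\vv},\p)\,\ell_\p(\mu),
\]
and hence, when $\bfL_\p(\mu)=\vx$ (so that $\bfL(\mu)=\vx\,\ell_\p(\mu)$),
\[
\ell_{\p_{[\vv]}}(\mu) = \ell_\p(\mu)\bigl(\langle \vv,\vx\rangle + \Dil(\D_{-\vv},\p)\bigr).
\]
Since $\ell_\p(\mu)>0$ for any non-zero current by \Cref{coro.charH+currents}, the vanishing of $\ell_{[\vv]}(\mu)$ depends only on $\vv$ and $\vx$, which immediately settles the ``moreover'' assertion.

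Next I would express $\Dil(\D_{-\vv},\p)$ as an extremum over $\bbP\calC_\G$. Starting from the definition as a supremum of $\ell_{\D_{-\vv}}/\ell_\p$ over non-torsion group elements, density of real multiples of rational currents in $\calC_\G$, together with continuity of $\ell_{\D_{-\vv}}$ and $\ell_\p$ (\Cref{thm.continuousextension}) and positivity of $\ell_\p$ on non-zero currents, yields
\[
\Dil(\D_{-\vv},\p) = \sup_{[\mu]\in\bbP\calC_\G}\frac{\ell_{\D_{-\vv}}(\mu)}{\ell_\p(\mu)} = -\inf_{[\mu]\in\bbP\calC_\G}\langle \vv,\bfL_\p(\mu)\rangle.
\]
\Cref{lem.currentsJTS} identifies $\bfL_\p(\bbP\calC_\G)$ with $\J_\p(\D)$, so the infimum equals $\min_{\vy\in\J_\p(\D)}\langle \vv,\vy\rangle$. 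Hence the defining condition for $[\vv]\in N_\J(\vx)$, namely $\langle \vv,\vx\rangle\le\langle \vv,\vy\rangle$ for every $\vy\in\J_\p(\D)$, is exactly $\langle \vv,\vx\rangle = -\Dil(\D_{-\vv},\p)$.

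Combining the two computations, $[\vv]\in N_\J(\vx)$ is equivalent to $\ell_{[\vv]}(\mu)=0$ for some (equivalently, every) $\mu\in\calC_\G$ with $\bfL_\p(\mu)=\vx$, and at least one such $\mu$ exists by \Cref{lem.currentsJTS}. The only delicate step will be promoting $\Dil(\D_{-\vv},\p)$ from a supremum over group elements to one over all projective currents; this rests on \Cref{thm.continuousextension} and Bonahon's density theorem, but is otherwise routine.
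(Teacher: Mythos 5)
Your proposal is correct and follows essentially the same route as the paper: both reduce the statement to the scalar identity $\langle\vv,\vx\rangle+\Dil(\D_{-\vv},\p)=0$ by combining linearity of the extended stable translation length (\Cref{thm.continuousextension}), positivity of $\ell_\p$ on currents (\Cref{coro.charH+currents}), and \Cref{lem.currentsJTS}. The only cosmetic difference is that you re-derive the formula $\Dil(\D_{-\vv},\p)=-\min_{\vy\in\J_\p(\D)}\langle\vv,\vy\rangle$ directly from density of rational currents, whereas the paper imports the same fact through \Cref{lem.extensionboundary} and the variational characterization of $\calH_\G^+\setminus\calH_\G^{++}$.
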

This lemma yields the following criterion for a point in the joint translation spectrum to belong to its boundary.

\begin{corollary}\label{coro.criterionboundary}
   If $\vv \in \J_\p(\D)$, we have that $\vx\in \partial \J_\p(\D)$ if and only there exist non-zero $\vv\in \R^n$ and non-zero $\mu\in \calC_\G$ such that $\vx=\bfL_\p(\mu)$ and $\ell_{\psi_{[\vv]}}(\mu)=0$.
\end{corollary}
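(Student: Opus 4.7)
The plan is to deduce this corollary as a direct repackaging of Lemma \ref{lem.criterionnormal}, combined with the characterization $\J_\p(\D)=\bfL_\p(\bbP\calC_\G)$ from Lemma \ref{lem.currentsJTS}. By the definition of the normal set $N_\J(\vx)$ given just before Lemma \ref{lem.criterionnormal}, the point $\vx \in \J_\p(\D)$ lies in $\partial \J_\p(\D)$ if and only if $N_\J(\vx) \neq \emptyset$; this is the geometric fact I will use throughout.

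For the forward direction, I will assume $\vx \in \partial \J_\p(\D)$ and select $[\vv] \in N_\J(\vx)$. Applying Lemma \ref{lem.criterionnormal} directly produces a current $\mu \in \calC_\G$ with $\bfL_\p(\mu)=\vx$ and $\ell_{\psi_{[\vv]}}(\mu)=0$. The current $\mu$ must be non-zero because otherwise $\bfL_\p(\mu)$ would not be defined (recall that $\bfL_\p$ is defined on non-zero currents via the extension from Theorem \ref{thm.continuousextension}, using that $\ell_\p(\mu)>0$ for non-zero currents by Corollary \ref{coro.charH+currents} since $\p \in \calH_\G^{++}$). Since $[\vv] \in \mathbb{S}^{n-1}$, the vector $\vv$ is automatically non-zero.

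For the reverse direction, assume we have non-zero $\vv \in \R^n$ and non-zero $\mu \in \calC_\G$ with $\vx = \bfL_\p(\mu)$ and $\ell_{\psi_{[\vv]}}(\mu)=0$. Normalizing $\vv$ to a unit vector (which does not affect $\psi_{[\vv]}$, as $\psi_{[\vv]}$ depends only on the class $[\vv] \in \mathbb{S}^{n-1}$), Lemma \ref{lem.criterionnormal} immediately gives $[\vv] \in N_\J(\vx)$, so $N_\J(\vx)$ is non-empty and hence $\vx \in \partial \J_\p(\D)$.

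There is no real obstacle here — the work has already been done in Lemmas \ref{lem.currentsJTS} and \ref{lem.criterionnormal}. The only minor point to verify is that one cannot accidentally produce the zero current when applying Lemma \ref{lem.criterionnormal}, but this is ruled out by the fact that $\bfL_\p$ requires a non-zero current as input in order for $\vx = \bfL_\p(\mu)$ to make sense.
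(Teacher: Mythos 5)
Your proof is correct and matches the paper's implicit argument: the paper simply states ``This lemma yields the following criterion'' immediately after Lemma \ref{lem.criterionnormal}, and what you wrote is exactly the intended unpacking, namely combining the observation that $\vx \in \partial\J_\p(\D)$ iff $N_\J(\vx)\neq\emptyset$ with the characterization of $N_\J(\vx)$ from Lemma \ref{lem.criterionnormal} and the surjectivity $\J_\p(\D)=\bfL_\p(\bbP\calC_\G)$ from Lemma \ref{lem.currentsJTS}. Your remarks on why $\mu$ and $\vv$ are automatically non-zero are also accurate.
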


\begin{proof}[Proof of \Cref{lem.criterionnormal}]
    Suppose that $\vx=\bfL_\p(\mu)$ for $\mu\in \calC_\G$ non-zero, which holds by \Cref{lem.currentsJTS}. For a non-zero vector $\vv \in \R^n$ we claim that $\ell_{\psi_{[\vv]}}(\mu)=0$ if and only if $\langle \vx-\vy, \vv\rangle \leq 0$ for all $\vy \in \J_\p(\D)$. This implies the lemma since the condition $\langle \vx-\vy, \vv\rangle \leq 0$ for all $\vy\in \J_\p(\D)$ is precisely $[\vv] \in N_\J(\vx)$.
    
    Now, denote $D=\Dil(\D_{-\vv},\psi)$. Since $\p_{[\vv]} \in \calH_\G^+ \bs \calH_\G^{++}$ by \Cref{lem.extensionboundary}, then \Cref{coro.charH+currents} tells us that  $\ell_{\psi_{[\vv]}}(\mu)=0$ is equivalent to
    \[
    \frac{\ell_{\psi_{[\vv]}}(\mu)}{\ell_\psi(\mu)}\leq \frac{\ell_{\psi_{[\vv]}}(\mu')}{\ell_\psi(\mu')} \text{ for all }\mu' \in \bbP\calC_\G,
    \]
    or 
    \[
    \frac{\langle \bfL(\mu),\vv \rangle+D\ell_{\p}(\mu)}{\ell_{\psi}(\mu)}\leq \frac{\langle \bfL(\mu'),\vv \rangle+D\ell_{\p}(\mu')}{\ell_\psi(\mu')} \text{ for all }\mu' \in \bbP\calC_\G.
    \]
    This is the same as $ \langle \bfL_\p(\mu),\vv \rangle \leq \langle \bfL_\p(\mu'),\vv \rangle$ for all $\mu' \in \bbP\calC_\G$, and hence $\langle \vx,\vv \rangle \leq \langle \vy,\vv \rangle $ for all $\vy \in \J_\p(\D)$ by \Cref{lem.currentsJTS}.
\end{proof} 

\begin{remark}\label{remark.fillingcurrents}
A geodesic current $\mu \in \calC_\G$ with full support verifies $\ell_{\p'}(\mu)>0$ for all $\p'\in \partial\calH_\G^{++}$. Therefore, the lemma above implies that $\bfL_\p(\mu)\in \Int(\J_\p(\D))$ for any geodesic current $\mu$ with full support.
\end{remark}

We also need the next lemma. Recall that we are assuming $\p_1,\dots,\p_n,\p$.

\begin{lemma}\label{lem.ineqconvexity}
    For any $\vv\in \R^n$ we have
    \begin{equation}\label{eq.boundstheta}\theta(\bfv)-v_\psi \leq \langle \bfv, \nabla \theta (\bfv)\rangle < \theta(\bfv),
    \end{equation}
where the first inequality is strict if $\vv\neq \mathbf{0}$.
\end{lemma}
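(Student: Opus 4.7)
The proof will establish the two inequalities separately, each leveraging the convexity of $\theta := \theta_{\D/\p}$ together with the Lyapunov-vector interpretation of $\nabla\theta$ provided by \Cref{cor.drift}.

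For the first inequality $\theta(\bfv) - v_\p \le \langle \bfv, \nabla\theta(\bfv)\rangle$, I plan to apply the subgradient inequality for the convex $C^1$ function $\theta$ (combining \Cref{prop.C1} and \Cref{prop.convex}) at the point $\bfv$ evaluated at $\mathbf{0}$. Since $\theta(\mathbf{0}) = v_\p$ is immediate from the definition of $\theta$ as an abscissa of convergence, this reads $v_\p \ge \theta(\bfv) - \langle \bfv, \nabla\theta(\bfv)\rangle$, which rearranges to the desired bound. The strict convexity of $\theta$ from \Cref{prop.convex}, available under the standing independence hypothesis, then upgrades the inequality to a strict one whenever $\bfv \ne \mathbf{0}$.

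The second inequality $\langle \bfv, \nabla\theta(\bfv)\rangle < \theta(\bfv)$ requires a more substantial argument. My plan is to reinterpret $\theta(\bfv) - \langle \bfv, \nabla\theta(\bfv)\rangle$ as the limiting drift ratio $\lim_{k\to\infty}\p_{\bfv}(o,x_k)/\p(o,x_k)$ along a $\nu_{\bfv}$-typical ray $x_k \to \xi$, where $\nu_{\bfv}$ is a quasi-conformal measure for $\p_{\bfv}$ furnished by \Cref{coro.PSforv}, and then to deduce positivity from $\p_{\bfv} \in \calH_\G^{++}$. The first step uses \Cref{cor.drift}, which yields $\D(x_k)/\p(o,x_k) \to -\nabla\theta(\bfv)$; expanding $\p_{\bfv} = \langle \bfv, \D\rangle + \theta(\bfv)\p$ and dividing by $\p(o,x_k)$ identifies the limit above with $\theta(\bfv) - \langle \bfv, \nabla\theta(\bfv)\rangle$. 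For the second step, \Cref{lem.EGR1} gives $\p_{\bfv} \in \calH_\G^{++}$, so \Cref{lem.charH_G^++} ensures $\p_{\bfv}$ is quasi-isometric to $\p$, producing constants $c, C > 0$ with $\p_{\bfv}(o,x) \ge c\,\p(o,x) - C$ for all $x \in \G$. Since $\p$ is proper and $x_k \to \xi \in \partial \G$, we have $\p(o,x_k) \to \infty$ along the ray, and the liminf of the ratio is therefore at least $c > 0$.

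The main conceptual point, and the step I expect to be most subtle, is recognizing the drift-ratio identity for $\theta(\bfv) - \langle \bfv, \nabla\theta(\bfv)\rangle$; once this is in place, strict positivity is immediate from the quasi-isometric comparison between $\p_{\bfv}$ and $\p$, and no additional ingredients beyond the previously established structural results are needed.
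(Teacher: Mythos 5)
Your proof is correct. The argument for the first inequality is exactly the paper's: apply the gradient inequality for the convex $C^1$ function $\theta$ at the point $\vv$ to the point $\mathbf{0}$, using $\theta(\mathbf{0})=v_\p$, and upgrade to strict inequality via \Cref{prop.convex} when $\vv\neq\mathbf{0}$.

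For the second inequality you diverge from the paper in an interesting way. The paper introduces the auxiliary Manhattan curve $\ov\theta := \ov\theta_{\p/\p_\vv}$ of $\p$ over $\p_\vv$, observes the functional equation $\theta(\ov\theta(s)\vv)=s+\ov\theta(s)\theta(\vv)$ (coming from the characterization of the critical exponent via exponential growth rate $1$), differentiates at $s=0$ to get $\ov\theta'(0)\,\bigl(\langle\nabla\theta(\vv),\vv\rangle-\theta(\vv)\bigr)=1$, and concludes from $\ov\theta(0)=1$ and $\ov\theta'(0)<0$. Your route instead identifies $\theta(\vv)-\langle\vv,\nabla\theta(\vv)\rangle$ as the $\nu_\vv$-typical drift ratio $\lim_k \p_\vv(o,x_k)/\p(o,x_k)$ via \Cref{cor.drift} and the expansion $\p_\vv=\langle\vv,\D\rangle+\theta(\vv)\p$, and then bounds this ratio below by a positive quasi-isometry constant, since both $\p_\vv$ and $\p$ lie in $\calH_\G^{++}$ and are hence quasi-isometric by \Cref{lem.charH_G^++}. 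Both arguments are valid and draw on the same underlying facts ($\p_\vv\in\calH_\G^{++}$ with $v_{\p_\vv}=1$ from \Cref{lem.EGR1}); your version makes the ``why'' of strict positivity more transparent and avoids the chain-rule computation with the auxiliary Manhattan curve, at the cost of invoking the Lyapunov-vector/quasi-conformal-measure machinery of \Cref{cor.drift}, which the paper's calculus argument sidesteps.
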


\begin{proof}
    By \Cref{prop.convex} for all $\bfv\neq \mathbf{0}$ in $\R^n$ we have \[\
\thet(\vv)-v_\psi=\thet(\vv)-\thet(\mathbf{0}) <\langle \bfv -\mathbf{0}, \nabla \theta (\bfv)\rangle = \langle \bfv, \nabla \theta (\bfv)\rangle,
 \]
    which proves the first inequality in \eqref{eq.boundstheta}.  For the second inequality we fix $\vv\in \R^n$ and let $\ov\thet=\ov\thet_{\p/\p_\vv}$ be the parameterization of the Manhattan curve of $\p$ over $\p_\vv$. By \Cref{lem.EGR1}, for any $s$ the number $\ov\thet(s)$ is characterized by the property that the function 
    $\ov\p_s:=s\p+\ov\theta(s)\p_\vv$ belongs to $\calH^{++}_\G$ and has exponential growth rate 1. But
    \[\ov\p_s:=s\p+\ov\thet(s)\p_\vv=s\p+\ov\thet(s)(\D_\vv+\thet(\vv)\p)=\D_{\ov\thet(s)\vv}+(s+\ov\thet(s)\thet(v))\p,\]
    and hence the definition of $\thet$ and \Cref{lem.EGR1} imply that
    \[\thet(\ov\theta(s)\vv)=s+\ov\thet(s)\thet(\vv)\]
    for all $s\in \R$.
    By \Cref{thm.manc^1}, both $\thet$ and $\thet'$ are differentiable, so taking derivatives with respect to $s$ yields
     \[\langle \nabla\theta(\ov\theta(s)\vv),\ov\theta'(s)\vv \rangle =1+\ov\thet'(s)\theta(\vv).\]
     At $s=0$, and using $\ov\thet(0)=v_{\p_\vv}=1$ by \Cref{lem.EGR1}, we obtain
     \[1=(\langle \nabla\thet(\vv),\vv \rangle-\thet(\vv))\ov\thet'(0),\]
     and since $\ov\thet'(0)<0$ (both $\p$ and $\p_\vv$ belong to $\calH_\G^{++}$), we get $\thet(\vv)>\langle \nabla\thet(\vv),\vv\rangle$, as desired.
    \end{proof}

Using this lemma we can finish the proof of \Cref{thm.homeointeriors}, which combined with \Cref{prop.manhattanhomeo} completes the proof of \Cref{prop.fromManhattantoJTS}.

\begin{proposition}\label{prop.IntJ}
    The map $-\nabla\thet:\R^n \ra \Int(\J_\p(\D))$ is a homeomorphism.
\end{proposition}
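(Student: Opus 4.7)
The approach is to exploit what is already established about $-\nabla\theta$ and reduce the problem to showing its image is closed in $\Int(\J_\p(\D))$. By Proposition \ref{prop.C1}, the map $-\nabla\theta$ is continuous; by the strict convexity of $\theta$ in Proposition \ref{prop.convex}, it is injective; and by Proposition \ref{prop.deriv=dj} its image is an open subset of $\Int(\J_\p(\D))$. Invariance of domain then guarantees that $-\nabla\theta$ is a homeomorphism onto its image, so the entire task reduces to proving $-\nabla\theta(\R^n) = \Int(\J_\p(\D))$. Since $\J_\p(\D)$ is convex, $\Int(\J_\p(\D))$ is open, non-empty, and connected, so it suffices to show that $-\nabla\theta(\R^n)$ is closed in $\Int(\J_\p(\D))$.

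Accordingly, consider any sequence $(\vv_m)_m \subset \R^n$ with $-\nabla\theta(\vv_m) \to \vx \in \Int(\J_\p(\D))$. If $\{\vv_m\}$ is bounded, a convergent subsequence $\vv_m \to \vv_\infty$ combined with continuity of $-\nabla\theta$ yields $\vx = -\nabla\theta(\vv_\infty)$, which completes the argument in this case. The main obstacle is to rule out the alternative scenario: after passing to a subsequence, $\|\vv_m\| \to \infty$ and $\hat\vv_m := \vv_m/\|\vv_m\| \to \hat\vv \in \mathbb{S}^{n-1}$.

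The strategy for the unbounded case is to show that $\vx$ would have to lie on a supporting hyperplane of $\J_\p(\D)$, contradicting $\vx \in \Int(\J_\p(\D))$. Dividing the two-sided estimate $\theta(\vv_m) - v_\p \le \langle \vv_m, \nabla\theta(\vv_m)\rangle < \theta(\vv_m)$ from Lemma \ref{lem.ineqconvexity} by $\|\vv_m\|$, Lemma \ref{lem.limitthetaboundary} identifies the common limit of both sides as $\Dil(\D_{-\hat\vv},\p)$, so $\langle \hat\vv_m, \nabla\theta(\vv_m)\rangle \to \Dil(\D_{-\hat\vv},\p)$. Meanwhile, $\nabla\theta(\vv_m) \to -\vx$ forces the same limit to equal $-\langle \hat\vv, \vx\rangle$. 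Unwinding the definition of the dilation and using Corollary \ref{coro.JfromLam} one computes
\[
\Dil(\D_{-\hat\vv},\p) = \sup_{\vy \in \J_\p(\D)} \langle -\hat\vv, \vy\rangle = -\inf_{\vy \in \J_\p(\D)}\langle \hat\vv, \vy\rangle,
\]
so $\vx$ attains the minimum of $\langle \hat\vv, \cdot\rangle$ on $\J_\p(\D)$. Hence $\vx$ lies on a supporting hyperplane with outer normal $-\hat\vv$, forcing $\vx \in \partial \J_\p(\D)$ and producing the desired contradiction.
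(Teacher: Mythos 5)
Your proof is correct and uses the same key technical lemmas (\Cref{lem.limitthetaboundary}, \Cref{lem.ineqconvexity}, invariance of domain) as the paper, but reaches the final contradiction by a genuinely more direct route. The paper argues by choosing a boundary-of-image point $\vy$ on a segment emanating from $-\nabla\theta(\mathbf{0})$, writing $\vy=\bfL_\p(\mu)$ for a geodesic current $\mu$, and invoking \Cref{coro.criterionboundary} to show $\ell_{\p_{[\hat\vv]}}(\mu)=0$ forces $\vy\in\partial\J_\p(\D)$. You instead (i) reduce via the open-and-closed-in-connected argument rather than the segment construction, and (ii) unwind $\Dil(\D_{-\hat\vv},\p)=\sup_{\vy\in\J_\p(\D)}\langle-\hat\vv,\vy\rangle$ via \Cref{coro.JfromLam} to conclude directly that $\vx$ attains the minimum of $\langle\hat\vv,\cdot\rangle$ on $\J_\p(\D)$ and hence lies on a supporting hyperplane. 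Your version sidesteps geodesic currents and \Cref{coro.criterionboundary} entirely, which makes the convex-geometry content more transparent; the paper's version has the advantage of slotting naturally into the current-theoretic framework developed for the boundary results in \Cref{prop.descriptionboundary}. Both are valid and of comparable length.
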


\begin{proof}
\Cref{thm.manc^1}, \Cref{prop.deriv=dj} and Invariance of domain gives us that $-\nabla\theta:\R^n \ra \Int(\J_\p(\D))$ is well-defined and is a homeomorphism onto its image, which is an open subset of $\Int(\J_\p(\D))$. Hence we are left to show that $\Int(\J_\p(\D))\subset -\nabla\theta(\R^n)$.

    To show this, let $A=-\nabla\thet(\R^n)$ and suppose for the sake of contradiction that there exists $\vx \in \Int(\J_\p(\D)) \bs A$ and let $\gam=[-\nabla\thet(\mathbf{0}),\vx]$ be the segment joining $\vx$ and $-\nabla\thet(\mathbf{0})$. Let $\vy \in \gam$ be the nearest point to $\vx$ such that $\vy' \in A$ for any $\vy'\in \gam$ closer to $-\nabla\thet(\mathbf{0})$ than $\vy$. Since $\Int(\J_\p(\D))$ is convex we have $\vy\in \Int(\J_\p(\D))$, and since $A$ is open we have $\vy\notin A$. In particular, there exists a sequence $\vv_m \in \R^n$ such that $-\nabla\thet(\vv_m) \to \vy$ as $m$ diverges. 
    
    The condition $\vy \notin A$ implies that $\vv_m$ diverges as $m$ tends to infinity, so up to taking a subsequence suppose that $\hat\vv_m \to \hat\vv \in \bbS^{n-1}$. Given $\mu \in \calC_\G$ such that $\vy \in \bfL_\p(\mu)$, we claim that $\ell_{[\hat\vv]}(\mu)=0$. This is our desired contradiction since it prevents $\vy \in \Int(\J_\p(\D))$ by \Cref{coro.criterionboundary}. The claim then follows by the sequence of identities 
    \begin{align*}
        \frac{\ell_{[\hat\vv]}(\mu)}{\ell_\p(\mu)}  = \langle \bfL_\p(\mu),\hat\vv\rangle +\Dil(\D_{-\hat\vv},\psi)
         = \lim_{m \to \infty}{\frac{\langle -\nabla\theta(\vv_m),\vv_m\rangle +\thet(\vv_m)}{\|\vv_m\|}}=0,
    \end{align*} 
    which hold by \Cref{lem.limitthetaboundary}, \Cref{lem.ineqconvexity}, and the divergence $\|\vv_m\| \to \infty$. This completes the proof of the proposition.
\end{proof}

We conclude with the proof of \Cref{prop.descriptionboundary}.

\begin{proof}[Proof of \Cref{prop.descriptionboundary}]
  We first prove Item (1), so suppose that $\partial \J_\p(\D)$ is strictly convex. This is equivalent to $N_\J(\vx) \cap N_\J(\vy)=\emptyset$ for all $\vx \neq \vy$ in $\partial \J_\p(\D)$. That means, for any $[\vv] \in \mathbb{S}^{n-1}$ non-zero there exists a unique $\Phi([\vv]):=\vx$ in $\partial \J_\p(\D)$ such that $[\vv]\in N_\J(\vx)$. Then we use $\Phi$ to extend $-\nabla \theta:\R^n \ra \Int(\J_\p(\D))$ to a function $\Phi:\ov{\R^n}\ra \J_\p(\D)$, which is clearly surjective. To prove continuity, by \Cref{thm.homeointeriors} it is enough to check continuity along a divergent sequence $\vv_m$ in $\R^n$ such that $\hat\vv_m \to \hat \vv$ in $\mathbb{S}^{n-1}$. Since $\J_\p(\D)$ is compact, suppose after extracting a subsequence that $-\nabla \thet(\vv_m) \to \vx$. Then we must have $\vx \in \partial \J_\p(\D)$ by \Cref{prop.IntJ}, and by \Cref{lem.limitthetaboundary} we get
  \begin{equation}\label{eq.identitydilinnerproduct}
      -\langle \hat \vv, \vx\rangle =\lim_{m \to \infty} \frac{\langle \vv_m, \thet(\vv_m)\rangle}{\|\vv_m\|}=\Dil(\D_{-\hat \vv},\p).
  \end{equation}
  If $\vx=\bfL_\p(\mu)$ for $\mu\in \calC_\G$ given by \Cref{lem.currentsJTS}, then \eqref{eq.identitydilinnerproduct} gives us $\ell_{\p_{[\hat \vv]}}(\mu)=0$ and hence $[\vv]\in N_\J(\vx)$ by \Cref{lem.criterionnormal}. This forces $\vx=\Phi([\hat\vv])$, which completes the proof of continuity.

  Now we deal with the proof of Item (2), so suppose that $\partial \J_\p(\D)$ is $C^1$. This is equivalent to $\# N_\J(\vx)=1$ for each $\vx \in \partial \J_\p(\D)$. We let $\Psi(\vx)\in \mathbb{S}^{n-1}$ be such that $N_\J(\vx)=\{\Psi(\vx)\}$. This gives us an extension $\Psi: \J_\p(\D) \ra \ov{\R^n}$ of $(-\nabla \thet)^{-1}:\Int(\J_\p(\D)) \ra \R^n$, which is clearly surjective. To prove continuity, it is enough to check convergence of $\Psi$ along a sequence $\vx_m$ in $\Int(\J_\p(\D))$ converging to $\vx \in \partial \J_\p(\D)$. Suppose that $\vx_m=-\nabla \thet(\vv_m)$ for each $m$, so that $\vv_m$ diverges in $\R^n$ by \Cref{prop.IntJ}. After extracting a subsequence we can assume that $\hat\vv_m \to \hat \vv$ in $\mathbb{S}^{n-1}$.
  As in the proof of \eqref{eq.identitydilinnerproduct}, we can prove that $-\langle \hat \vv,\vx\rangle =\Dil(\D_{-\hat\vv},\p)$ and that $\ell_{\p_{[\hat \vv]}}(\mu)=0$ for any $\mu\in \calC_\G$ such that $\bfL_\p(\mu)=\vx$. Then $[\hat \vv]\in N_\J(\vx)$ by \Cref{lem.criterionnormal} and hence $[\hat \vv]=\Psi(\vx)$.
\end{proof}

\section{Examples}\label{sec.examples}
In this section we discuss examples of the joint translation spectrum in more detail.

\subsection{Word metrics}
Suppose that $S, S_1,S_2, \ldots, S_n$ are finite symmetric generating sets for $\G$ and write $d,d_1,\ldots,d_n$ for the corresponding word metrics in $\Dc_\G$. In this section we study the joint spectrum $\J_d(d_1,\ldots,d_n).$ We let $\D=(d_1,\dots,d_n)$.

\begin{proposition}\label{prop.wordpoly}
    Suppose that $d, d_1, \ldots, d_n$ are all word metrics,  then $\J_{d}(d_1,\ldots,d_n)$ is a polytope.
\end{proposition}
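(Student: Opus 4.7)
The plan is to represent $\J_d(\D)$ as the rotation set of a finite weighted directed graph and then invoke the classical fact that such rotation sets are rational polytopes. By \Cref{coro.JfromLam} we have $\J_d(\D) = \overline{\bfL_d(\G)}$, so the question reduces to analyzing the stable translation vectors of elements of $\G$ in a purely combinatorial fashion.

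\textbf{Building a shared Markov coding.} The first and most substantial step is to construct, starting from the Cannon automatic structure for $(\G,S)$, a finite directed graph $\calG^{*}$ with edge-weight functions $c_0,c_1,\ldots,c_n\colon E(\calG^{*})\to\N$ such that (i) primitive closed loops in $\calG^{*}$ correspond to primitive conjugacy classes of $\G$ up to a finite-to-one map and bounded ambiguity, and (ii) for any loop $\gamma$ corresponding to a class $[x]$, the sums $\sum_{e\in\gamma}c_i(e)$ recover $\ell_{d_i}([x])$ up to $O(1)$ uniformly in $\gamma$, where $d_0:=d$. Such a refined coding exists because the data needed to reconstruct each word length $d_i$ additively along a $d$-geodesic is bounded---a consequence of hyperbolicity and the Morse lemma---and can therefore be tracked by adjoining finitely many auxiliary states (for instance, bounded windows of past $S$-letters) to the Cannon graph.

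\textbf{From the coding to the joint translation spectrum.} Combining (i)--(ii) with the density of rational currents in $\calC_\G$ and using \Cref{lem.currentsJTS} to write $\J_d(\D)=\bfL_d(\bbP\calC_\G)$, one sees that every point of $\J_d(\D)$ is a limit of normalized weight vectors $\frac{1}{c_0(\gamma)}(c_1(\gamma),\ldots,c_n(\gamma))$ as $\gamma$ ranges over long closed loops of $\calG^{*}$; the error $O(1)$ in (ii) is swept away after dividing by $c_0(\gamma)\to\infty$. Hence $\J_d(\D)$ coincides with the asymptotic rotation set
\[
R(\calG^{*}):=\overline{\left\{\frac{1}{c_0(\gamma)}\bigl(c_1(\gamma),\ldots,c_n(\gamma)\bigr) : \gamma \text{ closed loop in } \calG^{*}\right\}}.
\]

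\textbf{Conclusion and main obstacle.} A standard flow-decomposition argument then shows that every long closed loop decomposes, up to linking arcs of bounded total weight, into a disjoint union of simple sub-cycles; after normalization the linking contribution becomes negligible, so every point of $R(\calG^{*})$ is a convex combination of the normalized weight vectors of the finitely many simple cycles of $\calG^{*}$. Conversely, strong connectivity of the recurrent part of $\calG^{*}$ lets one realize any rational convex combination by a suitable long loop, so $R(\calG^{*})$ equals the convex hull of finitely many vectors in $\Q^n$---a polytope. The main difficulty lies in the construction of $\calG^{*}$: the naive choice $c_i(e):=|l(e)|_{S_i}$ yields only the upper bound $\ell_{d_i}([x])\le\sum c_i(e)$ and misses all the $S_i$-shortenings permitted by patterns in the underlying $S$-word (e.g.\ fusing two successive $a$'s into a generator $a^2\in S_i$). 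Encoding these shortenings additively is what forces the refinement of the state space, and verifying that the refined graph still preserves enough of the original dynamics to carry essentially all conjugacy classes is the technical heart of the argument.
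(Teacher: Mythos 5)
Your overall strategy — recast $\J_d(\D)$ as a rotation set (image of a weight-per-symbol function) of a finite weighted directed graph built from the Cannon automatic structure, then invoke that such sets are polytopes — is exactly the one the paper follows. The paper concludes polytopality by citing Marcus--Tuncel \cite{marcus-tuncel}, while you sketch the underlying cycle-decomposition argument directly; that difference is cosmetic and both are fine.

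However, the proposal has a genuine gap at precisely the step you yourself flag as "the technical heart": you assert that a refined Cannon-type graph $\calG^{*}$ exists carrying edge weights $c_1,\dots,c_n$ that recover $\ell_{d_i}$ additively up to $O(1)$ along loops, and that this graph still carries essentially all conjugacy classes — but you never prove either claim. The heuristic "adjoin bounded windows of past $S$-letters" is plausible but not an argument: one must show (a) that a single finite automaton can simultaneously and additively track all $n$ auxiliary word lengths along $S$-geodesics, and (b) that after this refinement there remains a single strongly connected component whose closed loops still represent enough conjugacy classes to exhaust $\J_d(\D)$. Neither of these is routine. The paper discharges (a) by citing Calegari--Fujiwara \cite{calegari-fujiwara}, who construct a function on the edges of a Cannon graph encoding an auxiliary word metric, together with \cite[Lemma~6.42]{calegari.scl} to handle all $S_1,\dots,S_n$ in one coding; and it discharges (b) by citing \cite[Corollary~3.2]{cantrell.mixing}, which produces a single component of the Cannon graph seeing a power of every nontorsion element. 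Without establishing these two facts your argument reduces the problem to a subproblem it does not solve, so the proposal is incomplete rather than incorrect: it correctly identifies what needs to be proven but stops short of proving it.
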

\begin{proof}
The proof relies on realising the joint translation spectrum as the image of the so-called \textit{weight per symbol} function for a Markov chain. To do this we need to use the Cannon coding which we discussed in the proof of \Cref{lem.construct}. To avoid introducing a lot of preliminary material for the Cannon coding here we refer the reader to Section 3 of \cite{cantrell.mixing}.

The key result that allows us to represent the joint translation spectrum as an object related to a Markov chain is due to Calegari and Fujiwara \cite{calegari-fujiwara}. They show that, given a hyperbolic group $\G$ and generating sets $S,S_\ast$ there is a Cannon coding $(\mathcal{G}, V,E, v_\ast)$ (where we use the notation introduced in the proof of \Cref{lem.construct}) for $(\G,S)$ and a function $\phi_{S_\ast} : E \to \Z$ that encodes $S_\ast$ in the following way: for any finite path starting at $v_\ast$ following the edges $e_1,e_2, \ldots, e_k$, the group element $x \in \G$ corresponding to this path has $S_\ast$ word length
\[
|x|_{S_\ast} = \sum_{j=1}^k \phi_{S_\ast}(e_j),
\]
i.e. there is a labeling of the edges of a Cannon graph for $S$ that encodes the $S_\ast$ word distance. In fact, by \cite[Lemma~6.42]{calegari.scl} we can find a Cannon graph $\mathcal{G}$ for $S$ and a collection of functions $\phi_1, \ldots, \phi_n$ that represent the corresponding generating sets $S_1,\ldots,S_n$ simultaneously. We can then define $\Phi: E \to \Z^n$ by
\[
\Phi(e) = (\phi_1(e), \ldots, \phi_n(e)) \ \ \text{ for each $e \in E$}
\]
and extend $\Phi$ to finite paths in the coding by summing along the edges in the path. The function sending a finite path $p$ to
\[
 p \mapsto \frac{\Phi(p)}{|p|} \ \ \text{ where $|p|$ is the number of edges in $p$}
\]
is called the \emph{weight-per-symbol function} \cite{marcus-tuncel}.
Further it is easy to check that if $p$ is a loop in the Cannon graph with $|p| = k$ the $p$ corresponds to a group element $x_p$ with $\ell_S(x_p) = |x_p|_S = k$ and furthermore $\Phi(x_p) = \D(x_p) = \Lambda(x_p).$

We now recall that by the work of Marcus and Tuncel \cite{marcus-tuncel} the image of the weight per symbol function over the closed paths in a mixing Markov chain is a polytope. It is easy to see that this is also the case when the Markov chain is transitive instead of mixing. The Cannon graph may have multiple connected components that give rise to transitive Markov chains. Further by Corollary 3.2 in \cite{cantrell.mixing} there exists a connected component $\Cc$ that sees all of the conjugacy classes in $\G$ in the sense that for any non-torsion $x \in \G $ there exists a closed loop $p$ inside $\Cc$ such that the group element $x_p$ corresponding to $p$ belongs to $x^M$ for some integer $M \ge 1$. Hence we deduce that
\[
\left\{\frac{\Phi(x_p)}{|p|} : \text{ $p$ is loop in $\Cc$} \right\} = \{ \Lambda_d(x) : \text{ $x \in \G$ is a non-torsion}\}.
\]
It then follows from \Cref{thm.intro.jtspectrum} that the joint spectrum is exactly the image of a weight per symbol function for a transitive Markov chain, which, by the above discussion, is a polytope.
\end{proof}

In this case we also have the following upgrade of \Cref{thm.manc^1}.

\begin{corollary}\label{coro.analyticword}
    When $d,d_1, \ldots, d_n$ are all word metrics, the corresponding Manhattan manifold is analytic.
\end{corollary}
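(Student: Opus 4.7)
The plan is to leverage the Cannon coding used in the proof of \Cref{prop.wordpoly} in order to realize $\theta_{\D/\p}(\vv)$ as the value of $s$ at which an analytic family of nonnegative transfer matrices has spectral radius equal to $1$, and then to invoke Perron--Frobenius theory, Kato's analytic perturbation theorem, and the analytic implicit function theorem. Keep the Cannon coding $(\mathcal{G}, V, E, v_*)$ for $(\G, S)$ together with the integer edge-weight functions $\phi, \phi_1, \ldots, \phi_n : E \to \Z$ encoding $d, d_1, \ldots, d_n$ additively along paths starting at $v_*$, as in the proof of \Cref{prop.wordpoly}. Setting $\Phi := (\phi_1, \ldots, \phi_n)$ and using the bijection $P_* \leftrightarrow \G$, the Poincar\'e series defining $\theta_{\D/\p}$ rewrites as
$$Z(\vv, s) \;=\; \sum_{p \in P_*} e^{-\langle \vv, \Phi(p)\rangle - s\, \phi(p)} \;=\; \sum_{k \geq 0} \mathbf{1}_{v_*}^\top M(\vv, s)^k\, \mathbf{1},$$
where the transfer matrix $M(\vv, s) \in \R_{\geq 0}^{V \times V}$ is given by
$$M(\vv, s)_{u, w} \;=\; \sum_{e : u \to w} e^{-\langle \vv, \Phi(e)\rangle - s\, \phi(e)}$$
and depends real-analytically on $(\vv, s)$. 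Consequently $\theta_{\D/\p}(\vv)$ is characterized by $r(\vv, \theta_{\D/\p}(\vv)) = 1$, where $r(\vv, s)$ is the spectral radius of $M(\vv, s)$.

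Next I would argue that locally around $\{r = 1\}$ the spectral radius is realized by a single strongly connected component $\mathcal{C}$ of $\mathcal{G}$ to which Perron--Frobenius theory applies. By \cite[Corollary~3.2]{cantrell.mixing} (invoked in the proof of \Cref{prop.wordpoly}), there is a component $\mathcal{C}$ that witnesses every conjugacy class of $\G$ and realizes its full exponential growth. The restriction $M_{\mathcal{C}}(\vv, s)$ is then an irreducible nonnegative matrix, so its spectral radius $r_{\mathcal{C}}(\vv, s)$ is a simple, isolated eigenvalue, and Kato's theorem gives that $r_{\mathcal{C}}$ is real-analytic on $\R^n \times \R$. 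Differentiating the eigenvalue equation against the normalized Perron eigenvectors yields $\partial_s r_{\mathcal{C}} < 0$, since every cycle in $\mathcal{C}$ has positive total $\phi$-weight. The analytic implicit function theorem applied to $r_{\mathcal{C}}(\vv, s) = 1$ then gives that $\theta_{\D/\p}$ is real-analytic on $\R^n$; the Manhattan manifold $\calM_{\D/\p}$, being the graph of $\theta_{\D/\p}$, is therefore analytic.

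The main obstacle is the possible failure of $\mathcal{G}$ to be strongly connected: \emph{a priori} $r(\vv, s)$ is only the maximum of the analytic functions $r_{\mathcal{C}_i}(\vv, s)$ over the strongly connected components $\mathcal{C}_i$ of $\mathcal{G}$, hence merely piecewise analytic. The resolution is the strict dominance of $\mathcal{C}$ over the other components, which follows exactly as in the proof of \Cref{prop.wordpoly} from the fact that subordinate components code exponentially fewer group elements of $\G$; this forces $r(\vv, s) = r_{\mathcal{C}}(\vv, s)$ on a full neighborhood of $\{r = 1\}$ and validates the analyticity conclusion.
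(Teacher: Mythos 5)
Your overall framework is the right one and matches what the paper intends: the paper's proof of this corollary is simply a citation to Cantrell--Tanaka \cite[Theorem 5.5]{cantrell-tanaka.invariant} for the case $n=1$ together with the claim that the same argument generalizes, and that argument is indeed a thermodynamic-formalism/transfer-operator argument on the Cannon shift of exactly the flavour you describe. Your reduction of $\theta_{\D/\p}(\vv)$ to the equation $r(\vv,\theta_{\D/\p}(\vv))=1$ for the analytic family $M(\vv,s)$, and the use of Perron--Frobenius simplicity, analytic perturbation (in finite dimensions this is even elementary: apply the analytic implicit function theorem to the characteristic polynomial, no need for Kato), and the computation $\partial_s r_{\mathcal{C}}<0$ (indeed $\partial_s r_{\mathcal C}=-r_{\mathcal C}$ here, since $\phi\equiv 1$ because $d=d_S$ is the metric underlying the Cannon coding) are all sound.

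The genuine gap is in your resolution of the non-transitivity of $\mathcal G$. You write that the strict dominance $r(\vv,s)=r_{\mathcal C}(\vv,s)$ near $\{r=1\}$ ``follows exactly as in the proof of \Cref{prop.wordpoly} from the fact that subordinate components code exponentially fewer group elements of $\G$.'' What \Cref{prop.wordpoly} actually uses (via \cite[Corollary~3.2]{cantrell.mixing}) is that the weight-per-symbol \emph{images} of the loops in $\mathcal C$ and of all loops coincide; and the ``exponentially fewer elements'' statement concerns the \emph{unweighted} count, i.e.\ it compares $r_{\mathcal C'}(\mathbf{0},s)$ to $r_{\mathcal C}(\mathbf{0},s)$. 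Neither of these gives you, for an arbitrary $\vv\in\R^n$, the inequality $r_{\mathcal C'}(\vv,s)\le r_{\mathcal C}(\vv,s)$: a priori the twisted weight $e^{-\langle\vv,\Phi(e)\rangle}$ could favour a subordinate component for some nonzero $\vv$, in which case $r(\vv,\cdot)$ would only be a maximum of analytic functions of $s$ and the implicit function theorem argument breaks down. Establishing that a single component dominates the pressure for \emph{all} twisted potentials $-\langle\vv,\Phi\rangle-s\phi$ is precisely the technical heart of the Cantrell--Tanaka argument that the paper is appealing to; it is a real assertion about the combinatorics of the Cannon coding (roughly, that every loop in a subordinate component is ``shadowed'' by a loop in $\mathcal C$ coding a bounded power of a conjugate element, with controlled weight), and you would need to either reproduce that argument or cite it rather than wave at the unweighted growth comparison. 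As written, this step does not follow from what you have assembled.
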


\begin{proof}
In the case of Manhattan curves (i.e. when $n=1$) his was proven by Cantrell and Tanaka \cite[Theorem 5.5]{cantrell-tanaka.invariant}. The same proof can be applied for $n > 1$ with minor changes: we leave the details to the reader.
\end{proof}

\subsection{Cubulations}
A \emph{$\CAT(0)$ cube complex} is a simply-connected,  metric polyhedral complex in which all polyhedra are unit length Euclidean cubes and satisfying a form of non-positive curvature \cite{bridson-haefliger,sageev.cat0}. If $\calX$ is a $\CAT(0)$ cube complex, we are interested in its \emph{combinatorial metric}, which can be defined as the graph metric on its 1-skeleton so that each edge has length 1. If $\G$ acts cubically on $\calX$, then this action preserves the combinatorial metric, and we can pullback this metric along any $\G$-orbit to get a left-invariant pseudo metric $d_\calX$ on $\G$.

Suppose now that $\G$ is non-elementary hyperbolic and acts geometrically and cubically on the $\CAT(0)$ cube complexes $\calX,\calX_1,\dots,\calX_n$. As for word metrics, we can prove the following.


\begin{proposition}\label{prop.polytopecubulations}
     Let $d, d_1, \ldots, d_n\in \calD_\G$ be pseudo metrics on $\G$ induced by the combinatorial metrics associated to the geometric actions on the $\CAT(0)$ cube complexes $\calX,\calX_1,\dots,\calX_n$ as above. Then for $\J_{d}(d_1,\ldots,d_n)$ is a polytope and the corresponding Manhattan manifold is analytic.
\end{proposition}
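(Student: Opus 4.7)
The plan is to mirror the proofs of \Cref{prop.wordpoly} and \Cref{coro.analyticword} by encoding the cubical distances $d_{\calX_i}$ as additive edge labellings on a single Cannon graph for $\G$. Once such an encoding is available, both conclusions follow from the same machinery used in the word-metric case: Marcus--Tuncel's theorem on weight-per-symbol images \cite{marcus-tuncel} yields the polytope statement, and analyticity of the topological pressure for vector-valued locally constant potentials on a subshift of finite type yields the analyticity of the Manhattan manifold.

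More precisely, I would first fix a word metric $d_S$ on $\G$ with Cannon coding $(\mathcal{G}, V, E, v_\ast)$ and a distinguished transitive component $\Cc$ sensing every conjugacy class (\cite[Corollary~3.2]{cantrell.mixing}). The key step is to produce, for each $i$, an integer-valued function $\phi_i : E \to \Z$ such that for any finite path $p = e_1 \cdots e_k$ from $v_\ast$ representing $x_p \in \G$, one has
\[
d_{\calX_i}(o, x_p) = \sum_{j=1}^k \phi_i(e_j) + O(1),
\]
with a uniform error. This is the statement that the combinatorial metric of a geometric cubulation of a hyperbolic group is a \emph{bicombable function} in the sense of Calegari--Fujiwara \cite{calegari-fujiwara}, and that a finite collection of bicombable functions admits a common Cannon-type integer labelling, as in the simultaneous encoding argument of \cite[Lemma~6.42]{calegari.scl}. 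With such $\phi_1, \ldots, \phi_n$ in hand, setting $\Phi = (\phi_1, \ldots, \phi_n) : E \to \Z^n$ and reproducing the argument of \Cref{prop.wordpoly} verbatim---using \Cref{thm.intro.jtspectrum} (or \Cref{coro.JfromLam}) to identify the image of the weight-per-symbol function on loops in $\Cc$ with $\J_d(d_1, \ldots, d_n)$---produces the polytope statement. Real-analyticity of the parametrization $\theta_{\D/d}$ then follows by the same thermodynamic formalism argument as in \Cref{coro.analyticword}, applied to the vector-valued locally constant potential $\langle \mathbf{v}, \Phi \rangle$ on the subshift of finite type associated to $\Cc$.

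The main obstacle is verifying the bicombability of cubical distances in the form required by the Calegari--Fujiwara machinery. Geometrically, this rests on the observation that $d_{\calX_i}(o, x)$ counts the hyperplanes of $\calX_i$ separating a fixed orbit point from its $x$-translate, and hyperplane crossings along fellow-travelling Cannon geodesics can be matched up with bounded discrepancy via the Morse lemma together with the quasi-convexity of hyperplane stabilizers in a geometric cubulation of a hyperbolic group. Once this compatibility between the wall structure of $\calX_i$ and the Cannon automaton of $(\G, S)$ is established, the remainder of the argument is a direct adaptation of the word-metric case.
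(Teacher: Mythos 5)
Your approach is genuinely different from the paper's and, as written, has a concrete gap. The paper does not encode cubical distances on a Cannon graph for an auxiliary word metric. It instead invokes the automatic structure of Cantrell--Reyes \cite{cantrell-reyes.statistic}, which represents \emph{combinatorial geodesics in the cube complex $\calX$ itself} (for a finite-index subgroup $\G'<\G$), together with integer edge labellings encoding the remaining actions $\calX_1,\dots,\calX_n$. This choice is essential to the mechanism: it makes the path length $|p|$ of a loop in the automaton agree with the $d=d_\calX$-length of $x_p$, so that the weight-per-symbol quantity $\Phi(p)/|p|$ on loops is exactly $\bfL_d(x_p)$, which is what the identification with $\J_d(d_1,\dots,d_n)$ via \Cref{coro.JfromLam} requires.

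In your set-up the Cannon automaton is built from the word metric $d_S$, so the denominator $|p|$ measures the $d_S$-length of $x_p$, not the $d_\calX$-length. Hence $\Phi(p)/|p|$ computes $\bfL_{d_S}(x_p)$, not $\bfL_d(x_p)$, and the argument of \Cref{prop.wordpoly} cannot be ``reproduced verbatim.'' You could repair this by also encoding $d_\calX$ via a labelling $\phi_0$, applying Marcus--Tuncel \cite{marcus-tuncel} to the $(n+1)$-dimensional weight-per-symbol $(\Phi,\phi_0)(p)/|p|$, and projecting radially from the origin onto the affine hyperplane $\{x_{n+1}=1\}$ (a projective map carries a polytope to a polytope when the $\phi_0$-component is uniformly bounded away from zero on loops, which holds since $d_\calX$ is quasi-isometric to $d_S$); but your proposal does not do this. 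Independently, your acknowledged ``main obstacle''---that cubical distances are combable in the Calegari--Fujiwara sense relative to a word-metric Cannon coding---is not a formal consequence of fellow-travelling plus quasi-convexity of hyperplane stabilizers, and it is substantive enough that the paper cites a dedicated construction in \cite{cantrell-reyes.statistic}, including a pass to a finite-index subgroup that your outline omits entirely. As stated, your proposal is a plausible research plan, not a proof.
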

\begin{proof}
We follow the same proof as that presented in \Cref{prop.wordpoly}. To do so we need analogues of the results of Calegari and Fujiwara for word metrics on hyperbolic groups but for our actions on $\text{CAT}(0)$ cube complexes. These results were obtained by Cantrell and Reyes in \cite{cantrell-reyes.statistic}. More precisely they show that there exists an automatic structure (a finite directed graph) that represents the action of a finite index subgroup $\G'<\G$ on $\calX$ (i.e. paths in the graph correspond to combinatorial geodesics for the action). Furthermore we can find such a structure for which there are labelings of the edges with lengths that represent the action on $\calX_1, \ldots, \calX_n$ \cite[Lemma 6.6]{cantrell-reyes.statistic}. That is. 
the sum of the labelings along a path represents the combinatorial distance for the actions of $\G'$ on $\calX_1, \ldots, \calX_n$ corresponding to the group element represented by the path (Lemma 6.6 in \cite{cantrell-reyes.statistic} is only stated for a pair of actions, however the same argument allows us to encode arbitrarily finitely many actions). It follows that, as for word metrics on $\G$, $\J_{d}(d_1,\ldots,d_n)$ is the image of a weight-per-symbol function for a transitive Markov chain (the fact we can take a single component follows from \cite[Lemma 6.13]{cantrell-reyes.statistic}) and so is a polytope by the work of Marcus and Tuncel \cite{marcus-tuncel}.

Analiticity of the associated Manhattan manifold follows exactly as the proof of \Cref{coro.analyticword} (cf.~\cite[Theorem~6.1]{cantrell-reyes.statistic}).
\end{proof}

\subsection{Cells in Outer space}
Let $\G=F_k$ be a free group on $k\geq 2$ generators. The \emph{Culler-Vogtmann outer space} of $\G$ \cite{culler-vogtmann} is the space $\scrC\scrV_k$ of rough homothety classes of geometric and minimal actions of $F_k$ on metric trees. By looking at marked homeomorphism types of quotients for these actions, we get a decomposition of $\scrC\scrV_k$ as a union of open cells. We can describe these cells in $\scrD_\G$ using Manhattan manifolds. 

As a concrete example, suppose $k=3$ and let $a,b,c$ be a basis for $F_3$. Let $\mathsf{C}\subset \scrC\scrV_k$ be the cell corresponding by $F_3$ actions on trees $F_3$-equivariantly BiLipschitz to the action of $F_3$ on its Cayley graph with generators $S=\{a^{\pm},b^{\pm},c^{\pm}\}$. Equivalently, points in $\mathsf{C}$ are metric structures induced by the metrics 
\[d_{v_a,v_b,v_c}(x,y)=v_a d_a(x^{-1}y)+v_b d_b(x^{-1}y)+v_c d_c(x^{-1}y), \]
where $d_a,d_b,d_c$ count the number of appearances of $a^\pm,b^\pm$ and $c^\pm$ respectively in irreducible representatives and
$v_a,v_b,v_c>0$. 

If $d=d_{1,1,1}$ be the word metric for $S$ and $\D=(d_a,d_b,d_c)$, then it is not hard to check that the Manhattan manifold $\scrM_{\D/d}$ is exactly $\mathsf{C}$. Note in this case that the boundary metric structures in $\partial \scrM_{\D/d}$ are of the form $d_{v_a,v_b,v_c}$, where $v_a,v_b,v_c\geq 0$, at least one of these numbers is positive and at least one of them is zero. These boundary points correspond to small (but not proper) actions of $\G$ on trees. Also, the joint translation spectrum is the set of all triples $(x_a,x_b,x_c)$ in $\R^3$ with $x_a+x_b+x_c=1$ and $x_a,x_b,x_c \geq 0$, hence a polytope.

The same phenomenon applies to the other cells in outer space.

\subsection{Stable norm}

Suppose now that $\G$ has positive first Betti number and consider the natural map $\pi: \G \ra H_1(\G)=H_1(\G;\R)$. For $d\in \calD_\G$ we first define a left-invariant pseudo metric $\ov d$ on $H_1(\G;\Z)$ according to
\[\ov{d}(\ov x, \ov y)=\inf\{d(x,y) : x\in \pi^{-1}(\ov {x}), y\in \pi^{-1}(\ov {y})\}.\]
This pseudo metric induces a semi-norm $\|\cdot \|_d$ on $H_1(\G)$ given by the extension of
\[
\|\ov{x}\|_d=\lim_{k \to \infty}{\frac{\ov d( \ov{o},\ov{x}^k)}{k}}
\]
for $x\in H_1(\G;\Z)$. This semi-norm is indeed a norm and depends only on the rough isometry class of $d$. We call $\|\cdot \|_d$ the \emph{stable norm} associated to $d$. 

Now we let $a_1,\dots,a_n\in \G$ be such that $\pi(a_1),\dots,\pi(a_n)$ form a basis of $H_1(\G)$ with dual basis $\p_1,\dots,\p_n \in H^1(\G)\subset \calH_\G$. The basis $\pi(a_1),\dots,\pi(a_n)$ then gives a linear identification $\omega:H_1(\G) \ra \R^n$. For this identification, we have the following:

\begin{proposition}\label{prop.stablenorm}
The unit ball of $\|\cdot \|_d$ in $H_1(\G)=\R^n$ is exactly the joint translation spectrum $\J_d(\D)$. 
\end{proposition}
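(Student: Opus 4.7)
The key observation is that each $\p_i$ is a group homomorphism $\G \to \R$, so the Gromov-product corrections in the definition of $\calH_\G$ vanish identically: one has the exact equalities $\D(x) = \bfL(x) = \omega(\pi(x))$ for every $x \in \G$. Consequently $\bfL_d(x) = \omega(\pi(x))/\ell_d(x)$ and $\D_d(x) = \omega(\pi(x))/d(o,x)$, and the problem reduces to a direct comparison between the stable translation length $\ell_d$ and the stable norm $\|\cdot\|_d$.

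The inclusion $\J_d(\D) \subset \omega(B)$ is straightforward: the definition of $\ov d$ yields $\ov d(\ov o, k\ov x) \leq d(o, x^k)$, and dividing by $k$ and letting $k\to\infty$ gives $\|\pi(x)\|_d \leq \ell_d(x)$ for every non-torsion $x \in \G$. Hence $\bfL_d(x) \in \omega(B)$, so by \Cref{coro.JfromLam} together with the closedness of $\omega(B)$ we obtain $\J_d(\D) = \ov{\bfL_d(\G)} \subset \omega(B)$.

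For the reverse inclusion I will proceed in three steps. Since $\G$ is non-elementary hyperbolic with positive first Betti number, $\G$ is not virtually abelian, so $[\G,\G]$ is infinite and contains a non-torsion element $c$; then $\bfL_d(c) = \omega(\pi(c))/\ell_d(c) = 0$, showing $0 \in \J_d(\D)$. Next, for an arbitrary non-zero class $u \in H_1(\G;\Z)$, the definition $\|u\|_d = \lim_k \ov d(\ov o, k u)/k$ allows me to pick $x_k \in \G$ with $\pi(x_k) = k u$ and $d(o,x_k) \leq \ov d(\ov o, k u) + 1$. Then $d(o, x_k)/k \to \|u\|_d > 0$, so $d(o,x_k) \to \infty$, and $\D_d(x_k) = k\omega(u)/d(o,x_k) \to \omega(u)/\|u\|_d$. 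The ball version of the joint translation spectrum (\Cref{sec.othersets}) captures such limits, so $\omega(u)/\|u\|_d \in \J^B_d(\D)$; combined with $0 \in \J_d(\D)$, \Cref{prop.convexhull} gives $\J^B_d(\D) = \J_d(\D)$, placing $\omega(u)/\|u\|_d$ in $\J_d(\D)$. As $u$ ranges over $H_1(\G;\Z)\setminus\{0\}$, the points $\omega(u)/\|u\|_d$ are dense in the stable-norm unit sphere $\omega(\partial B)$: approximate any unit vector by rational vectors, clear denominators, and invoke the homogeneity and continuity of $\|\cdot\|_d$. Closedness of $\J_d(\D)$ yields $\omega(\partial B) \subset \J_d(\D)$, and finally convexity of $\J_d(\D)$ (\Cref{thm.intro.jtspectrum}) together with $0 \in \J_d(\D)$ forces $\omega(B) \subset \J_d(\D)$.

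No serious obstacle arises: the only minor point is the existence of a non-torsion element in $[\G,\G]$, which is immediate from non-elementarity of $\G$. The heart of the argument is the density-plus-convexity step, which is enabled precisely by the homomorphism hypothesis on $\p_1,\dots,\p_n$ that eliminates all Gromov-product defects and lets the stable-norm defining sequence supply the required approximating group elements.
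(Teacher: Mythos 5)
Your proof is correct and, at its core, follows the same route as the paper: establish $\|\pi(x)\|_d\leq\ell_d(x)$ for the easy inclusion, and then use the defining sequence of the stable norm to produce elements $x_k$ with $\pi(x_k)=ku$ and $d(o,x_k)$ close to $\ov d(\ov o, ku)$, so that $\D_d(x_k)$ (equivalently $\bfL_d(x_k)$) converges to the boundary point $\omega(u)/\|u\|_d$ of the unit ball. Your opening observation that the homomorphism property of the $\p_i$ forces $\D=\bfL=\omega\circ\pi$ exactly is a useful clarification that the paper leaves implicit.

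The packaging differs slightly. The paper works directly with $\bfL_d$, approximates an arbitrary unit-norm $\vx$ by scaled integral classes $\lam_m\pi(x_m)$ chosen so that $\lam_m\ell_d(x_m)$ is close to $\|\lam_m\pi(x_m)\|_d$, and finishes by compactness and convexity of $\J_d(\D)$. You instead first verify $\mathbf{0}\in\J_d(\D)$ via a non-torsion commutator, pass through the ball version $\J^B_d(\D)$ and \Cref{prop.convexhull}, and then densify integral directions on the sphere. Both are valid, but two of your steps are avoidable: once $d(o,x_k)\to\infty$, the limit of $\D_d(x_k)$ lies in $\J_d(\D)$ directly by \Cref{thm.intro.jtspectrum} applied with $T_k=d(o,x_k)$ (no need for $\J^B$); and once you have $\omega(\partial\calB)\subset\J_d(\D)$, convexity of $\J_d(\D)$ already yields $\omega(\calB)=\textnormal{CH}(\omega(\partial\calB))\subset\J_d(\D)$, making the separate verification that $\mathbf{0}\in\J_d(\D)$ redundant. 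Streamlining these would bring your argument essentially to the paper's.
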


\begin{proof}
Let $\calB$ denote the unit ball of $\|\cdot\|_d$ in $H_1(\G)$. 
From the definition of the stable norm it is clear that $\|\pi(x)\|_d\leq \ell_d(x)$ for all $x\in \G$. In particular, if $x$ is non-torsion then $\frac{1}{\ell_d(x)}\pi(x)$ belongs to $\calB$. 
But we also have
\[\Lam_d(x)=\frac{1}{\ell_d(x)}(\p_1(x),\dots,\p_n(x))=\omega^{-1}\left(\frac{1}{\ell_d(o,x)}\pi(x)\right),\]
an hence $\om(\Lam_d(x))$ belongs to $\calB$ and $\om(\J_d(\D))\subset \calB$.

To prove the reverse inclusion, let $\vx\in H_1(\G)$ and write $\vx=\lim_{m\to \infty}{\vx_m}$ for $\vx_m=\lam_m\pi(x_m)$ with $\lam_m>0$ and $x_m\in \G$. By appropriately choosing the $x_m$'s we can assume that each $x_m$ is non-torsion and that  
\[
(1-1/m)\lam_m \ell_d(x_m)  \leq \|\vx_m\|_d \leq (1+1/m)\lam_m \ell_d(x_m).    \]

From this we obtain
\[\omega(\Lam_d(x_m))=\frac{1}{\ell_d(x_m)}\pi(x_m)=\frac{1}{\lam_m \ell_d(x_m)}\vx_m \to \frac{1}{\|\vx\|_d}\vx.\]

If $\|\vx\|_d=1$, then $\vx$ belongs to $\omega(\J_d(\D))$ since $\J_d(\D)$ is compact. In general, if $\vx \in \calB$ is non-zero then it is a convex combination of the trivial element of $H_1(\G)$ and $\frac{1}{\|\vx\|_d}\vx$, and hence $\vx$ belongs to $\omega(\J_d(\D))$ by \Cref{thm.intro.jtspectrum}.
\end{proof}

A classical example of stable norm is when $\G$ is a closed hyperbolic surface group and $d\in \calD_\G$ is induced by a Fuchsian representation \cite{pollicott-sharp.livsic,massart}. Indeed, the work of Massart \cite{massart} implies that the unit ball has a corner at infinitely many directions in $H_1(\G;\Z)$. Combining this with \Cref{prop.stablenorm} provides an explicit example of a joint translation spectrum $\J_d(\D)$ that is not a polyhedron.

\section{Further directions}\label{sec.questions}

We end with some questions related to the joint translation spectrum. Our first question is about the speed of convergence of the sets involved in the definition of the joint translation spectrum.

\begin{question}
    How quickly do $\D_\p(S_R(T))$ or $\Lam(S_R^{\ell_\p}(T))$ converge to $\J_\p(\D)$ as $T$ increases? Is there a version of Breuillard-Fujiwara's inequality \cite[Theorem~1.4]{breuillard-fujiwara} in this setting? 
\end{question}

We also ask about realizability of convex bodies as joint translation spectra.

\begin{question}
    For fixed $\G$, what convex sets can be realized as the joint translation spectrum for some $\p_1,\dots,\p_n \in \calH_\G$ and $\p\in \calH_\G^{++}$? What convex sets can be realized as the joint translation spectrum for $\p_1,\dots,\p_n,\p\in \calD_\G$?
\end{question}

Extending the random walk spectrum, we let $\mathcal{WJ}^1_\p(\D)$ denote set of typical drifts for $\D_\p$ for random walk as in \Cref{def.rw}, except that we allow finite first moment random walks (instead of only finitely supported). In virtue of \Cref{thm.wj=j}, we ask:

\begin{question}
    Do we always have $\mathcal{WJ}^1_{\p}(\D) = \J_{\p}(\D)$? 
\end{question}

Similarly, from \Cref{thm.dj=j} we know that $\mathcal{DJ}_d(\psi_1,\ldots,\psi_n)$ contains the interior of $\mathcal{J}_d(\psi_1,\ldots,\psi_n)$ under the assumption of independence. The behavior of intersection $\partial \J_\p(\D) \cap \calD\J_\p(\D)$ remains unclear.

\begin{question}
Can $\calD\J_\p(\D)$ contain at least an element of the boundary of $\J_\p(\D)$? If so, can it contain a point at the boundary but not the whole boundary?
\end{question}

From a dynamical perspective, it is expected that the joint translation spectrum is in general strictly convex and not $C^1$.

\begin{question}
If there any interesting class of hyperbolic metric potentials for which for (independent) tuples among them the joint translation spectrum is strictly convex? Is this the case for the class of all Green metrics?
\end{question}

In general, is desirable to have a complete characterization of regularity and convexity of the boundary $\partial \J_\p(\D)$. More precisely, we ask:

\begin{question}
    Do the converses of \Cref{prop.descriptionboundary} hold?
\end{question}

\subsection*{Open access statement}
For the purpose of open access, the authors have applied a Creative Commons Attribution (CC BY) licence to any Author Accepted Manuscript version arising from this submission.

\noindent\small{Department of Mathematics, 
University of Warwick,
Coventry, CV4 7AL, UK}\\
\small{\textit{Email address}: \texttt{stephen.cantrell@warwick.ac.uk, cagri.sert@warwick.ac.uk}\\
\\
\small{Department of Mathematics, Yale University, New Haven, CT 06511, USA}\\
\small{\textit{Email address}: \texttt{eduardo.c.reyes@yale.edu}}\\

\end{document}